\documentclass[12pt,a4paper]{amsart}

\usepackage{tikz-cd} 
\tikzcdset{row sep/normal=3.6em, column
	sep/normal=3.6em}

\usepackage{mathrsfs} 
\usepackage{enumitem} 
\setlist{leftmargin=*}
\usepackage{caption}

\usepackage{stmaryrd}

\newtheorem{theorem}{Theorem}[section]
\newtheorem{proposition}[theorem]{Proposition}
\newtheorem{lemma}[theorem]{Lemma}

\theoremstyle{definition}
\newtheorem{assumption}[theorem]{Assumption}
\newtheorem{construction}[theorem]{Construction}
\newtheorem{convention}[theorem]{Convention}
\newtheorem{corollary}[theorem]{Corollary}
\newtheorem{definition}[theorem]{Definition}
\newtheorem{example}[theorem]{Example}

\newtheorem{notation}[theorem]{Notation}
\newtheorem{openproblem}[theorem]{Open Problem}
\newtheorem*{remark*}{Remark}
\newtheorem{remark}[theorem]{Remark}


\usepackage{newmacros}

\newsavebox{\InnerOne}
\newsavebox{\InnerTwo}
\newsavebox{\InnerTwoEps}
\begin{document}

\title[Quantitative Algebras]{Quantitative Algebras and a Classification of Metric Monads}
\author{J.~Adámek}
\thanks{J.~Adámek and M.~Dostál acknowledge the support by
	the Grant Agency of the Czech Republic under the grant 22-02964S}
\address{Department of Mathematics, Faculty of Electrical Engineering, Czech Technical University in Prague, Czech Republic and Institute for Theoretical Computer Science, Technical University Braunschweig, Germany}
\email{j.adamek@tu-bs.de}
\author{M.~Dostál}
\author{J.~Velebil}
\address{Department of Mathematics, Faculty of Electrical Engineering, Czech Technical University
	in Prague, Czech Republic}
\email{$\{$dostamat,velebil$\}$@fel.cvut.cz}

\begin{abstract}
Quantitative algebras are $\Sigma$-algebras acting on metric spaces, where operations are nonexpanding.
Mardare, Panangaden and Plotkin introduced 1-basic varieties as categories of quantitative algebras presented by quantitative equations.
We prove that for the category $\UMet$ of ultrametric spaces such varieties bijectively correspond to strongly finitary monads on $\UMet$.
The same holds for the category $\Met$ of metric spaces, provided that strongly finitary endofunctors are closed under composition.

For uncountable cardinals $\lambda$ there is an analogous bijection between varieties of $\lambda$-ary quantitative algebras and monads that are strongly $\lambda$-accessible. Moreover, we present a bijective correspondence between $\lambda$-basic varieties as introduced by Mardare et al and enriched, surjections-preserving $\lambda$-accesible monads on $\Met$.
Finally, for general enriched $\lambda$-accessible monads on $\Met$ a bijective correspondence to generalized varieties is presented.
\end{abstract}

\maketitle

\section{Introduction}

Algebras acting on metric spaces, called \emph{quantitative algebras}, have recently received a lot of attention in connection with the semantics of probabilistic computation, see e.g.\ \cite{mpp16,mpp17,bmpp18,mv,bmpp21}.
Mardare, Panangaden and Plotkin worked in~\cite{mpp17} with $\Sigma$-algebras in the category $\Met$ of extended metric spaces and nonexpanding maps, where $\Sigma$ is a signature with finite or countable arities.
They introduced \emph{quantitative equations} which are expressions $t =_{\epsilon} t'$ where $t$ and $t'$ are terms and $\epsilon \geq 0$ is a rational number.
A quantitative algebra $A$ satisfies this equation iff for every interpretation of the variables the computation of $t$ and $t'$ yields elements of distance at most $\epsilon$ in $A$.
Example: almost commutative monoids are quantitative monoids in which, for a given constant $\epsilon$, the distance of $ab$ and $ba$ is at most $\epsilon$.
This is presented by the quantitative equation $xy =_{\epsilon} yx$.

We call classes of quantitative $\Sigma$-algebras that can be presented by a set of quantitative equations \emph{varieties} (in~\cite{mpp17} they are called $1$-basic varieties).
Every variety $\Vvar$ has a free algebra on each metric space, which yields a corresponding monad $\Tmon_\Vvar$ on $\Met$.
Moreover, $\Vvar$ is isomorphic to the Eilenberg-Moore category $\Met^{\Tmon_\Vvar}$.
Can one characterize monads on $\Met$ that correspond to varieties?
One property is that such a monad is \emph{finitary}, i.e.\ it preserves directed colimits.
Another property is that it is enriched, that is, locally nonexpanding.
Unfortunately, these conditions are not sufficient.

We work with \emph{strongly finitary monads} $\Tmon$ introduced by Kelly and Lack~\cite{kelly+lack:strongly-finitary}.
This means that the endofunctor $T$ is the enriched left Kan extension of its restriction to finite discrete spaces.
More precisely, denote by $K : \Set_\fp \to \Met$ the full embedding of finite discrete spaces, then strong finitarity means that $T = \Lan{K}{(T \comp K)}$.
We prove that for every strongly finitary monad $\Tmon$ the Eilenberg-Moore category $\Met^\Tmon$ is concretely isomorphic to a variety of quantitative algebras.
Unfortuanately, we have not been able to solve the following
\begin{openproblem}
	Are all strongly finitary endofunctors on $\Met$ closed under composition?
\end{openproblem}

Assuming an affirmative answer, we present a proof that for every variety $\Vvar$ of quantitative algebras the monad $\Tmon_\Vvar$ is strongly finitary (Corollary~\ref{C:sf}).
Under this assumption we thus get a bijection
\begin{center}
quantitative varieties $\cong$ strongly finitary monads on $\Met$
\end{center}
(Theorem~\ref{T:mainmet}).

Kelly and Lack actually proved, for cartesian closed base categories, the composability of strongly finitary endofunctors.
But $\Met$ is not cartesian closed.
We therefore also investigate quantitative algebras on the category
\[
\UMet
\]
of (extended) ultrametric spaces, a full cartesian closed subcategory of $\Met$.
We then speak about \emph{ultra-quantitative algebras}.
Without any extra assumption we prove a bijection
\begin{center}
utra-quantitative varieties $\cong$ strongly finitary monads on $\UMet$
\end{center}
(Theorem~\ref{T:main}).

For signatures $\Sigma$ with infinitary operations, let $\lambda$ ($\geq \aleph_1$) be a regular cardinal larger than all arities.
We get an immediate generalization: varieties of ultra-quantitative $\Sigma$-algebras correspond bijectively to \emph{strongly $\lambda$-accessible monads} $\Tmon$ on $\UMet$.
This means that $T = \Lan{K}{(T \comp K)}$ for the full embedding $K: \Set_\lambda \hookrightarrow \Met$ of discrete spaces of cardinality less than $\lambda$ (Corollary~\ref{C:dualeq}).
But for $\lambda \geq \aleph_1$ we obtain more: bijective correspondences between
\begin{enumerate}
	\item enriched $\lambda$-accessible monads and generalized $\lambda$-ary varieties (Theorem~\ref{T:main3}), and
	\item $\lambda$-basic (i.e.\ enriched, surjections-preserving and $\lambda$-accessible) monads and $\lambda$-varieties of Mardare et al.\ (Theorem~\ref{T:main2}).
\end{enumerate}
Unfortunately, none of those two results holds for $\lambda = \aleph_0$, as Example~\ref{E:contra} demonstrates.

\vfill
\begin{figure*}[h!]
\begin{tikzpicture}[overlay]
	\filldraw[fill=white,draw=black] (-0.5,1.5) rectangle (5,-2.5);
	\filldraw[fill=white,draw=black] (6.5,1.5) rectangle (12.5,-2.5);
	\filldraw[fill=white,draw=black] (-0.2,0) rectangle (4.8,-2.3);
	\filldraw[fill=white,draw=black] (6.8,0) rectangle (12.3,-2.3);
	\filldraw[fill=white,draw=black] (-0.1,-1.3) rectangle (4.7,-2.1);
	\filldraw[fill=white,draw=black] (6.9,-1.3) rectangle (12.2,-2.1);
\end{tikzpicture}
\begin{tikzcd}[column sep=small, row sep=tiny]
{\text{\footnotesize Enriched monads}} & & {\text{\footnotesize Classes of ultra-quantitative algebras}} \\
{\lambda\text{-accessible}} & & \parbox{4cm}{\centering generalized varieties of $\lambda$-ary algebras} \\
\parbox{4cm}{\centering $\lambda$-accessible + preserve surjections}
& {\quad\cong} & {\lambda\text{-varieties}} \\
{\text{strongly }\lambda\text{-accessible}} & & {\text{varieties of }\lambda\text{-ary algebras}}
\end{tikzcd}
\vspace{0.5cm}
\caption*{A Classification of Monads on $\UMet$ (for $\lambda \geq \aleph_1$)}
\end{figure*}
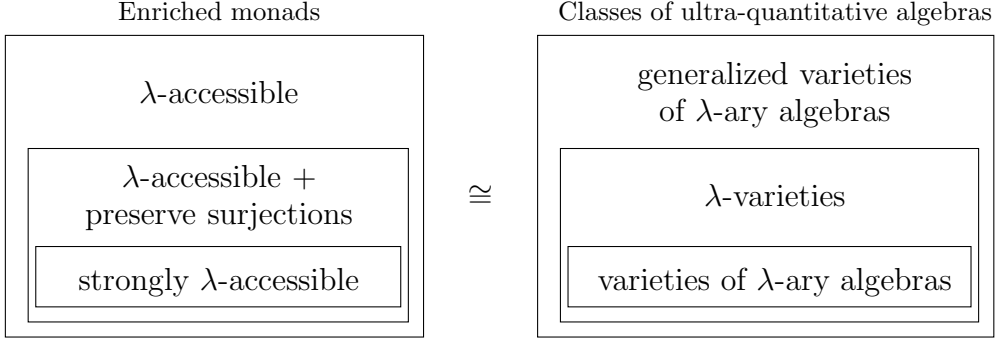

\vspace{0.5cm}

\textbf{Related results}
The correspondence (1) above is based on the presentation of enriched finitary monads due to Kelly and Power~\cite{kelly+power:adjunctions} that we recall in Section~\ref{S:am}.
The correspondence (2) has been established, for $\lambda = \aleph_1$, in~\cite{adamek:varieties-quantitative-algebras}.
We just indicate the (very minor) modifications needed for $\lambda > \aleph_1$ in Section~\ref{S:basic}.
Enriched $\aleph_1$-accessible monads on $\Met$ have also been semantically characterized by Rosický~\cite{rosicky:metric-monads} who uses a different concept of a type of algebras. In Remark~4.11 (2) he indicates a relationship to quantitative algebras.
For cartesian closed base categories Bourke and Garner~\cite{bourke-garner:monads-and-theories} present a bijective correspondence between strongly $\lambda$-ary monads and varieties, where the syntax of the latter is different from the quantitative equations of Mardare et al.

For ordered $\Sigma$-algebras closely related results have been presented recently.
Strongly finitary monads on $\Pos$ bijectively correspond to varieties of finitary ordered $\Sigma$-algebras: see~\cite{kurz-velebil:exactness}, for a shorter proof see~\cite{adv}.

\vspace{0.5cm}

\textbf{Acknowledgement}
The authors are grateful to Jiří Rosický who helped us substantially to understand our main result. And to Nathanael Arkor and John Bourke for suggestions that have improved the presentation of our paper.

\section{The Category of Metric Spaces}

We recall here basic properties of the category $\Met$ needed throughout our paper.
Objects are metric spaces, always meant in the extended sense: the distance $\infty$ is allowed.
Morphisms in $\Met$ are the nonexpanding maps $f : X \to Y$: we have $d(x, x') \geq d(f(x), f(x'))$ for all $x, x' \in X$.

\begin{notation}
\label{N:met}
\phantom{phantom}
\begin{enumerate}
	\item Given spaces $A$ and $B$, the \emph{tensor product} $A \tensor B$ is the cartesian product of the underlying sets equipped with the sum metric:
	\[
	d((a,b), (a',b')) = d(a,a') + d(b,b').
	\]
	With $\One$ the singleton space, $\Met$ is a symmetric monoidal closed category. The internal hom-functor $[A,B]$ is given by the space $\Met(A,B)$ of all morphisms from $A$ to $B$ equipped with the supremum metric:
	\[
	d(f,f') = \sup_{x \in X} d(f(x), f(x')) \quad \text{ for } f, f': A \to B
	\]
	\item The (categorical) product $A \times B$ is the cartesian product of the underlying sets equipped with the maximum metric:
	\[
	d((a,b), (a',b')) = \max \{ d(a,a'), d(b,b') \}.
	\]
	
	\item $\UMet$ denotes the full subcategory of \emph{ultrametric} spaces.
	These are the (extended) metric spaces satisfying the stronger triangle inequality:
	\[
	d(x,z) \leq \max \{ d(x,y), d(y,z) \}.
	\]
	This category is cartesian closed.
	
	\item $\CUMet$ denotes the full subcategory of complete (extended) ultrametric spaces.
	This category is also cartesian closed.

\end{enumerate}
\end{notation}

\begin{remark}[Peter Johnstone, private communication]
The categories $\Met$ and $\CMet$ are not cartesian closed.
This can be derived from the description of exponential objects due to Clementino and Hofmann~\cite{CH}.
Here we present an elementary argument: the endofunctor $(\blank) \times Y$ for
\[
Y = \{ a, b \} \text{ with } d(a,b) = 2
\]
does not have a right adjoint because it does not preserve coequalizers.

Let $X = \{ -3, -2, 2, 3 \}$ be the subspace of the real line.
Consider morphisms $f, g : \One \to X$ representing $-3$ and $3$, respectively.
Their coequalizer $q : X \to Q$ is the quotient map with $Q = \{ -2, 2, x \}$ where $d(-2,x) = 1 = d(2, x)$ and $d(-2,2) = 2$.
Thus in $Q \times X$ all distances are at most $2$.
In contrast, the pair $f \times Y$, $g \times Y$ has a coequalizer $\ol{q} : X \times Y \to \ol{Q}$ which maps the pair $(2,a)$ and $(-2,b)$ to elements of distance $3$.
\end{remark}

\begin{convention}
\phantom{phantom}
\begin{enumerate}
	\item Throughout the rest of this paper all categories $\K$ are understood to be enriched over $\Met$, $\UMet$ or $\CUMet$: every hom-set $\K(X,Y)$ carries a metric and composition is a nonexpanding map from $\K(X,Y) \tensor \K(Y,Z)$ to $\K(X,Z)$.
	For a category $\K$ we denote by $\K_\ordi$ the underlying ordinary category.
	
	Also all functors $F: \K \to \LL$ are understood to be enriched over $\Met$, $\UMet$ or $\CUMet$: the derived maps from $\K(X,Y)$ to $\LL(FX,FY)$ are nonexpanding.
	\item Every ordinary natural transformation between enriched functors is enriched.
	Thus a monad on $\Met$ is enriched iff its underlying endofunctor is.
	Analogously for $\UMet$ or $\CUMet$.
	\item Every set is considered to be the \emph{discrete space}: all distances are $0$ or $\infty$.
	\item The underlying set of a metric space $X$ is denoted by $|X|$.
\end{enumerate}
\end{convention}

\begin{remark}
\label{R:weight}
\phantom{phantom}
\begin{enumerate}
	\item We recall the concept of weighted colimit in a category $\K$.
	Given a diagram $D: \D \to \K$ (a functor with $\D$ small) and a weight (a functor $W: \D^\op \to \Met$) the \emph{weighted colimit}
	\[
	C = \Colim{W}{D}
	\]
	is an object of $\K$ together with an isomorphism
	\[
	\psi_X : \K(C,X) \to [\D^\op,\Met](W,\K(D\blank,X))
	\]
	natural in $X \in \K$.
	\item \emph{Conical colimits} are the special case where $W$ is the trivial weight, constant with value $\One$.
	\item When $\D$ is a $\lambda$-directed poset (every subset of power less than $\lambda$ has an upper bound), we say that $\Colim{W}{D}$ is a \emph{$\lambda$-directed colimit}.
	(In particular, $\D$ is non-empty.)
	\item The \emph{unit} of $C = \Colim{W}{D}$ is the natural transformation $\nu: W \to \K(D\blank,C)$ given by $\nu = \psi_C(\id_C)$.
	The unit determines $\psi$: for every morphism $h : C \to X$ in $\K$ we obtain a natural transformation $\wh{h}: \K(D\blank,C) \to \K(D\blank,X)$ by post-composing with $h$.
	The naturality of $\psi$ implies that
	\[
	\psi_X(h) = W \xrightarrow{\nu} \K(D\blank,C) \xrightarrow{\wh{h}} \K(D\blank,X).
	\]
	\item A functor $T: \K\to \K'$ \emph{preserves} the weighted colimit if $TC = \Colim{W}{TD}$ with the unit having components $T \nu_d$ ($d \in \D$).
	\item  A category is called \emph{cocomplete} if it has weighted colimits.
	\item The dual concept of a weighted limit works with diagrams $D: \D \to \K$ and weights $W: \D\to \K$.
	The category $\K$ is \emph{complete} if it has weighted limits.
\end{enumerate}
\end{remark}

\begin{proposition}[\cite{adamek+rosicky:approximate-injectivity}, Theorem 4.6]
\label{P:coco}
The category $\Met$ is complete and cocomplete.
\end{proposition}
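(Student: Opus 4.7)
The plan is to verify completeness and cocompleteness by reducing each to the existence of conical (co)limits together with (co)tensors, since an enriched category has all weighted (co)limits iff it has these two kinds of (co)limits. I would treat the two sides in parallel.

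For completeness, first I would exhibit products and equalizers. The product $\prod_i X_i$ carries the supremum (equivalently, maximum) metric on the underlying product of sets, which is the categorical product because nonexpanding maps into it correspond precisely to nonexpanding maps into each factor. Equalizers are realized as the subspace of $X$ on $\{x \mid f(x) = g(x)\}$ equipped with the inherited metric; the universal property is immediate. For cotensors, given a weight $W \in \Met$ and an object $X$, I would take $[W,X] = \Met(W,X)$ with the supremum metric already described in Notation~\ref{N:met}(1); enrichedness of the evaluation map and the required adjunction $\Met(Y,[W,X]) \cong \Met(W,\Met(Y,X))$ follow directly from the symmetric monoidal closed structure.

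For cocompleteness, coproducts are the disjoint unions with inter-component distance $\infty$ (this is why extended metrics are essential). Tensors $W \otimes X$ are the products of underlying sets with the sum metric, as in Notation~\ref{N:met}(1); the adjunction $\Met(W \otimes X, Y) \cong \Met(X,[W,Y])$ is again the monoidal-closed structure. The delicate step, and the main obstacle, is the construction of coequalizers. Given $f,g : X \to Y$, the underlying-set coequalizer is $Y/{\sim}$ where $\sim$ is the equivalence relation generated by $f(x) \sim g(x)$. The quotient distance on equivalence classes $[y],[y']$ is
\[
d([y],[y']) \;=\; \inf \sum_{i=1}^{n} d(y_i, y'_i),
\]
where the infimum ranges over all finite zigzag sequences $y = y_1 \sim y'_1, y_2 \sim y'_2, \ldots, y_n = y'$ with $y_{i+1}$ belonging to the same equivalence class as $y'_i$. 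I would then check symmetry, the triangle inequality (by concatenating zigzags) and nonexpansion of the quotient map, and verify the universal property: any nonexpanding $h : Y \to Z$ coequalizing $f,g$ factors uniquely through the quotient because the inequality $d(h(y),h(y')) \leq \sum d(y_i,y'_i)$ forces $h$ to be nonexpanding on $Y/{\sim}$. Note that two distinct classes may end up at distance $0$, which is allowed in our setting since the distance $0$ between distinct points is not forbidden for extended pseudometrics — but to stay within \emph{metric} spaces in the strict sense one instead quotients further by the relation ``distance $0$''; for the extended-metric setting of the paper the zigzag construction already lands in $\Met$.

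Having conical limits plus cotensors yields all weighted limits, and conical colimits plus tensors yield all weighted colimits, by the standard enriched (co)end formulas $\Lim{W}{D} = \int_{d} [Wd, Dd]$ and $\Colim{W}{D} = \int^{d} Wd \otimes Dd$. I would close the proof by invoking these formulas. The truly routine parts — products, equalizers, coproducts, tensors, cotensors — are essentially forced by the symmetric monoidal closed structure of $\Met$; the substantive verification is the coequalizer construction, which is where the characteristic ``zigzag infimum'' of metric-space quotients enters.
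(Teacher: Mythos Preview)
The paper does not prove this proposition at all: it simply cites \cite{adamek+rosicky:approximate-injectivity}, Theorem~4.6, and moves on. Your proposal therefore supplies what the paper omits, and the overall strategy --- reduce weighted (co)limits to conical (co)limits plus (co)tensors via the standard (co)end formulas, then construct each ingredient explicitly using the symmetric monoidal closed structure --- is standard and sound.

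There is one genuine confusion to fix. In the coequalizer step you conclude that ``for the extended-metric setting of the paper the zigzag construction already lands in $\Met$.'' This is not right: \emph{extended} in this paper means only that the value $\infty$ is permitted as a distance, not that distinct points may have distance $0$. The zigzag infimum on $Y/{\sim}$ is in general only a pseudometric (two distinct $\sim$-classes can end up at distance $0$), so you must carry out the further quotient by the relation ``$d=0$'' --- which you do mention --- in order to land in $\Met$. With that correction the construction is correct, and the universal property still goes through because any nonexpanding $h:Y\to Z$ coequalizing $f,g$ is automatically constant on distance-$0$ classes.
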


We now present a characterisation of directed colimits in $\Met$.
First we give an example demonstrating that directed colimits are not formed on the level of the underlying sets.

\begin{example}
Let $(A_n)_{n<\omega}$ be the $\omega$-chain of two-element spaces $A_n$ with the underlying set $\{0, 1\}$ and the metric of $A_n$ given by
\[
d(0,1) = 2^{-n}
\]
for all $n < \omega$.
Define the connecting maps to be the identity maps on $\{ 0, 1 \}$.
The colimit of this diagram is the singleton space.
\end{example}

\begin{definition}
\label{D:pres}	
An object $A$ of a category $\K$ is
\begin{enumerate}
	\item \emph{$\lambda$-presentable} if the hom-functor $\K_\ordi(A,\blank): \K_\ordi \to \Set$ preserves $\lambda$-directed colimits, and
	\item \emph{$\lambda$-presentable in the enriched sense} if the functor $\K(A,\blank): \K \to \Met$ preserves $\lambda$-directed colimits.
	\item A category $\K$ is locally $\lambda$-presentable in the enriched sense provided that it has weighted colimits and a set $\K_\lambda$ of objects $\lambda$-presentable in the enriched sense such that $\K$ is the closure of $\K_\lambda$ under $\lambda$-directed colimits.
\end{enumerate}
\end{definition}

\begin{corollary}
If a metric space $M$ is finitely presentable, then $M = \emptyset$.
\end{corollary}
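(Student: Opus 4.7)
The plan is to exploit the explicit directed colimit exhibited in the preceding example. Since $\colim_{n<\omega} A_n = \One$, applying the hom-functor $\Met_\ordi(M,\blank)$ to this cocone should, by finite presentability of $M$, yield the colimit in $\Set$. I would compare the two sides of the canonical comparison map
\[
\colim_{n<\omega} \Met_\ordi(M, A_n) \longrightarrow \Met_\ordi(M, \One).
\]
The right-hand side is a singleton (a unique nonexpanding map into $\One$ for any $M$). So preservation forces the left-hand directed colimit in $\Set$ to be a singleton as well.

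Now suppose, aiming for a contradiction, that $M \neq \emptyset$. Pick any $m_0 \in M$. For every $n$ the two constant maps $c_0, c_1 \colon M \to A_n$ (with values $0$ and $1$) are nonexpanding, hence belong to $\Met_\ordi(M, A_n)$, and they are distinct since $c_0(m_0) = 0 \neq 1 = c_1(m_0)$. The connecting map $\Met_\ordi(M, A_n) \to \Met_\ordi(M, A_{n+1})$ is postcomposition with the identity on $\{0,1\}$, so it sends $c_0 \mapsto c_0$ and $c_1 \mapsto c_1$. Consequently $c_0$ and $c_1$ remain distinct at every stage, and therefore represent two different elements in the directed colimit $\colim_{n<\omega}\Met_\ordi(M,A_n)$. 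This contradicts the singleton on the right-hand side, whence $M = \emptyset$.

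The argument is essentially self-contained once the example above is in place; there is no substantive obstacle. The only mild subtlety to keep track of is that finite presentability in the corollary refers to the ordinary (Set-valued) hom-functor of Definition~\ref{D:pres}(1), so one does not need any enriched colimit computation — the $\Set$-level directed colimit of hom-sets already fails to match $\Met_\ordi(M,\One)$ as soon as $M$ has a single point. The choice of the two constants is what makes injectivity of the comparison map fail, and the identity connecting maps are what prevent any later identification.
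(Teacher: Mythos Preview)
Your proof is correct and follows essentially the same approach as the paper. The paper first handles the case $M = \One$ by identifying $\Met_\ordi(\One,\blank)$ with the underlying-set functor and observing that the example shows this fails to preserve the chain colimit, then simply remarks that ``an analogous argument holds for every nonempty space''; your use of the two constant maps $c_0, c_1$ is exactly the natural way to make that analogy explicit for general nonempty $M$.
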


Indeed, the previous example shows that the ordinary forgetful functor $|\blank|: \Met_\ordi \to \Set_\ordi$ does not preserve colimits of $\omega$-chains.
Since $|\blank| \cong \Met(\One,\blank)_\ordi$, we see that $\One$ is not finitely presentable.
An analogous argument holds for every nonempty space.

\begin{proposition}
\label{P:pres}
Let $(D_i)_{i \in I}$ be a directed diagram in $\Met$.
A cocone $c_i: D_i \to C$ is a colimit iff
\begin{enumerate}
	\item it is collectively surjective:
	\[
	C = \bigcup_{i \in I} c_i[D_i],
	\]
	and
	\item for every $i \in I$, given $y, y' \in D_i$ we have
	\[
	d(c_i(y), c_i(y')) = \inf_{j \geq i} d(f_j(y), f_j(y'))
	\]
	where $f_j : D_i \to D_j$ is the connecting morphism.
\end{enumerate}
\end{proposition}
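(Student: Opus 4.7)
The plan is to construct one explicit colimit cocone satisfying (1) and (2) and then invoke uniqueness of colimits (existence guaranteed by Proposition~\ref{P:coco}) to handle both implications at once.

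On the disjoint union $S = \bigsqcup_{i \in I} |D_i|$ I would define
\[
\rho(y, y') = \inf_{j \geq i, i'} d(f_j(y), f_j(y')) \qquad \text{for } y \in D_i,\ y' \in D_{i'},
\]
with $f_j$ denoting the connecting morphism into $D_j$ from the appropriate source, and take $C^* = S / {\sim}$ with $y \sim y'$ iff $\rho(y,y') = 0$, equipped with the induced metric, together with canonical maps $c_i^* : D_i \to C^*$. The $c_i^*$'s are nonexpanding (take $j = i$ in the infimum), commute with the $f_j$'s, and satisfy (1) by construction and (2) because the definition of $\rho$ restricted to pairs from a single $D_i$ is exactly the formula in (2). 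The main technical obstacle I anticipate is showing that $\rho$ is a pseudometric: reflexivity and symmetry are immediate, but the triangle inequality, for $y, y', y''$ in $D_i, D_{i'}, D_{i''}$, requires using directedness to pick $j \geq i, i', i''$ large enough to $\varepsilon$-approximate both relevant infima and then applying the triangle inequality inside $D_j$.

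Next I would verify the colimit property of $(C^*, c_i^*)$. Given a competing cocone $(g_i : D_i \to X)$, the map $h([y]) := g_i(y)$ is well-defined and nonexpanding by the single estimate
\[
d(g_i(y), g_{i'}(y')) = d(g_j f_j(y), g_j f_j(y')) \leq d(f_j(y), f_j(y'))
\]
for any $j \geq i, i'$, which upon taking infima yields $d(h([y]), h([y'])) \leq \rho(y, y')$; uniqueness is forced by collective surjectivity of the $c_i^*$. Both implications of the proposition then drop out: any colimit is isomorphic to $(C^*, c_i^*)$ and inherits (1) and (2), while conversely any cocone satisfying (1) and (2) induces a mediating morphism $h : C^* \to C$ which is surjective by (1) and isometric by (2) combined with the definition of $\rho$, hence an isomorphism, so $(c_i)$ is itself a colimit.
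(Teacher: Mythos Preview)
Your argument is correct but takes a different route from the paper. The paper disposes of necessity by citing Lemma~2.4 of~\cite{adamek+rosicky:approximate-injectivity}, and for sufficiency argues directly: given a cocone $(h_i: D_i \to X)$, it defines $h(x) = h_i(y)$ whenever $x = c_i(y)$, checks well-definedness using (2) (an $\varepsilon$-argument to show independence of the representative), and checks that $h$ is nonexpanding. You instead build the colimit $C^*$ explicitly as a metric quotient of $\bigsqcup_i |D_i|$ and derive both implications from uniqueness of colimits.

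What each buys: your construction is self-contained---you don't need the external reference for necessity, since $(C^*, c_i^*)$ visibly satisfies (1) and (2) and any other colimit is isometric to it. The cost is the extra scaffolding (verifying $\rho$ is a pseudometric, the quotient is a genuine metric space, and the $c_i^*$ form a compatible cocone). The paper's route is leaner for sufficiency because it works with the given $C$ directly; but note that the well-definedness step in the paper is essentially the same $\varepsilon$-computation you do for the triangle inequality of $\rho$, just localized. One small point worth making explicit in your write-up: when you argue that the mediating $h: C^* \to C$ is isometric in the converse direction, condition (2) as stated only covers pairs $y,y'$ in a common $D_i$; you need to pass to a common upper bound $j \geq i,i'$ via the cocone compatibility $c_i = c_j \circ f_j$ before invoking (2), and then observe that the resulting infimum over $k \geq j$ coincides with the full infimum defining $\rho$ by cofinality.
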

\begin{proof}
The necessity of (1) and (2) is Lemma~2.4 in~\cite{adamek+rosicky:approximate-injectivity}.

To prove sufficiency, assume (1) and (2).
Given a cocone
\[
h_i : D_i \to X \quad (i \in I)
\]
we define $h: |C| \to |X|$ by
\[
h(x) = h_i(y) \text{ whenever } x = c_i(y).
\]
This is well defined: by (1) such $i$ and $y$ exist, and by (2) the value $h_i(y)$ is independent of the choice of $i$ and $y$.
For the independence on $i$ use that $I$ is directed.
For the independence on $y$, we now verify that given $y,y' \in D_i$, then $c_i(y) = c_i(y')$ implies $h_i(y) = h_i(y')$.
Indeed, we show that $d(h_i(y),h_i(y')) < \eps$ for an arbitrary $\eps> 0$.
Apply (2) to $c_i(y) = c_i(y')$: there exists $j \geq i$ with $d(f_j(y), f_j(y')) < \eps$.
Since $h_i = h_j \comp f_j$ (by compatibility) and $h_j$ is nonexpanding, we get
\[
d(h_i(y), h_i(y')) = d(h_j \comp f_j(y), h_j \comp f_j(y')) < \eps.
\]
Thus $h$ is well defined, and by its definition we have $h \comp c_i = h_i$ for every $i \in I$.
Moreover, it is nonexpanding.
Indeed, given $x,x' \in C$ find $i \in I$ and $y,y' \in D_i$ with $x = c_i(y)$ and $x' = c_i(y')$.
Then since $h_i$ and $c_i$ are nonexpanding, we get
\[
d(h(x),h(x')) = d(h_i(y), h_i(y')) \geq d(y,y') \geq d(x,x').
\]
It is easy to see that $h: C \to X$ is the unique morphism with $h \comp c_i = h_i$ for every $i \in I$.
\end{proof}

\begin{corollary}
Directed colimits in $\UMet$ are characterized by the above conditions (1) and (2).

Indeed, this subcategory is closed under directed colimits in $\Met$.
\end{corollary}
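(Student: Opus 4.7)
The plan is to establish the ``indeed'' clause first: show that the full inclusion $\UMet \hookrightarrow \Met$ is closed under directed colimits. Once this is verified, the colimit cocone computed in $\Met$ is already a cocone in $\UMet$, and since any other cocone in $\UMet$ is also a cocone in $\Met$, universality transfers for free. Thus the characterization of directed colimits in $\Met$ via Proposition~\ref{P:pres} applies verbatim in $\UMet$.

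To prove closure, I would fix a directed diagram $(D_i)_{i \in I}$ in $\UMet$ and let $c_i : D_i \to C$ be its colimit cocone in $\Met$, described by conditions (1) and (2) of Proposition~\ref{P:pres}. It remains to verify that the metric on $C$ is ultrametric, i.e.\ $d(x,z) \leq \max\{d(x,y), d(y,z)\}$ for all $x,y,z \in C$. Using collective surjectivity (condition (1)) together with directedness of $I$, I would first pull $x,y,z$ back to a common index: pick $i \in I$ and $x', y', z' \in D_i$ with $c_i(x') = x$, $c_i(y') = y$, $c_i(z') = z$.

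The key computation is then an $\eps$-approximation argument. Fix $\eps > 0$. By condition (2) applied to the pairs $(x',y')$ and $(y',z')$, there exist indices $j_1, j_2 \geq i$ such that
\[
d(f_{j_1}(x'), f_{j_1}(y')) < d(x,y) + \eps \quad \text{and} \quad d(f_{j_2}(y'), f_{j_2}(z')) < d(y,z) + \eps.
\]
By directedness, pick $j \geq j_1, j_2$. Since the connecting maps $D_{j_k} \to D_j$ are nonexpanding, both inequalities persist in $D_j$. The strong triangle inequality in the ultrametric space $D_j$ then yields
\[
d(f_j(x'), f_j(z')) \leq \max\{d(f_j(x'), f_j(y')), d(f_j(y'), f_j(z'))\} < \max\{d(x,y), d(y,z)\} + \eps.
\]
Since $c_j$ is nonexpanding and $c_j \circ f_j = c_i$, we conclude $d(x,z) \leq d(f_j(x'), f_j(z')) < \max\{d(x,y), d(y,z)\} + \eps$. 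As $\eps > 0$ was arbitrary, the ultrametric inequality holds in $C$, so $C \in \UMet$.

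I do not anticipate a serious obstacle: the argument is a routine transfer of a universal property along a reflective-style closure. The only delicate point is the triple use of directedness — first to align $x,y,z$ with a common fibre, and then to amalgamate the two approximation indices $j_1, j_2$ into one index $j$ where the ultrametric inequality of $D_j$ can be applied.
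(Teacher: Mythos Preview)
Your proposal is correct and follows exactly the approach the paper indicates: the paper simply asserts the ``indeed'' clause without proof, treating closure of $\UMet$ under directed colimits in $\Met$ as evident, whereas you spell out the $\eps$-approximation argument that verifies the strong triangle inequality in the colimit. There is nothing to add; your argument is the natural elaboration of what the paper leaves implicit.
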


\begin{remark}
\label{R:new}
Directed colimits in $\CUMet$ are described analogously, see Proposition~\ref{P:dircol}.
\end{remark}

\begin{example}
\label{E:dirn}
\phantom{phantom}
\begin{enumerate}
	\item For every natural number $n$ the endofunctor $(\blank)^n$ of $\Met$ preserves directed colimits.
	Indeed, if a cocone $(c_i)$ of $D$ satisfies (1) and (2) of the above Proposition, then $c_i^n$ clearly satisfies (1) for $D^n$.
	To verify (2), let $y,y' \in D_i^n$ be given, then for every $\eps > 0$ we prove
	\[
	d(c_i^n(y),c_i^n(y')) < \inf_{j \geq i} d(f_j^n(y),f_j^n(y')) + \eps.
	\]
	Indeed for $y = (y_k)_{k < n}$ and $y' = (y_k')_{k < n}$, given $k < n$, condition (2) for $D$ implies that there exists $j \geq i$ with
	\[
	d(c_i(y_k),c_i(y_k')) < d(f_j(y_k),f_j(y_k')) + \eps.
	\]
	Moreover, since $I$ is directed, we can choose our $j$ independent of $k$.
	Then we get
	\begin{align*}
	d(c_i^n(y),c_i^n(y')) & = \max_{k < n} d(c_i(y_k),c_i(y_k')) \\
	& < \max_{k < n} d(f_j(y_k),f_j(y_k')) + \eps \\
	& = d(f_j^n(y),f_j^n(y')) + \eps.
	\end{align*}
	The same holds for $\UMet$ and $\CUMet$.
	
	\item More generally, given a regular cardinal $\lambda$, the endofunctors $(\blank)^n$ preserve $\lambda$-directed colimits for all cardinals $n < \lambda$.
\end{enumerate}
\end{example}

Whereas directed colimits in $\Met$ are not $\Set$-based, countably directed ones (where every countable subset of $\D$ has an upper bound) are.
The following is also a consequence of 2.6(1) in~\cite{adamek+rosicky:approximate-injectivity}.

\begin{corollary}
\label{C:count}
The ordinary forgetful functor
\[
|\blank| : \Met_\ordi \to \Set_\ordi
\]
preserves countably directed colimits.
\end{corollary}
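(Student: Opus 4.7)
The plan is to use Proposition~\ref{P:pres} to compare the colimit in $\Met$ with the canonical colimit in $\Set$. Let $D : \D \to \Met$ be a countably directed diagram with colimit cocone $c_i : D_i \to C$ in $\Met$, and connecting morphisms $f_{i,j} : D_i \to D_j$ for $i \leq j$. Recall that the colimit of $|D| : \D \to \Set$ is the quotient $L$ of $\bigsqcup_i |D_i|$ by the smallest equivalence relation identifying $y \in |D_i|$ with $f_{i,j}(y) \in |D_j|$ for $i \leq j$; because $\D$ is directed, two elements $y \in |D_i|$ and $y' \in |D_{i'}|$ represent the same class in $L$ iff there exists $k \geq i, i'$ with $f_{i,k}(y) = f_{i',k}(y')$. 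There is a canonical comparison map $\phi : L \to |C|$, and the goal is to prove that $\phi$ is a bijection.

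Surjectivity of $\phi$ is immediate from condition (1) of Proposition~\ref{P:pres}, which says $C = \bigcup_i c_i[D_i]$. For injectivity, suppose $y \in |D_i|$ and $y' \in |D_{i'}|$ satisfy $c_i(y) = c_{i'}(y')$. By directedness of $\D$ we may pass to a common index and assume $i = i'$. Then condition (2) of Proposition~\ref{P:pres} gives
\[
\inf_{j \geq i} d(f_{i,j}(y), f_{i,j}(y')) \;=\; d(c_i(y), c_i(y')) \;=\; 0.
\]
Here is the key step where countable directedness enters. For each $n \in \omega$ choose $j_n \geq i$ with $d(f_{i,j_n}(y), f_{i,j_n}(y')) < 1/n$. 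Because $\D$ is countably directed, the countable set $\{j_n : n \in \omega\}$ has an upper bound $j \in \D$. For every $n$ the morphism $f_{j_n,j} : D_{j_n} \to D_j$ is nonexpanding, so
\[
d(f_{i,j}(y), f_{i,j}(y')) \;=\; d(f_{j_n,j}(f_{i,j_n}(y)), f_{j_n,j}(f_{i,j_n}(y'))) \;\leq\; d(f_{i,j_n}(y), f_{i,j_n}(y')) \;<\; 1/n.
\]
Since this holds for every $n$, we get $d(f_{i,j}(y), f_{i,j}(y')) = 0$, and since $D_j$ is a metric space (not merely pseudo-metric) this forces $f_{i,j}(y) = f_{i,j}(y')$. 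Hence $y$ and $y'$ are identified in $L$, proving injectivity of $\phi$.

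Finally, the maps $|c_i|$ form a cocone over $|D|$ in $\Set$ whose induced map to $|C|$ is precisely $\phi$, so $(|C|, |c_i|)$ is a colimit of $|D|$ in $\Set$. The only delicate point is the passage from infimum-zero to actual equality of elements, which is exactly where countable directedness is indispensable: without it one could only find a sequence of indices $j_n$ along which the distance tends to zero, with no guarantee of a single index realising the equality. No other features of $\Met$ are needed, so the same proof works verbatim in $\UMet$ and $\CUMet$.
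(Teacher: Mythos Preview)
Your proof is correct and is precisely the direct argument one would give using Proposition~\ref{P:pres}; the paper itself does not spell out a proof but simply refers to 2.6(1) of~\cite{adamek+rosicky:approximate-injectivity}. Your key step---choosing $j_n$ with distance below $1/n$ and then using countable directedness to find a single upper bound $j$---is exactly the point.

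One small caveat on your closing remark: the claim that the argument works verbatim in $\CUMet$ is not safe. Directed colimits in $\CUMet$ are characterised by collective \emph{density} rather than collective surjectivity (Proposition~\ref{P:dircol}), so the surjectivity half of your comparison does not carry over as stated. This does not affect the corollary itself, which is about $\Met$; the extension to $\UMet$ is fine since $\UMet$ is closed under directed colimits in $\Met$.
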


\begin{proposition}[\cite{adamek+rosicky:approximate-injectivity}, 2.6]
\label{P:presen}
For every regular cardinal $\lambda > \aleph_0$ a space is $\lambda$-presentable in $\Met$ (in the ordinary or enriched sense) iff it has cardinality smaller than $\lambda$.
\end{proposition}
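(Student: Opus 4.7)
I would prove both directions separately, using Proposition~\ref{P:pres} to analyze $\lambda$-directed colimits in $\Met$ and Corollary~\ref{C:count} to transfer between the ordinary and enriched settings.

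\emph{Sufficiency.} Suppose $|X| < \lambda$ and let $(c_i : D_i \to C)_{i\in I}$ be a $\lambda$-directed colimit in $\Met$ with connecting morphisms $f_j : D_i \to D_j$. For the ordinary version, I need every $f : X \to C$ to factor essentially uniquely through some $c_i$. Since $\lambda > \aleph_0$, Corollary~\ref{C:count} says $|\blank|$ preserves this colimit, so at the level of underlying sets there exist $i\in I$ and a function $\bar f : |X| \to |D_i|$ with $c_i\circ\bar f = f$. The delicate point is that $\bar f$ need not be nonexpanding. To promote it, I would exploit condition (2) of Proposition~\ref{P:pres}: for every pair $x,x'\in X$,
\[
d(f(x),f(x')) \;=\; \inf_{j\geq i} d(f_j\bar f(x), f_j\bar f(x')) \;\leq\; d(x,x'),
\]
so for each rational $\epsilon > 0$ one can pick $j_{x,x',\epsilon} \geq i$ with $d(f_{j_{x,x',\epsilon}}\bar f(x),f_{j_{x,x',\epsilon}}\bar f(x')) \leq d(x,x') + \epsilon$. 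Because $|X|^2\times\mathbb{Q}_+$ has cardinality less than $\lambda$ and $I$ is $\lambda$-directed, a common upper bound $j$ exists; since connecting morphisms are nonexpanding, $f_j\circ\bar f$ is then nonexpanding modulo any $\epsilon$, hence nonexpanding, giving the required factorization. Uniqueness and the corresponding statement for morphisms out of finitely many copies of $X$ is routine.

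\emph{Enriched case.} Beyond the bijection on hom-sets I must match the metrics. Writing the metric on $\Met(X,C)$ as a supremum over $X$, I would compare
\[
d(c_i\bar f, c_i\bar g) \;=\; \sup_{x\in X} \inf_{j\geq i} d(f_j\bar f(x), f_j\bar g(x))
\]
with the colimit metric $\inf_{j\geq i}\sup_{x\in X} d(f_j\bar f(x), f_j\bar g(x))$ supplied by Proposition~\ref{P:pres}(2). The inequality $\sup\inf \leq \inf\sup$ is automatic; the reverse requires exchanging $\sup$ and $\inf$, which is exactly where $|X|<\lambda$ is used: for each $\epsilon>0$ and each $x$ choose $j_x$ realizing the inner infimum within $\epsilon$, then take a common upper bound in $I$, available because fewer than $\lambda$ conditions need to be simultaneously satisfied.

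\emph{Necessity.} Assume $X$ is $\lambda$-presentable; I only need the ordinary version, since enriched $\lambda$-presentability implies it (apply $|\blank|$ and Corollary~\ref{C:count} to the preserved colimit in $\Met$). I would then exhibit $X$ as the canonical $\lambda$-directed colimit of its subspaces $A\subseteq X$ with $|A|<\lambda$, ordered by inclusion. Regularity of $\lambda$ gives $\lambda$-directedness, and that the inclusion cocone is a colimit is a direct verification of conditions (1) and (2) of Proposition~\ref{P:pres}. Applying $\lambda$-presentability to $\id_X$ yields a factorization $\id_X = \iota_A\circ r$ for some $A$ with $|A|<\lambda$, forcing $|X|\leq|A|<\lambda$.

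\emph{Main obstacle.} The only substantive step is the $\sup$–$\inf$ exchange in the enriched argument. Everything else reduces to the same combinatorial principle: fewer than $\lambda$ cofinality conditions in a $\lambda$-directed poset with $\lambda$ regular and uncountable can be resolved simultaneously. The hypothesis $\lambda > \aleph_0$ is essential both here (to invoke Corollary~\ref{C:count}) and in the necessity argument (singletons are not finitely presentable, as noted just before Proposition~\ref{P:pres}).
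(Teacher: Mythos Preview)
The paper does not give its own proof of this proposition; it simply cites the result from \cite{adamek+rosicky:approximate-injectivity}, 2.6. Your proposal is a correct direct argument and uses exactly the tools one would expect: the explicit description of $\lambda$-directed colimits in $\Met$ (Proposition~\ref{P:pres}), preservation by the forgetful functor for $\lambda>\aleph_0$ (Corollary~\ref{C:count}), and the $\sup$--$\inf$ exchange made possible by $\lambda$-directedness combined with $|X|<\lambda$. The necessity step via the canonical colimit of small subspaces is the standard retraction argument.

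One minor remark: in your sufficiency argument you phrase things as ``modulo any $\epsilon$, hence nonexpanding''. It may be worth making explicit that after passing to the common upper bound $j$ over $|X|^2\times\mathbb{Q}_+$ you have, for every pair $(x,x')$ and every rational $\epsilon>0$, the inequality $d(f_j\bar f(x),f_j\bar f(x'))\leq d(x,x')+\epsilon$, and then letting $\epsilon\to 0$ at this fixed $j$ gives nonexpansiveness outright. This is surely what you intend, but the wording could be read as requiring a further limit over $j$.
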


\begin{remark}
	\label{R:fp}
	A metric space is finitely presentable in the enriched sense iff it is finite and discrete. See~\cite{rosicky:metric-monads}, 2.5.
\end{remark}

\begin{definition}[\cite{kelly:structures}]
\label{D:present}
A monoidal closed category $\K$ is \emph{locally $\lambda$-presentable as a closed category} if its underlying category $\K_\ordi$ is locally $\lambda$-presentable, the unit object $I$ is $\lambda$-presentable, and the tensor product of $\lambda$-presentable objects is $\lambda$-presentable.
\end{definition}

\begin{corollary}
\label{C:present}
The category $\Met$ is locally $\lambda$-presentable as a closed category for every regular cardinal $\lambda > \aleph_0$.
Indeed, Proposition~\ref{P:coco} and~\ref{P:presen} imply that $\Met_\ordi$ is locally $\lambda$-presentable: every space is the $\lambda$-filtered colimit of all its subspaces of cardinality smaller than $\lambda$.
And the rest also follows from Proposition~\ref{P:presen}.
\end{corollary}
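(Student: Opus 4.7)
The plan is to unpack Definition~\ref{D:present} and verify its three clauses one by one, each time pushing the work back to Propositions~\ref{P:coco} and~\ref{P:presen}. For the first clause, I would argue that $\Met_\ordi$ is locally $\lambda$-presentable. Cocompleteness of the underlying ordinary category follows from Proposition~\ref{P:coco} (enriched cocompleteness implies ordinary cocompleteness, since conical colimits are the special case with trivial weight in Remark~\ref{R:weight}(2)). Proposition~\ref{P:presen} identifies the $\lambda$-presentable spaces with those of cardinality less than $\lambda$, so up to isomorphism they form a small collection $\Met_\lambda$. It then suffices to show that every metric space $X$ is a $\lambda$-filtered colimit of its subspaces of cardinality less than $\lambda$: the poset of such subspaces (ordered by inclusion) is $\lambda$-filtered because $\lambda$ is regular and uncountable, and the canonical cocone on $X$ is plainly collectively surjective, while condition~(2) of Proposition~\ref{P:pres} holds trivially since each inclusion is an isometry.

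For the second clause, the unit object in $(\Met, \tensor, \One)$ is the singleton space $\One$, which has cardinality $1 < \aleph_1 \leq \lambda$, hence is $\lambda$-presentable by Proposition~\ref{P:presen}.

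For the third clause, I have to check that if $A$ and $B$ are $\lambda$-presentable, then so is $A \tensor B$. By Proposition~\ref{P:presen} this reduces to the purely cardinal arithmetic statement $|A| < \lambda$ and $|B| < \lambda$ imply $|A \tensor B| = |A| \cdot |B| < \lambda$, which holds because $\lambda$ is a regular infinite cardinal (and the underlying set of $A \tensor B$ is the cartesian product of the underlying sets by Notation~\ref{N:met}(1)).

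The only genuinely nontrivial ingredient is the statement that an arbitrary space $X$ is the $\lambda$-filtered colimit of its small subspaces, and this is where I expect to spend the most care: the colimit must be verified in the enriched sense, i.e.\ the cocone of subspace inclusions must be shown to satisfy the criterion of Proposition~\ref{P:pres}. Collective surjectivity is immediate, and the distance condition is automatic from the isometric nature of the inclusions, so nothing further is required beyond invoking Proposition~\ref{P:pres}. This completes the verification of Definition~\ref{D:present}.
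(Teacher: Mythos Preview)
Your proposal is correct and follows exactly the approach sketched in the paper's own corollary: unpack Definition~\ref{D:present}, use Propositions~\ref{P:coco} and~\ref{P:presen} for the first clause (with Proposition~\ref{P:pres} to certify the subspace colimit), and reduce the remaining two clauses to the cardinality characterization in Proposition~\ref{P:presen}. The paper compresses all of this into two sentences, but your expanded version matches it step for step.
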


We now define a weight that will allow to present every metric space as a weighted colimit of \emph{discrete} metric spaces.
This will enable us to characterize strongly finitary functors as functors preserving certain colimits.

\begin{notation}
	\label{N:fol-w}
	Let us denote by
	\[
	B : \B^\op \to \UMet
	\]
	the following weight, called \emph{basic}.
	The category $\B$, which is discretely enriched, is obtained from the linearly ordered set of all real numbers $\eps > 0$ by adding two (parallel) cocones to it with codomain $0$, denoted by
	\[
	l_\eps, r_\eps: \eps \to 0.
	\]
	The functor $B$ is defined on objects by
	\[
	B0 = \{ a \} \text{ and } B\eps = \Two_\eps
	\]
	where $\Two_\eps = \{ 0, 1\}$ with $d(0,1) = \eps$.
	It assigns to $\eps \to \eps'$ the inclusion map $\Two_\eps \hookrightarrow \Two_{\eps'}$.
	For every $\eps > 0$ we put
		\begin{center}
			$Bl_\eps(a) = 0$ and $Br_\eps(a) = 1$
		\end{center}
		
		\begin{lrbox}{\InnerOne}
			\begin{tikzpicture}[overlay]
				\filldraw[fill=white,draw=black] (0,-0.2) rectangle (0.8,0.5);
			\end{tikzpicture}
			\begin{tikzcd}[column sep=10pt]
				\bullet a
			\end{tikzcd}
		\end{lrbox}
		
		\begin{lrbox}{\InnerTwo}
			\begin{tikzpicture}[overlay]
				\filldraw[fill=white,draw=black] (0,-0.2) rectangle (1.4,0.5);
			\end{tikzpicture}
			\begin{tikzcd}[column sep=10pt]
				\bullet \ar[r, no head] & \bullet
			\end{tikzcd}
		\end{lrbox}
		
		\begin{lrbox}{\InnerTwoEps}
			\begin{tikzpicture}[overlay]
				\filldraw[fill=white,draw=black] (0,-0.2) rectangle (1.4,0.5);
			\end{tikzpicture}
			\begin{tikzcd}[column sep=10pt]
				\bullet \ar[r, no head, "\eps"] & \bullet
			\end{tikzcd}
		\end{lrbox}
		
		\begin{equation}
			\label{eq:folia}
			\begin{tikzcd}
				{\eps} \arrow[rd, bend left=10, "r_\eps"] \arrow[rd, bend right=10, swap, "l_\eps"] \arrow[r] & {\eps'} \arrow[d, bend left=10] \arrow[d, bend right=10, swap] \arrow[r] & {\dots} \arrow[ld, bend left=10] \arrow[ld, bend right=10] \\
				& 0 &
			\end{tikzcd}
			\qquad \qquad
			\begin{tikzcd}[column sep=tiny]
				\usebox{\InnerTwoEps} & \usebox{\InnerTwo} & {\usebox{\InnerTwo} \; \; \; \dots} \\
				& \usebox{\InnerOne}
				\arrow[ul, bend left=10, "B l_\eps"] \arrow[ul, bend right=10, swap, "B r_\eps"] \arrow[u, bend left=10] \arrow[u, bend right=10] \arrow[ur, bend left=10] \arrow[ur, bend right=10]
				&
			\end{tikzcd}
		\end{equation}

\end{notation}

\begin{definition}
\label{D:prec}
	A \emph{foliation} of a metric space $M$ is the diagram $D_M : \B \to \Met$ weighted by the basic weight, where
\begin{enumerate}
	
	\item
	\[
	D_M 0 = |M|
	\]
	(the discrete space underlying $M$)
	
	\item For every $\eps > 0$ put $D_M \eps = \Delta_\eps M$, where
	\[
	\Delta_\eps M = \{ (x',x) \in |M| \times |M| \mid d(x',x) \leq \eps \},
	\]
	considered as a discrete space
	
	\item
	For every $\eps > 0$
	\[
	D_M l_\eps : \Delta_\eps M \to |M|
	\]
	is the left-hand projection $(x',x) \mapsto x'$, and
	\[
	D_M r_\eps : \Delta_\eps M \to |M|
	\]
	is the right-hand one, $(x',x) \mapsto x$.
	
\end{enumerate}
\end{definition}

\begin{proposition}
\label{P:prec}
Every metric space $M$ is the colimit of its foliation:
\[
M = \Colim{B}{D_M}.
\]
\end{proposition}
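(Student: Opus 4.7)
The plan is to verify directly the universal property of a weighted colimit: for every metric space $X$ I exhibit a natural isometric isomorphism
\[
\Met(M, X) \;\cong\; [\B^\op, \Met]\bigl(B,\, \Met(D_M\blank, X)\bigr).
\]

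First I would unpack the data of a natural transformation $\alpha: B \to \Met(D_M\blank, X)$. Its $a$-component is a map $\{0\} \to \Met(|M|, X)$, which picks out a single function $f := \alpha_a(0): |M| \to X$ (any function will do because $|M|$ is discrete). Naturality at $l_\eps, r_\eps: \eps \to a$ in $\B$ then forces $\alpha_\eps(l) = f \circ \pi_l$ and $\alpha_\eps(r) = f \circ \pi_r$, so $\alpha$ is completely determined by $f$. The only remaining condition is that each $\alpha_\eps: \{l, r\} \to \Met(D_M \eps, X)$ be nonexpanding; since $d(l, r) = \eps$, this unwinds to the requirement that for every rational $\eps > 0$,
\[
d(x, x') \leq \eps \;\Longrightarrow\; d(f(x), f(x')) \leq \eps \quad\text{for all } x, x' \in M.
\]

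The core step is to show that this family of conditions, indexed by positive rationals, is equivalent to $f$ being nonexpanding. The converse direction is trivial. For the forward direction, given $x, x' \in M$ with finite $\delta := d(x, x')$, apply the condition at each rational $\eps > \delta$ to obtain $d(f(x), f(x')) \leq \eps$; letting $\eps \searrow \delta$ yields nonexpandingness (the $\delta = \infty$ case is vacuous). This density-of-rationals argument, though modest, is the main substantive point. The assignment $\alpha \leftrightarrow f$ is then a bijection between natural transformations and nonexpanding maps $M \to X$, and naturality in $X$ is immediate from the construction.

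Finally I would verify that this bijection is an isometry, which is what the enriched universal property demands. For $\alpha, \beta$ corresponding to $f, g: M \to X$, the distance in the end $[\B^\op, \Met](B, \Met(D_M\blank, X))$ is the supremum over objects of $\B$ of the component-wise sup-distances. At $a$ one gets $\sup_{x \in M} d(f(x), g(x)) = d(f, g)$. At each $\eps$ one gets the maximum of two suprema of the form $\sup_{(x, x') \in D_M\eps} d(f(x), g(x))$, each equal to $d(f, g)$ since the diagonal pair $(x, x)$ lies in $D_M \eps$ for every $\eps > 0$. Hence the overall supremum is exactly $d(f, g)$. Under this identification the unit of the colimit is the natural transformation $\nu: B \to \Met(D_M\blank, M)$ whose $a$-component picks out $\id_M$, realising $M$ as the claimed weighted colimit.
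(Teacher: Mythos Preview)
Your proof is correct and follows essentially the same approach as the paper: both unpack a natural transformation $B \to \Met(D_M\blank, X)$ as a single map $f: |M| \to X$ determined by the $a$-component, and both identify the nonexpansion of the $\eps$-components with $f$ being nonexpanding. You are somewhat more thorough than the paper, spelling out the density-of-rationals argument that the paper leaves implicit, and you additionally verify that the bijection is an isometry (as the enriched universal property requires), which the paper's proof omits.
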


\begin{proof}
Let $X$ be an arbitrary space.
To give a natural transformation $\tau: B \to \Met(D_M\blank, X)$ means to specify a map $f = \tau_0 (a) : |M| \to X$ and maps $\tau_\eps(l), \tau_\eps(r): \Delta_\eps M \to X$ making the following triangles
\[
\begin{tikzcd}
	& {\Delta_\eps M} \arrow[ld, swap, "Bl_\eps"] \arrow[rd, "\tau_\eps(0)"] & & & {\Delta_\eps M} \arrow[ld, swap, "Br_\eps"] \arrow[rd, "\tau_\eps(1)"] & \\
	{|M|} \arrow[rr, swap, "f"] & & X & {|M|} \arrow[rr, swap, "f"] & & X
\end{tikzcd}
\]
commutative.
Thus $\tau$ is determined by $f$.
Moreover, $f : M \to X$ is nonexpanding: given $(x,x') \in \Delta_\eps M$ we have
\begin{align*}
	d(f(x),f(x')) & = d(f \comp Bl_\eps(x,x'), f \comp Br_\eps(x,x')) \\
	& = d(\tau_\eps(0), \tau_\eps(1)) \\
	& \leq \eps
\end{align*}
since $\tau_\eps$ is nonexpanding.
Conversely, every morphism $f : M \to X$ defines a natural transformation $\tau$ by $\tau_0(a) = f$, $\tau_\eps(0) = f \comp B l_\eps$, and $\tau_\eps(1) = f \comp B r_\eps$.
It is easy to see that given another transformation $\tau'$ and putting $f' = \tau_0'(a)$ we have
\[
d(\tau,\tau') = d(f,f').
\]

The desired natural isomorphism
\[
\psi_X: \Met(M,X) \to [\B^\op, \Met](B,\Met(D_M \blank,X))
\]
is given by $\psi_X(f) = \tau$ for the unique natural transformation with $\tau_0(a) = f$.

\end{proof}

\begin{corollary}
	\label{C:fol-pow}
	For every cardinal $\lambda$, the colimits of foliations commute with power to $\lambda$:
	\[
	M^\lambda = \Colim{B}{(D_M)^\lambda}.
	\]
	
\end{corollary}

This follows easily from $\Delta_\eps M^n = (\Delta_\eps M)^n$.

Proposition~\ref{P:prec} allows us to give a `nice' colimit reconstruction of $\Met$ from discrete spaces of bound cardinality.
The idea is to \emph{first} use colimits of foliations to obtain all ultrametric spaces of bound cardinality and \emph{then} to use directed colimits to obtain all ultrametric spaces.
We will formulate this precisely in Section~\ref{S:saf}.

\begin{remark}
\label{R:prec}
\phantom{phantom}
\begin{enumerate}
	\item Denote by $i_M: |M| \to M$ the identity map.
	The unit of the above colimit $M = \Colim{B}{D_M}$ (see Remark~\ref{R:weight} (4)) is the natural transformation
	\[
	\nu: B \to [D_M \blank, M]
	\]
	with components $\nu_0 : a \mapsto i_M$ and
	\[
	\nu_\eps: l \mapsto \pi_l \text{ and } r \mapsto \pi_r.
	\]
	\item In the following we use the factorisation system $(\EE,\MM)$ on $\Met$, where $\EE$ is the class of surjective morphisms and $\MM$ is the class of isometric embeddings.
	Every morphism $f: X \to Y$ factorises via a surjective morphism through $f[X]$ with the metric inherited from $Y$. (The diagonal fill-in is clear.)

\end{enumerate}
\end{remark}

\begin{proposition}
Let $M$ be a metric space.
An enriched functor $H: \Met\to \Met$ preserves the colimit $M = \Colim{B}{D_M}$ iff 
\begin{enumerate}[label=(\roman*)]
	
	\item
	$Hi_M$ is an epimorphism.
	
	\item
	Every morphism $f : H|M| \to X$ that satisfies the following condition
	\begin{equation}
		\label{eq:Tpres}
		d(f \comp Hl_\eps, f \comp Hr_\eps) \leq \eps \text{ for all $\eps > 0$}
	\end{equation}
	factorizes through $Hi_M$.
	
\end{enumerate}
\[
\begin{tikzcd}
	HD_M \eps \ar[r, shift left=1, "Hl_\eps"] \ar[r, shift right=1, swap, "Hr_\eps"] & H|M| \ar[r, "Hi_M"] \ar[rd, swap, "f"] & HM \ar[d, dashed, "\ol{f}"] \\
	& & X
\end{tikzcd}
\]
\end{proposition}

\begin{proof}
	
\begin{enumerate}[label=(\alph*)]
	
	\item 
	We first observe that natural transformations $\tau: B \to [HD_M \blank, X]$ correspond bijectively to morphisms $f : H|M| \to X$ satisfying~\eqref{eq:Tpres}.
	Indeed, put
	\[
	f = \tau_0(a) : H|M| \to X.
	\]
	Then $f$ determines $\tau$ due to the naturality squares:
	\[
	\begin{tikzcd}
		{\{ a \}} \ar[r, "Bl_\eps"] \ar[d, swap, "\tau_0"] & {\Two_\eps} \ar[d, "\tau_\eps"] & {\{ a \}} \ar[d, "\tau_0"] \ar[l, swap, "Br_\eps"] \\
		{[H|M|,X]} \ar[r, swap, "(\blank) \comp HBl_\eps"] & {[HM, X]} & {[H|M|, X]} \ar[l, "(\blank) \comp HBr_\eps"]
	\end{tikzcd}
	\]
	They yield
	\begin{equation}
		\label{eq:rov}
		\tau_\eps(0) = f \comp HBl_\eps \text{ and } \tau_\eps(1) = f \comp HBr_\eps.
	\end{equation}
	Moreover, $f$ satisfies~\eqref{eq:Tpres}:
	\[
	d(f \comp HBl_\eps, f \comp HBr_\eps) \leq d( \tau_\eps \comp B l_\eps, \tau_\eps \comp B r_\eps ) \leq d(B l_\eps, B r_\eps) = \eps.
	\]
	Conversely, if a morphism $f : H|M| \to X$ satisfies~\eqref{eq:Tpres}, then it determines a natural transformation $\tau$ with $\tau_0(a) = f$ via~\eqref{eq:rov}.
	
	\item 
	Conditions (i) and (ii) are sufficient.
	Indeed, $HM = \Colim{B}{HD_M}$ holds since we have an isomorphism
	\[
	\psi_X : [HM,X] \to [\B^\op, \Met]([HD_M\blank,X])
	\]
	natural in $X$ as follows.
	Given a morphism $h : HM \to X$ then $h \comp Hi_M : H|M| \to X$ determines a natural transformation $\tau$ with $\tau_0(a) = h \comp H i_M$.
	We put $\psi(h) = \tau$.
	The morphism $h$ is nonexpanding, thus $h \comp H i_M$ fulfils~\eqref{eq:Tpres}.
	
	Since $H i_M$ is an epimorphism, Item~(a) implies that $\psi_X$ is a bijection.
	Moreover, it preserves distances: let $h' : HM \to X$ have distance $\delta$ from $h$, then, since $H i_M$ is dense, $d(h \comp H i_M, h' \comp H i_M) = \delta$.
	If $\tau'$ is the natural transformation corresponding to $h'$, then
	\[
	d(\tau, \tau') \geq d(\tau_0, \tau_0') = \delta.
	\]
	From~\eqref{eq:rov} (for $\tau_\eps$ and $\tau_\eps'$) in Item~(a) we also get $d(\tau_\eps, \tau_\eps') \leq d(\tau^{\phantom{'}}_0,\tau_0') = \delta$.
	Thus $d(\tau, \tau') = \delta$.
	It is easy to see that the isomorphism $\psi_X$ is natural in $X$.
	
	\item 
	Necessity of (i) and (ii).
	Suppose that $H$ preserves the colimit $M = \Colim{B}{D_M}$.
	Then (ii) holds by Item~(a).
	In order to prove that $Hi_M$ is an epimorphism, let $Y$ denote the closure of its image in $HM$. We consider a factorization
	\[
	Hi_M = H|M| \xrightarrow{e} Y \xrightarrow{m} HM
	\]
	where $e$ is an epimorphism and $m$ is a closed subspace embedding.
	We prove that $m$ is invertible.
	Define a natural transformation
	\[
	\sigma : B \to \Met(HD_M\blank, Y) \text{ by } \sigma_0(a) = e.
	\]
	For the unit $\nu$ of $HM = \Colim{D}{HD_M}$ we have the unique morphism $f : HM \to Y$ making the diagram
	\[
	\begin{tikzcd}
		B \ar[r, "\nu"] \ar[rrd, swap, "\sigma"] & {\Met(D_M\blank, M)} \ar[r, "H_{D_M\blank, M}"] & {\Met(HD_M\blank, HM)} \ar[d, "{\Met(HD_M\blank, f)}"] \\
		& & {\Met(HD_M\blank, Y)}
	\end{tikzcd}
	\]
	commutative.
	This implies $m \comp f = \id$ by the universal property of the unit of $HM \cong \Colim{B}{HD_M}$, which is the horizontal row in the above triangle.
	Thus, $m$ is both a subspace embedding and a split epimorphism, hence an isomorphism.
	Thus $Hi_M$ is an epimorphism.

\end{enumerate}

\end{proof}

\begin{remark}
The same result holds for the full subcategory $\CMet$ of $\Met$ on complete spaces.
The proof is analogous.
\end{remark}

\begin{example}
\label{E:Haus}
\begin{enumerate}
	\item The Hausdorff functor $\mathcal{H}$ preserves colimits of foliations.
	Recall that $\mathcal{H}$ assigns to every space $M$ the space $\mathcal{H}M$ of all compact subsets with the following metric $d_\mathcal{H}:$
	\[
	d_\mathcal{H}(A,B) = \max \{  \sup_{a \in A} d(a,B), \sup_{b \in B} d(b,A) \}
	\]
	where $d(a,B) = \inf_{b \in B} d(a,b)$.
	(In particular $d(A,\emptyset) = \infty$ for $A \neq \emptyset$.)
	On morphisms $f : M \to M'$ it is defined by $A \mapsto f[A]$.
	It is our task to prove that every morphism $f : \mathcal{H}|M| \to X$ satisfying~\eqref{eq:Tpres} above is also a morphism $f: \mathcal{H}M \to X$.
	That is, for all $A,B \in \mathcal{H}M$ we are to verify that
	\[
	d_\mathcal{H}(A,B) = \eps \text{ implies } d_\mathcal{H}(f[A],f[B]) \leq \eps.
	\]
	For each $a \in A$, since $d(a,B) \leq \eps$, there exists $b_a \in B$ with $d(a,b_a) \leq \eps$.
	Analogously for each $b \in B$ we have $a^b \in A$ with $d(b,a^b) \leq \eps$.
	The set $P = \{ (a,b_a) \mid a \in A \} \cup \{ (a^b,b) \mid b \in B \}$ lies in $\mathcal{H}(D_M\eps)$ and fulfils $\mathcal{H}\pi_l(P) = A$ and $\mathcal{H}\pi_r(P) = B$.
	Thus, \eqref{eq:Tpres} implies the desired inequality
	\[
	d_\mathcal{H}(f[A],f[B]) = d_\mathcal{H}(f \comp \pi_l [P], f \comp \pi_r [P]) \leq \eps.
	\]
	
	\item The subfunctor $\mathcal{H}_f$ of $\mathcal{H}$ assigning to every space the space of all finite subsets also preserves colimits of foliations, the argument is the same.
\end{enumerate}
\end{example}

\begin{remark}
	The class $\EE$ lies (strictly) inbetween the class of all epimorphisms, which are the dense morphisms, and all strong epimorphisms.
	For example, if $X$ is a non-discrete space, the epimorphism $\id: |X| \to X$ lies in $\EE$ but is not strong.
\end{remark}

\section{Strongly Accessible Functors}
\label{S:saf}

\begin{assumption}
We assume throughout this section that an infinite regular cardinal $\lambda$ is chosen.
\end{assumption}

\begin{notation}
	\label{N:lambda}
	We denote by
	\[
	\Set_\lambda \text{ and } \Met_\lambda
	\]
	full subcategories of $\Met$ representing, up to isomorphism, all sets (discrete spaces) of cardinality less than $\lambda$, or all spaces of cardinality less than $\lambda$, respectively.
	We also denote by
	\[
	K: \Set_\lambda \hookrightarrow \Met \text{ and } K^*: \Met_\lambda \hookrightarrow \Met
	\]
	the full embeddings.
\end{notation}

Recall that an endofunctor of $\Met$ is \emph{$\lambda$-accessible} iff it preserves $\lambda$-directed colimits.
We characterize these functors using the concept of density presentation of Kelly~\cite{kelly:book} that we shortly recall.
Let $L: \A \to \Met$ be a functor with $\A$ small.
This leads to a functor 
\[
	\wt{L} : \Met \to [\A^\op, \Met], \quad X \mapsto \Met(L\blank,X).
\]
A colimit of a weighted diagram in $\Met$ is called \emph{$L$-absolute} if $\wt{L}$ preserves it.
A \emph{density presentation} of $L$ is a collection of $L$-absolute weighted colimits in $\Met$ such that every space can be obtained from finite discrete spaces by an (iterated) application of those colimits.

\begin{proposition}
	An endofunctor $T$ of $\Met$ is $\lambda$-accessible iff it is the left Kan extension of its restriction $T \comp K^*$ to spaces of power less than $\lambda$:
	\[
	T = \Lan{K^*}{(T \comp K^*)}.
	\]
\end{proposition}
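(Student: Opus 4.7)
My plan is to reduce the equivalence to the density of the embedding $K^*: \Met_\lambda \hookrightarrow \Met$ in the enriched sense. Because $\Met$ is locally $\lambda$-presentable as a closed category (Corollary~\ref{C:present}, using Proposition~\ref{P:presen}), every space $X$ is canonically the weighted colimit of $\lambda$-presentable spaces
\[
X = \Colim{\Met(K^*\blank, X)}{K^*},
\]
and this colimit resolves into a $\lambda$-directed conical colimit in the standard way. The two directions of the proposition then correspond to the two standard consequences of this density presentation.

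For the forward direction I would take a $\lambda$-accessible endofunctor $T$ and apply it to the canonical colimit above: because $T$ preserves $\lambda$-directed colimits, the image is a weighted colimit of the same weight with diagram $T \comp K^*$. By the pointwise formula for the enriched left Kan extension, this image is precisely $\Lan{K^*}{(T \comp K^*)}(X)$; naturality in $X$ is inherited from the unit of the canonical colimit (see Remark~\ref{R:weight}(4)).

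For the backward direction I would assume $T = \Lan{K^*}{(T \comp K^*)}$ and check preservation of an arbitrary $\lambda$-directed colimit $(c_i : X_i \to X)_{i \in I}$. Two inputs feed in: first, each hom-functor $\Met(A, \blank)$ with $A \in \Met_\lambda$ preserves $\lambda$-directed colimits (Definition~\ref{D:pres}(2), Proposition~\ref{P:presen}), so the weight $\Met(K^*\blank, X)$ is the $\lambda$-directed colimit in $[\Met_\lambda^\op, \Met]$ of the weights $\Met(K^*\blank, X_i)$; second, weighted colimits are cocontinuous in the weight argument. Combining these two facts, the pointwise formula yields $TX = \mathrm{colim}_{i \in I} TX_i$, establishing $\lambda$-accessibility of $T$.

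The step I expect to be most delicate is establishing the canonical density presentation itself. For $\lambda > \aleph_0$ it is direct from $\Met$ being locally $\lambda$-presentable as a closed category. For $\lambda = \aleph_0$ Proposition~\ref{P:presen} no longer applies and only the finite discrete spaces are enriched-$\aleph_0$-presentable (Remark~\ref{R:fp}); however, Proposition~\ref{P:pres} still shows every space is a conical directed colimit of its finite subspaces, which suffices to run the argument. Once the density presentation is in hand, the remainder is a routine consequence of Kelly's pointwise Kan-extension formula and the cocontinuity of weighted colimits in the weight.
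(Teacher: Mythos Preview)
Your approach is correct and is essentially the same as the paper's, just presented at a different level of abstraction. The paper packages the argument into a direct appeal to Kelly's density-presentation machinery: it observes that $K^*$ has a density presentation consisting of $\lambda$-directed colimits, checks that these are $K^*$-absolute (because each $[M,\blank]$ with $M\in\Met_\lambda$ preserves $\lambda$-directed colimits, by Proposition~\ref{P:presen}), and then invokes Kelly's Theorem~5.29 to conclude. Your argument unpacks exactly this: your forward direction is the statement that $T$ preserves the canonical density colimit once it preserves the $\lambda$-directed colimits in the presentation, and your backward direction (cocontinuity of the pointwise Kan-extension formula in the weight, combined with $\lambda$-presentability of the objects of $\Met_\lambda$) is precisely why $\lambda$-directed colimits are $K^*$-absolute. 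One small caution: the sentence ``resolves into a $\lambda$-directed conical colimit in the standard way'' hides real work---this is exactly what Kelly's Theorem~5.29 supplies, and for $\lambda=\aleph_0$ you should be explicit that it is Proposition~\ref{P:pres} (directed colimit of finite subspaces), not local finite presentability, that furnishes the density presentation, and that one still needs $[M,\blank]$ for finite $M$ to preserve directed colimits, which does not follow from Proposition~\ref{P:presen} as stated.
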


\begin{proof}
	The functor $K^* : \Met_\lambda \to \Met$ is dense.
	Indeed, this follows from~\cite{kelly:book}, Theorem~5.19.
	The functor $K^*$ has a density presentation consisting of all $\lambda$-directed diagrams in $\Met_\lambda$.
	(Indeed, every metric space $M$ is the $\lambda$-directed colimit of all of its subspaces of power less than $\lambda$. Now consider the corresponding $\lambda$-directed diagram in $\Met_\lambda$.)
	For every space $M$ in $\Met_\lambda$ the functor $[M,\blank]$ preserves $\lambda$-filtered colimits by Proposition~\ref{P:presen}.
	Thus $\lambda$-filtered colimits are $K^*$-absolute, and our proposition follows from Theorem~5.29 in~\cite{kelly:book}.
\end{proof}

We now characterize strongly $\lambda$-accessible endofunctors on $\Met$.
These are functors $T$ which are (enriched) left Kan extensions of their restrictions to discrete spaces of power larger than $\lambda$.
More precisely, recalling Notation~\ref{N:lambda} the strongly $\lambda$-accessible functors are defined as follows:

\begin{definition}
\label{D:sa}
An endofunctor $T$ of $\Met$ is \emph{strongly $\lambda$-accessible} if it is the left Kan extension of its restriction $T \comp K$ to discrete spaces of power less than $\lambda$:
\[
T = \Lan{K}{(T \comp K)}.
\]
If $\lambda = \aleph_0$, we speak of \emph{strongly finitary endofunctors}.
\end{definition}

This concept of a strongly finitary functor was introduced by Kelly and Lack~\cite{kelly+lack:strongly-finitary} more generally for endofunctors of locally finitely presentable categories as enriched categories.

Recall that the above equality means that there is a natural transformation $\sigma : T \comp K \to T \comp K$ such that for every endofunctor $F$ and every natural transformation $\phi: T \comp K \to T \comp F$ there exists a unique $\ol{\phi}: T \to F$ with $\phi = (\ol{\phi}K) \comp \sigma$.

\begin{example}
\label{E:pown}
\begin{enumerate}
	\item For every natural number $n$ the functor $(\blank)^n$ is strongly finitary.
	Indeed, for every (enriched) endofunctor $F$ to give a natural transformation $\phi: (\blank)^n \comp K \to F \comp K$ means precisely to specify an element of $Fn$ (since $(\blank)^n$ is naturally isomorphic to the hom-functor of $n$).
	And this is the same as specifying a natural transformation $\ol{\phi}: (\blank)^n \to F$.
	Thus, $(\blank)^n$ is the left Kan extension with $\sigma = \id$ as claimed.
	\item Directed colimits and colimits of foliations are $K$-absolute.
	Indeed, the functor $\wt{K}$ assigns to $X$ the functor of finite powers of $X$.
	And directed colimits commute with finite powers by Example~\ref{E:dirn}, whereas reflexive coinserters do by Remark~\ref{R:prec}~(2).
\end{enumerate}
\end{example}

\begin{theorem}
\label{T:sa}
An endofunctor of $\Met$ is strongly $\lambda$-accessible iff it is $\lambda$-accessible and preserves colimits of foliations.
\end{theorem}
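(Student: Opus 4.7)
The plan is to invoke the density-presentation theorem (\cite{kelly:book}, Theorem~5.29) for the inclusion $K : \Set_\lambda \hookrightarrow \Met$. That theorem reduces the identity $T = \Lan{K}{(T \comp K)}$ to the statement that $T$ preserves every colimit in some density presentation of $K$, i.e.\ a collection $\mathcal{C}$ of $K$-absolute weighted colimits whose iterated application to the image of $K$ reaches every object of $\Met$. The natural candidate is $\mathcal{C} = \mathcal{C}_1 \cup \mathcal{C}_2$, where $\mathcal{C}_1$ is the class of all $\lambda$-directed colimits and $\mathcal{C}_2$ is the class of all precongruence colimits $M = \Colim{B}{D_M}$. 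Once $\mathcal{C}$ is known to be a density presentation, both directions of the theorem follow formally from Kelly's result: a strongly $\lambda$-accessible $T$ preserves every $K$-absolute colimit and hence every colimit in $\mathcal{C}$, whereas an endofunctor that preserves the colimits in $\mathcal{C}$ is automatically a left Kan extension of its restriction to $\Set_\lambda$.

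The substantive work is therefore to exhibit $\mathcal{C}$ as a density presentation. I would argue in two stages. First, every $N \in \Met_\lambda$ is the precongruence colimit $N = \Colim{B}{D_N}$ by Proposition~\ref{P:prec}, and the diagram $D_N$ takes values in $\Set_\lambda$: its value at $a$ is the discrete space $|N|$, while each $D_N \eps \subseteq |N| \times |N|$ is discrete of cardinality at most $|N|^2 < \lambda$ (using regularity of $\lambda$; all spaces in sight are finite when $\lambda = \aleph_0$). Hence the precongruence colimits already reconstruct all of $\Met_\lambda$ from $\Set_\lambda$. Second, Corollary~\ref{C:present} together with Proposition~\ref{P:presen} presents every metric space as a $\lambda$-directed colimit of its subspaces of cardinality less than $\lambda$, so a single further application of a colimit in $\mathcal{C}_1$ carries us from $\Met_\lambda$ to all of $\Met$. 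Combined with the $K$-absoluteness of $\mathcal{C}_1$ and $\mathcal{C}_2$ recorded in Example~\ref{E:pown}~(2) (with Example~\ref{E:dirn}~(2) supplying the $\lambda$-directed generalisation), this furnishes the required density presentation, and Theorem~5.29 of~\cite{kelly:book} delivers both implications simultaneously.

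The main obstacle I expect is the enriched bookkeeping behind the $K$-absoluteness of $\mathcal{C}_2$: at the enriched level one must check that $\wt{K}(X)(n) = X^n$ preserves precongruence colimits in $X$ for each discrete space $n$ of cardinality less than $\lambda$. This reduces, through the description of $\wt{K}$ and the structural content of Remark~\ref{R:prec}~(2), to the commutation of $\lambda$-small powers with the coinserter-like construction of $\Colim{B}{D_M}$; the analogous fact for $\mathcal{C}_1$ is exactly Example~\ref{E:dirn}~(2). Once these commutations are in hand, the rest of the argument is a formal unwinding of Kelly's density calculus, with the cardinal bound $|D_N\eps| \leq |N|^2 < \lambda$ as the only arithmetic subtlety.
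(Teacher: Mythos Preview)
Your proposal is correct and follows essentially the same approach as the paper: both invoke Kelly's Theorem~5.29 via a density presentation of $K$ consisting of $\lambda$-directed colimits together with precongruence colimits, after checking $K$-absoluteness of both classes. The one minor difference is that the paper takes only the precongruences of spaces in $\Met_\lambda$ as part of the density presentation, so Kelly's theorem first yields the equivalence with preserving those, and an additional short argument (expressing the precongruence of an arbitrary $M$ as a $\lambda$-directed colimit of precongruences of its small subspaces) is then needed to pass to \emph{all} precongruences; by putting all precongruence colimits into $\mathcal{C}_2$ from the outset you absorb that final step into the density-presentation verification itself, which is a harmless and slightly more streamlined packaging of the same content.
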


\begin{proof}
\phantom{phantom}

The functor $K: \Set_\lambda \to \Met$ has a density presentation consisting of all $\lambda$-directed diagrams and all foliations of spaces in $\Met_\lambda$. Indeed:
	\begin{enumerate}
		\item Every space in $\Met_\lambda$ is a colimit of its foliation (Proposition~\ref{P:prec}) which is a weighted diagram in $\Set_\lambda$;
		\item Every metric space $M$ is a $\lambda$-directed colimit of a diagram in $\Met_\lambda$: consider all subspaces of $M$ of cardinality less than $\lambda$, and form the corresponding diagram in $\Met_\lambda$.
	\end{enumerate}
Moreover, we know that both of these types of colimits are $K$-absolute.
Our theorem then follows from Theorem~5.29 in~\cite{kelly:book}: an endofunctor $T$ of $\Met$ is strongly $\lambda$-accessible iff it preserves
\begin{enumerate}[label=(\alph*)]
	\item $\lambda$-directed colimits and
	\item colimits of foliations of spaces in $\Met_\lambda$.
\end{enumerate}
Thus it only remains to observe that (a) and (b) imply that $T$ preserves all colimits of foliations.
Indeed, for every space $M$ let $M_i$ ($i \in I$) be the $\lambda$-directed diagram of all subspaces of cardinality less than $\lambda$.
From $M_i = \Colim{B}{D_{M_i}}$ (Proposition~\ref{P:prec}) we derive $TM = \Colim{B}{D_M}$:
the unit $\nu$ of the latter colimit is namely the $\lambda$-directed colimit of the units $\nu_i$ w.r.t\ $M_i$ ($i \in I$).
Indeed, recall from Remark~\ref{R:prec} that the component $\nu_a$ is given by the assignment $0 \mapsto \id: |M| \to M$.
This is a $\lambda$-directed colimit of the components $(\nu_i)_a$ given by $0 \mapsto \id: |M_i| \to M_i$.
Analogously for the component $\nu_\eps$.
\end{proof}

\begin{remark}
The same result holds for $\CMet$, the proof is analogous.
\end{remark}

\begin{example}
	\label{E:sa}
	\phantom{phantom}
	\begin{enumerate}
		\item Every coproduct of strongly finitary endofunctors is strongly finitary.
		Indeed, more generally, in the category $[\Met,\Met]$ the strongly finitary endofunctors are closed under weighted colimits: this follows from Definition~\ref{D:sa} directly.
		\item Let $\Sigma$ be a finitary signature, i.e., a set of symbols $\sigma$ with prescribed arities $\arity(\sigma) \in \Nat$.
		The corresponding \emph{polynomial functor} $H_\Sigma : \Met \to \Met$ given by
		\[
		H_\Sigma X = \Coprod_{\sigma \in \Sigma} X^{\arity(\sigma)}
		\]
		is strongly finitary.
		This follows from (1) and Example~\ref{E:pown}~(1).
		\item The finite Hausdorff functor $\mathcal{H}_f$ (Example~\ref{E:Haus}) is strongly finitary.
		Indeed, it follows easily from Proposition~\ref{P:pres} that $\mathcal{H_f}$ preserves directed colimits.
		And we know from Example~\ref{E:Haus} that it preserves colimits of foliations.
	\end{enumerate}
\end{example}

\begin{notation}
	\label{N:fsf}
	\phantom{phantom}
	\begin{enumerate}
		
		\item $[\Met,\Met]$ denotes the category of endofunctors and natural transformations.
		It is enriched by the supremum metric: the distance of natural transformations $\alpha, \beta: F \to G$ is
		\[
		\sup_{X \in \Met} d(\alpha_X,\beta_X).
		\]
		
		\item For every category $\K$, given a non-full subcategory of $\K_\ordi$, we can consider it as the (enriched) category $\LL$ by defining $\LL(X,Y)$ as the metric subspace of $\K(X,Y)$.
		In particular, the category
		\[
		\Mon(\Met)
		\]
		of all monads on $\Met$ is enriched as a non-full subcategory of $[\Met,\Met]$: the distance of parallel monad morphisms is the supremum of the distances of their components. Analogously for its full subcategory
		\[
		\Mon_\lambda(\Met)
		\]
		of all $\lambda$-accessible monads.
		(Notice that we are not using the view of monads as monoids here:
		indeed, the ordinary category of monoids in $\V$ is not $\V$-enriched in general.)

		\item We denote by $\Mon_\fp(\Met)$ and $\Mon_\sf(\Met)$ the categories of finitary and strongly finitary monads on $\Met$, respectively, as full subcategories of $\Mon(\Met)$.
		
	\end{enumerate}
\end{notation}

\begin{remark}
\label{R:coref}
\begin{enumerate}
	\item For every endofunctor $F$ of $\Met$ the functor $\ol{F} = \Lan{K}{F\comp K}$ is a strongly finitary reflection of $F$.
	Indeed, the universal natural transformation $\sigma : \ol{F} \comp K \to F \comp K$ yields a unique natural transformation $\tau : \ol{F} \to F$ with $\tau K = \sigma$.
	Given a strongly finitary functor $T = \Lan{K}{T\comp K}$ (with its universal natural transformation $\sigma' : TK \to TK$), for every natural transformation $\alpha : T \to F$ there exists a unique natural transformation $\ol{\alpha} : T \to \ol{F}$ with $\alpha = \tau \comp \ol{\alpha}$.
	Indeed, we have a unique natural transformation $\ol{\alpha} = \Lan{K}{\alpha}: \ol{T} \to \ol{F}$ making the square below commutative
	\[
	\begin{tikzcd}
	{TK = \ol{T}K} \ar[r, "\ol{\alpha} K"] \ar[d, swap, "\sigma'"] & {\ol{F} K} \ar[d, "\sigma = \tau K"] \\
	{TK} \ar[r, swap, "\alpha K"] & {FK}
	\end{tikzcd}
	\]
	From $(\tau \ol{\alpha}) K = (\alpha K) \comp \sigma'$ we conclude $\tau \comp \ol{\alpha} = \alpha$.
	\item Analogously, for every endofunctor $F$ the functor $\Lan{K^*}{FK^*}$ is a finitary reflection of $F$.
\end{enumerate}
\end{remark}

\begin{openproblem}
\label{OP}
Is a composite of strongly finitary endofunctors of $\Met$ strongly finitary?
\end{openproblem}

\begin{remark}
\label{R:comp}
For cartesian closed locally finitely presentable categories, strongly finitary functors do compose, as proved by Kelly and Lack~\cite{kelly+lack:strongly-finitary}.
Thus, the corresponding answer for $\UMet$ or $\CUMet$ is affirmative.
The proof in loc.~cit.\ also works for strongly $\lambda$-ary endofunctors on locally $\lambda$-presentable categories.
\end{remark}

\begin{proposition}
\label{P:lambda-acc}
The category $\Mon_\lambda(\Met)$ of $\lambda$-accessible monads is locally $\lambda$-presentable in the enriched sense, for every regular cardinal $\lambda > \aleph_0$.
\end{proposition}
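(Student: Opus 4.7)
The plan is to present $\Mon_\lambda(\Met)$ as monadic over the enriched category $[\Met,\Met]_\lambda$ of $\lambda$-accessible endofunctors, and then transfer local $\lambda$-presentability across the adjunction.

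First I would show that $[\Met,\Met]_\lambda$ is itself locally $\lambda$-presentable in the enriched sense. By the proposition preceding Definition~\ref{D:sa}, restriction along $K^* : \Met_\lambda \hookrightarrow \Met$ gives an equivalence of $\Met$-enriched categories $[\Met,\Met]_\lambda \simeq [\Met_\lambda, \Met]$. Since $\Met$ is locally $\lambda$-presentable as a closed category (Corollary~\ref{C:present}) and $\Met_\lambda$ is small, the standard enriched presheaf theorem of Kelly~\cite{kelly:book} yields that $[\Met_\lambda,\Met]$, and hence $[\Met,\Met]_\lambda$, is locally $\lambda$-presentable in the enriched sense.

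Next I would verify that the enriched forgetful functor $U : \Mon_\lambda(\Met) \to [\Met,\Met]_\lambda$ creates $\lambda$-directed colimits. Given a $\lambda$-directed diagram $(T_i,\eta_i,\mu_i)_{i \in I}$ of $\lambda$-accessible monads, let $T$ be the colimit of $(T_i)$ in $[\Met,\Met]_\lambda$, with coprojections $c_i : T_i \to T$. Because $\lambda > \aleph_0$, composition of $\lambda$-accessible endofunctors is $\lambda$-accessible in each variable, so $TT$ is the colimit of the double-indexed diagram $(T_iT_j)_{i,j}$; restricting to the diagonal (using directedness) one obtains a unique multiplication $\mu : TT \to T$ with $\mu \comp (c_ic_i) = c_i \comp \mu_i$, and analogously a unit. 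The monad axioms transfer pointwise, and since $\Mon_\lambda(\Met)$ carries the subspace enrichment of Notation~\ref{N:fsf}~(2), the colimit computed this way is also the enriched $\lambda$-directed colimit.

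Finally, the free-monad construction (a standard transfinite iteration, valid in the cocomplete and locally $\lambda$-presentable category $[\Met,\Met]_\lambda$) provides a left adjoint to $U$, making $U$ monadic with a $\lambda$-accessible enriched monad on $[\Met,\Met]_\lambda$; the standard enriched transfer theorem then gives that $\Mon_\lambda(\Met)$ is locally $\lambda$-presentable in the enriched sense. The main obstacle is Step~2, where one must check that no subtlety arises from using the non-full subcategory enrichment rather than the monoid-in-$[\Met,\Met]$ enrichment explicitly avoided in Notation~\ref{N:fsf}~(2); this reduces to the observation that parallel monad morphisms form a metric subspace of natural transformations, so the enriched colimit computation is unaffected. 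I would note in passing that the same strategy would fail at $\lambda = \aleph_0$, since by the Corollary after Definition~\ref{D:pres} no nonempty metric space is $\aleph_0$-presentable — which is exactly why the hypothesis $\lambda > \aleph_0$ is needed.
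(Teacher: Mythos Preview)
Your overall strategy matches the paper's: exhibit $\Mon_\lambda(\Met)$ as the Eilenberg--Moore category of a $\lambda$-accessible enriched monad on a locally $\lambda$-presentable base, then invoke Bird's transfer theorem. The paper, however, works over the discrete-indexed functor category $[|\Met_\lambda|,\Met]$ rather than $[\Met_\lambda,\Met]$, precisely so that monadicity can be quoted directly from Lack~\cite{lack:monadicity} (and then upgraded to the enriched setting via Dubuc). Your route over $[\Met,\Met]_\lambda \simeq [\Met_\lambda,\Met]$ is perfectly viable and arguably more natural, but it forces you to establish monadicity yourself rather than cite it.

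That is where your sketch has a gap: existence of a left adjoint together with creation of $\lambda$-directed colimits does \emph{not} give monadicity. You still need Beck's condition --- creation of $U$-split (or at least reflexive) coequalizers --- and this is exactly the content of Lack's paper that the authors invoke. It is true and not hard, but it is not automatic, and the warning in Notation~\ref{N:fsf}(2) that monads are \emph{not} being treated as monoids in a $\Met$-enriched monoidal category means you cannot simply appeal to ``monoids are monadic over their underlying objects'' in the enriched sense. Either cite Lack as the paper does, or supply the Beck verification explicitly. (Minor aside: composition of $\lambda$-accessible endofunctors is $\lambda$-accessible for every $\lambda$, not only $\lambda > \aleph_0$; the uncountability hypothesis enters elsewhere, via Corollary~\ref{C:present} and Proposition~\ref{P:presen}.)
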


\begin{proof}
\phantom{phantom}
\begin{enumerate}

	\item The functor
	\[
	W : \Mon_\lambda(\Met) \to [\Met_\lambda,\Met]
	\]
	assigning to every $\lambda$-accessible monad $\Tmon = (T,\eta,\mu)$ the domain restriction of $T$ to $\Met_\lambda$ is $\lambda$-accessible.
	This follows from Theorem~6.18(2) of~\cite{bird:thesis}.
	Let $J : |\Met_\lambda| \to \Met_\lambda$ denote the underlying discrete category of $\Met_\lambda$.
	The functor
	\begin{align*}
		V : [\Met_\lambda,\Met] & \to [|\Met_\lambda|,\Met]
	\end{align*}
	of precomposition with $J$ is also $\lambda$-accessible, in fact, it has a right adjoint.
	Thus, the composite
	\[
	U = V \comp W : \Mon_\lambda(\Met) \to [|\Met_\lambda|,\Met]
	\]
	is $\lambda$-accessible.
	
	\item $U$ is a monadic functor.
	Indeed, Lack proved (\cite{lack:monadicity}, Corollary~3 and Remark~4) that the underlying ordinary functor $U_\ordi$ is monadic.
	Moreover, the ordinary left adjoint to $U_\ordi$ is easily seen to be enriched.
	Hence, we have an enriched adjunction $F \adj U$ with the ordinary adjunction $F_\ordi \adj U_\ordi$ being monadic.
	By~\cite{dubuc}~Theorem~II.2.1 this means that $U$ is monadic (in the enriched sense).
	
	\item Let $\Tmon$ denote the $\lambda$-accessible monad on $[|\Met_\lambda|,\Met]$, associated to $F \adj U$.
	By Corollary~\ref{C:present} $\Met$ is locally $\lambda$-presentable in the enriched sense, hence, so is $[|\Met_\lambda|,\Met]$.
	By~\cite{bird:thesis} Theorem~6.9, the Eilenberg-Moore category of $\Tmon$ is locally $\lambda$-presentable as an enriched category.
	This category is equivalent (in the enriched sense) to $\Mon_\lambda(\Met)$ by (2) above.
	
\end{enumerate}
\end{proof}

\begin{corollary}
\label{C:coco1}
The category $\Mon_{\aleph_1}(\Met)$ is cocomplete.
\end{corollary}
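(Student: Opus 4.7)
The plan is to derive this as an immediate consequence of Proposition~\ref{P:lambda-acc}. Applying that proposition with $\lambda = \aleph_1$ (which is a regular cardinal strictly greater than $\aleph_0$, so the hypothesis holds) yields that $\Mon_{\aleph_1}(\Met)$ is locally $\aleph_1$-presentable as an enriched category. By Definition~\ref{D:pres}(3), any locally $\lambda$-presentable category has weighted colimits, so in particular $\Mon_{\aleph_1}(\Met)$ is cocomplete.

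There is really no obstacle here; the corollary is precisely the cocompleteness clause built into the definition of local presentability, once the preceding proposition has been established. The only thing worth pointing out in the write-up is that $\aleph_1 > \aleph_0$ so Proposition~\ref{P:lambda-acc} is actually applicable (the proposition is stated only for uncountable regular cardinals, which reflects the fact that $\Met$ itself is locally $\aleph_1$-presentable but not locally finitely presentable, cf.\ Corollary~\ref{C:present} and the remark following Proposition~\ref{P:presen}).
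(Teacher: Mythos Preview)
Your proposal is correct and matches the paper's approach exactly: the corollary is stated immediately after Proposition~\ref{P:lambda-acc} with no further proof, so the intended argument is precisely to instantiate $\lambda = \aleph_1$ and read off cocompleteness from the definition of local presentability.
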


Recall the open problem~\ref{OP}.

\begin{proposition}
\label{P:cls}
Suppose that strongly finitary endofunctors on $\Met$ are closed under composition. Then the category $\Mon_\fp(\Met)$ is cocomplete and $\Mon_\sf(\Met)$ is closed in it under colimits.
\end{proposition}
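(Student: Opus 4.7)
My plan is to treat the two conclusions of the proposition separately, working in the monoidal framework that presents monads on $\Met$ as monoids in the monoidal category $([\Met,\Met], \comp, \mathrm{Id})$.

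For the cocompleteness of $\Mon_\fp(\Met)$, I would identify this category with $\Mon([\Met,\Met]_\fp, \comp, \mathrm{Id})$, observing that finitary endofunctors are closed under composition without any assumption (filtered colimits commute with themselves). Then I would check that $[\Met,\Met]_\fp$ is cocomplete: colimits are computed pointwise in $[\Met,\Met]$, and a pointwise colimit of finitary functors is again finitary by the commutation of colimits with colimits. Composition on this category is cocontinuous in the first variable (pointwise) and, for each finitary $F$, the functor $F \comp (-)$ preserves filtered colimits, since $F$ itself does. These are exactly the hypotheses for Kelly's transfinite construction of free monoids in such a monoidal category, from which cocompleteness of $\Mon_\fp(\Met)$ follows. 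Note that this step does not invoke the standing assumption.

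For the closure of $\Mon_\sf(\Met)$ under colimits, the standing assumption turns $[\Met,\Met]_\sf$ into a strong monoidal subcategory of $[\Met,\Met]_\fp$; let $i$ denote the inclusion. By Remark~\ref{R:coref}, $i$ has a right adjoint $R \colon F \mapsto \Lan{K}{(F \comp K)}$ with counit $\tau \colon i \comp R \to \mathrm{Id}$. The key step is to verify that $R$ is lax monoidal: for finitary $F, G$, the composite $R(F) \comp R(G)$ is strongly finitary by the assumption, so the horizontal composition $\tau_F \comp \tau_G \colon R(F) \comp R(G) \to F \comp G$ factors uniquely through $\tau_{F \comp G} \colon R(F \comp G) \to F \comp G$, yielding the monoidal comparison; a parallel argument supplies the unit comparison, and the coherence axioms follow by uniqueness of these factorizations. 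Passing to monoids, the monoidal adjunction $i \dashv R$ induces an adjunction whose left adjoint is precisely the inclusion $\Mon_\sf(\Met) \hookrightarrow \Mon_\fp(\Met)$. Since this inclusion is a left adjoint, it preserves colimits, which is exactly the asserted closure.

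The main obstacle lies in Part 2: verifying the lax monoidality of the coreflection $R$ is where the composition-closure assumption is genuinely used, for without it $R(F) \comp R(G)$ need not be strongly finitary and the universal factorization through $R(F \comp G)$ is unavailable. Once this step is in place, the transfer from the endofunctor level to the monad (monoid) level is standard monoidal category theory.
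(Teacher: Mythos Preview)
Your proposal is correct and broadly parallel to the paper's argument, but the emphasis and the ambient framework differ.

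For cocompleteness of $\Mon_\fp(\Met)$, the paper does \emph{not} appeal to Kelly's free-monoid construction. Instead it uses Corollary~\ref{C:coco1}: the category $\Mon_{\aleph_1}(\Met)$ is already known to be cocomplete (in the enriched sense, via Proposition~\ref{P:lambda-acc}), and the paper then shows that $\Mon_\fp(\Met)$ is a coreflective full subcategory of it. The coreflection is built exactly as in Remark~\ref{R:coref}(2), and the monad structure on the coreflected endofunctor is obtained by the same explicit lifting as in Part~(1) of the paper's proof (which works because finitary endofunctors compose unconditionally). Your Kelly-style argument is a legitimate alternative that avoids the detour through $\Mon_{\aleph_1}(\Met)$; the paper's route has the advantage that enriched cocompleteness comes for free from the locally presentable structure, whereas with your approach one must separately check that the free-monoid construction is compatible with the $\Met$-enrichment (cf.\ the paper's Notation~\ref{N:fsf}(2), which warns that monads are not being treated as monoids precisely because of enrichment issues).

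For the closure of $\Mon_\sf(\Met)$, your argument and the paper's are the same argument at two levels of abstraction. You invoke doctrinal adjunction: the strong monoidal inclusion $i$ has a right adjoint $R$, hence $R$ is lax monoidal and the adjunction lifts to monoids. The paper carries out \emph{exactly this lift by hand}: it takes $\ol T = R(T)$, defines $\ol\mu$ and $\ol\eta$ via the universal property of $\tau$ (using that $\ol T \comp \ol T$ is strongly finitary --- this is where the hypothesis enters, just as you say), and then verifies the monad axioms and the coreflection property by explicit diagram chases. Your packaging is cleaner; the paper's is self-contained and makes visible precisely which square needs the composition hypothesis.
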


\begin{proof}
	By Corollary~\ref{C:coco1} it is sufficient to prove that both of the above
	full subcategories of $\Mon_{\aleph_1}(\Met)$ are coreflective in it.
	\begin{enumerate}
		
		\item The coreflectivity of $\Mon_\sf(\Met)$.
		Given a countably accessible monad $\Tmon = (T,\mu,\eta)$, we form the coreflection of $T$ in the category of strongly finitary endofunctors on $\Met$.
		By Remark~\ref{R:coref} this is the functor
		\[
		\ol{T} = \Lan{K}{T \comp K}.
		\]
		Let $\tau: \ol{T} \to T$ be the universal natural transformation.
		We are going to present a canonical monad structure $(\ol{T},\ol{\mu},\ol{\eta})$ making $\tau$ a monad morphism.
		It then follows easily that $\tau : (\ol{T},\ol{\mu},\ol{\eta}) \to (T,\mu,\eta)$ is the desired coreflection of $\Tmon$ in $\Mon_\sf(\Met)$.
		
		The universal property of $\tau$ implies that $\eta : \Id \to T$ factorizes as follows
		\[
		\begin{tikzcd}
		{\Id} \arrow[r, "\ol{\eta}"] \arrow[rd, swap, "\eta"] & {\ol{T}} \arrow[d, "\tau"] \\
		& T 
		\end{tikzcd}
		\]
		for a unique natural transformation $\ol{\eta}$.
		And $\mu : T \comp T \to T$ yileds a unique natural transformation $\ol{\mu} : \ol{T} \comp \ol{T} \to \ol{T}$ making the square below commutative (using that $\ol{T} \comp \ol{T}$ is finitary):
		\[
		\begin{tikzcd}
		\ol{T} \comp \ol{T} \arrow[r, "\ol{\mu}"] \arrow[d, "\tau * \tau", swap] & \ol{T} \arrow[d, "\tau"] \\
		T \comp T \arrow[r, swap, "\mu"] & T
		\end{tikzcd}
		\]
		We verify that $(\ol{T},\ol{\mu},\ol{\eta})$ is a monad.
		
		From the unit law $\mu \comp T\eta = \id$ we derive $\ol{\mu} \comp \ol{T} \ol{\eta} = \id$, using the universal property of $\tau$ and the following commutative diagram:
		\[
		\begin{tikzcd}
		\ol{T} \comp \ol{T} \arrow[rr, "\ol{\mu}"] \arrow[dd, swap, "\ol{T} \tau"]  & & \ol{T} \arrow[ddd, "\tau"] \\
		& \ol{T} \arrow[ul, swap, "\ol{T} \ol{\eta}"] \arrow[dl, "\ol{T} \eta"] \arrow[d, "\tau"] & \\
		\ol{T} \comp T \arrow[d, swap, "\tau T"] & T \arrow[ld, "T\eta"] \arrow[rd, equal] & \\
		T \comp T \arrow[rr, swap, "\mu"] & & T
		\end{tikzcd}
		\]
		Indeed, the left-hand triangle and the outward square commute by definition of $\ol{\eta}$ and $\ol{\mu}$, respectively.
		The remaining left-hand part is the naturality of $\tau$.
		Since $\tau \comp (\ol{\mu} \comp \ol{T} \ol{\eta}) = \tau$, we have $\ol{\mu} \comp \ol{T} \ol{\eta} = \id$.
		Analogously for the unit law $\mu \comp \ol{\eta} \ol{T} = \id$.
		
		From the associative law $\mu \comp T \mu = \mu \comp \mu T$ for $\Tmon$ we derive the one for $\ol{\Tmon}$ using the universal property of $\tau$ and the following commutative diagram:
		\[
		\begin{tikzcd}
		\ol{T} \ol{T} \ol{T} \arrow[rrrr, "\ol{T} \ol{\mu}"] \arrow[rd, "\ol{T} \ol{T} \tau"] \arrow[rdd, swap, "\ol{T} \tau \ol{T}"] \arrow[dddddd, swap, "\ol{\mu} \ol{T}"] & & & & \ol{T} \ol{T} \arrow[dddddd, "\ol{\mu}"] \arrow[ldd, swap, "\ol{T} \tau"] \\
		& \ol{T} \ol{T} T \arrow[rd, "\ol{T} \tau T"] \arrow[rrru, phantom, "\text{(def $\ol{\mu}$)}"] \arrow[d, phantom, "\text{($\tau$)}"] & & & \\
		& \ol{T} T \ol{T} \arrow[r, swap, "\ol{T} T \tau"] \arrow[d, swap, "\tau T \ol{T}"] \arrow[dd, bend right=70, phantom, "\text{(def $\ol{\mu}$)}"] \arrow[rd, phantom, "\text{($\tau$)}"] & \ol{T} T T \arrow[r, "\ol{T} \mu"] \arrow[d, "\tau T T"] \arrow[rd, phantom, "\text{($\tau$)}"] & \ol{T} T \arrow[d, swap, "\tau T"] \arrow[dd, bend left=70, phantom, "\text{(def $\ol{\mu}$)}"] & \\
		& T T \ol{T} \arrow[r, swap, "TT\tau"] \arrow[d, swap, "\mu \ol{T}"] \arrow[rd, phantom, "\text{($\mu$)}"] & T T T \arrow[r, "T \mu"] \arrow[d, swap, "\mu T"] & T T \arrow[d, "\mu"] & \\
		& T \ol{T} \arrow[r, swap, "T \tau"] \arrow[d, phantom, "\text{($\tau$)}"] & T T \arrow[r, swap, "\mu"] & T & \\
		& \ol{T} T \arrow[ur, swap, "\tau T"] \arrow[rrrd, phantom, "\text{(def $\ol{\mu}$)}"] & & & \\
		\ol{T} \ol{T} \arrow[uur, "\tau \ol{T}"] \arrow[ur, swap, "\ol{T} \tau"] \arrow[rrrr, swap, "\ol{\mu}"] & & & & \ol{T} \arrow[uul, swap, "\tau"]
		\end{tikzcd}
		\]
		Indeed, the parts denoted by ($\tau$) and ($\mu$) commute by the naturality of the corresponding transformations.
		Those denoted by (def~$\ol{\mu}$) follow from the definition of $\ol{\mu}$.
		The remaining square is the associativity of $\mu$.
		
		We conclude that $(\ol{T}, \ol{\mu}, \ol{\eta})$ is a monad, and by its definition $\tau$ is a monad morphism.
		
		It remains to verify that every monad morphism
		\[
		\sigma: (S,\mu^S,\eta^S) \to \Tmon
		\]
		with $S$ is finitary factorizes through $\tau$ via a (necessarily unique) monad morphism $\ol{\sigma} : (S, \mu^S, \eta^S) \to \ol{\Tmon}$.
		We have a unique natural transformation
		\[
		\ol{\sigma} : S \to \ol{T} \text{ with } \sigma = \tau \comp \ol{\sigma}.
		\]
		It preserves the unit: we have
		\[
		\tau \comp (\ol{\sigma} \comp \eta^S) = \sigma \comp \eta^S = \eta = \tau \comp \ol{\eta}
		\]
		from which $\ol{\sigma} \comp \eta^S = \ol{\eta}$ follows.
		And it preserves the multiplication: we verify $\ol{\sigma} \comp \mu^S = \ol{\mu} \comp (\ol{\sigma} * \ol{\sigma})$ analogously:
		\begin{align*}
		\tau \comp (\ol{\sigma} \comp \mu^S) & = \sigma \comp \mu^S && \text{def.\ of $\ol{\sigma}$} \\
		& = \mu \comp (\sigma * \sigma) \\
		& = \mu \comp (\tau * \tau) \comp (\ol{\sigma} * \ol{\sigma}) && \text{def.\ of $\ol{\sigma}$} \\
		& = \tau \comp \ol{\mu} \comp (\ol{\sigma} * \ol{\sigma}) && \text{def.\ of $\ol{\mu}$}
		\end{align*}
	
		Since both the full embedding
		\[
		\Mon_\sf(\Met) \to \Mon_{\aleph_1}(\Met)
		\]
		and its (ordinary) right adjoint constructed above are in fact enriched, the category $\Mon_\sf(\Met)$ is coreflective in $\Mon_{\aleph_1}(\Met)$. 
		
		\item The coreflectivity of $\Mon_\fp(\Met)$ is verified completely analogously.
		
	\end{enumerate}
\end{proof}

We analogously denote by
\[
\Mon_\fp(\UMet) \text{ and } \Mon_\sf(\UMet)
\]
the categories of (strongly) finitary monads on $\UMet$.

\begin{proposition}
The category $\Mon_\fp(\UMet)$ is cocomplete and $\Mon_\sf(\UMet)$ is closed in it under colimits.
\end{proposition}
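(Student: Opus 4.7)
The plan is to mirror the proof of Proposition~\ref{P:cls} verbatim, replacing $\Met$ by $\UMet$ throughout. The only ingredient that was hypothetical in the $\Met$-case --- the closure of strongly finitary endofunctors under composition --- holds unconditionally on $\UMet$ by Remark~\ref{R:comp}, since $\UMet$ is cartesian closed and locally $\aleph_1$-presentable, so the Kelly--Lack theorem applies. No analogue of Open Problem~\ref{OP} therefore needs to be assumed.

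First I would establish the $\UMet$-analogue of Proposition~\ref{P:lambda-acc}, namely that $\Mon_{\aleph_1}(\UMet)$ is locally $\aleph_1$-presentable in the enriched sense, and in particular cocomplete. The argument is identical: the forgetful functor from $\Mon_{\aleph_1}(\UMet)$ to $[|\UMet_{\aleph_1}|,\UMet]$ is $\aleph_1$-accessible (Bird~\cite{bird:thesis}) and enriched-monadic (Lack~\cite{lack:monadicity} combined with Dubuc~\cite{dubuc}), and the target is locally $\aleph_1$-presentable because $\UMet$ is, being closed in $\Met$ under $\aleph_1$-directed colimits with the same $\aleph_1$-presentable objects by Proposition~\ref{P:presen}. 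Then I would construct coreflections of both $\Mon_\fp(\UMet)$ and $\Mon_\sf(\UMet)$ into $\Mon_{\aleph_1}(\UMet)$ exactly as in Proposition~\ref{P:cls}. For a countably accessible monad $\Tmon = (T,\mu,\eta)$ the coreflection into $\Mon_\sf(\UMet)$ has underlying endofunctor $\ol{T} = \Lan{K}{T \comp K}$ for $K : \Set_\fp \hookrightarrow \UMet$; the universal $\tau : \ol{T} \to T$ of Remark~\ref{R:coref} produces a unit $\ol{\eta}$ from $\eta$ and a multiplication $\ol{\mu} : \ol{T} \comp \ol{T} \to \ol{T}$ from $\mu \comp (\tau * \tau)$. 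The monad axioms and the universal property of $\tau$ as a monad morphism then follow from the same diagram chases written out in Proposition~\ref{P:cls}. The finitary case is handled identically with $K^* : \UMet_{\aleph_1} \hookrightarrow \UMet$ in place of $K$, and there composability of finitary endofunctors is standard.

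The one step where composability is essential is the construction of $\ol{\mu}$: invoking the universal property of $\tau$ requires that its source $\ol{T} \comp \ol{T}$ itself be strongly finitary. For $\Met$ this is the content of Open Problem~\ref{OP}; for $\UMet$ it is settled by Kelly--Lack via cartesian closedness, so the argument proceeds without hypothesis. Everything else is formal and transports verbatim from the $\Met$-proof, which is why the statement here can drop the assumption that appeared in Proposition~\ref{P:cls}.
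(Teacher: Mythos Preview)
Your proposal is correct and matches the paper's own proof exactly: the paper simply says ``The proof is the same as above, using Remark~\ref{R:comp},'' which is precisely the transport of Proposition~\ref{P:cls} to $\UMet$ with the Kelly--Lack composability result replacing the open hypothesis. Your elaboration of the coreflection argument and the identification of the step where composability enters (the construction of $\ol{\mu}$) is accurate and more detailed than what the paper writes.
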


The proof is the same as above, using Remark~\ref{R:comp}.

\begin{proposition}
\label{P:fact}
Let $\Tmon$ be a finitary monad on $\Met$ preserving surjective morphisms.
Every morphism $h: \Tmon \to \Smon$ in the category $\Mon_\fin(\Met)$ has a factorization
\[
\begin{tikzcd}[row sep=scriptsize, column sep=scriptsize]
{\Tmon} \arrow[rr, "h"] \arrow[rd, swap, "e"] & & {\Smon} \\
& {\Tmon'} \arrow[ur, swap, "m"] &
\end{tikzcd}
\]
where the components of $e$ are surjective and those of $m$ are isometric embeddings.
The same result holds in the category $\Mon_\fp(\UMet)$.
\end{proposition}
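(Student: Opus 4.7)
The plan is to factor $h$ pointwise using the $(\EE,\MM)$-factorization on $\Met$ from Remark~\ref{R:prec}(2) and then show that the resulting middle functor inherits a monad structure. For each $X$, write $h_X = m_X \comp e_X$, where $T'X \subseteq SX$ is the image of $h_X$ equipped with the subspace metric, $e_X$ the induced surjection and $m_X$ the resulting isometric embedding. For a morphism $f : X \to Y$, diagonal fill-in applied to the square with $Tf$ on the left and $Sf$ on the right produces a unique arrow $T'f : T'X \to T'Y$ making $e$ and $m$ natural; its uniqueness immediately gives functoriality of $T'$.

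To endow $T'$ with a monad structure, I set $\eta^{T'} := e \comp \eta^T$. The key observation for the multiplication is that $(e*e)_X = e_{T'X} \comp Te_X$ has surjective components: the first factor is a component of $e$, and the second is surjective because $\Tmon$ preserves surjections. Hence one may define $\mu^{T'}_X$ set-theoretically by the requirement $\mu^{T'}_X \comp (e*e)_X = e_X \comp \mu^T_X$; well-definedness is immediate from the monad-morphism identity $\mu^S \comp (h*h) = h \comp \mu^T$ combined with the injectivity of $m_X$. Nonexpansiveness of $\mu^{T'}_X$ then reduces to that of $\mu^S$ via the identity $m_X \comp \mu^{T'}_X = \mu^S_X \comp (m*m)_X$ and the fact that all components of $m$ are isometric embeddings, so the metric on $T'(T'X)$ pulls back through $(m*m)_X = m_{SX} \comp T'm_X$ to a submetric of $SSX$. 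The monad laws, together with the statements that both $e$ and $m$ are monad morphisms, are then verified by pre- or post-composing with suitable pointwise-epi or pointwise-mono transformations ($e$, $e*e$ or $e*e*e$ on the left, $m$ on the right), reducing each identity to the corresponding law for $\Tmon$ or $\Smon$.

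To conclude that $T'$ is finitary, I would invoke Proposition~\ref{P:pres}: given a directed colimit cocone $c_i : D_i \to C$, the cocone $T'c_i : T'D_i \to T'C$ satisfies condition~(1) because the $Tc_i$ are collectively surjective (since $\Tmon$ is finitary) and $T'c_i \comp e_{D_i} = e_C \comp Tc_i$; condition~(2) follows using the isometric embeddings $m_{D_j}$ and $m_C$ together with condition~(2) for the colimit $SC = \Colim SD_i$, which holds by finitarity of $\Smon$. The argument for $\Mon_\fp(\UMet)$ is the same, because $\UMet$ is closed in $\Met$ under directed colimits and under taking subspaces. The step I expect to require the most care is the nonexpansiveness of $\mu^{T'}$, as this is the only point at which both hypotheses — that $\Tmon$ preserves surjections and that the $m_X$ are isometric embeddings — must be combined, and one must trace the metric on $T'T'X$ through two layers of the pointwise factorization.
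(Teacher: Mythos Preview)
Your proposal is correct and follows essentially the same route as the paper: pointwise $(\EE,\MM)$-factorization, functoriality and $\mu^{T'}$ via diagonal fill-in (the paper packages your set-theoretic definition plus nonexpansiveness check into a single fill-in step), monad laws by cancelling the epic $e*e$ and $e*e*e$, and finitarity via the two conditions of Proposition~\ref{P:pres}. Your worry about the nonexpansiveness of $\mu^{T'}$ is unnecessary once you observe that $(m*m)_X$ is merely nonexpanding (not isometric), which together with $m_X$ isometric and $\mu^S_X$ nonexpanding already suffices; the paper sidesteps this by obtaining $\mu^{T'}_X$ directly as a fill-in morphism.
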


\begin{proof}
We present a proof for $\Mon_\fin(\Met)$, the case $\Mon_\fin(\UMet)$ is analogous.
\begin{enumerate}[label=(\alph*)]
	
	\item For every metric space $X$ factorize $h_X$ as a surjective morphism $e_X : TX \to T'X$ followed by an isometric embedding $m_X : T'X \to SX$.
	This defines an endofunctor $T'$ whose action on a morphism $f : X \to Y$ is given by the following diagonal fill-in:
	\[
	\begin{tikzcd}
	{TX} \arrow[r, "e_X"] \arrow[d, swap, "Tf"] & {T'X} \arrow[d, "m_X"] \arrow[ldd, swap, dotted, "T'f"] \\
	{TY} \arrow[d, swap, "e_Y"] & {SX} \arrow[d, "Sf"] \\
	{T'Y} \arrow[r, swap, "m_Y"] & {SY}
	\end{tikzcd}
	\]
	It is easy to verify that $T'$ is well defined; the fact that it is enriched follows from $m_Y$ being an isometric embedding.
	The above diagram yields natural transformations $e : T \to T'$ and $m : T' \to S$ with $h = m \comp e$.
	
	We next prove that $e$ and $m$ are monad morphisms for some corresponding monad structure $\mu^{T'} : T'T' \to T'$ and $\eta^{T'} : \Id \to T'$.
	Finally, we prove that $T'$ is finitary.
	
	\item We define a natural transformation
	\[
	\eta^{T'} = \Id \xrightarrow{\eta^T} T \xrightarrow{e} T'.
	\]
	Furthermore, recall that $e * e : T \comp T \to T' \comp T'$ has components
	\[
	TTX \xrightarrow{Te_X} TT'X \xrightarrow{e_{T'X}} T'T'X
	\]
	that are surjective because $T$ preserves surjections.
	We can thus define a natural transformation
	\[
	\mu^{T'} : T'T' \to T'
	\]
	by components using the diagonal fill-in as follows:
	\[
	\begin{tikzcd}
	{TTX} \arrow[r, "(e*e)_X"] \arrow[d, swap, "\mu_X^T"] & {T'T'X} \arrow[d, "(m*m)_X"] \arrow[ldd, swap, dotted, "\mu_X^{T'}"] \\
	{TX} \arrow[d, swap, "e_X"] & {TTX} \arrow[d, "\mu_X^S"] \\
	{T'X} \arrow[r, swap, "m_X"] & {SX}
	\end{tikzcd}
	\]
	We now verify that $(T',\mu^{T'}, \eta^{T'})$ is a monad.
	From the above definitions it then follows that $e$ and $m$ are monad morphisms.
	\begin{enumerate}[label=(\roman*)]
		\item We show that $\mu^{T'} \comp T'\eta^{T'} = \id$ holds.
		This follows from the diagram below:
		\[
		\begin{tikzcd}
		{T}
		\arrow[rrr, "T\eta^T"]
		\arrow[rd, "e"]
		\arrow[rrrddd, swap, bend right, "\id"]
		& & & {TT}
		\arrow[ddd, "\mu^T"]
		\arrow[ld, "e * e"] \\
		& {T'}
		\arrow[r, "T'\eta^{T'}"]
		\arrow[rd, swap, "\id"]
		& {T'T'}
		\arrow[d, "\mu^{T'}"]
		& \\
		& & {T'} & \\
		& & & {T}
		\arrow[ul, "e"]
		\end{tikzcd}
		\]
		Since $\Tmon$ is a monad, the outward triangle commutes.
		The right-hand part commutes by the definition of $\mu^{T'}$.
		We prove next that the upper part commutes, then it follows that the desired inner triangle commutes when precomposed with $e$.
		Since $e$ has surjective components, this implies that the triangle commutes, too.
		For the upper part use the following diagram
		\[
		\begin{tikzcd}
		{T} \arrow[rr, "T\eta^T"] \arrow[d, swap, "e"] & & {TT} \arrow[d, "e * e"] \arrow[ld, swap, "eT"] \\
		{T'} \arrow[r, swap, "T'\eta^T"] & {T'T} \arrow[r, swap, "T'e"] & {T'T'}
		\end{tikzcd}
		\]
		The left-hand part commutes by the naturality of $e$, and the right-hand part is the (equivalent) definition of $e * e = T'e \comp eT$.
		\item The proof of $\mu^{}T' \comp \eta^{T'}T' = \id$ is analogous.
		\item To prove $\mu^{T'} \comp T'\mu^{T'} = \mu^{T'} \comp \mu^{T'}T'$ use the following diagram
		\begin{equation*}
		\label{diag:assoc} \tag{\textasteriskcentered}
		\begin{tikzcd}
		{TTT}
		\arrow[rrr, "T\mu^T"]
		\arrow[rd, "e*e*e"]
		\arrow[ddd, swap, "\mu^T T"]
		& & & {TT}
		\arrow[ddd, "\mu T"]
		\arrow[ld, swap, "e*e"]
		\\
		& {T'T'T'}
		\arrow[r, "T' \mu^{T'}"]
		\arrow[d, swap, "\mu^{T'} T'"]
		& {T'T'}
		\arrow[d, "\mu^{T'}"]
		& \\
		& {T'T'}
		\arrow[r, swap, "\mu^{T'}"]
		& {T} & \\
		{TT}
		\arrow[ur, "e*e"]
		\arrow[rrr, swap, "\mu^T"]
		& & & {T}
		\arrow[ul, swap, "e"]
		\end{tikzcd}
		\end{equation*}
		We prove first that the upper part commutes.
		Since $e*e*e = T'(e*e) \comp eTT$, this follows from the commutative diagram below:
		\[
		\begin{tikzcd}
		{TTT}
		\arrow[r, "T\mu^T"]
		\arrow[d, swap, "eTT"]
		& {TT}
		\arrow[d, "eT"]
		\\
		{T'TT}
		\arrow[r, "T' \mu^T"]
		\arrow[d, swap, "T'(e*e)"]
		& {T'T}
		\arrow[d, "T'e"]
		\\
		{T'T'T'}
		\arrow[r, swap, "T' \mu^{T'}"]
		& {T'T'}
		\end{tikzcd}
		\]
		The upper square commutes by the naturality of $e$, and the lower one does since it is the application of $T'$ to the left-hand part of the definition of $\mu^{T'}$.
		
		Analogously, the left-hand part of the diagram~\eqref{diag:assoc} commutes. The right-hand part and the lower one follow from the definition of $\mu^{T'}$.
		
		The outward square commutes because $\Tmon$ is a monad.
		We conclude that the desired inner square commutes when precomposed by $e * e * e$.
		The latter transformation has surjective components.
		Thus, the middle square commutes.
	\end{enumerate}	

	\item It remains to prove that $T'$ is finitary.
	Let $(c_i)_{i \in I}$ be a colimit cocone of a directed diagram $(D_i)_{i \in I}$.	
	It is our task to prove that $(T'c_i)_{i \in I}$ has properties (1) and (2) of Proposition~\ref{P:pres}.
	\begin{description}
		\item[Property (1)] Given $x \in T'C$ we are to find $i \in I$ with $x \in T'c_i[T'C_i]$.
		As $e_C$ is epic, we have $y \in TC$ with $x = e_C(y)$.
		Since $T$ preserves the colimit of $(D_i)$, there exists $i \in I$ and $y' \in TD_i$ with $y = Tc_i(y')$.
		Thus $x' = e_{D_i}(y')$ fulfils $x = T'c_i(x')$ as desired:
		\[
		\qquad \qquad \qquad x = e_C(y) = e_C \comp Tc_i(y') = T'c_i \comp e_{D_i} (y') = T'c_i(x').
		\]
		\item[Property (2)] Given $i \in I$ and $y_1, y_2 \in T' D_i$, we are to prove
		\[
		d(T'c_i(y_1), T'c_i(y_2)) = \inf_{j \geq i} d(T'f_j(y_1),T'f_j(y_2))
		\tag{\textasteriskcentered} \label{eq:meet}
		\]
		Put $\ol{y}_k = m_{D_i}(y_k)$ for $k = 1,2$ and use that, since $\Tmon$ is a finitary monad, $(Tc_i)$ is a colimit cocone of $(TD_i)$.
		Thus it has the property (2):
		\[
		d(TC_i(\ol{y}_1, Tc_i(\ol{y}_2))) = \inf_{j \geq i} d(Tf_j(\ol{y}_1), Tf_j(\ol{y}_2)).
		\tag{\textasteriskcentered\textasteriskcentered} \label{eq:meet1}
		\]
		The left-hand sides of \eqref{eq:meet} and \eqref{eq:meet1} are equal: use that $m_C$ is an isometric embedding to get
		\begin{align*}
		d(T'c_i(y_1), T'c_i(y_2)) & = d(m_C \comp T'c_i(y_1), m_C \comp T'c_i(y_2)) \\
		& = d(Tc_i \comp m_{D_i}(y_1), Tc_i \comp m_{D_i}(y_2)) \\
		& = d(Tc_i(\ol{y}_1), Tc_i(\ol{y}_2)).
		\end{align*}
	The right-hand sides are also equal: for every $j \geq i$, since $m_{D_j}$ is an isometric embedding, we have
	\begin{align*}
	d(T'f_j(y_1), T'f_j(y_2)) & = d(m_{D_j} \comp T'f_j(y_1), m_{D_j} \comp T'f_j(y_2)) \\
	& = d(Tf_j \comp m_{D_i}(y_1), Tf_j \comp m_{D_i}(y_2)) \\
	& = d(Tf_j(\ol{y}_1), Tf_j(\ol{y}_2)).
	\end{align*}
	Thus, \eqref{eq:meet1} implies \eqref{eq:meet}, as desired.
	\end{description}
\end{enumerate}
\end{proof}

\section{From Finitary Varieties to Strongly Finitary Monads}

We present the concept of a variety of quantitative $\Sigma$-algebras, where $\Sigma$ is a finitary signature.
Assuming that strongly finitary endofunctors compose (see~\ref{OP})
we prove that every variety is isomorphic to $\Met^\Tmon$ for a strongly finitary monad $\Tmon$.
In the subsequent section we prove the converse: $\Met^\Tmon$ is isomorphic to a variety for every strongly finitary monad $\Tmon$.
For the base category $\UMet$ the same result holds -- but here no extra assumption is needed, see Remark~\ref{R:comp}.

\begin{notation}
\phantom{phantom}
\begin{enumerate}
	\item Throughout this section $\Sigma$ denotes a finitary signature, i.e., a set of (operation) symbols $\sigma$ with prescribed arities in $\Nat$.
	\item For convenience, we assume that a countable infinite set $V$ of variables has been chosen.
	\item Throughout the rest of our paper $\eps$ and $\delta$ denote non-negative rational numbers.
\end{enumerate}
\end{notation}

The following concept was introduced in~\cite{mpp17}.

\begin{definition}
A \emph{quantitative algebra} of signature $\Sigma$ is a metric space $A$ equipped with a nonexpanding operation $\sigma_A : A^n \to A$ for every $n$-ary symbol $\sigma \in \Sigma$.
Thus the distance between $\sigma_A(a_i)$ and $\sigma_A(b_i)$ is at most $\max \{ d(a_0,b_0), \dots, d(a_{n-1},b_{n-1}) \}$.
A \emph{homomorphism} to a quantitative algebra $B$ is a nonexpanding function $f : A \to B$ preserving the operations:
\[
f \comp \sigma_A = \sigma_B \comp f^n
\]
for every $\sigma$ of arity $n$ in $\Sigma$.

The category of quantitative algebras is denoted by $\Sigma\text{-}\Met$.
\end{definition}

Analogously, an \emph{ultra-quantitative algebra} is a quantitative algebra on an ultrametric space.
The corresponding full subcategory of $\Sigma\text{-}\Met$ is denoted by $\Sigma\text{-}\UMet$.

\begin{example}
\label{E:monoid}
Quantitative monoids.
These are monoids $M$ acting on a metric space with nonexpansive multiplicationfrom $M \times M$ to $M$.
Explicitly:
\begin{center}
if $d(x,x') = \eps_1$ and $d(y,y') = \eps_2$, then $d(xy,x'y') \leq \max\{ \eps_1, \eps_2 \}$.
\end{center}
A free quantitative monoid $TM$ on a space $M$ is the coproduct of powers $M^n = M \times \dots \times M$:
\[
TM = \Coprod_{n \in \Nat} M^n.
\]
This endofunctor is strongly finitary (Example~\ref{E:sa}~(2)).
\end{example}

\begin{remark}
In contrast, the category of monoids in the monoidal category $\Met$ does not yield a strongly finitary monad.
If
\[
M^{\underline{n}} = M \tensor \dots \tensor M
\]
denotes the $n$-th tensor power, a free monoid on a space $M$ is given by
\[
\wt{T}M = \Coprod_{n \in \Nat} M^{\underline{n}}.
\]
The corresponding endofunctor is not even enriched.
Consider $M = \{ a_1, a_2 \}$ with $d(a_1,a_2) = 1$.
We have the obvious morphisms $f_1, f_2 : \One \to M$ with $d(f_1,f_2) = 1$.
However, $d(f_1^{\underline{n}},f_2^{\underline{n}}) = n$, thus, $d(\wt{T}f_1,\wt{T}f_2) = \infty$.
This indicates that Mardare et al.\ made the right choice when defining quantitative algebras.
However, those are much more restrictive than their `relatives' using the tensor product.
For example both $(\mathbb{R},+,0)$ and $([0,1],\cdot,1)$ are monoids in $\Met$, but they are not quantitative monoids.
\end{remark}

\begin{example}
\label{E:term}
In universal algebra, for every set $X$ (of variables) the algebra $T_\Sigma X$ of \emph{terms} is defined as the smallest set containing the variables, and such that given $\sigma\in \Sigma$ of arity $n$ and $n$ terms $t_i$ ($i < n$) we obtain a (composite) term $\sigma(t_i)_{i<n}$.
For metric spaces this was generalized in~\cite{bmpp18} and we recall the construction here.
\begin{enumerate}
	\item Two terms $t$ and $t'$ in $T_\Sigma X$ are called \emph{similar} if they have the same structure and differ only in the names of variables.
	That is, similarity is the smallest equivalence relation on $T_\Sigma X$ such that
	\begin{enumerate}
		\item all variables in $X$ are pairwise similar, and
		\item a term $\sigma(t_i)_{i<n}$ is similar to precisely those terms $\sigma(t_i')_{i<n}$ such that $t_i$ is similar to $t_i'$ for each $i < n$.
	\end{enumerate}
	\item Given a metric space $M$ on a set $X$, we denote by $T_\Sigma M$ the quantitative algebra on $T_\Sigma X$ where the metric extends that of $M$ as follows: the distance of non-similar terms is $\infty$, and for similar terms $\sigma(t_i)_{i<n}$ and $\sigma(t_i')_{i<n}$ the distance is $\bigvee_{i<n} d(t_i, t_i')$.
	
	It follows that the metric space $T_\Sigma M$ is a coproduct of powers $M^k$ (with the supremum metric), one summand for each similarity class $[t]$ of terms $t$ on $k$ variables.
	The function $(t_i)_{i<n} \mapsto \sigma(t_i)_{i<n}$ of composite terms is clearly a nonexpanding operation $\sigma_{T_\Sigma X} : (T_\Sigma X)^n \to T_\Sigma X$.
	
\end{enumerate}
\end{example}

\begin{remark}
If $M$ is a (complete) ultrametric space, then so is $T_\Sigma M$: the categories $\UMet$ and $\CUMet$ are closed under finite products and coproducts in $\Met$.
\end{remark}

\begin{example}
For our discrete space $V = \{ x_i \mid i \in \Nat \}$ of variables we get the discrete algebra $T_\Sigma V$ of terms (in the usual sense of universal algebra).
\end{example}

\begin{remark}
\label{R:Barr}
\phantom{phantom}
\begin{enumerate}
	\item Barr proved in~\cite{B} that for every endofunctor $H$ of $\Met$ which generates a free monad $\Tmon$ the category $\Met^\Tmon$ is equivalent to the category of \emph{$H$-algebras}.
	These are pairs $(A,\alpha)$ consisting of a metric space $A$ and a morphism $\alpha : HA \to A$.
	The morphisms of $H$-algebras $f: (A,\alpha) \to (A',\alpha')$ are the nonexpanding maps $f: A \to A'$ satisfying $f \comp \alpha = \alpha' \comp Hf$.
	
	\item If $H$ is $\lambda$-accessible, then a free $H$-algebra exists on every metric space $X$.
	As proved in~\cite{A}, the free $H$-algebra is the colimit of the $\lambda$-chain in $\Met$ whose objects $W_i$ and connecting morphisms $w_{ij} : W_i \to W_j$ for $i \leq j < \lambda$ are given as follows:
	\begin{align*}
	W_0 & = \emptyset \\
	W_{i+1} & = HW_i + X \\
	W_j & = \colim_{i < j} W_i \text{ for limit ordinals } j < \lambda
	\end{align*}
	and
	\begin{align*}
	w_{i+1,j+1} & = H w_{i,j} + \id_X \text{ for all } i \leq j < \lambda \\
	(w_{ij})_{i < \lambda} & \text{ is the colimit cocone for limit ordinals } j.
	\end{align*}
	
	\item The free monad on a $\lambda$-accessible endofunctor $H$ is the monad of free $H$-algebras, see~\cite{B}.
	
	\item The category $\Sigma\text{-}\Met$ is clearly isomorphic to the category of $H_\Sigma$-algebras for the polynomial functor (Example~\ref{E:sa}).
\end{enumerate}
\end{remark}

\begin{proposition}
\label{P:poly}
For every metric space $X$ the quantitative algebra $T_\Sigma X$ is free on $X$ w.r.t.\ the inclusion map $\eta_X : X \to T_\Sigma X$.
\end{proposition}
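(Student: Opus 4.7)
The plan is to reduce the statement to classical universal algebra for existence and uniqueness of the underlying function, and then verify the metric condition by structural induction on terms. So let $A$ be any quantitative $\Sigma$-algebra and $f : X \to A$ a nonexpanding map. I first forget the metric and use that the discrete algebra $T_\Sigma|X|$ is the free $\Sigma$-algebra on $|X|$ in the ordinary sense of universal algebra. This gives a unique $\Sigma$-algebra homomorphism $\bar f : T_\Sigma X \to A$ extending $f$, defined recursively by $\bar f(x) = f(x)$ on variables and $\bar f\bigl(\sigma(t_i)_{i<n}\bigr) = \sigma_A\bigl(\bar f(t_i)\bigr)_{i<n}$. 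This automatically settles uniqueness: any homomorphism of quantitative algebras extending $f$ is in particular a $\Sigma$-algebra homomorphism, hence equals $\bar f$.

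The only real content is showing that $\bar f$ is nonexpanding, i.e.\ $d_A(\bar f(t),\bar f(t')) \leq d_{T_\Sigma X}(t,t')$ for all $t,t' \in T_\Sigma X$. If $t$ and $t'$ are non-similar, the right-hand side is $\infty$ and there is nothing to prove, so it suffices to handle pairs of similar terms by induction on the common structure. In the base case $t = x$, $t' = y$ with $x,y \in X$, the inequality $d_A(f(x),f(y)) \leq d_X(x,y)$ is exactly nonexpansivity of $f$. In the inductive step $t = \sigma(t_i)_{i<n}$ and $t' = \sigma(t_i')_{i<n}$ with $t_i$ similar to $t_i'$; by Example~\ref{E:term}~(2) the distance $d_{T_\Sigma X}(t,t')$ equals $\bigvee_{i<n} d_{T_\Sigma X}(t_i,t_i')$, and by nonexpansivity of $\sigma_A$ together with the inductive hypothesis we obtain
\[
d_A\bigl(\bar f(t),\bar f(t')\bigr) \leq \max_{i<n} d_A\bigl(\bar f(t_i),\bar f(t_i')\bigr) \leq \max_{i<n} d_{T_\Sigma X}(t_i,t_i') = d_{T_\Sigma X}(t,t'),
\]
completing the induction.

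No step looks like a real obstacle: the construction of $\bar f$ is purely combinatorial, and the metric estimate falls out immediately from the definition of the metric on $T_\Sigma X$ as a coproduct of powers indexed by similarity classes (Example~\ref{E:term}~(2)), which precisely matches the behaviour forced by the nonexpansivity of the operations $\sigma_A$. The universal property of $\eta_X$ thus follows with no appeal to any transfinite construction, and the proof is essentially a one-paragraph verification.
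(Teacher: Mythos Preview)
Your proof is correct and more elementary than the paper's. The paper proceeds indirectly: it invokes the general free-algebra construction for $\lambda$-accessible endofunctors (Remark~\ref{R:Barr}), applied to the polynomial functor $H_\Sigma$, building the free algebra as the colimit of the $\omega$-chain $W_0 = \emptyset$, $W_{i+1} = H_\Sigma W_i + X$, and then identifies this colimit with $T_\Sigma X$ by observing that $W_i$ is the subspace of terms of height less than $i$ and that the connecting maps are isometric embeddings. You instead verify the universal property directly, using classical freeness of the underlying term algebra for existence and uniqueness of $\bar f$ as a set map, and a short structural induction on similar pairs for nonexpansiveness. Your route is self-contained and avoids the chain colimit machinery; the paper's route has the advantage of plugging into the general $H$-algebra framework, which is reused later (e.g.\ in Remark~\ref{R:gn} for generalized signatures, where a direct combinatorial description of terms is less transparent).
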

\begin{proof}
The functor $H_\Sigma$ is clearly finitary.
Thus by Remark~\ref{R:Barr} a free $H_\Sigma$-algebra on $X$ is a colimit of the following $\omega$-chain $W$:
\[
W_0 = \emptyset, \qquad W_{i+1} = H_\Sigma W_i + X
\]
for all $i \in \Nat$.
It is easy to verify that each connecting morphism $w_{ij}$ is an isometric embedding.
Thus, the colimit is given by $\bigcup_{i < j} W_i$ with the induced metric.
It is also easy to verify that $W_i$ is isomorphic to the subspace of $T_\Sigma X$ on all terms of height less than $i$.
Thus, $T_\Sigma X = \bigcup_{i < \omega} W_i$ is the free $\Sigma$-algebra on $X$.
\end{proof}

\begin{notation}
\label{N:ext}
For every $\Sigma$-algebra $A$ and every morphism $f: X \to A$ we denote by $f^\sharp : T_\Sigma X \to A$ the unique homomorphic extension (defined in $t = \sigma(t_i)_{i<n}$ by $f^\sharp(t) = \sigma_A(f^\sharp(t_i)_{i<n})$).
\end{notation}

\begin{corollary}
\label{C:term}
The functor $T_\Sigma$ is a coproduct of functors $(\blank)^n$ for $n \in \Nat$. Thus the monad $\Tmon_\Sigma$ of free $\Sigma$-algebras is strongly finitary.
\end{corollary}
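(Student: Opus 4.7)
The plan is to bolt together Example~\ref{E:term}, Example~\ref{E:pown}, Example~\ref{E:sa} and Proposition~\ref{P:poly}; no new technology is needed. First I would upgrade the description in Example~\ref{E:term}(2) from a pointwise statement to a functor isomorphism. For each metric space $M$ on a set $X$, $T_\Sigma M$ is the coproduct, indexed by similarity classes $[t]$ of terms $t \in T_\Sigma V$, of copies of the power $M^{k[t]}$, where $k[t]$ denotes the number of variable slots of any (hence every) representative of $[t]$, endowed with the supremum metric. (The index set is the set of similarity classes, which is countable since $\Sigma$ is a finitary signature and $V$ is countable, but the size of the indexing set will not matter.)

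Next I would observe that this decomposition is natural in $M$. Given a nonexpanding $f: M \to M'$, the map $T_\Sigma f$ sends $\sigma(t_i)_{i<n} \mapsto \sigma(T_\Sigma f(t_i))_{i<n}$ and hence preserves the similarity relation; restricted to the summand labelled by $[t]$ it acts precisely as $f^{k[t]}$. Consequently we have a natural isomorphism
\[
T_\Sigma \;\cong\; \coprod_{[t]} (\blank)^{k[t]}
\]
in $[\Met,\Met]$, which is the first assertion of the corollary.

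For strong finitarity, each $(\blank)^n$ is strongly finitary by Example~\ref{E:pown}(1), and Example~\ref{E:sa}(1) records that strongly finitary endofunctors of $\Met$ are closed in $[\Met,\Met]$ under (arbitrary) weighted colimits, and therefore in particular under coproducts. Hence the displayed coproduct $T_\Sigma$ is strongly finitary. Finally, Proposition~\ref{P:poly} identifies $T_\Sigma X$ with the underlying space of the free quantitative $\Sigma$-algebra on $X$, so by Remark~\ref{R:Barr}(3)(4) the underlying endofunctor of the monad $\Tmon_\Sigma$ of free $\Sigma$-algebras is precisely $T_\Sigma$. Since a monad is strongly finitary iff its underlying endofunctor is, $\Tmon_\Sigma$ is strongly finitary.

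There is really no hard step here; the only thing needing a line of verification is the naturality of the coproduct decomposition of $T_\Sigma$, and that is immediate from the inductive definition of $T_\Sigma f$. Everything else is a citation of earlier results.
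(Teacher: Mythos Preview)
Your proposal is correct and follows essentially the same approach as the paper: both arguments observe that $T_\Sigma$ is (naturally) a coproduct of power functors indexed by similarity classes, and then invoke Example~\ref{E:pown}(1) together with Example~\ref{E:sa}(1) to conclude strong finitarity. Your write-up is a bit more careful about naturality and about identifying $T_\Sigma$ with the underlying endofunctor of $\Tmon_\Sigma$; one small slip is that the set of similarity classes need not be countable (the signature $\Sigma$ may have uncountably many symbols), but as you note the cardinality is irrelevant to the argument.
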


Indeed, for every space $M$ we have seen that $T_\Sigma M$ is a coproduct of the powers $M^n$, one summand for each similarity equivalence class of terms on $n$ variables.
Since $M$ itself forms just one class, we see that the similarity classes are independent of the choice of $M$.
Thus $T_\Sigma (\blank) = \coprod (\blank)^n$ where the coproduct ranges over similarity classes of $T_\Sigma 1$.
It follows that $T_\Sigma$ is strongly finitary by Example~\ref{E:sa}.

The following definition stems from~\cite{mpp16}.
Recall that $\eps$ denotes an arbitrary non-negative rational number.

\begin{remark}
If $M$ is an ultrametric space, then $T_\Sigma M$ is an ultra-quantitative algebra.
This is the free algebra of $\Sigma\text{-}\UMet$ on $M$; yielding a strongly finitary endofunctor $T_\Sigma$ on $\UMet$.
\end{remark}

\begin{definition}
	\label{D:qe}
	\phantom{phantom}
	\begin{enumerate}
		\item A \emph{quantitative equation} is an expression
		\[
		l =_{\eps} r
		\]
		where $l$ and $r$ are terms in $T_\Sigma V$.
		Ordinary equations $l = r$ are the special case of $\eps = 0$.
		\item A quantitative algebra $A$ \emph{satisfies} the quantitative equation if every interpretation of variables $f : V \to |A|$ fulfils
		\[
		d(f^\sharp(l), f^\sharp(r)) \leq \eps.
		\]
	\end{enumerate}
\end{definition}

\begin{example}
\phantom{phantom} \label{E:quant}
\begin{enumerate}
	\item Our restriction to \emph{rational} numbers $\eps$ plays no substantial role: consider $\eps > 0$ irrational.
	Then the `irrational quantitative equation` $l =_\eps r$ is equivalent to the set $l =_{\eps_n}$ of quantitative equations for an arbitrary decreasing sequence $(\eps_n)$ of rational numbers with $\lim \eps_n = \eps$ in $\mathbb{R}$.
	\item \emph{Almost commutative monoids}. For a given constant $\eps > 0$ we call a quantitative monoid (Example~\ref{E:monoid}) almost commutative if the distance of $ab$ and $ba$ is always at most $\eps$.
	This is presented by the following quantitative equation:
	\[
	x * y =_\eps y * x.
	\]
	\item \emph{Quantitative semilattices}. Recall that a semilattice (with $0$) is a poset with finite joins, or equivalently a commutative and idempotent monoid $(M,+,0)$. (The order $x \leq y$ is defined by $x + y = y$.)
	A quantitative semilattice is a quantitative monoid which is commutative and idempotent.
	\item \emph{Almost semilattices} are almost commutative quantitative monoids which are \emph{almost idempotent}, i.e.\ they satisfy
	\[
	x * x =_\eps x.
	\]
	\item \emph{Almost small spaces} are metric spaces with distances at most $\eps$ (a given constant).
	These are algebras with empty signature satisfying $x =_\eps y$.
\end{enumerate}
\end{example}

\begin{definition}
A \emph{variety} of quantitative (or ultra-quantitative) $\Sigma$-algebras is a full subcategory of $\Sigma\text{-}\Met$ (or $\UMet$, resp.) specified by a set $E$ of quantitative equations.
Thus an algebra lies in the variety iff it satisfies every equation of $E$.
\end{definition}

Observe that the category $\Sigma\text{-}\Met$ is complete: $\Met$ is complete (Proposition~\ref{P:coco}) and the forgetful functor to $\Met$ creates limits.
In particular, a product $\prod_{i \in I} A_i$ of $\Sigma$-algebras is the cartesian product with coordinate-wise operations and the supremum metric:
\[
d((a_i),(a_i')) = \sup_{i \in I} d(a_i,a_i').
\]
The same holds for $\Sigma\text{-}\UMet$.

\begin{lemma}
\label{L:fact}
The categories $\Sigma\text{-}\Met$ and $\Sigma\text{-}\UMet$ have the factorization system with
\begin{align*}
\EE & = \text{surjective homomorphisms, and} \\
\MM & = \text{homomorphisms that are isometric embeddings.}
\end{align*}
\end{lemma}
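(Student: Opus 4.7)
The plan is to lift the factorization system $(\EE,\MM)$ on $\Met$ from Remark~\ref{R:prec}~(2) to the algebraic categories, by endowing the image of every homomorphism with a canonical $\Sigma$-algebra structure. Given a homomorphism $h: A \to B$, I would first factor the underlying nonexpanding map as $h = m \comp e$ with $e: A \to C$ surjective and $m: C \to B$ an isometric embedding, using the factorization system on $\Met$.

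Next I would equip $C$ with operations. For every $n$-ary symbol $\sigma \in \Sigma$, commutativity of
\[
\begin{tikzcd}
A^n \ar[r, "e^n"] \ar[d, swap, "e\,\comp\,\sigma_A"] & C^n \ar[d, "\sigma_B\,\comp\,m^n"] \\
C \ar[r, swap, "m"] & B
\end{tikzcd}
\]
follows from $h \comp \sigma_A = \sigma_B \comp h^n$ and $h = m \comp e$. Since $e^n$ is surjective (finite products of surjections in $\Met$ are surjective, as the max-metric on the product is computed coordinatewise) and $m$ is an isometric embedding, the diagonal fill-in for $(\EE,\MM)$ on $\Met$ yields a unique nonexpanding $\sigma_C: C^n \to C$ making both triangles commute. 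The two commutativities say precisely that $e$ and $m$ preserve $\sigma$. Doing this for every symbol produces a $\Sigma$-algebra structure on $C$, and turns $e \in \EE$ and $m \in \MM$ into $\Sigma$-homomorphisms.

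It remains to verify the axioms of a factorization system in $\Sigma\text{-}\Met$. Closure of $\EE$ and $\MM$ under composition and the presence of all isomorphisms are inherited from $\Met$, since composites and isomorphisms of $\Sigma$-homomorphisms are themselves $\Sigma$-homomorphisms. For the diagonal fill-in property, given $\Sigma$-homomorphisms $u,v$ fitting in a commutative square with $e' \in \EE$ and $m' \in \MM$, I would first take the unique diagonal $d$ in $\Met$. To see that $d$ is a $\Sigma$-homomorphism, observe that for each $\sigma$ of arity $n$ both $d \comp \sigma_C$ and $\sigma_D \comp d^n$ are fill-ins for one and the same $(\EE,\MM)$-square (the one obtained by multiplying $e'$ and precomposing/postcomposing with $\sigma$); by uniqueness of the diagonal in $\Met$ they coincide.

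For $\Sigma\text{-}\UMet$ nothing changes: the intermediate space $C$ is a metric subspace of the ultrametric space $B$, hence itself ultrametric, and the construction of $\sigma_C$ lands in $\UMet$ because $\UMet$ is closed under finite products and under the subspace construction. I expect the main technical point to be the verification that the diagonal in the fill-in step is automatically a $\Sigma$-homomorphism, but this is dispatched cleanly by invoking the uniqueness part of the $(\EE,\MM)$-fill-in in $\Met$; no genuinely new calculation is needed beyond the diagonal-fill-in manipulations already in evidence.
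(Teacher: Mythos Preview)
Your proposal is correct and follows essentially the same approach as the paper: lift the $(\EE,\MM)$ factorization system from $\Met$ by giving the image the induced $\Sigma$-algebra structure, and observe that the diagonal fill-in in $\Met$ is automatically a homomorphism. The paper's proof is terser---it simply notes that the image $h[A]$ is closed under the operations of $B$ and that the fill-in lifts---while you spell out the same content via explicit diagonal-fill-in squares, but the underlying argument is identical.
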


\begin{proof}
Given a homomorhism $f : X \to Y$ observe that $f[Y]$ is closed under the operations of $Y$.
The diagonall fill-in property lifts from $\Met$ (or $\UMet$) to $\Sigma\text{-}\Met$.
\end{proof}

\begin{definition}
Let $A$ be a quantitative $\Sigma$-algebra.
By a \emph{subalgebra} we mean a subobject in $\Sigma\text{-}\Met$ represented by an isometric embedding.
By a \emph{homomorphic image} we mean a quotient object in $\Sigma\text{-}\Met$ represented by a surjective homomorphism.

Analogously for subalgebras and homomorphic images in $\Sigma\text{-}\UMet$.
\end{definition}

The following was formulated in~\cite{mpp17}, a proof can be found in \cite{mu}, B19-B20:

\begin{theorem}[Birkhoff's Variety Theorem]
\label{T:BVT}
A full subcategory of $\Sigma\text{-}\Met$ or $\Sigma\text{-}\UMet$ is a variety iff it is closed under products, subalgebras, and homomorphic images.
\end{theorem}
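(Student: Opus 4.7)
The plan is to prove both implications by standard HSP-style arguments, adapted to the quantitative setting. For necessity I would verify, directly from Definition~\ref{D:qe}, that each of the three closure operations preserves satisfaction of an arbitrary quantitative equation $l =_\eps r$. Products use that the product metric is the supremum over coordinates and that projections are nonexpanding, so any interpretation commutes with projections and its image-distance equals the sup over coordinates. Subalgebras use that the inclusion is isometric. For a surjective homomorphism $q : A \to A'$ one lifts an interpretation $f : V \to |A'|$ set-theoretically to $\tilde f : V \to |A|$, observes $q \comp \tilde f^\sharp = f^\sharp$ by uniqueness of the homomorphic extension, and applies nonexpansiveness of $q$.

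For the substantive sufficiency direction, let $\Vvar$ be closed under the three operations and let $E$ be the set of all quantitative equations $l =_\eps r$ (with $l,r \in T_\Sigma V$) satisfied by every member of $\Vvar$. Then $\Vvar \subseteq \Vvar_E$ tautologically, so the task is the reverse inclusion. Fix $A \in \Vvar_E$, put $X = |A|$ (as a discrete space), and let $\pi : T_\Sigma X \to A$ be the homomorphism extending the identity $X \to |A|$, which is surjective because $X$ generates $A$. Consider the (set-sized) family $\mathcal I$ of pairs $(B,f)$ where $f : X \to B$ is any map and $B \in \Vvar$ is generated by $f(X)$, so $|B| \leq |T_\Sigma X|$. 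Since $X$ is discrete, every such $f$ is automatically nonexpanding and extends to a homomorphism $\hat f : T_\Sigma X \to B$; tupling over $\mathcal I$ yields a homomorphism $\Phi : T_\Sigma X \to P$ into the product $P = \prod_{(B,f) \in \mathcal I} B$, which lies in $\Vvar$ by closure under products. The image factorisation from Lemma~\ref{L:fact} produces a subalgebra $P_0 \subseteq P$ (in $\Vvar$) together with a surjection $\Phi_0 : T_\Sigma X \to P_0$.

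The heart of the argument is the inequality $d_A(\pi(t),\pi(t')) \leq d_P(\Phi(t),\Phi(t'))$ for all $t,t' \in T_\Sigma X$, which lets $\pi$ descend along $\Phi_0$ to a surjective nonexpanding homomorphism $\pi_0 : P_0 \to A$, exhibiting $A$ as a homomorphic image of $P_0 \in \Vvar$. The right-hand side equals $\sup_{(B,f) \in \mathcal I} d_B(\hat f(t),\hat f(t'))$ by the sup metric on $P$. Given $\delta$ above this supremum, the terms $t,t'$ use only finitely many variables $x_{i_1},\dots,x_{i_k} \in X$; substituting distinct $v_1,\dots,v_k \in V$ produces $\tilde t, \tilde t' \in T_\Sigma V$. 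Every $B \in \Vvar$ then satisfies $\tilde t =_\delta \tilde t'$: any interpretation $g : V \to |B|$ extends to some $f : X \to |B|$ with $(B_0,f) \in \mathcal I$ for $B_0$ the subalgebra generated by $f(X)$, and the assumption gives $d_B(g^\sharp(\tilde t),g^\sharp(\tilde t')) = d_{B_0}(\hat f(t),\hat f(t')) \leq \delta$. Hence $\tilde t =_\delta \tilde t' \in E$, and since $A \in \Vvar_E$, instantiating via $v_j \mapsto x_{i_j}$ in $A$ yields the bound on $d_A(\pi(t),\pi(t'))$. The ultrametric case is identical because $T_\Sigma$, products and subalgebras all stay inside $\UMet$. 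The main obstacle—the reason for the detour through $V$ rather than working directly with $X$—is that the equations of $E$ live in the fixed countable variable set $V$; the argument goes through precisely because terms have only finitely many variables, so the substitution trick faithfully transports quantitative information from $X$-indexed interpretations in $\Vvar$ back to bona fide elements of $E$.
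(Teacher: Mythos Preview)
The paper does not actually prove this theorem: it attributes the formulation to Mardare, Panangaden and Plotkin and refers to Milius and Urbat for a proof, adding only the observation that the $\UMet$ case follows because $\UMet$ is closed in $\Met$ under products, subspaces and quotient spaces. Your argument is therefore not a comparison target but a self-contained proof, and it is essentially correct: it is the classical Birkhoff HSP argument, with the metric content concentrated in the single inequality $d_A(\pi(t),\pi(t')) \leq d_{P_0}(\Phi_0(t),\Phi_0(t'))$, which you establish by the finite-support substitution trick.

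Two small points worth tightening. First, the constant $\delta$ you pick above the supremum must be taken \emph{rational}, since by the paper's convention quantitative equations $l =_\eps r$ have rational $\eps$; this is harmless because rational $\delta$ are cofinal and the inequality then follows by taking the infimum. (If the supremum is $\infty$ the inequality is vacuous.) Second, when you write ``extends to some $f : X \to |B|$ with $(B_0,f) \in \mathcal I$'', strictly speaking $\mathcal I$ contains isomorphism representatives, so you are using that the relevant distance is invariant under passing to an isomorphic copy of $B_0$; this is routine but should be said. With these cosmetic fixes the proof goes through, and the descent of $\pi$ along the surjection $\Phi_0$ to a nonexpanding homomorphism is exactly the diagonal fill-in of Lemma~\ref{L:fact}.
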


In fact, loc.~cit.\ only works for $\Sigma\text{-}\Met$.
Since $\UMet$ is closed under products, subspaces and quotient spaces in $\Met$, that result immediately implies the case of $\Sigma\text{-}\UMet$, too.

\begin{corollary}
\label{C:left}
For every variety $\Vvar$ the forgetful functor $U: \Vvar \to \Met$ has a left adjoint $F \adj U$.
\end{corollary}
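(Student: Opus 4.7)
The plan is to build $F$ as a composite of the free $\Sigma$-algebra functor (Proposition~\ref{P:poly}) with a reflector of $\Sigma\text{-}\Met$ onto $\Vvar$. Since the forgetful functor $U : \Vvar \to \Met$ factors as $\Vvar \hookrightarrow \Sigma\text{-}\Met \to \Met$ and the second arrow already has a left adjoint $T_\Sigma$, it suffices to exhibit a left adjoint to the inclusion $\Vvar \hookrightarrow \Sigma\text{-}\Met$.

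To build the reflector, I would fix a quantitative $\Sigma$-algebra $A$ and consider all surjective homomorphisms $q : A \to V$ with codomain in $\Vvar$. Up to isomorphism these form a set: the underlying set of any such $V$ has cardinality at most $|A|$, and the metric and $\Sigma$-algebra structures on a fixed set form a set. Choosing a representative family $\{q_i : A \to V_i\}_{i \in I}$, the product $\prod_i V_i$ lies in $\Vvar$ by Birkhoff's Theorem~\ref{T:BVT}, and the induced homomorphism $\langle q_i \rangle : A \to \prod_i V_i$ factors by Lemma~\ref{L:fact} as
\[
A \xrightarrow{\; r_A \;} A^\Vvar \hookrightarrow \prod_i V_i,
\]
with $r_A$ surjective. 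Closure of $\Vvar$ under subalgebras then yields $A^\Vvar \in \Vvar$.

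To verify universality, let $h : A \to V$ be any homomorphism with $V \in \Vvar$. Factor it as $A \twoheadrightarrow h[A] \hookrightarrow V$; closure of $\Vvar$ under subalgebras forces $h[A] \in \Vvar$, so $h[A]$ is isomorphic to some $V_i$, and $h$ equals $q_i$ followed by an isometric embedding into $V$. Composing the projection $\prod_j V_j \to V_i$ with this embedding and applying the diagonal fill-in of Lemma~\ref{L:fact} to $r_A$ produces a unique morphism $h^\Vvar : A^\Vvar \to V$ with $h = h^\Vvar \comp r_A$; uniqueness is automatic from the surjectivity of $r_A$. Thus $r_A$ is the reflection of $A$ in $\Vvar$, and the composite $F = (\blank)^\Vvar \comp T_\Sigma$ is left adjoint to $U$. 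The argument transfers verbatim to $\Sigma\text{-}\UMet$, since $\UMet$ is closed in $\Met$ under products, isometric subspaces and surjective quotients, so Lemma~\ref{L:fact} still supplies the required factorization.

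The one genuinely delicate step is the set-theoretic bookkeeping in the second paragraph, which is needed to ensure that the collection of surjective quotients of $A$ landing in $\Vvar$ forms a set rather than a proper class. Everything else is a formal consequence of Birkhoff's theorem combined with the factorization system of Lemma~\ref{L:fact}.
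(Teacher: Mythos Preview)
Your overall strategy coincides with the paper's: show that $\Vvar$ is reflective in $\Sigma\text{-}\Met$ (using closure under products and subalgebras from Birkhoff's Theorem~\ref{T:BVT}) and then compose the reflector with $T_\Sigma$. The paper simply cites a standard result (\cite{ahs}, Theorem~16.8) for $\EE$-reflectivity, whereas you unpack that result by hand via the solution-set of surjective quotients. That is a legitimate and essentially equivalent argument; the diagonal fill-in you invoke is not really needed (the map $A^\Vvar \hookrightarrow \prod_j V_j \to V_i \hookrightarrow V$ already does the job), but no harm is done.

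There is, however, a genuine gap. By the paper's standing convention all functors are enriched, so the statement $F \dashv U$ is asserting an \emph{enriched} adjunction: the left adjoint $F$ must be locally nonexpanding. Your construction only produces an ordinary left adjoint, and nothing in the reflector-via-products argument guarantees that $X \mapsto (T_\Sigma X)^\Vvar$ is nonexpanding on hom-spaces. The paper addresses this explicitly: given $f,g : X \to Y$ with $d(f,g) = \eps$, it verifies $d(Ff,Fg) \leq \eps$ by showing that the set $P = \{p \in FX \mid d(Ff(p),Fg(p)) \leq \eps\}$ contains $\eta_X[X]$ and is closed under the operations of $\Sigma$ (using that operations are nonexpanding and that $Ff$, $Fg$ are homomorphisms), hence equals all of $FX$ since $\eta_X[X]$ generates. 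You should add this step, or else argue abstractly that the ordinary adjunction is automatically enriched (which would require checking that $U$ preserves cotensors, something not established in the paper).
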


Indeed, since $\Vvar$ is closed under products and subalgebras (i.e., $\MM$-subobjects), it is a reflective subcategory of $\Sigma\text{-}\Met$ with reflections in $\EE$ (\cite{ahs}, Theorem~16.8).
Thus, a free algebra $FX$ of $\Vvar$ on a space $X$ is a reflection of the free algebra $T_\Sigma X$ of Proposition~\ref{P:poly}.

To see that the corresponding functor $F: \Met \to \Vvar$ is enriched, consider morphisms $f,g : X \to Y$ in $\Met$ with $d(f,g) = \eps$.
To verify $d(Ff, Fg) \leq \eps$, denote by $P \subseteq FX$ the set of all $p \in FX$ with $d(Ff(p), Fg(p)) \leq \eps$.
We prove that $P = FX$.
First, $P$ contains $\eta_X[X]$: if $p = \eta_X(x)$, then from $d(f(x),g(x)) \leq \eps$ we derive $d(Ff(p),Fg(p)) = d(\eta_Y \comp f(x), \eta_Y \comp g(x)) \leq \eps$ because $\eta_Y$ is nonexpanding.
Thus, to prove $P = FX$, it is sufficient to verify that $P$ is closed under the operations.
Let $\sigma$ be an $n$-ary symbol and $p = \sigma(p_i)$ with $p_i \in P$ for each $i < n$.
Since $Ff$ and $Fg$ are homomorphisms, we get
\begin{align*}
d(Ff(p), Fg(p)) & = d(\sigma_{FX}(Ff(p_i)), \sigma_{FX}(Fg(p_i))) \\
& \leq \sup_{i} d(Ff(p_i),Fg(p_i)) \leq \eps.
\end{align*}

\begin{notation}
For every variety $\Vvar$ we denote by $\Tmon_\Vvar$ the free-algebra monad on $\Met$, carried by $UF$.
\end{notation}

\begin{example}
\label{E:mons}
\phantom{phantom}
\begin{enumerate}
	\item The monad of quantitative monoids is the lifting of the word monad $TM = M^*$ on $\Set$ such that $M^* = \coprod_{n < \omega} M^n$ in $\Met$.
	(Words of different lengths have distance $\infty$.)
	\item The monad of commutative quantitative monoids is an analogous lifting of the corresponding monad on $\Set$: $TM$ is a coproduct of the spaces $M^n/{\sim}$ where $(x_i) \sim (y_i)$ iff there is a permutation $p$ on $n$ with $y_i = x_{p(i)}$ for all $i$.
	\item In contrast, the monad of almost commutative monoids is a lifting of the word monad : $TM = M^*$, but the metric is more complex than in (1).
	For example,
	\[
	d(x*y,y*x) = d(x,y) \wedge \eps.
	\]
	\item As proved in~\cite{mpp16}, the monad of quantitative semilattices is given by the finitary Hausdorff functor $\mathcal{H}_f$ (Example~\ref{E:Haus}~(2)).
\end{enumerate}
\end{example}

Recall the \emph{comparison functor}
\[
K: \Vvar \to \Met^{\Tmon_\Vvar}
\]
assigning to every algebra $A$ in $\Vvar$ the Eilenberg-Moore algebra
\[
KA \equiv T_\Vvar A \xrightarrow{\alpha} A
\]
given by the unique homomorphism $\alpha$ with $\alpha \comp \eta_A  = \id_A$.

\begin{example}
If $\Vvar = \Sigma\text{-}\Met$, then $\alpha = \eps_A^\sharp : T_\Vvar A \to A$ evaluates terms in $T_\Sigma A$ as elements of $A$.
The comparison functor is an isomorphism.
The verification is analogous to the classical $\Sigma$-algebras, see~\cite{maclane:cwm}, Theorem~VI.8.1.
\end{example}

\begin{definition}
\label{D:concr}
By a \emph{concrete category} over $\Met$ is meant a pair $(\A,U)$, where $\A$ is an (enriched) category and $U: \A \to \Met$ an (enriched) faithful functor: $d(f,g) = d(Uf,Ug)$ holds for all parallel pairs $f$, $g$ in $\A$.

A \emph{concrete functor} from $(\A,U)$ to $(\A',U')$ is a functor $F: \A \to \A'$ with $U = U' \comp F$.
If $F$ is an isomorphism, we say that $(\A,U)$ is \emph{concretely isomorphic} to $(\A',U')$.
\end{definition}

\begin{proposition}[See~\cite{rosicky:metric-monads}, 3.8]
\label{P:acc}
Every variety $\Vvar$ of algebras is concretely isomorphic to $\Met^{\Tmon_\Vvar}$, the Eilenberg-Moore algebras of its free-algebra monad: the comparison functor is a concrete isomorphism.
\end{proposition}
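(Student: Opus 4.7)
The plan is to apply Beck's monadicity theorem to the forgetful functor $U: \Vvar \to \Met$. The left adjoint $F$ exists by Corollary~\ref{C:left} and was shown there to be enriched; so once we establish that the underlying ordinary functor $U_\ordi$ is strictly monadic, Dubuc's theorem (used already in the proof of Proposition~\ref{P:lambda-acc}) upgrades this to enriched monadicity. The enriched comparison functor $K: \Vvar \to \Met^{\Tmon_\Vvar}$ is then an isomorphism, and the identity $U^{\Tmon_\Vvar} \comp K = U$, built into the construction of $K$, makes the isomorphism concrete.

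To verify the hypotheses of Beck, first I would check that $U$ reflects isomorphisms. If $f: A \to B$ is a homomorphism which is also an isometric bijection, its set-theoretic inverse $f^{-1}$ is automatically an isometry, and for every $n$-ary $\sigma \in \Sigma$ the computation
\[
f^{-1} \comp \sigma_B = f^{-1} \comp \sigma_B \comp f^n \comp (f^{-1})^n = f^{-1} \comp f \comp \sigma_A \comp (f^{-1})^n = \sigma_A \comp (f^{-1})^n
\]
shows $f^{-1}$ is a homomorphism, hence $f$ is an isomorphism in $\Vvar$.

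Next, I would show that $U$ creates coequalizers of $U$-split pairs. Let $f, g: A \to B$ be in $\Vvar$ such that $Uf, Ug$ admit a split coequalizer $q: UB \to Q$ in $\Met$. Split coequalizers are absolute, so the polynomial endofunctor $H_\Sigma$ (see Example~\ref{E:sa}(2)) sends this to a split coequalizer in $\Met$; in particular $H_\Sigma q$ coequalizes $H_\Sigma Uf$ and $H_\Sigma Ug$. Using that $f, g$ are homomorphisms for the structure $\beta_B: H_\Sigma UB \to UB$, the composite $q \comp \beta_B$ coequalizes $H_\Sigma Uf, H_\Sigma Ug$ and so factors uniquely as $\beta_Q \comp H_\Sigma q$ for a nonexpanding $\beta_Q: H_\Sigma Q \to Q$. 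Thus $(Q, \beta_Q)$ is a quantitative $\Sigma$-algebra, and $q: B \to (Q, \beta_Q)$ is a surjective homomorphism. By closure of the variety $\Vvar$ under homomorphic images (immediately from the equational presentation, or via Theorem~\ref{T:BVT}), $(Q, \beta_Q) \in \Vvar$. Faithfulness of $U$ together with the universal property of $q$ in $\Met$ promote $q$ to a coequalizer in $\Vvar$, and the structure $\beta_Q$ is the unique algebra structure making $q$ a homomorphism; this is precisely creation.

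The principal obstacle is the enriched-versus-ordinary bookkeeping, which Dubuc's theorem handles cleanly as above. A minor subtlety worth isolating is that we only need $U$-split (equivalently, $U$-absolute) coequalizers, not general ones: the algebra structure $\beta_Q$ is produced by the absoluteness of the split coequalizer along $H_\Sigma$, so we do not need cocompleteness of $\Vvar$, nor any term-level manipulation of the quantitative equations defining $\Vvar$.
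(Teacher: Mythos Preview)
The paper does not actually prove this proposition; it simply cites Rosick\'y~\cite{rosicky:metric-monads}, 3.8. Your argument via Beck's strict monadicity theorem is correct and is the standard route: reflection of isomorphisms is immediate, and creation of $U$-split coequalizers goes exactly as you describe, with absoluteness pushing the split coequalizer through $H_\Sigma$ to produce the algebra structure on $Q$, and closure under homomorphic images (via the split-epi $q$) placing $(Q,\beta_Q)$ in $\Vvar$. One small point you leave implicit but which is routine: the factoring map $\bar h\colon Q\to C$ of any coequalizing homomorphism $h$ is again a homomorphism because $H_\Sigma q$ is epic (a coproduct of powers of a surjection). The appeal to Dubuc to pass from ordinary to enriched monadicity is exactly how the paper handles the analogous step in Proposition~\ref{P:lambda-acc}, so your proof integrates cleanly with the surrounding material.
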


\begin{remark}
\label{R:dubuc}
In the next result we use a technique due to Dubuc~\cite{dubuc}.
This makes it possible to identify algebras for a monad $\Tmon$ with monad morphisms with domain $\Tmon$.
We recall this shortly here.
\begin{enumerate}
	\item Let $M$ be a metric space.
	We obtain the following adjoint situation:
	\[
	[\blank,M] \adj [\blank,M]: \Met \to \Met^\op.
	\]
	The corresponding monad on $\Met$ is enriched, we denote it by $\spitze{M,M}$.
	Thus
	\[
	\spitze{M,M}X = [[X,M],M]
	\]
	for all $X \in \Met$.
	\item Let $\Tmon$ be an enriched monad on $\Met$.
	Every algebra $\alpha: TM \to M$ in $\Met^\Tmon$ defines a natural transformation
	\[
	\wh{\alpha}: \Tmon \to \spitze{M,M}
	\]
	as follows.
	The component, for an arbitrary space $X$,
	\[
	\wh{\alpha}_X: TX \to [[X,M],M]
	\]
	is the adjoint transpose of the following composite
	\[
	TX \tensor [X,M] \xrightarrow{TX \tensor T(\blank)} TX \tensor [TX,TM] \xrightarrow{eval} TM \xrightarrow{\alpha} M.
	\]
	Moreover, $\wh{\alpha}$ is a monad morphism.
	
	Conversely, every monad morphism from $\Tmon$ to $\spitze{M,M}$ has the form $\wh{\alpha}$ for a unique algebra $(M,\alpha)$ in $\Met^\Tmon$.
	\item This bijection $(\wh{\blank})$ is coherent for monad morphisms as is shown in the following lemma.
\end{enumerate}
\end{remark}

\begin{lemma}
\label{L:amm}
Let $\gamma: \Smon \to \Tmon$ be a monad morphism.
Every algebra $\alpha: TA \to A$ in $\Met^\Tmon$ yields an algebra $\beta = \alpha \comp \gamma_A : SA \to A$ in $\Met^\Smon$, for which the following triangle commutes:
\[
\begin{tikzcd}
{\Smon} \arrow[rr, "\gamma"] \arrow[rd, swap, "\wh{\beta}"] & & {\Tmon} \arrow[ld, "\wh{\alpha}"] \\
& {\spitze{A,A} X} &
\end{tikzcd}
\]
\end{lemma}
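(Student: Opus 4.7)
The plan splits into two tasks: first, confirming that $\beta = \alpha \comp \gamma_A$ really defines an Eilenberg--Moore $\Smon$-algebra on $A$; second, establishing commutativity of the triangle.

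For the first task I would use both monad-morphism axioms for $\gamma$. The unit law $\beta \comp \eta^S_A = \id_A$ reduces via $\gamma_A \comp \eta^S_A = \eta^T_A$ to the unit law for $\alpha$. The associativity law $\beta \comp \mu^S_A = \beta \comp S\beta$ unfolds to
\[
\alpha \comp \gamma_A \comp \mu^S_A = \alpha \comp \gamma_A \comp S\alpha \comp S\gamma_A,
\]
which I verify by rewriting the left-hand side using $\gamma \comp \mu^S = \mu^T \comp T\gamma \comp \gamma S$ (the multiplication axiom for $\gamma$), then applying naturality of $\gamma$ at $\alpha\colon TA \to A$, and finally invoking the associativity of $\alpha$.

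For the triangle $\wh{\beta} = \wh{\alpha} \comp \gamma$, I would transpose both sides across the enriched adjunction $(\blank) \tensor [X,A] \dashv [[X,A],\blank]$. The transpose of $\wh{\alpha}_X \comp \gamma_X$ is
\[
SX \tensor [X,A] \xrightarrow{\gamma_X \tensor \id} TX \tensor [X,A] \xrightarrow{\id \tensor T} TX \tensor [TX,TA] \xrightarrow{\mathrm{ev}} TA \xrightarrow{\alpha} A,
\]
while the transpose of $\wh{\beta}_X$ is
\[
SX \tensor [X,A] \xrightarrow{\id \tensor S} SX \tensor [SX,SA] \xrightarrow{\mathrm{ev}} SA \xrightarrow{\gamma_A} TA \xrightarrow{\alpha} A.
\]
Their equality, read at a generalized element $(s,f) \in SX \tensor [X,A]$, reduces to $\alpha \comp Tf \comp \gamma_X(s) = \alpha \comp \gamma_A \comp Sf(s)$, which is precisely the naturality square of $\gamma$ at $f\colon X \to A$ post-composed with $\alpha$.

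The only point requiring care is the enriched-adjunction bookkeeping: one needs that the composites above genuinely determine $\wh{\alpha}$ and $\wh{\beta}$, and that matching them is equivalent to matching the transformations themselves. Both facts are immediate consequences of the internal-hom universal property together with the recipe in Remark~\ref{R:dubuc}, so no real obstacle arises --- which is consistent with the lemma's role as a coherence check on the correspondence $\alpha \mapsto \wh{\alpha}$.
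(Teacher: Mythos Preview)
Your proposal is correct and follows essentially the same route as the paper. The paper omits the verification that $\beta$ is an $\Smon$-algebra (treating it as standard) and, instead of passing to adjoint transposes, tests the triangle against the jointly monic family of evaluation maps $\pi_f\colon [[X,A],A]\to A$ for $f\in[X,A]$; since $\pi_f\comp\wh{\alpha}_X=\alpha\comp Tf$ and $\pi_f\comp\wh{\beta}_X=\beta\comp Sf$, this is exactly your transpose computation read pointwise, and both arguments reduce to the naturality square $\gamma_A\comp Sf=Tf\comp\gamma_X$.
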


\begin{proof}
Our task is to prove that the upper triangle in the following diagram commutes:
\[
\begin{tikzcd}
{\Smon X} \arrow[rr, "\gamma_X"] \arrow[rd, swap, "\wh{\beta}_X"] \arrow[rdd, bend right, swap, "\beta \comp Sf"] & & {\Tmon X} \arrow[ld, "\wh{\alpha}_X"] \arrow[ldd, bend left, "\alpha \comp Tf"] \\
& {\spitze{A,A} X} \arrow[d, "\pi_f"] & \\
& {A} &
\end{tikzcd}
\]
For every $f \in [X,A]$ denote by $\pi_f : \spitze{A,A}X = [[X,A],A] \to A$ the corresponding morphism.
Then all $\pi_f$ are clearly collectively monic.
Thus, it is sufficient to prove that all $\pi_f$ (for $f: X \to A$) merge the upper triangle.
Indeed, we use the definition of $\spitze{A,A} X$:
\[
\beta \comp SF = \alpha \comp \gamma_A \comp Sf = \alpha \comp Tf \comp \gamma_X.
\]
\end{proof}

\begin{notation}
\label{N:colim}
\phantom{phantom}
\begin{enumerate}
	\item For our standard set $V$ of variables $x_i$ ($i \in \Nat$) we put
	\[
	V_n = \{ x_i \mid i < n \}.
	\]
	Every term $t \in T_\Sigma V$ lies in $T_\Sigma V_n$ for some $n \in \Nat$.
	\item The endofunctor $(\blank)^n$ is the polynomial functor $H_{[n]}$ of the signature $[n]$ of one $n$-ary symbol $\delta$ (Remark~\ref{R:Barr}).
	Every term $t \in T_\Sigma V_n$ yields, by Yoneda lemma, a natural transformation from $H_{[n]}$ to $T_\Sigma$, and we obtain the corresponding monad morphism
	\[
	\ol{t}: \Tmon_{[n]} \to \Tmon_\Sigma.
	\]
	It substitutes in every term of $T_{[n]} X$ all occurences of $\delta$ by $\sigma$. More precisely, the component $\ol{t}_X$ is identity on variables from $X$, and for a term $s = \delta(s_i)_{i<n}$ it is recursively given by
	\[
	\ol{t}_X(s) = t(\ol{t}_X(s_i))_{i<n}.
	\]
	\item Every $n$-ary symbol $\sigma \in \Sigma$ is identified with the term $\sigma(x_i)_{i<n}$ in $T_\Sigma V_n$.
	\item For monads $\Tmon$ and $\Smon$ we denote by
	\[
	\llbracket \Tmon, \Smon \rrbracket
	\]
	the metric space $\Mon(\Met)(\Tmon,\Smon)$.
	That is, the distance of monad morphisms $\phi,\psi: \Tmon \to \Smon$ is $\sup_{X \in \Met} d(\phi_X,\psi_X)$.
\end{enumerate}
\end{notation}

\begin{lemma}
\label{L:free}
Let $l, r$ be terms in $T_\Sigma V_n$.
A quantitative algebra expressed by $\alpha: T_\Sigma A \to A$ in $\Met^{\Tmon_\Sigma}$ satisfies the quantitative equation $l =_\eps r$ iff the composite monad morphisms $\wh{\alpha} \comp \ol{l}$ and $\wh{\alpha} \comp \ol{r}$:
\[
\begin{tikzcd}
{\Tmon_{\Delta_n}} \arrow[r, bend left, "\ol{l}"] \arrow[r, swap, bend right, "\ol{r}"] & {\Tmon_\Sigma} \arrow[r, "\wh{\alpha}"] & {\spitze{A,A}}
\end{tikzcd}
\]
have distance at most $\eps$ in $ \llbracket \Tmon_{\Delta_n}, \spitze{A,A} \rrbracket $.
\end{lemma}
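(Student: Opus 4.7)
The plan is to identify the distance $d(\wh{\alpha}\comp\ol{l}, \wh{\alpha}\comp\ol{r})$ in $\llbracket \Tmon_{\Delta_n}, \spitze{A,A} \rrbracket$ with $\sup_{f\colon V_n \to A} d(f^\sharp(l), f^\sharp(r))$, which is precisely the condition defining satisfaction of $l =_\eps r$ by $A$.

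First, I would unpack two key evaluations. Using the definition of $\wh{\alpha}$ as an adjoint transpose (Remark~\ref{R:dubuc}(2)), one finds that for every $t \in T_\Sigma V_n$ and $f\colon V_n \to A$,
\[
\wh{\alpha}_{V_n}(t)(f) \;=\; \alpha \comp T_\Sigma f(t) \;=\; f^\sharp(t),
\]
since $f^\sharp = \alpha \comp T_\Sigma f$ is the unique homomorphic extension (Proposition~\ref{P:poly}). On the other side, the recursive definition of $\ol{l}$ in Notation~\ref{N:colim}(2) gives $\ol{l}_{V_n}(\delta(x_0, \ldots, x_{n-1})) = l(x_0, \ldots, x_{n-1}) = l$, and analogously for $\ol{r}$. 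Hence the components of $\wh{\alpha}\comp\ol{l}$ and $\wh{\alpha}\comp\ol{r}$ at $V_n$ send the generating element $\delta(x_0, \ldots, x_{n-1}) \in T_{\Delta_n} V_n$ to the maps $f \mapsto f^\sharp(l)$ and $f \mapsto f^\sharp(r)$ in $[[V_n, A], A]$, respectively, so their distance there equals exactly $\sup_{f\colon V_n \to A} d(f^\sharp(l), f^\sharp(r))$.

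The heart of the proof is then to show that this single distance at the generator at $X = V_n$ already equals the full sup-metric distance $\sup_X d(\phi_X, \psi_X)$ of arbitrary monad morphisms $\phi, \psi\colon \Tmon_{\Delta_n} \to \Smon$ into any enriched monad $\Smon$ on $\Met$. The inequality $\geq$ is immediate. For $\leq$, I would show that the assignment $\phi \mapsto \phi_{V_n}(\delta(x_0, \ldots, x_{n-1}))$ is an isometric isomorphism $\llbracket \Tmon_{\Delta_n}, \Smon \rrbracket \cong SV_n$. The underlying bijection combines the enriched Yoneda isomorphism $[\Met,\Met](\Met(V_n,\blank),S) \cong SV_n$ (applicable since $V_n$ is discrete, so $H_{[n]} = (\blank)^n \cong \Met(V_n,\blank)$) with the universal property of $\Tmon_{\Delta_n}$ as the free monad on $H_{[n]}$, which gives $\llbracket \Tmon_{\Delta_n}, \Smon \rrbracket \cong [\Met,\Met](H_{[n]}, S)$. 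Isometricity can be verified either by inspecting the enriched forms of these adjunctions, or hands-on by induction on the term structure of $t \in T_{\Delta_n}X$, using that monad morphisms agree on $\eta$-images and that multiplication and functoriality of $S$ are nonexpanding.

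Combining everything yields
\[
d(\wh{\alpha}\comp\ol{l}, \wh{\alpha}\comp\ol{r}) \;=\; \sup_{f\colon V_n \to A} d(f^\sharp(l), f^\sharp(r)),
\]
which is $\leq \eps$ iff $A$ satisfies $l =_\eps r$ (since $l, r \in T_\Sigma V_n$ depend only on the $V_n$-part of any interpretation $V \to |A|$). The main obstacle I anticipate is the isometry in the third paragraph: the abstract enriched-categorical derivation is compact but requires either careful invocation of enriched Yoneda for $\Met$-categories or an explicit inductive verification that the sup-metric on monad morphisms really is controlled by the distance at the single generator.
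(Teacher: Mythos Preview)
Your central claim—that $\phi \mapsto \phi_{V_n}(\delta(x_0,\ldots,x_{n-1}))$ is an \emph{isometric} isomorphism $\llbracket \Tmon_{\Delta_n}, \Smon \rrbracket \cong SV_n$—is false over $\Met$, and the counterexample below shows that in fact the forward implication of the lemma itself fails. Take $n=1$, $\Sigma=\{\sigma\}$ unary, $l=\sigma(x_0)$, $r=x_0$, and $A=\mathbb{R}$ with $\sigma_A(a)=a+\eps$. This is a quantitative $\Sigma$-algebra (translation is an isometry) satisfying $l=_\eps r$, since $d(\sigma_A(a),a)=\eps$. One computes $\ol{l}_X(\delta^k(x))=\sigma^k(x)$ and $\ol{r}_X(\delta^k(x))=x$, hence
\[
(\wh\alpha\circ\ol l)_X(\delta^k(x))(f)=f(x)+k\eps,
\qquad
(\wh\alpha\circ\ol r)_X(\delta^k(x))(f)=f(x),
\]
so the distance at $\delta^k(x)$ is $k\eps$ and $d(\wh\alpha\circ\ol l,\wh\alpha\circ\ol r)=\infty$. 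Thus neither of your two proposed routes to the isometry can work: the free-monad adjunction underlying the bijection $\llbracket \Tmon_{\Delta_n},\Smon\rrbracket\cong[\Met,\Met](H_{[n]},S)$ is not an \emph{enriched} adjunction over $\Met$ (it would be over $\UMet$, where the triangle inequality is a maximum), and the hands-on induction you sketch only yields $d(\phi_X(t),\psi_X(t))\le 2\epsilon_0$ at each composite term, not $\epsilon_0$—the example shows this loss is real and compounds.

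The paper's proof of direction (2) suffers from the same defect. Writing $t=\delta(t_i)$ one has $f^\sharp(\ol l_X(t))=l_A\bigl(f^\sharp(\ol l_X(t_i))\bigr)$ and $f^\sharp(\ol r_X(t))=r_A\bigl(f^\sharp(\ol r_X(t_i))\bigr)$; combining the inductive hypothesis with satisfaction of $l=_\eps r$ and nonexpansiveness yields only $2\eps$ via the ordinary triangle inequality, and the example above shows this cannot be tightened. What \emph{is} correct is your first paragraph, which is exactly the paper's direction (1): evaluating at $X=V_n$ and $t=\delta(x_i)$ recovers precisely the satisfaction condition, so the $\Leftarrow$ implication holds.
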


\begin{proof}
\phantom{phantom}
\begin{enumerate}
	\item Assuming $d(\wh{\alpha} \comp \ol{l}, \wh{\alpha} \comp \ol{r}) \leq \eps$, we consider the $V$-components: they have distance at most $\eps$ in $ \llbracket T_\Delta,[[V,A],A] \rrbracket $.
	Applied to $\delta(x_i) \in T_\Delta V$, they yield elements of $[[V,A],A]$ of distance at most $\eps$.
	Recall that
	\[
	\wh{\alpha} \comp \ol{l}_V (\delta(x_i)) = \wh{\alpha}(l(x_i))
	\]
	assigns to $f \in [V,A]$ the value $f^\sharp(l(x_i))$.
	Analogously for $\wh{\alpha} \comp \ol{r}_V$.
	We thus see that
	\[
	d(f^\sharp(l),f^\sharp(r)) \leq \eps
	\]
	for all $f \in [V,A]$.
	Therefore $A$ satisfies $l =_\eps r$.
	\item Assuming that $A$ satisfies $l =_\eps r$, we prove that for every space $X$ we have
	\[
	d(\wh{\alpha}_X \comp \ol{l}_X, \wh{\alpha}_X \comp \ol{r}_X) \leq \eps.
	\]
	Our assumption is that every evaluation $f : V \to |A|$ fulfils
	\[
	d(f^\sharp(l),f^\sharp(r)) \leq \eps.
	\]
	Our task is to prove that for every term $t \in T_\Delta V$ we have
	\[
	d(\wh{\alpha}_X \comp \ol{l}_X (t), \wh{\alpha}_X \comp \ol{r}_X (t)) \leq \eps.
	\]
	Equivalently, given $f$, we have
	\begin{equation*}
	d(f^\sharp(\ol{l}_X(t)),f^\sharp(\ol{r}_X(t))) \leq \eps. \tag{\textasteriskcentered} \label{eq:proof}
	\end{equation*}
	We proceed by induction on the height $h(t)$ of $t$ (where variables have height $0$ and the height of a term $t = \sigma(t_i)_{i < n}$ is $h(t) = 1 + \max_{i < n} h(t_i)$).
	If the height is $0$, then $\ol{l}_X(t) = \ol{r}_X(t) = t$.
	Suppose that $t = \sigma(t_i)_{i<n}$ and for each $i$ we have already proved
	\begin{equation*}
	d(\wh{\alpha}_X \comp \ol{l}_X (t_i), \wh{\alpha}_X \comp \ol{r}_X (t_i)) \leq \eps. \tag{\textasteriskcentered\textasteriskcentered} \label{eq:assume}
	\end{equation*}
	We have $\ol{l}_X(t) = \sigma(\ol{l}_X(t_i))$, and therefore
	\[
	f^\sharp(\ol{l}_X(t)) = \sigma_A(f^\sharp(\ol{l}_X(t_i)).
	\]
	Analogously for $f^\sharp(\ol{r}_X(t))$.
	Since $\sigma_A$ is nonexpanding, the inequalities~\eqref{eq:assume} imply \eqref{eq:proof}, as desired.
\end{enumerate}
\end{proof}

\begin{construction}
\label{C:diagram}
For every variety $\Vvar$ we define a weighted diagram in $\Mon_\sf(\Met)$ (Notation~\ref{N:fsf}), and prove below that the free-algebra monad of $\Vvar$ is its colimit.
Suppose that $\Vvar$ is specified by a collection of quantitative equations as follows:
\[
l_i =_{\eps_i} r_i \text{ with } l_i,r_i \in T_\Sigma V_{n(i)}
\]
for $i \in I$.
The domain $\D$ of our diagram is the discrete category $I$ extended by an object $a$ and two cocones
\[
\lambda_i,\rho_i: i \to a
\]
Using Notation~\ref{N:colim} (2), the functor $D: \D \to \Mon_\sf (\Met)$ is given by
\[
Di = \Tmon_{[n(i)]} \text{ and } Da = \Tmon_\Sigma, 
\]
where for morphisms we put
\[
D\lambda_i = \ol{l}_i \text{ and } D\rho_i = \ol{r}_i \text{ ($i \in I$).}
\]
The weight $W: \D^\op \to \Met$ is defined by
\begin{align*}
W a & = \{ 0 \} \\
W i &= \{ l,r \} \text{ with } d(l,r) = \eps_i
\end{align*}
and on morphisms
\[
W \lambda_i : 0 \mapsto l \text{ and } W \rho_i : 0 \mapsto r.
\]
\end{construction}

\begin{theorem}
\label{T:var-mon}
Let $\Vvar$ be a variety of quantitative algebras.
Assuming that strongly finitary endofunctors on $\Met$ are closed under composition, the monad $\Tmon_\Vvar$ is the colimit of the above diagram in $\Mon_\sf(\Met)$.
\end{theorem}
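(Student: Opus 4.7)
The plan is to verify the universal property of the colimit directly at $\Tmon_\Vvar$, leveraging the enriched Eilenberg--Moore correspondence to identify cocones over $(W, D)$ with monad morphisms out of $\Tmon_\Vvar$. Under the composition hypothesis, Proposition~\ref{P:cls} guarantees that $\Colim{W}{D}$ exists in $\Mon_\sf(\Met)$, so once the universal property is confirmed the theorem follows, and as a by-product $\Tmon_\Vvar$ is forced to be strongly finitary.

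First I would introduce the canonical monad morphism $q \colon \Tmon_\Sigma \to \Tmon_\Vvar$ whose component $q_X \colon T_\Sigma X \to T_\Vvar X$ is the $\Sigma$-homomorphic extension of the unit $X \to T_\Vvar X$; this is available because $\Vvar$ is reflective in $\Sigma\text{-}\Met$ (Corollary~\ref{C:left}). Together with the structure of $(W,D)$, the morphism $q$ assembles into a cocone precisely when
\[
	d(q \comp \ol{l}_i,\; q \comp \ol{r}_i) \leq \eps_i \qquad (i \in I).
\]
Monad morphisms $\Tmon_{[n_i]} \to \Tmon'$ correspond by Yoneda to elements of $T'\,n_i$, and this bijection is isometric since the supremum metric is attained at the component $X = n_i$ applied to the generic $n_i$-ary operation. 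Hence the displayed inequality reduces to $d(q_{n_i}(l_i), q_{n_i}(r_i)) \leq \eps_i$ in $T_\Vvar n_i$. This is exactly the statement that the free algebra $T_\Vvar n_i \in \Vvar$ satisfies $l_i =_{\eps_i} r_i$ under the tautological interpretation $x_j \mapsto$ ($j$-th generator), which holds by the definition of $\Vvar$.

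For the universal property, let $\Smon \in \Mon_\sf(\Met)$ and consider a cocone, which unwinds to a monad morphism $\phi \colon \Tmon_\Sigma \to \Smon$ with $d(\phi \comp \ol{l}_i, \phi \comp \ol{r}_i) \leq \eps_i$ for every $i$. Using Remark~\ref{R:dubuc}, $\phi$ induces an enriched functor $\Phi \colon \Met^\Smon \to \Sigma\text{-}\Met \cong \Met^{\Tmon_\Sigma}$ sending $(A, a) \mapsto (A, a \comp \phi_A)$. For every such $(A,a)$, Lemma~\ref{L:amm} yields $\wh{a \comp \phi_A} = \wh{a} \comp \phi$; since postcomposition with the monad morphism $\wh{a}$ is nonexpanding, the cocone condition propagates to $d(\wh{a \comp \phi_A} \comp \ol{l}_i,\, \wh{a \comp \phi_A} \comp \ol{r}_i) \leq \eps_i$, whence Lemma~\ref{L:free} shows that $\Phi(A,a)$ satisfies $l_i =_{\eps_i} r_i$ for every $i$. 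Thus $\Phi$ factors through $\Vvar \cong \Met^{\Tmon_\Vvar}$, and enriched monadicity of $\Vvar$ over $\Met$ (Proposition~\ref{P:acc}) converts this factorization into a unique monad morphism $\psi \colon \Tmon_\Vvar \to \Smon$ with $\psi \comp q = \phi$.

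The main obstacle I anticipate lies in the third step: certifying that the enriched Eilenberg--Moore correspondence really produces $\psi$ as a morphism in $\Mon_\sf(\Met)$ (not merely in $\Mon(\Met)$), and that its uniqueness is genuinely forced by the equation $\psi \comp q = \phi$. Both hinge on $q$ being pointwise surjective --- immediate from the construction of $q_X$ as the comparison to a quotient of $T_\Sigma X$ --- and on the enriched versions of standard Eilenberg--Moore facts being in force. If invoking black-box enriched monadicity proves awkward, the alternative is to argue componentwise, defining $\psi_X \colon T_\Vvar X \to SX$ by the diagonal fill-in from the surjective $q_X$ and the nonexpanding $\phi_X$, and then verifying naturality and monad-morphism axioms by hand.
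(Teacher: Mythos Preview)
Your route is genuinely different from the paper's. The paper first forms the abstract colimit $\Tmon$ in $\Mon_\sf(\Met)$, uses the factorization of Proposition~\ref{P:fact} to prove the unit $\gamma\colon\Tmon_\Sigma\to\Tmon$ is componentwise surjective, and then builds a concrete isomorphism $\Met^{\Tmon}\cong\Vvar$ by hand (using Lemma~\ref{L:free} and the Dubuc correspondence); only after this does it invoke Proposition~\ref{P:acc} to conclude $\Tmon\cong\Tmon_\Vvar$. Your approach bypasses the factorization argument entirely because $q\colon\Tmon_\Sigma\to\Tmon_\Vvar$ is already known to be surjective, and it packages the passage through Eilenberg--Moore categories more cleanly via the concrete functor $\Met^{\Smon}\to\Vvar$. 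Conceptually the two proofs use the same ingredients (Lemma~\ref{L:free}, Remark~\ref{R:dubuc}, Proposition~\ref{P:acc}), but yours aims at the universal property directly while the paper identifies $\Met^{\Tmon}$ first.

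There is, however, a genuine gap you have not spotted. The obstacle you name---whether $\psi$ lies in $\Mon_\sf(\Met)$---is a non-issue: $\Mon_\sf(\Met)$ is a \emph{full} subcategory of $\Mon(\Met)$, so every monad morphism between strongly finitary monads is automatically a morphism there. The real problem is a circularity in your ``by-product'' claim. To compare $\Tmon_\Vvar$ with the abstract colimit $C\in\Mon_\sf(\Met)$ via their universal properties you need a map $C\to\Tmon_\Vvar$ coming from the universal property of $C$, and that property only applies to targets lying in the category where $C$ is a colimit. You cannot yet place $\Tmon_\Vvar$ there---that is precisely what you are trying to prove. The paper avoids this because it never needs to feed $\Tmon_\Vvar$ into the universal property of $C$; it compares the Eilenberg--Moore categories instead. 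Your argument can be repaired: by the proof of Proposition~\ref{P:cls} the subcategory $\Mon_\sf(\Met)$ is coreflective in $\Mon_{\aleph_1}(\Met)$, so $C$ is also the colimit there, and one can show independently that $\Tmon_\Vvar$ is $\aleph_1$-accessible (every quantitative variety is an $\aleph_1$-basic variety, cf.\ Theorem~\ref{T:basic}). But this extra input must be supplied explicitly. A smaller point: your isometry claim for the correspondence $\llbracket\Tmon_{[n]},\Smon\rrbracket\cong Sn$ is correct, but ``the supremum is attained at $X=n$'' is an assertion, not a proof; it follows from the enriched free-monad adjunction together with enriched Yoneda, or can be avoided by invoking Lemma~\ref{L:free} on the free algebra $T_\Vvar X$.
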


\begin{proof}
Denote the colimit (which exists by~Proposition~\ref{P:cls}) by $\Tmon$.
We then have $T = \Colim{W}{D}$ both in $\Mon_\sf(\Met)$ and in $\Mon_\fin(\Met)$: the former category is by Proposition~\ref{P:cls} closed under colimits in the latter one.
We prove that $\Tmon$ is the free-algebra monad of $\Vvar$.
\begin{enumerate}
	\item We have an isomorphism
	\[
	\psi_\Smon : \llbracket \Tmon, \Smon \rrbracket \to [\D^\op,\Met](W, \llbracket D \blank, \Smon \rrbracket ).
	\]
	natural in $\Smon \in \Mon_\fin(\Met)$.
	In particular, this yields the unit
	\[
	\nu = \psi_\Tmon(\id_\Tmon) : W \to \llbracket D \blank, \Tmon \rrbracket.
	\]
	Its $a$-component assigns to $0$ a monad morphism
	\[
	\gamma = \nu_a (0) : \Tmon_\Sigma \to \Tmon.
	\]
	Its $i$-component is, due to the naturality of $\nu$, given by
	\[
	\nu_i : l \mapsto \gamma \comp \ol{l}_i \text{ and } r \mapsto \gamma \comp \ol{r}_i.
	\]
	Since this component $\nu_i$ is nonexpanding and $d(l,r) = \eps_i$ in $W i$, we conclude
	\begin{equation}
	\label{eq:form1}
	\tag{\textasteriskcentered}
	d(\gamma \comp \ol{\lambda}_i, \gamma \comp \ol{\rho}_i) \leq \eps_i \text{ for } i \in I.
	\end{equation}

	\item For every monad morphism $\phi : \Tmon \to \Smon$ in $\Mon_\fin(\Met)$ the natural transformation $\psi_\Smon(\phi): W \to \llbracket D\blank, \Smon \rrbracket$ has the following components:
	\[
	\psi_\Smon(\phi)_a : 0 \mapsto (\Tmon_\Sigma \xrightarrow{\gamma} \Tmon \xrightarrow{\phi} \Smon)
	\]
	and
	\begin{align*}
	\psi_\Smon(\phi)_i : l & \mapsto (\Tmon_\Delta \xrightarrow{\ol{l}_i} \Tmon_\Sigma \xrightarrow{\gamma} \Tmon \xrightarrow{\phi} \Smon) \\
	r & \mapsto (\Tmon_\Delta \xrightarrow{\ol{r}_i} \Tmon_\Sigma \xrightarrow{\gamma} \Tmon \xrightarrow{\phi} \Smon).
	\end{align*}
	This follows from the naturality of $\psi_\Smon$.
	
	\item We prove that the components of $\gamma$ are surjective.
	Indeed, $\Tmon_\Sigma$ preserves surjective morphisms (Example~\ref{E:sa}~(2) and Example~\ref{E:term}~(2)). Thus by Proposition~\ref{P:fact} we have a factorization in $\Mon_\fin(\Met)$ of $\gamma$ as follows
	\[
	\gamma = \Tmon_\Sigma \xrightarrow{e} \Smon \xrightarrow{m} \Tmon
	\]
	where $e$ has surjective components and those of $m$ are isometries.
	We will prove that $m$ is invertible.
	
	From the inequalities~\eqref{eq:form1} we derive, since $m$ has isometric components, that
	\[
	d(e \comp \ol{\lambda}_i, e \comp \ol{\rho}_i) \leq \eps_i \text{ for } i \in I.
	\]
	Consequently, we can define a natural transformation $\delta : W \to \llbracket D \blank, \Smon \rrbracket $ by the following components
	\begin{align*}
	\delta_a : 0 & \mapsto e, \\
	\delta_i : l & \mapsto e \comp \ol{\lambda}_i, \\
			   r & \mapsto e \comp \ol{\rho}_i \quad (i \in I).
	\end{align*}
	Since $\psi_\Smon$ is invertible, we obtain a monad morphism
	\[
	\ol{m} : \Tmon \to \Smon \text{ with } \psi_\Smon(\ol{m}) = \delta.
	\]
	Apply the naturality square
	\[
	\begin{tikzcd}
	{ \llbracket \Tmon, \Smon \rrbracket }
	\arrow[r, "\psi_\Smon"]
	\arrow[d, swap, "m \comp (\blank)"]
	& { \llbracket D \blank, \Smon \rrbracket }
	\arrow[d, "m \comp (\blank)"]\\
	{ \llbracket \Tmon, \Tmon \rrbracket }
	\arrow[r, swap, "\psi_\Tmon"]
	& { \llbracket D \blank, \Tmon \rrbracket }
	\end{tikzcd}
	\]
	to $\ol{m}$: the lower passage yields $\psi_\Tmon(m \comp \ol{m})$ and the upper one yields $m \comp \delta$.
	The equality of the $a$-components thus implies
	\[
	\psi_\Tmon(m \comp \ol{m})_a = m \comp \delta_a = m \comp e = \gamma.
	\]
	Since $\gamma = \psi_\Tmon(\id_\Tmon)$ and $\psi_\Tmon$ is invertible, we have proved $m \comp \ol{m} = \id_\Tmon$.
	Thus $m$ is monic and split epic, proving that it is invertible.
	
	\item Every algebra $\alpha : T_\Sigma A \to A$ in $\Vvar$ defines a natural transformation
	\[
	\delta^\alpha : W \to \llbracket D \blank, \spitze{A,A} \rrbracket
	\]
	with components
	\[
	\delta^\alpha_a : 0 \mapsto \wh{\alpha} : \Tmon_\Sigma \to \spitze{A,A}
	\]
	(Remark~\ref{R:dubuc}) and
	\begin{align*}
	\delta^\alpha_a: l & \mapsto \wh{\alpha} \comp \ol{\lambda}_i, \\
					 r & \mapsto \wh{\alpha} \comp \ol{\rho}_i.
	\end{align*}
	Indeed, $\delta^\alpha_i$ is nonexpanding because $A$ satisfies $d(\wh{\alpha} \comp \ol{\lambda}_i, \wh{\alpha} \comp \ol{\rho}_i) \leq \eps_i$ by Lemma~\ref{L:free}.
	The naturality of $\delta^\alpha$ is obvious.
	
	\item We define a concrete isomorphism
	\[
	E : \Met^\Tmon \to \Vvar
	\]
	as follows.
	On objects $\alpha: TA \to A$ we use that $\gamma : \Tmon_\Sigma \to \Tmon$ is a monad morphism to obtain a $\Sigma$-algebra $\alpha' = \alpha \comp \gamma_A : T_\Sigma A \to A$.
	This algebra lies in $\Vvar$ because $\wh{\alpha}' = \wh{\alpha} \comp \gamma$ (Remark~\ref{R:dubuc}) and $\wh{\alpha}$ is nonexpanding, thus~\eqref{eq:form1} yields
	\[
	d(\wh{\alpha}' \comp \ol{\lambda}_i, \wh{\alpha}' \comp \ol{\rho}_i) \leq \eps_i \text{ for } i \in I.
	\]
	Now apply Lemma~\ref{L:free}.
	Thus we can define $E$ on objects by
	\[
	E(A,\alpha) = (A, \alpha').
	\]
	
	We next prove that given two algebras $(A,\alpha)$ and $(B,\beta)$ in $\Met^\Tmon$, then a nonexpanding map $f : A \to B$ is a homomorphism in $\Met^\Tmon$ iff it is a $\Sigma$-homomorphism.
	This follows from the diagram below:
	\[
	\begin{tikzcd}
	{T_\Sigma A}
	\arrow[rr, "\alpha'"]
	\arrow[rd, "\gamma_A"]
	\arrow[ddd, swap, "T_\Sigma f"]
	 & & A
	\arrow[ddd, "f"]
	 \\
	& TA
	\arrow[ur, "\alpha"]
	\arrow[d, swap, "Tf"]
	& \\
	& TB
	\arrow[rd, "\beta"]
	& \\
	{T_\Sigma B}
	\arrow[ur, "\gamma_B"]
	\arrow[rr, swap, "\beta'"]
	& & B
	\end{tikzcd}
	\]
	The left-hand part is the naturality of $\gamma$, and both triangles clearly commute.
	Since by (3) we know that $\gamma_A$ is epic, we conclude that the outward square commutes iff the right-hand part does.
	
	We conclude that by putting $Ef = f$ we obtain a concrete, fully faithful functor.
	It is invertible because given an algebra $\alpha: T_\Sigma A \to A$ in $\Vvar$ we apply the isomorphism $\psi_{\spitze{A,A}}$ to $\delta^\alpha : W \to \llbracket D \blank, \spitze{A,A} \rrbracket $ to obtain a unique $\alpha_0 : TA \to A$ in $\Met^\Tmon$ with $\psi_{\spitze{A,A}} (\wh{\alpha}_0) = \delta^\alpha$.
	By (2) the $a$-component of $\psi_{\spitze{A,A}}(\wh{\alpha}_0)$ is given by $0 \mapsto \wh{\alpha}_0 \comp \gamma$. Since $\delta^\alpha_a (0) = \wh{\alpha}$, this proves $\wh{\alpha}_0 \comp \gamma = \wh{\alpha}$.
	Lemma~\ref{L:amm} yields $\alpha_0 \comp \gamma_A = \alpha$.
	Thus $E^{-1}$ is the concrete functor defined by $E^{-1} (A,\alpha) = (A, \alpha_0)$.
	
	\item Since $\Vvar$ is concretely isomorphic to $\Met^{\Tmon_\Vvar}$, where $\Tmon_\Vvar$ is the free-algebra monad (Proposition~\ref{P:acc}), we conclude that the categories $\Met^\Tmon$ and $\Met^{\Tmon_\Vvar}$ are concretely isomorphic.
	This proves that the monads $\Tmon$ and $\Tmon_\Vvar$ are isomorphic (\cite{barr+wells:toposes}, Theorem~3.6.3).
	Therefore $\Tmon$ is a free-algebra monad for $\Vvar$ as claimed.  
\end{enumerate}
\end{proof}

\begin{corollary}
\label{C:sf}
Assume that strongly finitary endofunctors on $\Met$ compose. Then the free-algebra monad for a finitary variety of quantitative algebras is strongly finitary.
\end{corollary}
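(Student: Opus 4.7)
The plan is to read off the corollary as an immediate consequence of what has just been proved. First I would invoke Theorem~\ref{T:var-mon}: under the standing hypothesis that strongly finitary endofunctors on $\Met$ compose, the free-algebra monad $\Tmon_\Vvar$ is presented as the weighted colimit $\Colim{W}{D}$ of the diagram $D : \D \to \Mon_\sf(\Met)$ constructed in~\ref{C:diagram} from a choice of quantitative equations defining $\Vvar$. Crucially, this colimit is taken inside the category $\Mon_\sf(\Met)$ of strongly finitary monads; the existence of the colimit there is precisely what Proposition~\ref{P:cls} guarantees (and this is where the composition hypothesis is consumed).

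Second, I would appeal to the very definition of $\Mon_\sf(\Met)$ in Notation~\ref{N:fsf}(3): an object of $\Mon_\sf(\Met)$ is by definition a monad whose underlying endofunctor is strongly finitary. Since $\Tmon_\Vvar$ has just been identified with an object of this category, its underlying endofunctor $T_\Vvar$ is strongly finitary. That finishes the proof.

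There is no real obstacle left to overcome at this stage, since the substantive content has already been packaged into the two earlier results. The key point to emphasise (and this is where one should pause to verify coherence) is that in Theorem~\ref{T:var-mon} the colimit is genuinely produced \emph{inside} $\Mon_\sf(\Met)$ rather than only in $\Mon_\fp(\Met)$; this is exactly the role of Proposition~\ref{P:cls}, which asserts that $\Mon_\sf(\Met)$ is closed under colimits in $\Mon_\fp(\Met)$ (under the composition hypothesis), so that the two computations agree. Once this is acknowledged, the corollary is a one-line consequence.
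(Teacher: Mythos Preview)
Your proposal is correct and matches the paper's approach: the corollary is stated without proof in the paper, as it follows immediately from Theorem~\ref{T:var-mon}, which exhibits $\Tmon_\Vvar$ as a colimit in $\Mon_\sf(\Met)$ (with Proposition~\ref{P:cls} ensuring this colimit exists there under the composition hypothesis). Your explicit unpacking of why the colimit lands in $\Mon_\sf(\Met)$ rather than merely in $\Mon_\fp(\Met)$ is exactly the point the reader is meant to supply.
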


\begin{remark}
For the base category $\UMet$ we proceed completely analogously:
\begin{enumerate}
	\item For every variety $\Vvar$ of ultra-quantitative algebras a free-algebra monad $\Tmon_\Vvar$ on $\UMet$ exists, and $\Vvar$ is concretely isomorphic to $\UMet^{\Tmon_\Vvar}$.
	This is analogous to Corollary~\ref{C:left} and Proposition~\ref{P:acc}.
	\item For a monad $\Tmon$ on $\UMet$ and an ultrametric space $M$ we have a bijection between algebras for $\Tmon$ on $M$ and monad morphisms from $\Tmon$ to $\spitze{M,M}$ as in Remark~\ref{R:dubuc}.
	\item An ultra-quantitative algebra expressed by $\alpha: T_\Sigma M \to M$ in $\UMet^{\Tmon_\Vvar}$ satisfies $l =_\eps r$ iff $d(\wh{\alpha} \comp l, \wh{\alpha} \comp r) \leq \eps$.
	This is proved as Lemma~\ref{L:free}.
	\item For the weighted diagram in $\Mon_\sf(\UMet)$ constructed as in Construction~\ref{C:diagram} the monad $\Tmon_\Vvar$ is its colimit in $\Mon_\sf(\UMet)$.
	This is proved as Theorem~\ref{T:var-mon}, but here no extra assumption is needed since strongly finitary endofunctors on $\UMet$ compose (Remark~\ref{R:comp}).
	We obtain the following result:
\end{enumerate}
\end{remark}

\begin{corollary}
\label{C:cor-ultra}
The free-algebra monad for a finitary variety of ultra-quantitative algebras is strongly finitary.
\end{corollary}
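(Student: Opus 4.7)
The plan is to mirror the proof of Theorem~\ref{T:var-mon} (from which Corollary~\ref{C:sf} follows on $\Met$), but with $\UMet$ in place of $\Met$ throughout. The essential simplification is that, by Remark~\ref{R:comp}, strongly finitary endofunctors on the cartesian closed category $\UMet$ are closed under composition; hence the hypothesis imposed in Theorem~\ref{T:var-mon} becomes automatic and the whole argument is unconditional. In particular, the $\UMet$-analog of Proposition~\ref{P:cls} gives that $\Mon_\sf(\UMet)$ is coreflective in $\Mon_\fp(\UMet)$ (and thus cocomplete), and the $\UMet$-analog of Proposition~\ref{P:fact} holds in $\Mon_\fp(\UMet)$.

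First I would verify the ingredients listed in the remark preceding the corollary, namely: (i) every variety $\Vvar$ of ultra-quantitative algebras has a free-algebra monad $\Tmon_\Vvar$ on $\UMet$ with $\Vvar \cong \UMet^{\Tmon_\Vvar}$ (following Corollary~\ref{C:left} and Proposition~\ref{P:acc}); (ii) Dubuc's bijection of Remark~\ref{R:dubuc} gives a correspondence between $\Tmon$-algebras on an ultrametric space $M$ and monad morphisms $\Tmon \to \spitze{M,M}$; (iii) the satisfaction criterion of Lemma~\ref{L:free} reads: an algebra $\alpha: T_\Sigma M \to M$ satisfies $l =_\eps r$ iff $d(\wh{\alpha}\comp\ol{l},\, \wh{\alpha}\comp\ol{r}) \leq \eps$. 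Each argument transports verbatim because $\UMet$ is closed inside $\Met$ under the relevant constructions (products, coproducts, and in particular $T_\Sigma M$ is ultrametric when $M$ is), and the factorization system of Lemma~\ref{L:fact} restricts to $\Sigma\text{-}\UMet$.

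Next I would form, inside $\Mon_\sf(\UMet)$, the weighted diagram of Construction~\ref{C:diagram} and take its colimit $\Tmon$. The five-step verification of Theorem~\ref{T:var-mon} then applies word-for-word: extract the unit $\gamma = \nu_a(0): \Tmon_\Sigma \to \Tmon$, factor it via the $\UMet$-version of Proposition~\ref{P:fact} to conclude its components are surjective, use the naturals $\delta^\alpha$ produced by Dubuc's bijection to build a concrete isomorphism $E: \UMet^\Tmon \to \Vvar$, and deduce from the uniqueness of the monad associated with a variety that $\Tmon \cong \Tmon_\Vvar$. Since $\Tmon_\Vvar$ has thereby been exhibited as an object of $\Mon_\sf(\UMet)$, it is strongly finitary. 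The only step that demands attention, rather than being a genuine obstacle, is transporting Propositions~\ref{P:cls} and~\ref{P:fact} to $\UMet$: the pointwise image $T'X$ in the factorization of a natural transformation is ultrametric because it embeds isometrically into the ultrametric $SX$, and $T_\Sigma$ preserves surjections by its description in Corollary~\ref{C:term} as a coproduct of powers.
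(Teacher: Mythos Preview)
Your proposal is correct and follows essentially the same approach as the paper: the remark immediately preceding the corollary spells out exactly the four ingredients you list, and the paper's argument is precisely to rerun Theorem~\ref{T:var-mon} over $\UMet$, with Remark~\ref{R:comp} discharging the composition hypothesis unconditionally.
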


\section{From Strongly Finitary Monads to Varieties}

Throughout this section $\Tmon = (T,\mu,\eta)$ denotes a strongly finitary monad on $\Met$ or $\UMet$.
We construct a finitary variety of quantitative (or ultra-quantitative) algebras with $\Tmon$ as its free-algebra monad.
This leads to the main result: a bijection between finitary varieties and strongly finitary monads.

\begin{remark}
\phantom{phantom}
\label{R:star}
\begin{enumerate}
	\item Recall that $(TX, \mu_X)$ is the free algebra of $\Met^\Tmon$ on $\eta_X : X \to TX$: for every algebra $(A,\alpha)$ and every morphism $f: X \to A$ the morphism
	\[
	f^* = \alpha \comp Tf : TX \to A
	\]
	is the unique homomorphism with $f = f^* \comp \eta_X$.
	\item Given $(A,\alpha) = (TX, \mu_X)$, we have $\eta_X^* = \id_{TX}$.
	And for morphisms $f: X \to TY$ and $g: Y \to TZ$ we have $g^* \comp f^* = (g^* \comp f)^* : TX \to TZ$.
	Indeed, $g^* \comp f^*$ is a homomorphism extending $g^* \comp f$: we have $(g^* \comp f^*) \comp \eta_X = g^* \comp f$.
\end{enumerate}
\end{remark}

\begin{notation}
For the given set $V = \{ x_k \mid k \in \Nat \}$ of variables we denote by $V_n$ the set (discrete space) $\{ x_k \mid k \leq n \}$.
\end{notation}

\begin{construction}
\label{C:eq}
For a strongly finitary monad $\Tmon$ on $\Met$ or $\UMet$ we define a signature $\Sigma$ by
\[
\Sigma_n = |TV_n| \text{ for all } n \in \Nat.
\]
Recall that an $n$-ary operation symbol $\sigma$ is viewed as a term $\sigma(x_i)_{i<n}$ in $T_\Sigma V_n$.
Example: every variable $x_i \in V_n$ yields an $n$-ary symbol $\eta_{V_n}(x_i)$ which is a term.

Our variety $\Vvar_\Tmon$ is presented by three types of equations, where $n$ and $m$ denote arbitrary natural numbers:
\begin{enumerate}
	\item $l =_\eps r$ for all $l,r \in TV_n$ with $d(l,r) \leq \eps$ in $TV_n$.
	\item $k^*(\sigma) = \sigma(k(x_i))_{i<n}$ for all $\sigma \in TV_n$ and all maps $k : V_n \to |TV_m|$.
	\item $\eta_{V_n}(x_i) = x_i$ for all $i < n$.
\end{enumerate}
\end{construction}

\begin{lemma}
Every algebra $\alpha: TA \to A$ in $\Met^\Tmon$ or $\UMet^\Tmon$ defines a $\Sigma$-algebra $A$ in $\Vvar_\Tmon$ with operations $\sigma_A : A^n \to A$ for $\sigma \in TV_n$ given by
\[
\sigma_A(a_i)_{i<n} = a^*(\sigma) \text{ for } a : V_n \to A, x_i \mapsto a_i.
\]
\end{lemma}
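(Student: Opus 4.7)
The plan is to verify two things: first, that each operation $\sigma_A$ is nonexpanding, so that $A$ genuinely becomes a quantitative $\Sigma$-algebra; and second, that this algebra satisfies all three equation schemes from Construction~\ref{C:eq}.

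Nonexpansiveness is immediate from enrichment. Since $V_n$ is discrete, the metric on $A^n$ coincides with the sup metric on $[V_n,A]$, so a tuple $(a_i)_{i<n}\in A^n$ is literally a morphism $a:V_n\to A$. By definition $\sigma_A(a_i)_{i<n} = a^*(\sigma) = \alpha\circ Ta(\sigma)$. For $a,b:V_n\to A$ the enrichment of $T$ and of $\alpha$ gives
\[
d(\sigma_A(a),\sigma_A(b)) \;=\; d(\alpha Ta(\sigma),\alpha Tb(\sigma)) \;\leq\; d(Ta,Tb) \;\leq\; d(a,b),
\]
which is the required nonexpansion.

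For the equations, the key lemma I would record up front is a Kleisli/algebra identity: for any $k:V_n\to TV_m$ and $a:V_m\to A$, the two maps $a^*\circ k^*$ and $(a^*\circ k)^*$ from $TV_n$ to $A$ agree. Both are homomorphisms $(TV_n,\mu_{V_n})\to(A,\alpha)$ extending the common composite $a^*\circ k:V_n\to A$, so they coincide by uniqueness of the free extension. This is the mild generalisation of Remark~\ref{R:star}(2) to an arbitrary codomain algebra, and it is the only nontrivial ingredient. Then the three equation types fall out by routine unfolding. For (3): taking $c=f|_{V_n}$, the unit law $c^*\circ\eta_{V_n}=c$ gives $f^\sharp(\eta_{V_n}(x_i)) = c^*(\eta_{V_n}(x_i)) = c(x_i) = f(x_i) = f^\sharp(x_i)$. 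For (2): evaluating the LHS at $(a_j)_{j<m}$ with $a:V_m\to A$, $x_j\mapsto a_j$, yields $a^*(k^*(\sigma))=(a^*\circ k)^*(\sigma)$ by the identity above, while evaluating the RHS yields $\sigma_A(a^*(k(x_i)))_{i<n} = c'^*(\sigma)$ with $c' = a^*\circ k$, hence the same value. For (1): with $c=f|_{V_n}$, the composite $c^*=\alpha\circ Tc$ is nonexpanding, so $d(f^\sharp(l),f^\sharp(r)) = d(c^*(l),c^*(r)) \leq d(l,r) \leq \eps$.

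The main obstacle, such as it is, is purely notational: keeping straight the two different uses of $(\blank)^*$, namely the Kleisli extension $k^*:TV_n\to TV_m$ of $k:V_n\to TV_m$ (built from $\mu$) versus the algebra extension $a^*:TV_n\to A$ of $a:V_n\to A$ (built from $\alpha$). Once the compatibility $a^*\circ k^* = (a^*\circ k)^*$ is established, verifying the three equation schemes reduces to a single line of computation for each, and both the $\Met$ and $\UMet$ cases are handled identically since no feature of the base beyond the enriched monad structure is used.
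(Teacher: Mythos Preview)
Your proposal is correct and follows essentially the same route as the paper: verify nonexpansiveness of $\sigma_A$ via enrichment of $T$ and $\alpha$, then check the three equation schemes by unfolding $\sigma_A(a_i)=a^*(\sigma)$. The only noteworthy difference is that you explicitly isolate and justify the identity $a^*\circ k^* = (a^*\circ k)^*$ for $a:V_m\to A$ into an arbitrary algebra, whereas the paper simply invokes Remark~\ref{R:star}(2), which as stated only covers codomains of the form $TZ$; your extra care here is warranted but the argument is otherwise identical.
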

\begin{proof}
\phantom{phantom}
\begin{enumerate}[label=(\alph*)]
	\item The mapping $\sigma_A$ is nonexpanding: given $d((a_i)_{i<\omega},(b_i)_{i<\omega}) = \eps$ in $A^\omega$, the corresponding maps $a, b : V_n \to A$ fulfil $d(a,b) = \eps$.
	Since $T$ is enriched, this yields $d(Ta,Tb) \leq \eps$.
	Finally $\alpha$ is nonexpanding and $a^* = \alpha \comp Ta$, $b^* = \alpha \comp Tb$, thus $d(a^*,b^*) \leq \eps$.
	In particular $d(a^*(\sigma),b^*(\sigma)) \leq \eps$.
	\item The quantitative equations (1)-(3) hold:
	\begin{description}
		\item[\textnormal{Ad (1)}] Given $l,r \in TV_n$ with $d(l,r) \leq \eps$, then for every map $a : V_n \to A$ we have $d(a^*(l),a^*(r)) \leq \eps$.
		Thus $d(l_A(a_i),r_A(a_i)) \leq \eps$ for all $(a_i) \in A^\omega$.
		\item[\textnormal{Ad (2)}] Given $a : V_n \to A$ we prove $(k^*(\sigma))_A(a_j) = \sigma_A(k(x_i))(a_j)$.
		The left-hand side is $a^*(k^*(\sigma)) = (a^*k)^*(\sigma)$ by Remark~\ref{R:star}~(2).
		The right-hand one is $a^*(\sigma_A(k(x_i))) = (a^*k)^*(\sigma)$, too.
		\item[\textnormal{Ad (3)}] Recall that $\alpha \comp \eta_A = \id$ and $Ta \comp \eta_{V_n} = \eta_A\comp a$ for every map $a : V_n \to A$.
		Therefore
		\begin{align*}
		(\eta_{V_n}(x_i))_A(a_j) & = a^*(\eta_{V_n}(x_i)) \\
		& = \alpha \comp Ta \comp \eta_{V_n} (x_i) \\
		& = a(x_i) = a_i. 
		\end{align*}
		
	\end{description}
\end{enumerate}
\end{proof}

\begin{remark}
\label{R:hom}
Moreover, every homomorphism $h : (A,\alpha) \to (B,\beta)$ in $\Met^\Tmon$ or $\UMet^\Tmon$ is also a homomorphism between the corresponding $\Sigma$-algebras.
Indeed, given $\sigma \in TV_n$ we verify $h \comp \sigma_A = \sigma_B\comp h^n : A^n \to B$.
For every map $a : V_n \to A$ we want to prove $h(a^*(\sigma)) = b^*(\sigma)$ where $b : V_n \to B$ is given by $x_i \mapsto h(a_i)$, that is, $b = h \comp a$.
This follows from $h \comp a^* = (h \comp a)^*$: indeed, we know that $h \comp \alpha = \beta \comp Th$, thus
\[
h \comp a^* = h \comp \alpha \comp Ta = \beta \comp Th \comp Ta = \beta \comp T(h \comp a) = (h \comp a)^*.
\]
\end{remark}

\begin{theorem}
\label{T:sfv}
Every strongly finitary monad $\Tmon$ on $\Met$ or $\UMet$ is the free-algebra monad of the variety $\Vvar_\Tmon$.
\end{theorem}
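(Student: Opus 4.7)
The plan is to show that the concrete functor $E : \Met^\Tmon \to \Vvar_\Tmon$, defined on objects by the preceding lemma and on morphisms by Remark~\ref{R:hom}, is a concrete isomorphism. Together with Proposition~\ref{P:acc} this yields a concrete isomorphism $\Met^\Tmon \cong \Met^{\Tmon_{\Vvar_\Tmon}}$; then \cite{barr+wells:toposes}, Theorem~3.6.3, gives $\Tmon \cong \Tmon_{\Vvar_\Tmon}$. Faithfulness of $E$ is automatic since $E$ is concrete over $\Met$.

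For bijection on objects I use the Dubuc correspondence from Remark~\ref{R:dubuc}: given $A \in \Vvar_\Tmon$, producing the required $\alpha : TA \to A$ reduces to producing a monad morphism $\phi : \Tmon \to \spitze{A,A}$. Define
\[
\phi_{V_n} : TV_n \to [A^n, A], \qquad \sigma \mapsto \sigma_A \qquad (n \in \Nat).
\]
Equation (1) of Construction~\ref{C:eq} makes each $\phi_{V_n}$ nonexpanding, and the identity $(a \comp f)^* = a^* \comp Tf$ from Remark~\ref{R:star}(2) makes the family natural in $n \in \Set_\fp$. Strong finitarity $T = \Lan{K}{T \comp K}$ then extends it uniquely to a natural transformation $\phi : T \to \spitze{A,A}$. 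Equation (3) yields the unit axiom $\phi \comp \eta = \eta^{\spitze{A,A}}$ via the naturality identity $\eta_X(x) = T\ol{x}(\eta_{V_1}(x_0))$ for $\ol{x} : V_1 \to X$ with $x_0 \mapsto x$. For the multiplication axiom I apply strong finitarity of $T$ at the space $TV_n$ to write an arbitrary element of $T(TV_n)$ in the form $T\tau(\xi)$ for some $\tau : V_k \to TV_n$ and $\xi \in TV_k$; equation (2) then reduces both sides of the axiom evaluated at $g \in A^n$ to the common value $\xi_A((\tau(x_i))_A(g))_{i<k}$. Hence $\phi$ is a monad morphism, and Remark~\ref{R:dubuc} supplies a unique $\alpha$ with $\wh{\alpha} = \phi$; by construction $\wh{\alpha}_{V_n}(\sigma) = \phi_{V_n}(\sigma) = \sigma_A$, so $E(A,\alpha) = A$. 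Uniqueness: any $\alpha'$ with $E(A,\alpha') = A$ yields $\wh{\alpha'}_{V_n} = \phi_{V_n}$ for all $n$, hence $\wh{\alpha'} = \phi$ by strong finitarity, hence $\alpha' = \alpha$.

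For fullness, let $h : A \to B$ be a $\Sigma$-homomorphism between the $\Sigma$-algebras induced by $(A,\alpha), (B,\beta) \in \Met^\Tmon$; one must verify $h \comp \alpha = \beta \comp Th$. Using the coend representation $TA = \int^{n \in \Set_\fp} A^n \tensor TV_n$ supplied by strong finitarity of $T$, it suffices to check the equality after precomposition with $Ta : TV_n \to TA$ for every $a \in A^n$ and $n \in \Nat$. There both sides evaluated at $\sigma \in TV_n$ return $\sigma_B(h \comp a) = h(\sigma_A(a))$, using that $h$ is a $\Sigma$-homomorphism. Thus $E$ is a concrete isomorphism and the theorem follows.

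The principal obstacle is verifying that $\phi$ is a monad morphism: the multiplication axiom is an equality of natural transformations $TT \to \spitze{A,A}$, and $TT$ need not itself be strongly finitary (Open Problem~\ref{OP}), so one cannot simply invoke the extension property of $\Lan{K}{(\blank)}$ a second time. Equation (2) of Construction~\ref{C:eq} is engineered precisely to encode the compatibility between monad multiplication and substitution of operations, and combined with the coend presentation of $T$ at the space $TV_n$ it delivers the axiom element-wise.
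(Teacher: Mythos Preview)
Your approach is genuinely different from the paper's and, once repaired, arguably cleaner. The paper proves directly that $TV_n$ is the free $\Vvar_\Tmon$-algebra on $V_n$ (via the formula $\ol f(\sigma)=\sigma_A(f(x_i))$), and then extends to all spaces by invoking strong finitarity of \emph{both} $\Tmon$ and $\Tmon_{\Vvar_\Tmon}$; for $\Met$ the latter step goes through Corollary~\ref{C:sf} and hence the composition assumption of Open Problem~\ref{OP}. Your route via the Dubuc correspondence avoids ever needing $\Tmon_{\Vvar_\Tmon}$ to be strongly finitary, so in principle it proves the theorem for $\Met$ unconditionally.

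There is, however, a real gap in your verification of the multiplication axiom. You check it only at $X=V_n$, and you explicitly note that $TT$ need not be strongly finitary, so restriction along $K$ is not obviously faithful on natural transformations out of $TT$. What you are missing is the reduction from arbitrary $X$ to $V_m$. Given any $X$ and $\zeta\in T(TX)$, strong finitarity of $T$ at $TX$ writes $\zeta=T\tau(\xi)$ with $\tau:V_k\to TX$; now apply strong finitarity of $T$ at $X$ to each $\tau(x_i)\in TX$ to factor $\tau=Ta\circ\ol\tau$ with $a:V_m\to X$ and $\ol\tau:V_k\to TV_m$. Naturality of $\phi$, $\mu^T$ and $\mu^S$ then reduces both sides of the multiplication axiom at $\zeta$ to the corresponding expressions at $X=V_m$, where your computation with equation~(2) applies. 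Equivalently, and perhaps more transparently, you can bypass the monad-morphism axiom for $\phi$ altogether: extract $\alpha:TA\to A$ from $\phi$ and verify the algebra associativity $\alpha\circ T\alpha=\alpha\circ\mu_A$ directly on $T(TA)$, using the same two-step factorisation.

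A smaller issue: your justification for naturality of $(\phi_{V_n})_n$ cites the identity $(a\circ f)^*=a^*\circ Tf$ from Remark~\ref{R:star}, but that identity presupposes a $\Tmon$-algebra structure on $A$, which is exactly what you are trying to construct. The naturality must instead be derived from equations~(2) and~(3) of Construction~\ref{C:eq}: for $f:V_m\to V_n$, take $k=\eta_{V_n}\circ f$ in~(2) (so $k^*=Tf$) and use~(3) to identify $(\eta_{V_n}(f(x_i)))_A$ with the $f(x_i)$-th projection, yielding $(Tf(\sigma))_A(a)=\sigma_A(a\circ f)$.
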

\begin{proof}
We present a proof for $\Met$, the case $\UMet$ is completely analogous.
For every metric space $M$ we want to prove that the $\Sigma$-algebra associated with $(TM,\mu_M)$ is free in $\Vvar_\Tmon$ w.r.t.\ the universal map $\eta_M$.
Then the theorem follows from Proposition~\ref{P:acc}.

We have two strongly finitary monads, $\Tmon$ and the free algebra monad of $\Vvar_\Tmon$ (Corollary~\ref{C:sf}).
Thus, it is sufficient to prove the above for finite discrete spaces $M$.
Then this extends to all finite spaces because we have $M = \Colim{W_0}{D_M}$ (Proposition~\ref{P:prec}) and both monads preserve this colimit.
Since they coincide on all finite discrete spaces, they coincide on $M$.
Finally, the above extends to all spaces $M$: we have a directed colimit $M = \colim_{i \in I} M_i$ of the diagram of all finite subspaces $M_i$ ($i \in I$) which both monads preserve.

Given a finite discrete space $M$, we can assume without loss of generality $M = V_n$ for some $n \in \Nat$.
For every algebra $A$ in $\Vvar_\Tmon$ and a map $f : V_n \to A$, we prove that there exists a unique $\Sigma$-homomorphism $\ol{f} : TV_n \to A$ with $f = \ol{f} \comp \eta_{V_n}$.
	\begin{description}
		\item[Existence] Define $\ol{f}(\sigma) = \sigma_A(f(x_i))_{i<n}$ for every $\sigma \in TV_n$.
		The equality $f = \ol{f} \comp \eta_{V_n}$ follows since $A$ satisfies the equations $\eta_{V_n}(x_i) = x_i$, thus the operation of $A$ corresponding to $\eta_{V_n}(x_i)$ is the $i$-th projection.
		The map $\ol{f}$ is nonexpanding: given $d(l,r) \leq \eps$, the algebra $A$ satisfies $l =_\eps r$.
		Therefore for every $n$-tuple $f : V_n \to A$ we have
		\[
		d(l_A(f(x_i)),r_A(f(x_i))) \leq \eps.
		\]
		To prove that $\ol{f}$ is a $\Sigma$-homomorphism, take an $m$-ary operation symbol $\tau \in TV_m$.
		We prove $\ol{f} \comp \tau_{V_m} = \tau_A \comp \ol{f}^m$.
		This means that every $k : V_m \to TV_n$ fulfils
		\[
		\ol{f} \comp \tau_{V_m} (k(x_j))_{j < m} = \tau_A \comp \ol{f}^m (k(x_j))_{j < m}.
		\]
		The definition of $\ol{f}$ yields that the right-hand side is $\tau_A(k(x_j)_A(f(x_i)))$.
		Due to equation (2) in Construction~\ref{C:eq} with $\tau$ in place of $\sigma$ this is $k^*(\tau)_A(f(x_i))$.
		The left-hand side yields the same result since
		\[
		\ol{f}^m(k(x_j)) = (k(x_j))_A (f(x_i)).
		\]
		\item[Uniqueness] Let $\ol{f}$ be a nonexpanding $\Sigma$-homomorphism with $f = \ol{f} \comp \eta_{V_n}$.
		In $TV_n$ the operation $\sigma$ asigns to $\eta_{V_n}(x_i)$ the value $\sigma$.
		(Indeed, for every $a : n \to |TV_n|$ we have $\sigma_{TV_n}(a_i) = a^*(\sigma) = \mu_{V_n} \comp Ta(\sigma)$.
		Thus $\sigma_{TV_n}(\eta_{V_n}(x_i)) = \mu_{V_n} \comp T \eta_{V_n} (\sigma) = \sigma^n$.)
		Since $\ol{f}$ is a homomorphism, we conclude
		\[
		f(\sigma) = \sigma_A(\ol{f} \comp \eta_{V_n} (x_i)) = \sigma_A(f(x_i))
		\]
		which is the above formula.
	\end{description}

\end{proof}

Recall the concept of concrete functor over $\Met$ from Definition~\ref{D:concr}.

\begin{notation}
\phantom{phantom}
\begin{enumerate}
	\item The category of finitary varieties of quantitative algebras and concrete functors is denoted by
	\[
	\Var_\fp(\Met).
	\]
	Analogously $\Var_\fp(\UMet)$ is the category of varieties of ultra-quantitative algebras.
	\item Recall that the category of strongly finitary monads and monad morphisms is denoted by
	\[
	\Mon_\sf(\UMet) \text{ and } \Mon_\sf(\Met).
	\]
\end{enumerate}
\end{notation}

\begin{theorem}[Main theorem]
\label{T:main}
The category $\Var_\fp(\UMet)$ of finitary varieties of ultra-quantitative algebras is dually equivalent to the category $\Mon_\sf(\UMet)$ of strongly finitary monads on $\UMet$.
\end{theorem}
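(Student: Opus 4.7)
The plan is to establish the dual equivalence via the assignments $\Vvar \mapsto \Tmon_\Vvar$ and $\Tmon \mapsto \Vvar_\Tmon$, whose object-level compatibility is essentially already proved by the previous two sections. The remaining work is to promote these to functors, lift them to a contravariant equivalence, and verify naturality.

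First, I would define the contravariant functor $\Phi: \Var_\fp(\UMet) \to \Mon_\sf(\UMet)^\op$ on objects via $\Phi(\Vvar) = \Tmon_\Vvar$; this lands in $\Mon_\sf(\UMet)$ by Corollary~\ref{C:cor-ultra}. On a concrete functor $F: \Vvar \to \Vvar'$ I would define $\Phi(F): \Tmon_{\Vvar'} \to \Tmon_\Vvar$ componentwise: for every ultrametric space $X$, $F$ sends the free $\Vvar$-algebra on $X$ to a $\Vvar'$-algebra on the same underlying space $T_\Vvar X$; the universal property of the free $\Vvar'$-algebra on $X$ then yields a unique $\Vvar'$-homomorphism $T_{\Vvar'} X \to T_\Vvar X$ extending $\eta^\Vvar_X$, and the collection of these maps assembles into a monad morphism (naturality and the monad laws follow from uniqueness).

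Next, I would define $\Psi: \Mon_\sf(\UMet)^\op \to \Var_\fp(\UMet)$ on objects by $\Psi(\Tmon) = \Vvar_\Tmon$ from Construction~\ref{C:eq}. Given a monad morphism $\gamma: \Tmon \to \Tmon'$, Lemma~\ref{L:amm} provides a concrete functor $\UMet^{\Tmon'} \to \UMet^\Tmon$ sending $(A,\alpha)$ to $(A, \alpha \comp \gamma_A)$, and the proof of Theorem~\ref{T:sfv} identifies $\Vvar_\Tmon$ concretely with $\UMet^\Tmon$, giving the required $\Psi(\gamma): \Vvar_{\Tmon'} \to \Vvar_\Tmon$. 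Functoriality of $\Psi$ is immediate from Lemma~\ref{L:amm}.

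To verify that $\Phi$ and $\Psi$ are quasi-inverse, I would check the round-trips object-wise and then the naturality of the resulting isomorphisms. For a variety $\Vvar$, Proposition~\ref{P:acc} gives $\Vvar \cong \UMet^{\Tmon_\Vvar}$ concretely, and Theorem~\ref{T:sfv} identifies the right-hand side concretely with $\Vvar_{\Tmon_\Vvar}$; hence $\Psi \Phi(\Vvar) \cong \Vvar$. For a strongly finitary monad $\Tmon$, Theorem~\ref{T:sfv} states that $\Tmon$ is the free-algebra monad of $\Vvar_\Tmon$, so $\Phi \Psi(\Tmon) \cong \Tmon$.

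The main obstacle is verifying that the passage on morphisms is bijective, i.e.\ that $\Phi$ and $\Psi$ are fully faithful. The cleanest route is to exploit the concrete isomorphisms $\Vvar \cong \UMet^{\Tmon_\Vvar}$ and $\Vvar' \cong \UMet^{\Tmon_{\Vvar'}}$ to transfer the question to concrete functors between Eilenberg--Moore categories, where the bijection of Remark~\ref{R:dubuc} between algebras and monad morphisms into $\spitze{A,A}$, together with Lemma~\ref{L:amm}, shows that concrete functors $\UMet^{\Tmon_{\Vvar'}} \to \UMet^{\Tmon_\Vvar}$ over $\UMet$ are in natural bijection with monad morphisms $\Tmon_{\Vvar'} \to \Tmon_\Vvar$ in $\Mon_\sf(\UMet)$. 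Chasing this bijection through the concrete isomorphisms of Theorem~\ref{T:sfv} and Proposition~\ref{P:acc} yields both the naturality of the unit and counit and the fully faithfulness of $\Phi$ and $\Psi$, completing the equivalence.
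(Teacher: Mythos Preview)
Your proposal is correct and follows essentially the same strategy as the paper: the object-level correspondence comes from Corollary~\ref{C:cor-ultra} and Theorem~\ref{T:sfv}, the morphism-level correspondence from the standard bijection between monad morphisms $\Smon \to \Tmon$ and concrete functors $\UMet^\Tmon \to \UMet^\Smon$, and everything is glued via the concrete isomorphisms of Proposition~\ref{P:acc}. The paper is a bit more economical: it defines only one functor $\Phi$, cites the Barr--Wells bijection between monad morphisms and concrete functors directly (rather than rederiving it through Remark~\ref{R:dubuc} and Lemma~\ref{L:amm}), and then checks that $\Phi$ is full, faithful, and essentially surjective, instead of building a quasi-inverse $\Psi$ and verifying unit and counit. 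Your route works, but note that Remark~\ref{R:dubuc} and Lemma~\ref{L:amm} alone only give one direction of that bijection; the converse (concrete functor $\Rightarrow$ monad morphism) is exactly what you sketched via free algebras in your definition of $\Phi$ on morphisms, so you may as well invoke the standard result directly.
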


\begin{proof}
\phantom{phantom}
\begin{enumerate}
	\item Given monads $\Tmon$ and $\Smon$ on $\UMet$ we recall the bijection between monad morphisms and concrete functors, see e.g.~\cite{barr+wells:toposes}, Theorem~3.3.
	It assigns to every monad morphism $\phi: \Smon \to \Tmon$ the concrete functor $\ol{\phi} : \UMet^\Tmon \to \UMet^\Smon$ given by
	\[
	TA \xrightarrow{\alpha} A \; \; \mapsto \; \; SA \xrightarrow{\phi_A} TA \xrightarrow{\alpha} A.
	\]
	The map $\phi \mapsto \ol{\phi}$ is bijective and preserves composition.
	
	We denote by $(\blank)^*$ the inverse, assigning to a concrete functor $F : \UMet^\Tmon \to \UMet^\Smon$ the corresponding monad morphism $F^* : \Smon \to \Tmon$.
	This also preserves composition.
	\item For every variety $\Vvar$ the comparison functor $K_\Vvar : \Vvar \to \UMet^{\Tmon_\Vvar}$ is invertible (Proposition~\ref{P:acc}).
	Using Corollary~\ref{C:sf}, we define a functor
	\[
	\Phi: \Var_\fp(\UMet)^\op \to \Mon_\sf(\UMet)
	\]
	on objects $\Vvar$ by
	\[
	\Phi(\Vvar) = \Tmon_\Vvar.
	\]
	On morphisms $F : \Vvar \to \Wvar$ we define $\Phi(F)$ by forming the concrete functor
	\[
	\UMet^{\Tmon_\Vvar} \xrightarrow{K_\Vvar^{-1}} \Vvar \xrightarrow{F} \Wvar \xrightarrow{K_\Wvar} \UMet^{\Tmon_\Wvar}
	\]
	and putting
	\[
	\Phi(F) = (K_\Wvar F K_\Vvar^{-1})^* : \Tmon_\Wvar \to \Tmon_\Vvar.
	\]
	Since $(\blank)^*$ preserves composition, so does $\Phi$.
	\item $\Phi$ is obviously faithful and, since $(\blank)^*$ is a bijection, full.
	It is an equivalence functor because every monad $\Tmon$ in $\Mon_\sf(\UMet)$ is isomorphic to $\Phi(\Vvar_\Tmon)$ by Theorem~\ref{T:sfv}.
\end{enumerate}
\end{proof}

For varieties of quantitative algebras the procedure is the same, but here we use Corollary~\ref{C:sf}, so that we need an extra assumption:

\begin{theorem}
\label{T:mainmet}
Assume that strongly finitary endofunctors on $\Met$ compose.
Then the category $\Var_\fp(\Met)$ of finitary varieties of quantitative algebras is dually equivalent to the category $\Mon_\sf(\Met)$ of strongly finitary monads on $\Met$
\end{theorem}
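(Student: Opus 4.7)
The plan is to follow the proof of Theorem~\ref{T:main} verbatim, substituting $\Met$ for $\UMet$ throughout, and invoking the composability hypothesis at the single place where it is genuinely needed. All the ingredients are already in hand: Proposition~\ref{P:acc} gives a concrete isomorphism $K_\Vvar: \Vvar \to \Met^{\Tmon_\Vvar}$ for every variety $\Vvar$; Corollary~\ref{C:sf} (which is where the composability assumption enters) certifies that $\Tmon_\Vvar$ lives in $\Mon_\sf(\Met)$; and Theorem~\ref{T:sfv} tells us that every strongly finitary monad $\Tmon$ on $\Met$ is recovered as the free-algebra monad of the variety $\Vvar_\Tmon$ of Construction~\ref{C:eq}.

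First, I would recall the standard, composition-preserving bijection (\cite{barr+wells:toposes}, Theorem~3.3) between monad morphisms $\phi : \Smon \to \Tmon$ on $\Met$ and concrete functors $\ol{\phi} : \Met^\Tmon \to \Met^\Smon$ over $\Met$, given by $(TA \xrightarrow{\alpha} A) \mapsto (SA \xrightarrow{\phi_A} TA \xrightarrow{\alpha} A)$. Write $(\blank)^*$ for the inverse of this bijection. Second, I would define the contravariant functor
\[
\Phi : \Var_\fp(\Met)^\op \to \Mon_\sf(\Met)
\]
on objects by $\Phi(\Vvar) = \Tmon_\Vvar$, noting that the codomain is correct precisely by Corollary~\ref{C:sf}. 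On a morphism $F : \Vvar \to \Wvar$ in $\Var_\fp(\Met)$ I would form the concrete functor
\[
\Met^{\Tmon_\Vvar} \xrightarrow{K_\Vvar^{-1}} \Vvar \xrightarrow{F} \Wvar \xrightarrow{K_\Wvar} \Met^{\Tmon_\Wvar}
\]
and set $\Phi(F) := (K_\Wvar \comp F \comp K_\Vvar^{-1})^* : \Tmon_\Wvar \to \Tmon_\Vvar$. Functoriality of $\Phi$ is immediate from the fact that $(\blank)^*$ preserves composition and $K_\Vvar, K_\Wvar$ are isomorphisms.

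Third, I would check that $\Phi$ is an equivalence. Faithfulness follows from the injectivity of $(\blank)^*$ together with the fact that $K_\Vvar, K_\Wvar$ are concrete isomorphisms, so different $F$'s produce different composites $K_\Wvar F K_\Vvar^{-1}$. Fullness is equally immediate: given any monad morphism $\phi : \Tmon_\Wvar \to \Tmon_\Vvar$, the concrete functor $\ol{\phi} : \Met^{\Tmon_\Vvar} \to \Met^{\Tmon_\Wvar}$ transports under $K_\Vvar, K_\Wvar$ to a concrete functor $\Vvar \to \Wvar$ whose image under $\Phi$ is $\phi$. Essential surjectivity is precisely Theorem~\ref{T:sfv}: for any strongly finitary $\Tmon$, the variety $\Vvar_\Tmon$ satisfies $\Phi(\Vvar_\Tmon) = \Tmon_{\Vvar_\Tmon} \cong \Tmon$.

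The only real obstacle is making sure $\Phi$ lands in $\Mon_\sf(\Met)$ on objects, and this is exactly what the composability hypothesis buys us via Corollary~\ref{C:sf}; without it, $\Tmon_\Vvar$ is only guaranteed to be finitary, and the codomain of $\Phi$ would have to be enlarged to $\Mon_\fp(\Met)$, breaking the desired duality. Once this point is secured, every step of the argument is a transcription of the $\UMet$-proof (Theorem~\ref{T:main}), so no further difficulties arise.
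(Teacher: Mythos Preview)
Your proposal is correct and matches the paper's own treatment exactly: the paper does not give a separate proof of Theorem~\ref{T:mainmet} but simply remarks that the procedure is the same as for Theorem~\ref{T:main}, with Corollary~\ref{C:sf} (and hence the composability assumption) replacing the unconditional Corollary~\ref{C:cor-ultra}. Your identification of Corollary~\ref{C:sf} as the unique point where the hypothesis is consumed is precisely the paper's intent.
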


\section{Monads on Complete Ultrametric Spaces}

For the category $\CUMet$ of (extended) complete ultrametric spaces (a full subcategory of $\Met$) we obtain the same result: strongly finitary monads bijectively correspond to varieties of complete algebras of $\CUMet$.
The proof is analogous, we indicate here what small changes are needed.

The category $\CUMet$ is closed symmetric monoidal: if $A$ and $B$ are complete spaces, then so are the spaces $A \tensor B$ and $[A,B]$ in Notation~\ref{N:met}.
In this section by 'category' and 'functor' we mean an enriched category (and functor) over $\CUMet$.
This category is complete and cocomplete (\cite{adamek+rosicky:approximate-injectivity}, Example 4.5).

\begin{remark}
	\phantom{phantom}
	\begin{enumerate}[label=(\alph*)]
		
		\item Recall the Cauchy completion $\cauchy{X}$ of an ultrametric space $X$: it is the (essentially unique) complete ultrametric space containing $X$ as a dense subspace.
		This embedding $X \hookrightarrow \cauchy{X}$ is a reflection of $X$ in $\CUMet$: for every nonexpanding map $f : X \to Y$ with $Y$ complete there is a unique nonexpanding extension $\cauchy{f} : \cauchy{X} \to Y$.
		Indeed, since $f$ is continuous, a unique continuous extension exists.
		To prove that $\cauchy{f}$ is indeed nonexpanding, consider $a, b \in \cauchy{X}$ with $d(a,b) = r$.
		Find sequences $(a_n)$, $(b_n)$ in $X$ converging to $a$ and $b$, respectively.
		Since $\cauchy{f}(a) = \lim_{n \to \infty} f(a_n)$ and $\cauchy{f}(b) = \lim_{n \to \infty} f(b_n)$, we get
		\begin{align*}
			d(\cauchy{f}(a),\cauchy{f}(b)) & = \lim_{n \to \infty}d(f(a_n),f(b_n)) \\
			& \leq \lim_{n \to \infty}d(a_n,b_n) \\
			& = d(a,b).
		\end{align*}
		
		\item Cauchy completion preserves finite products: the space $\cauchy{X} \times \cauchy{Y}$ is complete, and $X \times Y$ is clearly dense in it.
		Thus $\cauchy{X \times Y} \cong \cauchy{X} \times \cauchy{Y}$.
		
		\item For every natural number $n$ the endofunctor $(\blank)^n$ of $\CUMet$ preserves directed colimits.
		Indeed, directed colimits in $\CUMet$ are the Cauchy completions of the corresponding colimits in $\Met$.
		Thus our statement follows from Example~\ref{E:dirn} and (b) above.
		
		\item Directed colimits in $\CUMet$ have a characterization analogous to Proposition~\ref{P:pres}.
		However, Condition~1 must be weakened, as the following example demonstrates.
	\end{enumerate}
\end{remark}

\begin{example}
	In $\CUMet$ the subspace of the real line on the set $A = \{ 0 \} \cup \{\frac{1}{2}, \frac{1}{4}, \frac{1}{8}, \dots \}$ is a colimit of the $\omega$-chain of subspaces $A_n = \{ 2^{-k} \mid k=1,\dots,n \}$.
	However, $0$ does not lie in the image of any of the colimit maps.
\end{example}

\begin{proposition}
	\label{P:dircol}
	Let $(D_i)_{i \in I}$ be a directed diagram in $\CUMet$.
	A cocone $c_i : D_i \to C$ is a colimit iff
	\begin{enumerate}
		\item it is collectively dense: $C$ is the closure of $\bigcup_{i \in I} c_i[D_i]$, and
		\item for every $i \in I$, given $y,y' \in D_i$ we have
		\[
		d(c_i(y),c_i(y')) = \inf_{j \geq i} d(f_j(y),f_j(y'))
		\]
		where $f_j : D_i \to D_j$ is the connecting map.
	\end{enumerate}
\end{proposition}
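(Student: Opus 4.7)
The plan is to reduce to Proposition~\ref{P:pres} via the Cauchy completion reflection $\widehat{(-)} : \UMet \to \CUMet$ recalled at the start of this section. The key observation is that a directed colimit in $\CUMet$ is the Cauchy completion of the corresponding directed colimit computed in $\UMet$; condition~(1) of Proposition~\ref{P:pres} (collective surjectivity) then naturally relaxes to collective density, while the inf-formula~(2) survives because the canonical embedding into the completion is isometric.

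For necessity, I would form the directed colimit $(c_i^0 : D_i \to C_0)$ of the diagram in $\UMet$. Since $\CUMet$ is reflective in $\UMet$ via $\widehat{(-)}$, composing with the dense isometric embedding $\iota : C_0 \hookrightarrow \widehat{C_0}$ yields a colimit of the same diagram in $\CUMet$. Uniqueness of colimits then provides an isomorphism $C \cong \widehat{C_0}$ under which $c_i$ corresponds to $\iota \circ c_i^0$, and now~(1) follows from $\bigcup_i c_i^0[D_i] = C_0$ (Proposition~\ref{P:pres}) together with the density of $C_0$ in $\widehat{C_0}$, while~(2) follows because $\iota$ is distance-preserving.

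For sufficiency, given (1) and (2) together with an arbitrary cocone $h_i : D_i \to X$ in $\CUMet$, I would first define a map $h_0 : \bigcup_i c_i[D_i] \to X$ by $h_0(c_i(y)) := h_i(y)$, with well-definedness and the nonexpanding property proved verbatim as in Proposition~\ref{P:pres} using~(2) and the nonexpansiveness of each $h_j$. Since the target $X$ is complete and $\bigcup_i c_i[D_i]$ is dense in $C$ by~(1), $h_0$ extends uniquely to a nonexpanding $h : C \to X$ with $h \circ c_i = h_i$ for every $i$. The main obstacle is conceptual rather than computational: one must recognize that condition~(1) cannot be sharpened to surjectivity, as the example preceding the statement already illustrates, and that the correct universal property is recovered precisely by pairing the weakened density condition with completeness of the target, which supplies the unique extension from the dense image to the whole of $C$.
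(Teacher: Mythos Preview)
Your proof is correct and follows essentially the same strategy as the paper: both directions reduce to Proposition~\ref{P:pres} via the Cauchy-completion reflection $\UMet \to \CUMet$. For sufficiency you verify the universal property directly by extending the induced map from the dense image $\bigcup_i c_i[D_i]$ to a complete target, whereas the paper instead observes that the restricted cocone on this image is already a $\Met$-colimit (by Proposition~\ref{P:pres}) and then invokes the reflection; these are two phrasings of the same argument.
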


\begin{proof}
	Denote by $c_i' : D_i \to C'$ a colimit of our diagram in $\Met$.
	Then a colimit in $\CUMet$ is given by the cocone $m \comp c_i': D_i \to C'$ where $m: C' \to C$ is the Cauchy completion of $C'$.
	\begin{enumerate}[label=(\alph*)]
		
		\item Sufficiency: If (1) and (2) hold, put $C' = \bigcup_{i \in I} c_i[D_i]$ and denote by $c_i' : D_i \to C'$ the codomain restriction of $c_i$ for each $i \in I$.
		Consider $C'$ as the metric subspace of $C$.
		The we obtain a cocone $(c_i')$ of our diagram in $\Met$.
		Since by our Condition (1) the subspace $C'$ is dense in $C$, we conclude that the inclusion map $m: C' \hookrightarrow C$ is a Cauchy completion of $C'$.
		Therefore, the cocone $c_i = m \comp c_i : D_i \to C$ is a colimit in $\CUMet$, as claimed.
		
		\item Necessity: If $c_i : D_i \to C$ is a colimit cocone in $\CUMet$ and $c_i': D_i \to C'$ is the cocone of codomain restrictions to $C' = \bigcup_{i \in I} c_i[D_i]$, then we can assume without loss of generality that the inclusion map $m : C' \to C$ is a Cauchy completion of $C'$ with $c_i = m \comp c'$ for all $ \in I$.
		Since $c_i'$ are collectively epic by Proposition~\ref{P:pres}, we conclude that $c_i$ are collectively dense.
		Since $c_i'$ fulfil Condition (2) of~\ref{P:pres} and $m$ is an isometric embedding, we conclude that $c_i$ fulfil that condition, too.
	\end{enumerate}
\end{proof}

\begin{definition}
	By a \emph{foliation} of a complete ultrametric space $M$ in $\CUMet$ is meant the weighted diagram $D_M': \B \to \CUMet$ with weight $B' : \B^\op \to \CUMet$ where $D_M'$ and $B'$ are the codomain restrictions of the corresponding functors in Definition~\ref{D:prec}.
\end{definition}

\begin{proposition}
	Every complete ultrametric space is the colimit of its foliation in $\CUMet$.
\end{proposition}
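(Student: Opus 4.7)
The plan is to reduce directly to Proposition~\ref{P:prec}. First I observe that all the values of $D_M'$ and $B'$ are discrete spaces, hence automatically complete (every Cauchy sequence in a discrete extended metric space is eventually constant): $D_M' a = |M|$, $D_M' \eps$ is a discrete subspace of $|M| \times |M|$, while $B' a = \{0\}$ and $B' \eps = \{l,r\}$ with $d(l,r) = \eps$. So $B'$ and $D_M'$ are genuine codomain restrictions of $B$ and $D_M$ from Definition~\ref{D:prec}, living entirely inside $\CUMet$.

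The universal property of $M$ as the weighted colimit in $\CUMet$ then transfers verbatim from the one in $\Met$. For any $X \in \CUMet$, since $\CUMet$ is enriched as a full subcategory of $\Met$, the hom metric spaces $\CUMet(M,X)$ and $\CUMet(D_M'\blank,X)$ coincide with $\Met(M,X)$ and $\Met(D_M\blank,X)$, and $[\B^\op,\CUMet](B',\blank)$ agrees with $[\B^\op,\Met](B,\blank)$ on weights valued in $\CUMet$. The natural isomorphism supplied by Proposition~\ref{P:prec},
\[
\psi_X : \Met(M,X) \to [\B^\op,\Met](B, \Met(D_M\blank, X)),
\]
therefore restricts to the desired natural isomorphism in $\CUMet$, with unit the natural transformation $\nu$ from Remark~\ref{R:prec}~(1) (whose codomain $[D_M'\blank, M]$ already lies in $\CUMet$).

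A structural alternative is to invoke that $\CUMet \hookrightarrow \UMet \hookrightarrow \Met$ is a composite of reflective inclusions, with left adjoints the ultrametric hull and the Cauchy completion $\cauchy{(\blank)}$. Left adjoints preserve weighted colimits, so the $\Met$-colimit $M = \Colim{B}{D_M}$ is sent to a colimit in $\UMet$ and then in $\CUMet$; since $M$ is already complete ultrametric, both reflections act as the identity on it, yielding $M = \Colim{B'}{D_M'}$ in $\CUMet$.

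No genuine obstacle is anticipated: the content of the proposition is that the entire precongruence diagram already lives in $\CUMet$ and that $M$ itself is complete, so the statement is a bookkeeping consequence of Proposition~\ref{P:prec} together with the observation that completeness of $M$ means Cauchy completion changes nothing.
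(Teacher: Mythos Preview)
Your approach is correct and matches the paper's, which simply says ``the proof is the same as that of Proposition~\ref{P:prec}.'' One minor slip: $B'\eps = \{l,r\}$ with $d(l,r) = \eps$ is \emph{not} discrete (its nonzero distance is a positive rational, not $\infty$); it is, however, finite and hence complete, and any two-point space is trivially ultrametric, so your conclusion that all values of $B'$ and $D_M'$ lie in $\CUMet$ still stands.
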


The proof is the same as that of Proposition~\ref{P:prec}.

\begin{notation}
	Every set (= discrete space) is complete.
	We thus get a full embedding
	\[
	K : \Set_\fp \hookrightarrow \CUMet
	\]
	of the category of finite sets and mappings into $\CUMet$.
\end{notation}

\begin{definition}
	An endofunctor $T$ of $\CUMet$ is \emph{strongly finitary} if it is the left Kan extension of its restriction $T \comp K$ to finite discrete spaces:
	\[
	T = \Lan{K}{(T \comp K)}.
	\]
\end{definition}

\begin{theorem}
	An endofunctor of $\CUMet$ is strongly finitary iff it is finitary and preserves colimits of foliations.
\end{theorem}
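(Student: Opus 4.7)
The plan is to repeat the argument of Theorem~\ref{T:sa} with $\Met$ replaced by $\CUMet$, applying Theorem~5.29 of Kelly's book to the inclusion $K: \Set_\fp \hookrightarrow \CUMet$. I would exhibit a density presentation of $K$ consisting of two types of colimits: (a) directed diagrams of finite subspaces, and (b) precongruences of finite discrete spaces. Since every finite ultrametric space is automatically complete, the precongruence $D_M'$ of a finite space $M$ is a weighted diagram whose values at $a$ and at each $\eps$ are finite discrete spaces, so it lives in $\Set_\fp$.

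To check that these generate every complete ultrametric space, I would first observe that every finite complete ultrametric space is the colimit of its precongruence in $\CUMet$ (the proposition just before the statement), which is a diagram of type (b). Next, every complete ultrametric space $M$ is a directed colimit of its finite subspaces in $\CUMet$ by Proposition~\ref{P:dircol}: the cocone of inclusions $M_i \hookrightarrow M$ is collectively dense (trivially, since $\bigcup_i M_i = M$) and satisfies condition~(2) because the inclusions are isometric embeddings. For $K$-absoluteness I would show that $\wt{K}(X) = \CUMet(K\blank,X)$ has components $X^n$ for $n \in \Set_\fp$, and the finite-power functor $(\blank)^n$ preserves directed colimits in $\CUMet$ by Remark~(c) of this section and preserves colimits of precongruences by the argument of Example~\ref{E:pown}(2), which only uses the maximum metric and thus transfers verbatim.

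Invoking Theorem~5.29 of Kelly's book now gives: $T$ is strongly finitary iff it preserves (a') directed colimits in $\CUMet_\fp$ and (b') precongruences of finite spaces. It remains to boost (a')+(b') from finite precongruences to all precongruences, and this is the step that deserves the most care. The idea, as in Theorem~\ref{T:sa}, is to write an arbitrary $M \in \CUMet$ as the directed colimit of its finite subspaces $M_i$ and exhibit the unit $\nu: B' \to [D_M'\blank, M]$ of $M = \Colim{B'}{D_M'}$ as the directed colimit of the units $\nu_i : B' \to [D_{M_i}'\blank, M_i]$, so that preservation by $T$ follows from (a') and (b'). The potential subtlety is that in $\CUMet$ directed colimits involve Cauchy completion, so one must verify that passing to closures does not disturb the identification of the units; however, since $\nu_a$ is given pointwise by the identity $|M| \to M$ and $\nu_\eps$ by the two projections, and since each pair $(x,x') \in D_M'\eps$ is already realized in some finite $M_i$, this identification is straightforward.

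The main obstacle I anticipate is purely bookkeeping: keeping the two descriptions of directed colimits in $\CUMet$ (via Cauchy closures) consistent with the weighted-colimit formalism throughout the density-presentation argument. Once that is set up carefully, no new idea beyond the $\Met$ proof is required; in particular, the cartesian closedness of $\CUMet$ makes the composability of strongly finitary endofunctors automatic (Remark~\ref{R:comp}), but this is not needed for the present theorem.
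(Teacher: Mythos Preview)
Your proposal is correct and follows essentially the same route as the paper, which simply says the proof is analogous to that of Theorem~\ref{T:sa} using that every complete ultrametric space is a directed colimit of its finite subspaces. One small slip: after invoking Kelly's Theorem~5.29 you write ``(a') directed colimits in $\CUMet_\fp$'', but the density presentation consists of \emph{all} directed colimits in $\CUMet$ (together with precongruences of finite spaces), so (a') should read ``all directed colimits''---this is what you actually use in the subsequent boost step, so the argument is unaffected.
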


The proof is analogous to that of Theorem~\ref{T:sa}.
Indeed, every complete metric space is a directed colimit of its finite subspaces, and those are obtained as colimits of foliations (using finite discrete spaces).

\begin{notation}
	\phantom{phantom}
	\begin{enumerate}
		
		\item Analogously to Notation~\ref{N:fsf} the category $\Mon(\CUMet)$ of monads on $\CUMet$ is enriched via the supremum metric (inherited from $[\CUMet,\CUMet]$).
		Indeed, it is easy to verify that, given monads $\Tmon$ and $\Smon$, the subspace of $\CUMet(T,S)$ on all monad morphisms is closed, thus, complete.
		
		\item We denote by $\Mon_\fin(\CUMet)$ and $\Mon_\sf(\CUMet)$ the categories of finitary and strongly finitary monads on $\CUMet$, respectively (cf.\ Notation~\ref{N:fsf}).
		
	\end{enumerate}
\end{notation}

\begin{proposition}
	The categories $\Mon_\fin(\CUMet)$ and $\Mon_\sf(\CUMet)$ are closed under weighted limits in the category of $\aleph_1$-accessible monads on $\CUMet$.
\end{proposition}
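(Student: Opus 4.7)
The plan is to combine the $\CUMet$-analogues of Propositions~\ref{P:lambda-acc} and~\ref{P:cls}, both of which transfer to $\CUMet$ without any extra hypothesis: the composition of strongly finitary endofunctors is automatic on the cartesian closed base $\CUMet$ by Remark~\ref{R:comp}. First, by the analogue of Proposition~\ref{P:lambda-acc}, the category $\Mon_{\aleph_1}(\CUMet)$ is locally $\aleph_1$-presentable in the enriched sense, so it has all weighted limits, and the forgetful functor
\[
U : \Mon_{\aleph_1}(\CUMet) \to [|\CUMet_{\aleph_1}|,\CUMet]
\]
is monadic, hence creates weighted limits. Therefore, for any weighted diagram $D : \mathcal{J} \to \Mon_{\aleph_1}(\CUMet)$ with weight $W$, the underlying endofunctor of $T = \lim\nolimits^W D$, restricted to any discrete space of countable cardinality---in particular to each object of $\Set_\fp$---agrees with the pointwise weighted limit of the $D_j$'s underlying endofunctors.

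Second, I will use the $\CUMet$-analogue of Proposition~\ref{P:cls}: the inclusions
\[
i_\sf : \Mon_\sf(\CUMet) \hookrightarrow \Mon_{\aleph_1}(\CUMet), \qquad i_\fin : \Mon_\fin(\CUMet) \hookrightarrow \Mon_{\aleph_1}(\CUMet)
\]
admit right adjoints $R_\sf$ and $R_\fin$, constructed via the left Kan extensions $\Lan{K}{({-})\comp K}$ and $\Lan{K^*}{({-})\comp K^*}$ as in Remark~\ref{R:coref}. Assume $D$ factors through $\Mon_\sf(\CUMet)$ (the finitary case is identical, using $i_\fin$, $R_\fin$, $K^*$). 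Since $R_\sf$ is a right adjoint, it preserves weighted limits:
\[
R_\sf T \;=\; R_\sf\bigl(\lim\nolimits^W i_\sf D\bigr) \;=\; \lim\nolimits^W (R_\sf i_\sf D) \;=\; \lim\nolimits^W D,
\]
the rightmost limit being computed inside the coreflective subcategory $\Mon_\sf(\CUMet)$ (where it exists, as $R_\sf$ exhibits this subcategory as complete).

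It remains to show that the counit $\tau_T : i_\sf R_\sf T \to T$ is an isomorphism of $\aleph_1$-accessible monads; this is the crux. Both sides are $\aleph_1$-accessible endofunctors of $\CUMet$, and since $K$ is fully faithful, $i_\sf R_\sf T = \Lan{K}{T\comp K}$ restricts on $\Set_\fp$ to $T\comp K$, which by the first step equals $T|_{\Set_\fp}$. For any strongly finitary test monad $\Smon$, combining the coreflection adjunction $i_\sf \dashv R_\sf$ with the limit universal property of $R_\sf T = \lim\nolimits^W D$ in $\Mon_\sf(\CUMet)$ yields a chain
\[
\Mon_{\aleph_1}(\CUMet)(i_\sf\Smon, T) \;=\; \lim\nolimits^W \Mon_{\aleph_1}(\CUMet)(i_\sf\Smon, i_\sf D_j) \;=\; \Mon_{\aleph_1}(\CUMet)(i_\sf\Smon, i_\sf R_\sf T),
\]
the latter bijection being post-composition with $\tau_T$. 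Hence $\tau_T$ induces a bijection on hom-sets out of every strongly finitary object.

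The main obstacle is promoting this restricted Yoneda conclusion to a genuine isomorphism. I would do this by showing that $\Mon_\sf(\CUMet)$ is strongly generating in $\Mon_{\aleph_1}(\CUMet)$: each $\aleph_1$-accessible monad is canonically a filtered colimit of its strongly finitary sub-monads obtained via the counit components of $R_\sf$ at finitely generated strongly finitary approximations (exploiting local $\aleph_1$-presentability of the ambient). Because the representable $\Mon_{\aleph_1}(\CUMet)({-},T)$ sends such colimits to limits, the restricted bijection extends to all of $\Mon_{\aleph_1}(\CUMet)$, and the enriched Yoneda lemma forces $\tau_T$ to be invertible. The finitary closure is handled by the identical argument with $R_\fin$, $i_\fin$, and $K^*$ in place of $R_\sf$, $i_\sf$, and $K$.
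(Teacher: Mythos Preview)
The statement as printed says ``closed under weighted \emph{limits}'', but the paper's own proof is the one-liner ``The proof is the same as for Proposition~\ref{P:cls}'', and Proposition~\ref{P:cls} establishes closure under \emph{colimits} via coreflectivity. The proposition is then invoked in the very next theorem to form a \emph{colimit} in $\Mon_\sf(\CUMet)$. So ``limits'' is almost certainly a typo for ``colimits''; the intended argument is: strongly finitary endofunctors on the cartesian closed base $\CUMet$ compose (Remark~\ref{R:comp}), so the coreflection construction of Proposition~\ref{P:cls} goes through verbatim, and coreflective full subcategories are closed under colimits.

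You have taken the printed word ``limits'' at face value and attempted to prove closure under weighted limits. The coreflection $i_\sf \dashv R_\sf$ is set up correctly, and your observation that $\tau_T : i_\sf R_\sf T \to T$ induces bijections on hom-sets out of every strongly finitary $\Smon$ is correct---but notice that this is simply the defining universal property of the coreflection, valid for \emph{every} $\aleph_1$-accessible monad $T$; nowhere does your chain of bijections use that $T$ is a limit of strongly finitary monads. Consequently your ``main obstacle'' step, where you claim $\Mon_\sf(\CUMet)$ is strongly generating in $\Mon_{\aleph_1}(\CUMet)$, would (if it worked) force $\tau_T$ to be invertible for \emph{all} $T$, i.e.\ would prove that every $\aleph_1$-accessible monad on $\CUMet$ is strongly finitary. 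That is false, so the strong-generation claim cannot hold, and the argument collapses at exactly the point you flagged as the crux. Coreflective subcategories are, in general, closed under colimits but not under limits; the literal statement you are trying to prove is not what the paper intends.
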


The proof is the same as for Proposition~\ref{P:cls}.

\begin{remark}
	In $\Met$ we have used the factorization system (surjective, isometric embedding) (Remark~\ref{R:prec}).
	In $\CMet$ we have the factorization system (epi, strong mono): epimorphisms are the morphisms whose images are dense, and strong monomorphisms are those representing closed subspaces.
\end{remark}

\begin{proposition}
	\label{P:facto}
	Let $\Tmon$ be a finitary monad on $\CUMet$ preserving epimorphisms.
	In the category $\Mon_\fin(\CUMet)$ every morphism $h: \Tmon \to \Smon$ has a factorization $h = m \comp e$ where $e$ has epic components and $m$ has components representing closed subspaces.
\end{proposition}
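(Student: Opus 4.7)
The proof should be the $\CUMet$-analogue of Proposition~\ref{P:fact}, replacing the factorization system (surjective, isometric embedding) by (epi, strong mono) on $\CUMet$. Recall from the preceding remark that epimorphisms are precisely the dense morphisms and strong monomorphisms precisely the closed isometric embeddings.

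First, for each space $X$ factor the component $h_X$ as $h_X = m_X \comp e_X$ with $e_X : TX \to T'X$ dense and $m_X : T'X \to SX$ a closed isometric embedding. Since $(e, m)$ is a factorization system, the diagonal fill-in defines $T'f$ for every $f : X \to Y$ by orthogonality of $e_X$ against $m_Y$, making $T'$ a functor. Enrichedness of $T'$ follows from that of $S$ because $m_Y$ is an isometric embedding. We obtain natural transformations $e : T \to T'$ and $m : T' \to S$ with $h = m \comp e$.

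Next, equip $T'$ with a monad structure. Put $\eta^{T'} = e \comp \eta^T$. For $\mu^{T'}$, observe that the component $(e \ast e)_X = e_{T'X} \comp Te_X$ is a composite of two dense morphisms, using the hypothesis that $\Tmon$ preserves epimorphisms to get that $Te_X$ is dense. A composite of dense nonexpanding maps is dense, so $e \ast e$ has epic components, while $(m \ast m)_X$ is a composite of closed isometric embeddings and hence again one. The diagonal fill-in against the defining square, identical to the one in the proof of Proposition~\ref{P:fact}, produces $\mu^{T'}$. The verifications that $(T', \mu^{T'}, \eta^{T'})$ is a monad and that $e$, $m$ are monad morphisms then copy verbatim the diagram chases of Proposition~\ref{P:fact}; nothing is sensitive to the particular factorization system beyond orthogonality and componentwise epic/mono.

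The delicate step is verifying that $T'$ is finitary, for which we must now appeal to Proposition~\ref{P:dircol} rather than Proposition~\ref{P:pres}. Given a directed diagram $(D_i)_{i \in I}$ with colimit cocone $c_i : D_i \to C$ in $\CUMet$, we must show that $(T'c_i)$ satisfies conditions (1) collective density and (2) the metric meet identity. For (2), transport along the isometric embeddings $m_C$ and $m_{D_j}$ reduces the identity to the corresponding one for $(Tc_i)$, which holds since $\Tmon$ is finitary, exactly as in Proposition~\ref{P:fact}. The main obstacle lies in (1): given $x \in T'C$ and $\eps > 0$, use that $e_C$ is dense to choose $y \in TC$ with $d(e_C(y), x) < \eps$, then use that $(Tc_i)$ is a colimit cocone in $\CUMet$ to pick $i \in I$ and $y' \in TD_i$ with $d(Tc_i(y'), y) < \eps$; the element $T'c_i(e_{D_i}(y')) = e_C(Tc_i(y'))$ then lies within $2\eps$ of $x$, which establishes collective density. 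The nesting of two $\eps$-approximations (rather than the single exact lift available in the $\Met$ case) is the only substantive new ingredient.
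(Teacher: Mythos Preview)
Your argument is correct and matches the paper's proof essentially line for line, including the two-$\eps$ approximation that replaces the exact lift when establishing collective density. One small slip: the assertion that $(m \ast m)_X$ is a composite of closed isometric embeddings is neither justified (the functors $S$ and $T'$ need not preserve strong monos) nor needed---the diagonal fill-in only requires that the bottom arrow $m_X$ be a strong mono---and in condition~(2) the transport along $m$ lands in $S$, so the identity you reduce to is the one for $(Sc_i)$, not $(Tc_i)$.
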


\begin{proof}
	Since $T$ preserves $\EE$-morphisms of the factorization system (epi, strong mono), the fact that $h$ has a factorization in the category of all monads as described is proved as Items (1)-(3) of Proposition~\ref{P:fact}.
	It remains to prove that the functor $T'$ is finitary.
	That is, given a directed colimit $c_i : D_i \to C$ ($i \in I$) in $\CUMet$, we are to verify that the cocone $(T'c_i)_{i \in I}$ satisfies (1) and (2) of Proposition~\ref{P:dircol}.
	\begin{description}
		
		\item[\textnormal{Ad (1)}] Our task is to find for every $x \in T'\ol{C}$ and every $\eps > 0$ an element $y \in D_i$ (for some $i \in I$) with $d(x,T'c_i(y)) < \eps$.
		Form a colimit $\ol{c}_i : D_i \to \ol{C}$ ($i \in I$) of the given diagram in $\Met$.
		Without loss of generality $C$ is the Cauchy completion of $\ol{C}$ and $c_i = m \comp \ol{c}_i$ for the inclusion map $m : \ol{C} \to C$.
		Since $e_C: TC \to T'C$ is dense, there exists $x_0 \in T\ol{C}$ with
		\[
		d(x, e_{\ol{C}}(x_0)) < \frac{\eps}{2}.
		\]
		The functor $T$ is finitary, hence by Proposition~\ref{P:dircol} the cocone $(T\ol{c}_i)$ is collectively dense.
		For $x_0$ there thus exists $i \in I$ and $y \in TD_i$ with
		\[
		d(x_0, T\ol{c}_i(y)) < \frac{\eps}{2}.
		\]
		Apply the nonexpanding map $e_{\ol{C}}$ and use $e_{\ol{C}} \comp T\ol{c}_i = T' \ol{c}_i \comp e_{D_i}$:
		\[
		d(e_{\ol{C}}(x_0), T'\ol{c}_i \comp e_{D_i}(y_0)) \leq \eps.
		\]
		The desired element is $y = e_{D_i}(y_0)$: since $T'c_i(y) = T\ol{c}_i \comp e_{D_i}(y_0)$ we get
		\[
		d(x,T'c_i(y)) \leq d(x,e_{\ol{C}}(x_0)) + d(e_{\ol{C}}(x_0), Tc_i(y)) < \frac{\eps}{2} + \frac{\eps}{2}.
		\]
		
		\item[\textnormal{Ad (2)}] This is the same proof as in Proposition~\ref{P:fact}, using $m: T' \to S$.
	\end{description}
\end{proof}

\begin{notation}
	Let $\Sigma$ be a finitary signature.
	We denote by $\Sigma\text{-}\CUMet$ the category of complete ultra-quantitative algebras.
	This is the full subcategory of $\Sigma\text{-}\UMet$ on algebras whose underlying metric is complete.
\end{notation}

\begin{example}
	For every complete space $X$ the free $\Sigma$-algebra $T_\Sigma X$ of Example~\ref{E:term} is complete.
	This is then the free algebra in $\Sigma\text{-}\CUMet$ w.r.t.\ $\eta_X: X \to T_\Sigma X$.
	
	Indeed, we have seen in Corollary~\ref{C:term} that $T_\Sigma X$ is a coproduct of finite powers of $X$, thus the metric space $T_\Sigma X$ is complete.
	Its universal property in $\Sigma\text{-}\CUMet$ thus follows from Proposition~\ref{P:poly}.
\end{example}

\begin{corollary}
	The monad $\Tmon_\Sigma$ of free complete quantitative algebras is strongly finitary on $\CUMet$.
\end{corollary}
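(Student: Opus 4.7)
The plan is to imitate the argument from Corollary~\ref{C:term} in the setting of $\CUMet$, using the characterization of strongly finitary endofunctors proved just above (strongly finitary iff finitary and preserves colimits of precongruences). Concretely, I would show that $T_\Sigma$ on $\CUMet$ is a coproduct of the finite-power endofunctors $(\blank)^n$, and that each such power is strongly finitary, and then invoke closure of strongly finitary endofunctors under coproducts.

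The first step is to record that the formula $T_\Sigma X = \Coprod_{n \in \Nat} X^n$ continues to hold in $\CUMet$ when $X$ is complete. Indeed, finite products of complete ultrametric spaces are complete, and a countable coproduct of complete spaces (with distance $\infty$ between distinct summands) is automatically complete, so the coproduct in $\CUMet$ agrees with the one in $\Met$. Combined with the example just above (freeness of $T_\Sigma X$ in $\Sigma\text{-}\CUMet$) this pins down the underlying endofunctor of $\Tmon_\Sigma$ on $\CUMet$.

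Next I would verify that each $(\blank)^n$ on $\CUMet$ is strongly finitary. Finitarity is noted in the opening remark of this section (using $\cauchy{X \times Y} \cong \cauchy{X} \times \cauchy{Y}$ together with Example~\ref{E:dirn}). Preservation of precongruence colimits is obtained by the same max-metric calculation as in Example~\ref{E:dirn}(1), transferred to $\CUMet$ via Proposition~\ref{P:dircol}: if $D_M'$ is the precongruence of a complete ultrametric space $M$, then applying $(\blank)^n$ pointwise yields a cocone whose universal property reduces coordinatewise to the given colimit. Alternatively, one can argue as in Example~\ref{E:pown}(1): to give $\phi : (\blank)^n \comp K \to F \comp K$ is to specify an element of $Fn$, which is the same as specifying a natural transformation $(\blank)^n \to F$, so $(\blank)^n = \Lan{K}{((\blank)^n \comp K)}$ directly from the definition.

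Finally, strongly finitary endofunctors on $\CUMet$ are closed under small coproducts, by the same observation as in Example~\ref{E:sa}(1): left Kan extension along $K : \Set_\fp \hookrightarrow \CUMet$ is itself a left adjoint on the relevant functor category and so preserves colimits. Therefore $T_\Sigma = \Coprod_n (\blank)^n$ is strongly finitary, and since being strongly finitary is a property of the underlying endofunctor, the monad $\Tmon_\Sigma$ of free complete $\Sigma$-algebras is strongly finitary. The main care needed is that directed and precongruence colimits in $\CUMet$ may differ from those in $\Met$ by a Cauchy completion; but this causes no trouble here, because the colimits entering the verification are already complete on the nose (finite precongruences and the countable coproduct indexed over $\Nat$ with discrete distance structure between summands).
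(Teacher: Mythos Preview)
Your proposal is correct and follows essentially the same route as the paper, which simply says ``the argument is the same as in Corollary~\ref{C:term}'': express $T_\Sigma$ as a coproduct of finite-power functors and invoke closure of strongly finitary endofunctors under coproducts. One small slip: the coproduct is indexed by similarity classes of terms (so there are many copies of each $(\blank)^n$), not by $n \in \Nat$ itself; this does not affect the argument.
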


The argument is the same as in Corollary~\ref{C:term}.

\begin{definition}
	A \emph{variety} of complete quantitative algebras is a full subcategory of $\Sigma\text{-}\CMet$ specified by a set of quantitative equations.
\end{definition}

\begin{example}
	We describe the monad $\Tmon$ of free ultra-quantitative complete semilattices on $\CUMet$.
	It assigns to every complete ultrametric space $M$ the space $TM$ of all compact subsets with the Hausdorff metric (Example~\ref{E:mons}~(4)).
	
	This holds for separable complete spaces: see~\cite{mpp16}, Theorem~9.6.
	To extend this result to all complete spaces, first observe that the subset $Z$ of $TM$ of all finite sets is dense.
	Indeed, every compact set $K \subseteq M$ lies in the closure of $Z$: given $\eps > 0$, let $K_0 \subseteq K$ be a finite set such that $\eps$-balls with centers in $K_0$ cover $K$.
	Then $K_0 \in Z$ and the Hausdorff distance of $K_0$ and $K$ is at most $\eps$.
	
	Given a complete ultrametric space $M$, let $X_i$ ($i \in I$) be the collection of all countable subsets.
	Each closure $\ol{X}_i$ is a complete separable space, and $M = \bigcup_{i \in I} \ol{X}_i$ is a directed colimit (see Remark~\ref{R:new}) preserved by $T$.
	Since $T\ol{X}_i$ is the space of all compact subsets of $\ol{X}_i$, and since finite subsets of $M$ form a dense set, we conclude that $TM$ is the space of all compact subsets of $M$.
\end{example}

\begin{proposition}
	Let $\Vvar$ be a variety of complete ultra-quantitative algebras.
	\begin{enumerate}
		
		\item $\Vvar$ has free algebras: the forgetful functor $U_\Vvar : \Vvar \to \CUMet$ has a left adjoint $F_\Vvar: \CUMet \to \Vvar$.
		
		\item For the corresponding monad $\Tmon_\Vvar$ with $T_\Vvar = U_\Vvar \comp F_\Vvar$ the categories $\V$ and $\CUMet^{\Tmon_\Vvar}$ are concretely isomorphic (via the comparison functor).
	\end{enumerate}
\end{proposition}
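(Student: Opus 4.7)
My plan is to mirror the $\Met$-proofs of Corollary~\ref{C:left} and Proposition~\ref{P:acc}, replacing the factorization system (surjective, isometric embedding) by the $\CUMet$-factorization system (dense-image, closed isometric embedding). The analog of Lemma~\ref{L:fact} in $\Sigma\text{-}\CUMet$ --- namely the factorization system $(\EE,\MM)$ with $\EE$ the $\Sigma$-homomorphisms of dense image and $\MM$ those whose underlying map is a closed isometric embedding --- lifts from $\CUMet$ because the closure of the image of a homomorphism inherits the $\Sigma$-operations by continuity, and the diagonal fill-in property transfers directly.

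For (1), the $\CUMet$-version of Birkhoff's Variety Theorem~\ref{T:BVT} goes through verbatim and implies that $\Vvar$ is closed in $\Sigma\text{-}\CUMet$ under products and $\MM$-subobjects; since $\Sigma\text{-}\CUMet$ is complete, \cite{ahs}~Theorem~16.8 then produces a reflector with units in $\EE$. Composing with the free-$\Sigma$-algebra functor $T_\Sigma : \CUMet \to \Sigma\text{-}\CUMet$ (guaranteed by the example preceding the statement) yields the desired left adjoint $F_\Vvar \adj U_\Vvar$. The one genuinely new point, compared with Corollary~\ref{C:left}, is enrichment of $F_\Vvar$: given $f,g: X \to Y$ with $d(f,g) \leq \eps$, set
\[
P = \{ p \in F_\Vvar X \mid d(F_\Vvar f(p), F_\Vvar g(p)) \leq \eps \}.
\]
As in the $\Met$ argument, $P$ contains $\eta_X[X]$ and is closed under the $\Sigma$-operations. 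But the reflection $T_\Sigma X \to F_\Vvar X$ is only dense in $\CUMet$, so I additionally observe that $P$ is topologically closed, being the preimage of the closed set $\{(u,v) \mid d(u,v) \leq \eps\}$ under the continuous map $\langle F_\Vvar f, F_\Vvar g \rangle : F_\Vvar X \to Y \times Y$. A topologically closed subset of $F_\Vvar X$ containing a dense subalgebra equals $F_\Vvar X$, so $d(F_\Vvar f, F_\Vvar g) \leq \eps$.

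For (2), with $F_\Vvar \adj U_\Vvar$ in hand the proof of Proposition~\ref{P:acc} adapts verbatim. The comparison functor $K_\Vvar: \Vvar \to \CUMet^{\Tmon_\Vvar}$ is concrete by construction, and faithful because $U_\Vvar$ is. Fullness and bijectivity on objects reduce to enriched monadicity of $U_\Vvar$, which in turn follows from the standard Beck-style creation of coequalizers of $U_\Vvar$-split pairs --- a purely algebraic verification that is insensitive to the switch from $\Met$ to $\CUMet$ --- together with the enriched Beck theorem (\cite{dubuc}, Theorem~II.2.1) already used in Proposition~\ref{P:lambda-acc}. This upgrades ordinary monadicity to the enriched statement and gives the desired concrete isomorphism.

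The main obstacle I anticipate is precisely the enrichment step for $F_\Vvar$: because the reflection $T_\Sigma X \to F_\Vvar X$ is only a dense-image morphism in $\CUMet$ rather than a set-theoretic surjection, the pointwise inductive argument of Corollary~\ref{C:left} does not by itself exhaust $F_\Vvar X$, and the additional observation that the set $P$ is topologically closed --- combined with completeness of $F_\Vvar X$ --- is what rescues the argument. All remaining steps are direct translations of the $\Met$ proofs.
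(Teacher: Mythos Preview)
Your proposal is correct and follows exactly the approach the paper indicates (it simply states that the proof is analogous to Corollary~\ref{C:left} and Proposition~\ref{P:acc}); you have in fact supplied the one extra detail the $\CUMet$ setting genuinely requires, namely that the set $P$ is topologically closed so that containing a dense subalgebra forces $P = F_\Vvar X$. One minor slip: the codomain of $\langle F_\Vvar f, F_\Vvar g\rangle$ should be $F_\Vvar Y \times F_\Vvar Y$, not $Y \times Y$.
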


The proof is analogous to that of Corollary~\ref{C:left} and Proposition~\ref{P:acc}.

\begin{construction}
	For every variety $\Vvar$ of complete ultra-quantitative algebras we construct a weighted diagram in $\Mon_\sf(\CUMet)$ completely analogous to Construction~\ref{C:diagram}.
	We just understand the objects $D_i = \Tmon_{[n(i)]}$ and $Da = \Tmon_\Sigma$ as monads on $\CUMet$.
\end{construction}

\begin{theorem}
	Let $\Vvar$ be a variety of complete ultra-quantitative algebras.
	Then the free-algebra monad $\Tmon_\Vvar$ is the colimit of the above weighted diagram in $\Mon_\sf(\CUMet)$.
\end{theorem}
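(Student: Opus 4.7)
The plan is to mirror the proof of Theorem~\ref{T:var-mon} with $\Met$ replaced by $\CUMet$, substituting each tool for its $\CUMet$-analogue established in this section. First, let $\Tmon$ denote the colimit of the weighted diagram in $\Mon_\sf(\CUMet)$, which exists by the $\CUMet$-analogue of Proposition~\ref{P:cls}; since $\Mon_\sf(\CUMet)$ is closed under colimits in $\Mon_{\aleph_1}(\CUMet)$, this colimit is also computed in $\Mon_\fin(\CUMet)$. The defining natural isomorphism
\[
\psi_\Smon : \llbracket \Tmon, \Smon \rrbracket \to [\D^\op,\CUMet](W, \llbracket D\blank, \Smon \rrbracket)
\]
yields, from its unit at $\Tmon$, a monad morphism $\gamma = \nu_a(0) : \Tmon_\Sigma \to \Tmon$ together with the inequalities $d(\gamma \comp \ol{l}_i, \gamma \comp \ol{r}_i) \leq \eps_i$ for all $i \in I$, read off from nonexpandingness of the $i$-components of $\nu$.

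Next, I would prove that the components of $\gamma$ are dense, i.e.\ epimorphic in $\CUMet$. Apply Proposition~\ref{P:facto} to factor $\gamma$ as $m \comp e$ in $\Mon_\fin(\CUMet)$ where $e$ has epic components and $m$ has components representing closed isometric subspaces. Since $m$ is componentwise isometric, the inequalities above pass down to $e$, so $e$ and the derived data assemble into a natural transformation $\delta : W \to \llbracket D\blank, \Smon \rrbracket$ with $\delta_a(0) = e$. Applying $\psi_\Smon^{-1}$ produces a monad morphism $\ol{m} : \Tmon \to \Smon$, and the naturality of $\psi$ under postcomposition with $m$ forces $m \comp \ol{m} = \id_\Tmon$. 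Thus each $m_X$ is simultaneously a split epimorphism and a closed isometric embedding, hence invertible.

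Then define the comparison functor $E : \CUMet^\Tmon \to \Vvar$ by $(A,\alpha) \mapsto (A, \alpha \comp \gamma_A)$. The $\CUMet$-analogue of Lemma~\ref{L:free} (which rests on the bijection $\alpha \leftrightarrow \wh{\alpha}$ from the $\CUMet$-version of Remark~\ref{R:dubuc}, available because $\CUMet$ is closed symmetric monoidal) guarantees that $\alpha \comp \gamma_A$ satisfies the quantitative equations defining $\Vvar$. That $E$ is fully faithful on morphisms uses the density of $\gamma_A$-components to cancel $\gamma$ on the left; surjectivity of $E$ on objects is obtained by applying $\psi_{\spitze{A,A}}$ to the natural transformation $\delta^\alpha$ built from an algebra $\alpha : T_\Sigma A \to A$ of $\Vvar$. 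Combined with the $\CUMet$-analogue of Proposition~\ref{P:acc} giving $\Vvar \cong \CUMet^{\Tmon_\Vvar}$ and the uniqueness of the monad inducing a given monadic concrete category (\cite{barr+wells:toposes}, Theorem~3.6.3), we obtain $\Tmon \cong \Tmon_\Vvar$.

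The main obstacle is Step~2: one must really exploit that Proposition~\ref{P:facto} for $\CUMet$ produces closed isometric embeddings (not mere monics), because the split-epi conclusion alone would not invert $m$ in a category where surjectivity has been weakened to density. A secondary subtlety is that the $\CUMet$-analogue of Lemma~\ref{L:free} requires evaluation at the $V$-component, which is legitimate since the standard set of variables $V$ is discrete and therefore a complete ultrametric space; every intermediate construction (internal hom, Yoneda substitution $\ol{t}$) stays within $\CUMet$ because $\CUMet$ is closed under the relevant limits and internal homs. Once these points are confirmed, the rest of the argument transports verbatim from Theorem~\ref{T:var-mon}.
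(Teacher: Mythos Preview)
Your proposal is correct and follows essentially the same approach as the paper, which simply states that the proof is analogous to that of Theorem~\ref{T:var-mon} using Proposition~\ref{P:facto} and the fact that discrete spaces are complete. You have spelled out precisely the adaptations the paper leaves implicit: replacing surjectivity of $\gamma$ by density (epimorphicity in $\CUMet$), invoking Proposition~\ref{P:facto} for the (dense, closed-isometric) factorization, and noting that the standard variable set $V$ and the discrete spaces appearing in the diagram lie in $\CUMet$.
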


The proof is analogous to that of Theorem~\ref{T:var-mon}, using Proposition~\ref{P:facto} and the fact that discrete spaces are complete.

\begin{corollary}
	For every variety $\Vvar$ of complete ultra-quantitative algebras the monad $\Tmon_\Vvar$ is strongly finitary.
\end{corollary}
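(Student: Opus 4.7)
The plan is to read this corollary directly off the preceding theorem. That theorem asserts that $\Tmon_\Vvar$ is the colimit of a specific weighted diagram, computed \emph{inside} the category $\Mon_\sf(\CUMet)$. Hence $\Tmon_\Vvar$ is, by construction, (isomorphic to) an object of $\Mon_\sf(\CUMet)$, which by the very definition of that category means its underlying endofunctor is strongly finitary. No further argument is needed beyond this bookkeeping step.

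What one still has to make sure of is that the colimit referred to in that theorem genuinely exists in $\Mon_\sf(\CUMet)$, so that asserting $\Tmon_\Vvar$ lies there is meaningful. For this I would invoke the proposition stated earlier in this section: $\Mon_\fin(\CUMet)$ is cocomplete and $\Mon_\sf(\CUMet)$ is closed in it under colimits. The justification runs exactly as in Proposition~\ref{P:cls}: one exhibits coreflectivity of $\Mon_\sf(\CUMet)$ inside $\Mon_{\aleph_1}(\CUMet)$ via the Kan extension $\ol{T} = \Lan{K}{T \comp K}$ equipped with its canonically induced monad structure, and this construction is legitimate because strongly finitary endofunctors of $\CUMet$ compose. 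Crucially, composability here is unconditional by Remark~\ref{R:comp}, since $\CUMet$ is cartesian closed and locally finitely presentable, so the composability result of Kelly and Lack applies.

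There is therefore no genuine obstacle: the corollary is a one-line consequence of the preceding theorem, and it is precisely the unconditional composability of strongly finitary endofunctors on $\CUMet$ — unavailable on $\Met$ without an extra hypothesis — that allows the statement to be formulated without any auxiliary assumption, mirroring the contrast between Corollary~\ref{C:cor-ultra} and Corollary~\ref{C:sf} earlier in the paper.
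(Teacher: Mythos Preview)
Your proposal is correct and matches the paper's approach: the corollary is stated without proof immediately after the theorem asserting that $\Tmon_\Vvar$ is a colimit in $\Mon_\sf(\CUMet)$, so membership in that category is the entire content. Your additional remarks about existence of the colimit and the role of Remark~\ref{R:comp} (composability via cartesian closedness of $\CUMet$) correctly spell out what the paper leaves implicit.
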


Recall the variety $\Vvar_\Tmon$ assigned to every strongly finitary monad $\Tmon$ on $\Met$ in Construction~\ref{C:eq}.
We can define a variety $\Vvar_\Tmon$ of complete ultra-quantitative algebras by the same signature and the same equations.
Using the same proof as that of Theorem~\ref{T:sfv} we obtain the following

\begin{theorem}
	Every strongly finitary monad on $\CUMet$ is the free-algebra monad of the variety $\Vvar_\Tmon$.
\end{theorem}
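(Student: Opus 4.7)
The plan is to mirror the proof of Theorem~\ref{T:sfv} essentially verbatim, replacing each ingredient by the $\CUMet$-counterpart established in Section~5. Both $\Tmon$ (by hypothesis) and the free-algebra monad $\Tmon_{\Vvar_\Tmon}$ of the variety built in Construction~\ref{C:eq} (by the corollary just above this theorem) are strongly finitary monads on $\CUMet$. It therefore suffices to show that for every complete ultrametric space $M$ the $\Sigma$-algebra structure induced on $(TM,\mu_M)$ is free in $\Vvar_\Tmon$ on $M$ with respect to $\eta_M$; the identification of $\Tmon$ with $\Tmon_{\Vvar_\Tmon}$ as monads then follows from the $\CUMet$-analogue of Proposition~\ref{P:acc}.

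I would establish this universal property in three stages of increasing generality. First, for a finite discrete space $M = V_n$, given any algebra $A$ in $\Vvar_\Tmon$ and any map $f : V_n \to A$, the formula $\ol{f}(\sigma) = \sigma_A(f(x_i))_{i<n}$ defines the unique nonexpanding $\Sigma$-homomorphism extending $f$. The argument transfers word-for-word from Theorem~\ref{T:sfv}: the three clauses of Construction~\ref{C:eq} supply, respectively, nonexpansiveness, the homomorphism identity, and compatibility with $\eta_{V_n}$, while uniqueness reduces to the equation $\sigma_{TV_n}(\eta_{V_n}(x_i)) = \sigma$ in $TV_n$. Completeness plays no role here, since $V_n$ already lies in $\CUMet$.

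Second, for an arbitrary finite ultrametric space $M$, I would realize $M$ as the colimit of its precongruence in $\CUMet$ (the $\CUMet$-version of Proposition~\ref{P:prec}) and invoke the $\CUMet$-analogue of Theorem~\ref{T:sa} to conclude that both $\Tmon$ and $\Tmon_{\Vvar_\Tmon}$ preserve this colimit. Since by the first stage they agree on the finite discrete spaces occurring in the precongruence, they agree on $M$. Third, for an arbitrary complete ultrametric space $M$, I would express $M$ as the directed colimit in $\CUMet$ of its finite subspaces $M_i$ ($i \in I$); the collective-density condition of Proposition~\ref{P:dircol} is trivially satisfied because every point of $M$ lies in a singleton finite subspace. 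Both monads are finitary, so both preserve this colimit, and agreement on each $M_i$ extends to $M$.

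The main obstacle is almost entirely bookkeeping: one must verify that every ingredient used in the $\Met$ proof — free-algebra existence, the comparison functor, the characterization of strongly finitary endofunctors, preservation of precongruence colimits, and the directed-colimit description — admits the $\CUMet$-analogue proved in Section~5. The only genuinely new point is the replacement in Proposition~\ref{P:dircol} of "collectively surjective" by "collectively dense"; but this causes no trouble in the third step, since the family of finite subspaces of $M$ is in fact already collectively surjective, so no density-vs-surjectivity subtlety is triggered.
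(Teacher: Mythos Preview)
Your proposal is correct and takes essentially the same approach as the paper, which simply states that the proof is ``the same as that of Theorem~\ref{T:sfv}''. Your three-stage reduction (finite discrete $\to$ finite via precongruences $\to$ arbitrary via directed colimits of finite subspaces) is exactly the structure of that proof, and your observation that the finite subspaces of a complete ultrametric space are already collectively surjective correctly disposes of the only point where the $\CUMet$ colimit description (Proposition~\ref{P:dircol}) differs from the $\Met$ one.
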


\begin{corollary}
	The category of finitary varieties of complete ultra-quantitative algebras (and concrete functors) is dually equivalent to the category $\Mon_\sf(\CUMet)$ of strongly finitary monads.
\end{corollary}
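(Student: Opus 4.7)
The plan is to adapt the proof of Theorem~\ref{T:main} verbatim, substituting $\CUMet$ for $\UMet$, since the two ingredients needed for the argument have both been established in this section: the previous corollary shows that $\Tmon_\Vvar$ is strongly finitary for every variety $\Vvar$ of complete ultra-quantitative algebras, and the previous theorem shows that every strongly finitary monad $\Tmon$ on $\CUMet$ arises as $\Tmon_{\Vvar_\Tmon}$. Combined with the concrete isomorphism $\Vvar \cong \CUMet^{\Tmon_\Vvar}$ from the proposition recorded just after Construction~\ref{C:eq}'s $\CUMet$-analogue, this gives the equivalence.

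First I would recall the standard bijection (see~\cite{barr+wells:toposes}, Theorem~3.3, which works verbatim over any base) between monad morphisms $\phi: \Smon \to \Tmon$ and concrete functors $\ol{\phi}: \CUMet^\Tmon \to \CUMet^\Smon$, sending an algebra $\alpha: TA \to A$ to $\alpha \comp \phi_A: SA \to A$. Call its inverse $(\blank)^*$; both assignments preserve identities and composition.

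Next I would define the (contravariant) comparison functor
\[
\Phi: \Var_\fp(\CUMet)^\op \to \Mon_\sf(\CUMet)
\]
by $\Phi(\Vvar) = \Tmon_\Vvar$ on objects, well-defined into $\Mon_\sf(\CUMet)$ by the strongly-finitary corollary above. For a concrete functor $F: \Vvar \to \Wvar$, the composite $K_\Wvar \comp F \comp K_\Vvar^{-1}: \CUMet^{\Tmon_\Vvar} \to \CUMet^{\Tmon_\Wvar}$ is concrete (because $K_\Vvar$ and $K_\Wvar$ are), so I set
\[
\Phi(F) = (K_\Wvar \comp F \comp K_\Vvar^{-1})^* : \Tmon_\Wvar \to \Tmon_\Vvar.
\]
Functoriality of $\Phi$ follows from the composition-preservation of $(\blank)^*$ and of the comparison functors.

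Finally I would verify the three conditions for an equivalence. Faithfulness is immediate from faithfulness of $(\blank)^*$ and of the $K_\Vvar$. Fullness follows because $(\blank)^*$ is a bijection: given any monad morphism $\phi: \Tmon_\Wvar \to \Tmon_\Vvar$, its adjoint concrete functor $\ol{\phi}: \CUMet^{\Tmon_\Vvar} \to \CUMet^{\Tmon_\Wvar}$ transports through the $K_\Vvar$, $K_\Wvar$ isomorphisms to a concrete $F: \Vvar \to \Wvar$ with $\Phi(F) = \phi$. Essential surjectivity is exactly the content of the theorem stating that every $\Tmon \in \Mon_\sf(\CUMet)$ is isomorphic (as a monad) to $\Tmon_{\Vvar_\Tmon}= \Phi(\Vvar_\Tmon)$. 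There is no real obstacle here; all the substantive work was absorbed into the strongly-finitary corollary and the preceding theorem, both of which closed the $\CUMet$-side gaps (in particular Proposition~\ref{P:facto} replacing Proposition~\ref{P:fact} in the factorization step).
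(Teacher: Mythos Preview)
Your proposal is correct and matches the paper's intended argument: the paper states this corollary without proof, relying on the reader to see that it follows from the two preceding results (the corollary that $\Tmon_\Vvar$ is strongly finitary and the theorem that every strongly finitary monad on $\CUMet$ is $\Tmon_{\Vvar_\Tmon}$) exactly as Theorem~\ref{T:main} follows from Corollary~\ref{C:cor-ultra} and Theorem~\ref{T:sfv}. You have simply spelled out that transfer explicitly, and the ingredients you cite are the right ones.
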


\section{Infinitary algebras}
\label{S:inf-algs}

The above bijective correspondence holds, more generally, for $\lambda$-ary ultra-quantitative algebras, where $\lambda$ is an arbitrary infinite regular cardinal.
The proof is completely analogous, we indicate the (small) modifications needed in the present section.

\begin{assumption}
\label{A:as}
Throughout the rest of the paper $\lambda$ denotes a regular infinite cardinal.
And a standard set $V = \{ x_i \mid i < \lambda \}$ of variables is chosen.
\end{assumption}

Let $\Sigma$ be a $\lambda$-ary signature: a set of operation symbols of arities which are cardinals $n < \lambda$.
A quantitative $\Sigma$-algebra is a metric space $A$ together with nonexpanding maps $\sigma_A: A^n \to A$ for every $n$-ary symbol $\sigma \in \Sigma$.
We again obtain the category $\Sigma\text{-}\Met$ of quantitative algebras and nonexpanding homomorphisms and $\Sigma\text{-}\UMet$ as its subcategory of ultra-quantitative algebras.

\begin{example}
The free algebra on a space $X$ is the algebra $T_\Sigma X$ of terms (defined analogously to Example~\ref{E:term}).
If $X$ is discrete, then so is $T_\Sigma X$.
\end{example}

Quantitative equations are defined precisely as in Definition~\ref{D:qe}.
Also the definition of a variety of $\Sigma$-algebras is unchanged.

\begin{example}
\label{E:sigma-semilat}
Recall that a \emph{$\sigma$-semilattice} is a poset with countable joins.
We can express it by adding to the operations $+$ and $0$ of Example~\ref{E:quant}(3) an operation $\bigsqcup$ of arity $\omega$.
Besides the semilattice equations for $+$ and $0$ one needs the following associativity equation:
\[
y + \bigsqcup_{n < \omega} x_n = \bigsqcup_{n < \omega} (y + x_n),
\]
idempotence of $\bigsqcup$:
\[
\bigsqcup_{n < \omega} x = x,
\]
and for every $k < \omega$ the equation
\[
x_k + \bigsqcup_{n < \omega} x_n = \bigsqcup_{n < \omega} x_n.
\]
When we then define $x \leq y$ by $x + y = y$, it is not difficult to verify that $\bigsqcup_{n < \omega} x_n$ is the join of $\{ x_n \mid n < \omega \}$.
A quantitative $\sigma$-semilattice is then a $\sigma$-semilattice on a metric space with $+$ and $\bigcup$ nonexpanding.

The \emph{monad of quantitative $\sigma$-semilattices} is given by the metric spaces $TM = \Pow_{\omega_1} M$ of countable subsets of $M$ with the Hausdorff metric.
The proof is analogous to the proof for semilattices in~\cite{mpp16}.
\end{example}

\begin{theorem}
An enriched endofunctor $T$ of $\Met$ is strongly $\lambda$-accessible (Definition~\ref{D:sa}) iff it
\begin{enumerate}
	\item is $\lambda$-accessible (preserves $\lambda$-directed colimits) and
	\item preserves colimits of foliations.
\end{enumerate}
\end{theorem}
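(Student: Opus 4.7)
The plan is to follow exactly the strategy used for Theorem~\ref{T:sa}, replacing $\aleph_0$ by $\lambda$ throughout and checking that all the ingredients survive. The main tool is Kelly's Theorem~5.29 in~\cite{kelly:book}: if $K$ admits a density presentation consisting of certain weighted colimits, then $T = \Lan{K}{(T\comp K)}$ precisely when $T$ preserves those colimits.

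First I would exhibit a density presentation of $K: \Set_\lambda \hookrightarrow \Met$ consisting of (i) all $\lambda$-directed diagrams in $\Met_\lambda$ and (ii) all precongruences of spaces in $\Met_\lambda$. For (i) every metric space is the $\lambda$-directed colimit of its subspaces of cardinality less than $\lambda$ by Corollary~\ref{C:present}; for (ii) Proposition~\ref{P:prec} presents each space in $\Met_\lambda$ as the colimit of a precongruence, which is a weighted diagram on the discrete spaces $|M|$ and $D_M\eps \subseteq |M|\times|M|$, both of cardinality less than $\lambda$ and hence objects of $\Set_\lambda$. Next I would verify $K$-absoluteness: $\wt{K}$ sends $X$ to the functor $n \mapsto X^n$ on $\Set_\lambda$, so it suffices to know that $(\blank)^n$ preserves both kinds of colimits for every $n<\lambda$. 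For $\lambda$-directed colimits this is Example~\ref{E:dirn}(2). For colimits of precongruences this is essentially Example~\ref{E:pown}(2) combined with Remark~\ref{R:prec}~(2): one checks directly that $(\blank)^n$ carries the characterizing data of $M = \Colim{B}{D_M}$ into that for $M^n$, using that nonexpanding $n$-tuples into a space correspond to nonexpanding maps from $n$-fold powers.

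With the density presentation and $K$-absoluteness in hand, Theorem~5.29 of~\cite{kelly:book} yields that $T$ is strongly $\lambda$-accessible iff it preserves (a) $\lambda$-directed colimits of diagrams in $\Met_\lambda$ and (b) colimits of precongruences of spaces in $\Met_\lambda$. The final step is to upgrade (b) from spaces in $\Met_\lambda$ to all metric spaces, i.e.\ to show that (a) and (b) together imply $T$ preserves $M = \Colim{B}{D_M}$ for every $M$. Following the argument at the end of the proof of Theorem~\ref{T:sa}, I would write $M$ as the $\lambda$-directed colimit of its subspaces $M_i$ with $|M_i|<\lambda$, note that each $M_i = \Colim{B}{D_{M_i}}$ is preserved by $T$, and then exhibit the unit $\nu: B \to [D_M\blank, M]$ as the $\lambda$-directed colimit of the units $\nu_i$ componentwise (the $a$-component is $0 \mapsto \id: |M| \to M$, obtained as the $\lambda$-directed colimit of $0 \mapsto \id: |M_i| \to M_i$, and analogously for the $\eps$-components using that $D_M \eps = \bigcup_i D_{M_i}\eps$).

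The routine obstacle is keeping track of the enriched colimit data in the last step. Specifically, one has to check that the colimit of units of precongruences in $[\B^\op,\Met]$ really is the unit of the precongruence of the colimit; this amounts to observing that $\lambda$-directedness makes the passage $(M_i) \mapsto D_{M_i}\eps$ commute with the colimit, which in turn rests on Corollary~\ref{C:count} (for $\lambda = \aleph_1$) or more generally on the fact that $\lambda$-directed unions of subspaces of bounded cardinality behave settheoretically. Once that verification is in place, preservation of (a) and (b) for $\Met_\lambda$ propagates to preservation of arbitrary precongruence colimits, closing the argument.
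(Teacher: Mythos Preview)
Your proposal is correct and follows essentially the same approach as the paper, which simply states that the proof is the same as that of Theorem~\ref{T:sa}. You have in fact spelled out the $\lambda$-ary adjustments (in particular the need for $(\blank)^n$ with $n<\lambda$ to preserve both $\lambda$-directed colimits and precongruence colimits, and the use of Example~\ref{E:dirn}(2) for the former) more explicitly than the paper itself does.
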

The proof is essentially the same as that of Theorem~\ref{T:sa}.

\begin{lemma}
\label{L:new}
A $\Sigma$-algebra expressed by $\alpha : T_\Sigma A \to A$ satisfies $l =_\eps r$ iff $d(\wh{\alpha} \comp \ol{l}, \wh{\alpha} \comp \ol{r}) \leq \eps$.
\end{lemma}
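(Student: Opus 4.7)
The plan is to follow the pattern of Lemma~\ref{L:free} almost verbatim, replacing the finite-arity variable set $V_n$ with the standard $\lambda$-ary set $V = \{x_i \mid i < \lambda\}$ from Assumption~\ref{A:as} and allowing terms of arity below $\lambda$. Write $l,r \in T_\Sigma V$ and view them via Yoneda as natural transformations $\ol{l}, \ol{r}: \Tmon_{[\kappa]} \to \Tmon_\Sigma$ for a suitable $\kappa < \lambda$ bounding the variables that actually occur.

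First I would prove the \emph{only if} direction by induction on the height (an ordinal $< \lambda$) of a term $t \in T_{[\kappa]} V$. The height is defined with variables of height $0$, and $h(\sigma(t_i)_{i<n}) = \sup_{i<n} h(t_i) + 1$ (note $n < \lambda$, so the supremum stays below $\lambda$). The base case $h(t)=0$ is trivial as $\ol{l}_X(t) = \ol{r}_X(t) = t$. For the induction step $t = \delta(t_i)_{i<n}$, apply the induction hypothesis componentwise to obtain $d(f^\sharp(\ol{l}_X(t_i)), f^\sharp(\ol{r}_X(t_i))) \le \eps$ for each $i < n$; then apply $\sigma_A$ on both sides (where $\sigma$ is the $\Sigma$-operation corresponding to $l$, respectively $r$) and use that $\sigma_A$ is nonexpanding with respect to the sup metric on $A^n$, yielding the desired inequality on evaluations. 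Note that at limit-height terms no additional argument is needed since the height of a term is always a successor by construction — a composite $\delta(t_i)$ has height $\sup_{i<n} h(t_i) + 1$, and a term of limit height would have to be built from operation symbols, hence again is a successor.

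For the \emph{if} direction, evaluate the two composite monad morphisms at the component $V$ and apply them to the generic element $\delta(x_i)_{i<\kappa} \in T_{[\kappa]} V$. By the definition of $\ol{l}$ and $\wh{\alpha}$ one gets, for every interpretation $f \in [V,A]$, the value $f^\sharp(l)$ on the left side and $f^\sharp(r)$ on the right side of $\pi_f \comp \wh{\alpha}_V \comp \ol{l}_V$ versus $\pi_f \comp \wh{\alpha}_V \comp \ol{r}_V$ (where $\pi_f : \spitze{A,A}V \to A$ is evaluation at $f$). Since $\pi_f$ is nonexpanding, the assumed bound $d(\wh\alpha \comp \ol l, \wh\alpha \comp \ol r) \le \eps$ descends to $d(f^\sharp(l), f^\sharp(r)) \le \eps$, which is exactly the statement that $A$ satisfies $l =_\eps r$.

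The only potential obstacle is making sure that Dubuc's bijection $\alpha \mapsto \wh{\alpha}$ (Remark~\ref{R:dubuc}) and the Yoneda identification $l \mapsto \ol{l}$ still work in the $\lambda$-ary setting, but both are formal consequences of the enrichment over $\Met$ and of $\Tmon_\Sigma$ being strongly $\lambda$-accessible (Example~\ref{E:sa} extended to arities below $\lambda$); nothing in the formal calculus distinguishes the finite from the $\lambda$-ary case. Thus the only new ingredient compared with Lemma~\ref{L:free} is the remark about transfinite heights, which, as explained, collapses to the successor case.
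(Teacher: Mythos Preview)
Your proposal is correct and takes essentially the same approach as the paper: both reduce to the proof of Lemma~\ref{L:free}, replacing the finite induction on term height by a transfinite one. Your observation that no term has limit height (since every composite term has successor height $1 + \bigvee_i h(t_i)$) is a useful clarification that the paper leaves implicit.
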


\begin{proof}
This is completely analogous to the proof of Lemma~\ref{L:free}.
In the proof of the inequality~\eqref{eq:proof} we proceed by transfinite induction on the height $h(t)$.
This is an ordinal number defined by $h(x) = 0$ for variables $x$, and $h(\sigma(t_i)_{i<n}) = 1 + \bigvee_{i<n} h(t_i)$.
\end{proof}

\begin{theorem}
Every variety of $\lambda$-ary ultra-quantitative algebras is concretely isomorphic to $\UMet^\Tmon$ for a strongly $\lambda$-accessible monad $\Tmon$.
\end{theorem}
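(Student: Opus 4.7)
The strategy is to adapt the proofs of Theorems~\ref{T:var-mon} and~\ref{T:sfv} verbatim from the finitary setting to the $\lambda$-ary setting, working over $\UMet$. Since $\UMet$ is cartesian closed, Remark~\ref{R:comp} guarantees that strongly $\lambda$-accessible endofunctors compose, so (in contrast with the $\Met$ case) no extra hypothesis is required.

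The first preliminary is to observe that the polynomial monad $\Tmon_\Sigma$ on $\UMet$ for a $\lambda$-ary signature $\Sigma$ is strongly $\lambda$-accessible. Each power $(\blank)^n$ with $n<\lambda$ is strongly $\lambda$-accessible by the same Yoneda argument as in Example~\ref{E:pown}(1), and the $\lambda$-ary analog of Example~\ref{E:sa}(1) gives closure of strongly $\lambda$-accessible endofunctors under coproducts; since $T_\Sigma X$ decomposes as a coproduct $\coprod_{[t]} X^{\mathrm{ar}([t])}$ over similarity classes of terms, each with arity less than $\lambda$, the functor $T_\Sigma$ is strongly $\lambda$-accessible. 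The next preliminary is the $\lambda$-ary analog of Proposition~\ref{P:cls}: the category of strongly $\lambda$-accessible monads on $\UMet$ is closed under colimits in the category of $\lambda$-accessible monads. The proof is identical, since composability of strongly $\lambda$-accessible endofunctors on $\UMet$ is its only ingredient.

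Given a variety $\Vvar$ presented by equations $l_i =_{\eps_i} r_i$ with $l_i, r_i \in T_\Sigma V_{n(i)}$ and $n(i)<\lambda$, I would then reproduce Construction~\ref{C:diagram} with $Di = \Tmon_{[n(i)]}$, $Da = \Tmon_\Sigma$, and weight $Wi = \{l,r\}$ with $d(l,r) = \eps_i$, now viewed as a diagram in the category of strongly $\lambda$-accessible monads on $\UMet$. Let $\Tmon$ be its weighted colimit there. I would then run the five-step argument of Theorem~\ref{T:var-mon} essentially word for word, using Lemma~\ref{L:new} in place of Lemma~\ref{L:free} and invoking the $\lambda$-ary analog of Proposition~\ref{P:fact} to obtain surjectivity of the components of the canonical monad morphism $\gamma: \Tmon_\Sigma \to \Tmon$. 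The output is a concrete isomorphism $E: \UMet^\Tmon \to \Vvar$. Combined with the ultra-version of Proposition~\ref{P:acc}, which identifies $\Vvar$ with $\UMet^{\Tmon_\Vvar}$ via the comparison functor, one concludes $\Tmon \cong \Tmon_\Vvar$ as monads, exhibiting $\Tmon_\Vvar$ as strongly $\lambda$-accessible.

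The main obstacle will be the $\lambda$-ary generalization of Proposition~\ref{P:fact}: one needs that any morphism $h: \Tmon \to \Smon$ of $\lambda$-accessible monads on $\UMet$, with $\Tmon$ preserving surjections, factors as a componentwise-surjective $e: \Tmon \to \Tmon'$ followed by a componentwise-isometric $m: \Tmon' \to \Smon$ with $\Tmon'$ still $\lambda$-accessible. The pointwise construction of $T'$ via the $(\mathrm{surj}, \mathrm{iso})$-factorization on $\UMet$, together with the diagonal fill-in definition of its monad structure, goes through exactly as in Proposition~\ref{P:fact}. The only nontrivial point is verifying that $T'$ preserves $\lambda$-directed colimits: this is done by checking conditions (1) and (2) of the $\lambda$-directed analog of Proposition~\ref{P:pres}, using that $e_C$ is surjective (for condition (1)) and that $m_C$ and $m_{D_j}$ are isometric embeddings (for condition (2)), exactly as in part (c) of the proof of Proposition~\ref{P:fact}. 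Once this generalization is established, every remaining step is a transliteration of the arguments already given in the finitary case.
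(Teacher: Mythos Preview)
Your proposal is correct and follows essentially the same route as the paper: construct the weighted diagram of Construction~\ref{C:diagram} in the category of $\lambda$-accessible (equivalently, strongly $\lambda$-accessible) monads on $\UMet$, take its colimit, and rerun the proof of Theorem~\ref{T:var-mon} using Lemma~\ref{L:new} in place of Lemma~\ref{L:free}. The paper's own proof is a two-line sketch citing exactly these ingredients; you have simply spelled out which auxiliary results (the $\lambda$-ary versions of Propositions~\ref{P:cls} and~\ref{P:fact}, and the strong $\lambda$-accessibility of $\Tmon_\Sigma$) need to be generalized, and correctly observed that each generalization is mechanical.
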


The proof is completely analogous to that of Corollary~\ref{C:cor-ultra}: we construct a weighted diagram in $\Mon_\lambda(\UMet)$ as in Construction~\ref{C:diagram}.
Its colimit $\Tmon$ exists in $\Mon_\lambda(\UMet)$ by Proposition~\ref{P:lambda-acc}.
And the proof that $\Tmon$ is the free-algebra monad for $\Vvar$ follows using Lemma~\ref{L:new}, the same steps as the proof of Theorem~\ref{T:var-mon}.

\begin{construction}
Given a strongly $\lambda$-accessible monad $\Tmon$ on $\UMet$ we define a $\lambda$-ary signature $\Sigma$ by $\Sigma_n = |Tn|$ for all cardinals $n < \lambda$.
The variety associated with $\Tmon$ is given by equations $(1)$-$(3)$ of Construction~\ref{C:eq} (where $n$ and $m$ range over cardinals smaller than $\lambda$).
\end{construction}

\begin{theorem}
Every strongly $\lambda$-accessible monad on $\UMet$ is the free-algebra monad of its associated variety.
\end{theorem}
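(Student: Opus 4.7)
The plan is to mirror the proof of Theorem~\ref{T:sfv}, adapted to the $\lambda$-ary setting over $\UMet$. First, given a strongly $\lambda$-accessible monad $\Tmon = (T,\mu,\eta)$ with associated signature $\Sigma_n = |Tn|$ for $n < \lambda$, we equip every Eilenberg-Moore algebra $(A,\alpha)$ with the $\Sigma$-structure $\sigma_A(a_i)_{i<n} = a^*(\sigma)$, where $a: V_n \to A$ sends $x_i \mapsto a_i$. Nonexpansiveness of $\sigma_A$ follows from the enrichment of $T$ exactly as before, and we verify that $A$ satisfies all three families of equations of Construction~\ref{C:eq}: the bounded-distance equations (1) use that $\alpha$ is nonexpanding, the substitution equations (2) use $g^* \comp f^* = (g^* \comp f)^*$ (Remark~\ref{R:star}(2)), and the unit equations (3) use $\alpha \comp \eta_A = \id$ together with naturality of $\eta$. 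The calculation of Remark~\ref{R:hom} carries over verbatim to show that every $\Tmon$-homomorphism is a $\Sigma$-homomorphism.

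Thus it suffices to prove that for every ultrametric space $M$ the $\Sigma$-algebra corresponding to $(TM,\mu_M)$ is free in $\Vvar_\Tmon$ with respect to $\eta_M$, since this produces a concrete isomorphism $\UMet^\Tmon \cong \Vvar_\Tmon$ and hence an isomorphism of monads with $\Tmon_{\Vvar_\Tmon}$. The key reduction is to discrete spaces of cardinality less than $\lambda$: both $\Tmon$ and the free-algebra monad $\Tmon_{\Vvar_\Tmon}$ are strongly $\lambda$-accessible — the former by hypothesis, the latter by the preceding theorem of this section — hence both equal the left Kan extension along $K: \Set_\lambda \hookrightarrow \UMet$ of their restrictions to discrete spaces of size $<\lambda$. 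Equivalently, any space $M$ is reconstructed from such $V_n$ by iterating colimits of precongruences (Proposition~\ref{P:prec}) and $\lambda$-directed colimits of its subspaces of cardinality less than $\lambda$, both of which strongly $\lambda$-accessible functors preserve.

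For discrete $V_n$ with $n < \lambda$ the freeness proof proceeds exactly as in Theorem~\ref{T:sfv}: given $A$ in $\Vvar_\Tmon$ and $f: V_n \to A$, define $\ol{f}: TV_n \to A$ by $\ol{f}(\sigma) = \sigma_A(f(x_i))_{i<n}$. Nonexpansiveness is handed to us by equations (1), the identity $f = \ol{f} \comp \eta_{V_n}$ by equations (3), and the $\Sigma$-homomorphism property by equations (2) after unfolding definitions. Uniqueness follows from the identity $\sigma_{TV_n}(\eta_{V_n}(x_i))_{i<n} = \sigma$, computed via $\mu_{V_n} \comp T\eta_{V_n} = \id$, which forces any nonexpanding $\Sigma$-homomorphism extending $f$ to coincide with $\ol{f}$. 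The only non-routine point is the reduction step — ensuring that agreement of the two strongly $\lambda$-accessible monads on the discrete objects $V_n$ ($n < \lambda$) really does propagate to all of $\UMet$ — and this is handled cleanly by strong $\lambda$-accessibility through Definition~\ref{D:sa}.
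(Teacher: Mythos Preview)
Your proposal is correct and follows precisely the approach the paper intends: the paper's own proof is simply ``analogous to that of Theorem~\ref{T:sfv}'', and you have carried out exactly that adaptation, invoking the preceding theorem of the section for strong $\lambda$-accessibility of $\Tmon_{\Vvar_\Tmon}$ (in place of Corollary~\ref{C:cor-ultra}) and replacing finite discrete spaces and directed colimits by discrete spaces of cardinality $<\lambda$ and $\lambda$-directed colimits. Your phrasing of the reduction via the left Kan extension characterization is a clean equivalent of the paper's two-step extension through precongruences and $\lambda$-directed colimits.
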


The proof is analogous to that of Theorem~\ref{T:sfv}.

\begin{corollary}
\label{C:dualeq}
The following categories are dually equivalent:
\begin{enumerate}
	\item varieties of $\lambda$-ary ultra-quantitative algebras (and concrete functors), and
	\item strongly $\lambda$-accessible monads on $\UMet$ (and monad morphisms).
\end{enumerate}
\end{corollary}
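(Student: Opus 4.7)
The plan is to mimic the proof of Theorem \ref{T:main} in the $\lambda$-ary setting, using the two preceding theorems as the main inputs. The corollary is essentially a formal wrap-up: once we know that the assignment $\Vvar \mapsto \Tmon_\Vvar$ lands in strongly $\lambda$-accessible monads on $\UMet$ (first theorem of this section) and that every such monad arises from its associated variety $\Vvar_\Tmon$ (second theorem), the duality itself follows from the classical bijection between monad morphisms and concrete functors of Eilenberg-Moore categories.

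First I would recall, as in step (1) of the proof of Theorem~\ref{T:main}, the bijection (see e.g.\ \cite{barr+wells:toposes}, Theorem~3.3) which assigns to each monad morphism $\phi \colon \Smon \to \Tmon$ the concrete functor $\overline{\phi} \colon \UMet^\Tmon \to \UMet^\Smon$ sending $(TA\xrightarrow{\alpha}A)$ to $(SA\xrightarrow{\phi_A}TA\xrightarrow{\alpha}A)$. This assignment is bijective and preserves composition, so its inverse $(\blank)^*$ turns concrete functors into monad morphisms in the opposite direction. This part of the argument is not affected by passing from $\aleph_0$ to $\lambda$, since it is purely formal.

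Next I would define the duality functor
\[
\Phi \colon \Var_\lambda(\UMet)^{\mathrm{op}} \to \Mon_{s\lambda}(\UMet)
\]
on objects by $\Phi(\Vvar) = \Tmon_\Vvar$; this is well-defined because the first theorem of this section guarantees that $\Tmon_\Vvar$ is strongly $\lambda$-accessible. On a concrete functor $F \colon \Vvar \to \Wvar$, using the $\lambda$-ary analogue of Proposition~\ref{P:acc} that identifies $\Vvar \cong \UMet^{\Tmon_\Vvar}$ and $\Wvar \cong \UMet^{\Tmon_\Wvar}$ via invertible comparison functors $K_\Vvar$ and $K_\Wvar$, I would set
\[
\Phi(F) = (K_\Wvar \comp F \comp K_\Vvar^{-1})^* \colon \Tmon_\Wvar \to \Tmon_\Vvar.
\]
Functoriality follows because $(\blank)^*$ preserves composition.

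Finally, to conclude that $\Phi$ is a dual equivalence I would verify the three usual conditions. Faithfulness is immediate from injectivity of $(\blank)^*$ together with faithfulness of the comparison functors; fullness follows from surjectivity of $(\blank)^*$ on concrete functors between Eilenberg-Moore categories. Essential surjectivity is where the second theorem of the section enters: given any strongly $\lambda$-accessible monad $\Tmon$ on $\UMet$, the variety $\Vvar_\Tmon$ has $\Tmon_{\Vvar_\Tmon} \cong \Tmon$, so $\Phi(\Vvar_\Tmon) \cong \Tmon$. The only step that requires genuine care (rather than transcription from the finitary proof) is ensuring that the comparison functor $K_\Vvar \colon \Vvar \to \UMet^{\Tmon_\Vvar}$ really is a concrete isomorphism for $\lambda$-ary varieties; but this is the $\lambda$-ary analogue of Proposition~\ref{P:acc}, whose proof carries over without change because $\UMet$ is closed under the relevant weighted limits. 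I expect no real obstacle beyond bookkeeping, since the heavy lifting has already been done in the two preceding theorems.
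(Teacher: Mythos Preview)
Your proposal is correct and follows exactly the approach the paper intends: the corollary is stated without proof in the paper precisely because it is obtained by transcribing the proof of Theorem~\ref{T:main} verbatim, replacing the finitary inputs (Corollary~\ref{C:cor-ultra} and Theorem~\ref{T:sfv}) by their $\lambda$-ary analogues established immediately before the corollary. Your identification of the comparison isomorphism $K_\Vvar$ and the bijection $(\blank)^*$ as the only formal ingredients beyond those two theorems matches the paper's treatment.
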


\begin{remark}
Assuming that strongly $\lambda$-accessible monads on $\Met$ are closed under composition, the analogous corollary holds for them: they correspond bijectively to varieties of $\lambda$-ary quantitative algebras.
\end{remark}

\section{$\lambda$-Basic Monads and $\lambda$-Basic Varieties}
\label{S:basic}

In this section we consider more general equations than those in Section~\ref{S:inf-algs}: $\lambda$-basic quantitative equations introduced by Mardare et al.~\cite{mpp17}.
And we show that the corresponding varieties of quantitative algebras bijectively correspond to monads on $\Met$ which are \emph{$\lambda$-basic}, i.e.\ enriched, $\lambda$-accessible and preserving surjective morphisms.
For $\lambda = \aleph_1$ this has been proved in~\cite{adamek:varieties-quantitative-algebras}, the proof for $\lambda > \aleph_1$ is completely analogous, we thus only shortly indicate it here.
The case $\lambda = \aleph_0$ is an open problem that we discuss below.

The idea of $\lambda$-basic equations is simple and natural: in Section~\ref{S:inf-algs} we have considered equations $l =_\eps r$ where $l$ and $r$ are terms in $T_\Sigma V$, the discrete free algebra on the set of variables.
Why restrict ourselves to discrete free algebras?
We can also consider $l =_\eps r$ for pairs $l$,$r$ of elemets of $T_\Sigma M$ for an \emph{arbitrary} metric space $M$.
Now restricting to $M$ of power less than $\lambda$ (the arity of the signature $\Sigma$) does not make any difference: every metric space $M_0$ is a $\lambda$-directed colimit of its subspaces $M \subseteq M_0$ of power less than $\lambda$, and $T_\Sigma$ preserves directed colimits.
Thus every pair $l,r \in T_\Sigma M_0$ lies in $T_\sigma M$ with $\card M < \lambda$.
This leads us to the following definition, formulated in~\cite{mpp17} for $\lambda = \aleph_0$ or $\aleph_1$.

\begin{assumption}
Recall Assumption~\ref{A:as}.
Further recall Notation~\ref{N:ext} $f^\sharp: T_\Sigma M \to A$ for the unique homomorphism extending $f: M \to A$.
For the rest of our paper $\lambda$ denotes an uncountable regular cardinal.
And $V$ is a chosen set of variables of cardinality $\lambda$.
\end{assumption}

\begin{definition}[\cite{mpp17}]
\label{D:basic}
Let $\Sigma$ be a signature of arity $\lambda$.
\begin{enumerate}
	\item A \emph{$\lambda$-basic quantitative equation} is an expression
	\[
	M \vdash l =_\eps r
	\]
	where $M$ is a metric space (of variables) with $\card M < \lambda$, and $l,r$ are terms in $T_\Sigma M$.
	For $\lambda = \aleph_0$ we speak about $\omega$-basic equations.
	
	\item A quantitative algebra $A$ \emph{satisfies} this equation if every nonexpanding interpretation of variables $f: M \to A$ fulfils
	\[
	d(f^\sharp(l),f^\sharp(r)) \leq \eps.
	\]
\end{enumerate}
\end{definition}

\begin{example}
\label{E:basic}
\phantom{phantom}
\begin{enumerate}
	\item \emph{Quasi-commutative monoids} are quantitative monoids in which the commutative law holds for all pairs of distance at most 1.
	They can be presented by the usual monoid equations plus the following $\omega$-basic equation
	\[
	x =_1 y \vdash x * y = y * x.
	\]
	More precisely: the left-hand side is the space $M = \{ x, y \}$ with $d(x,y) = 1$.
	
	\item \emph{Almost quasi-commutative monoids} (compare Example~\ref{E:quant}~(2)): here we only request that the distance of $ab$ and $ba$ is at most $\eps$ for pairs of distance at most 1:
	\[
	x =_1 y \vdash x * y =_\eps y * x.
	\]
	
	\item \emph{Quasi-discrete spaces} (see~\cite{rosicky:metric-monads}) are metric spaces with all non-zero distances greater than 1.
	Here we put $\Sigma = \emptyset$ and consider the $\omega$-basic equation
	\[
	x =_1 y \vdash x = y.
	\]
\end{enumerate}
\end{example}

\begin{remark}
\phantom{phantom}
\label{R:bas}
\begin{enumerate}
	\item The above $\omega$-basic equations work with finite spaces $M$.
	Suppose we list all pairs of distinct elements as $(x_0,y_0)$, \dots, $(x_{n-1},y_{n-1})$ and denote by $\delta_i$ the distance $d(x_i,y_i)$ in $M$.
	Then, as we have seen in the above examples, we can rewrite $M \vdash l =_\eps r$ in the form
	\[
	(x_0 =_{\delta_0} y_0), \dots, (x_{n-1} =_{\delta{n-1}} y_{n-1}) \vdash l =_\eps r.
	\]
	This is the syntax presented in~\cite{mpp17}.
	In fact, in that paper the syntax is a bit more general: the left-hand side of $\vdash$ need not be connected to a concrete space $M$.
	However, this does not influence the expressivity of the $\omega$-basic equations, see Remark~\ref{R:exp} below.
	
	\item Analogously, $\omega_1$-basic equations, working with countable spaces $M$, can be rewritten in the form
	\begin{equation*}
	\label{eq:MPP} \tag{{+}}
	(x_0 =_{\delta_0} y_0), (x_1 =_{\delta_1} y_1), (x_2 =_{\delta_2} y_2), \dots \vdash l =_\eps r.
	\end{equation*}
\end{enumerate}
\end{remark}

\begin{definition}
Given a $\lambda$-ary signature $\Sigma$, by a \emph{$\lambda$-basic variety} is meant a full subcategory of $\Sigma\text{-}\Met$ which can be presented by a set of $\lambda$-basic quantitative equations.
\end{definition}

For $\lambda = \omega$ or $\omega_1$ this definition stems from~\cite{mpp17}.
There it is also proved that every $\lambda$-basic variety has free algebras (their proof works for all $\lambda$).
We can thus introduce the corresponding notation:

\begin{notation}
For every $\lambda$-basic variety $\Vvar$ we denote by $\Tmon_\Vvar$ the free-algebra monad on $\Met$.
\end{notation}

The following Birkhoff Variety Theorem was also formulated in~\cite{mpp17}.
The proof there is not correct. See B19-B20 in the paper of Milius and Urbat~\cite{mu} for a correct proof.

\begin{definition}[\cite{mpp17}]
An epimorphism $e : X \to Y$ in $\Met$ is called \emph{$\lambda$-reflexive} provided that for every subspace $Y_0 \subseteq Y$ with $\card Y_0 < \lambda$ there exists a subspace $X_0 \subseteq X$ such that $e$ restricts to an isomorphism $e_0: X_0 \xrightarrow{\sim} Y_0$ in $\Met$.
\end{definition}

\begin{theorem}[Birkhoff Variety Theorem]
Let $\Sigma$ be a signature of arity $\lambda$.
A full subcategory of $\Sigma\text{-}\Met$ is a $\lambda$-variety iff it is closed under products, subalgebras and $\lambda$-reflexive homomorphic images.
\end{theorem}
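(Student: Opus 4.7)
The plan is to prove the two inclusions independently.

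For \emph{necessity}, suppose $\Vvar$ is presented by a set $E$ of $\lambda$-basic equations. Closure under products follows because the supremum metric on a product of algebras propagates $\eps$-bounds coordinatewise; closure under subalgebras follows because isometric embeddings preserve the distances of interpreted terms. The genuinely new case is a $\lambda$-reflexive epimorphism $e : A \to B$ with $A \in \Vvar$: for any equation $M \vdash l =_\eps r$ in $E$ and any nonexpanding $f : M \to B$, the image $f[M]$ has cardinality less than $\lambda$, so $\lambda$-reflexivity supplies a subspace $A_0 \subseteq A$ and an isometric bijection $e_0 : A_0 \to f[M]$. The lift $\tilde f = e_0^{-1} \circ f : M \to A$ is nonexpanding; induction on term structure gives $e \circ \tilde f^\sharp = f^\sharp$, and the bound $d(\tilde f^\sharp(l), \tilde f^\sharp(r)) \leq \eps$ in $A$ transfers to $B$ by nonexpansiveness of $e$.

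For \emph{sufficiency}, let $\mathcal{C}$ be closed under the three operations, and let $E$ be the set of all $\lambda$-basic equations valid in every member of $\mathcal{C}$. The inclusion $\mathcal{C} \subseteq \Vvar(E)$ is immediate. For the converse, I would first construct, for each metric space $M$ with $\card M < \lambda$, the \emph{free $\mathcal{C}$-algebra on $M$}: the $\Sigma$-subalgebra $F_M$ of $\prod_{(B,h)} B$ generated by the diagonal image of $M$, where the product ranges over a set of representatives of pairs with $B \in \mathcal{C}$ and $h : M \to B$ nonexpanding. Closure under products and subalgebras places $F_M$ in $\mathcal{C}$, and the quotient $T_\Sigma M \to F_M$ captures exactly the equations of $E$ with left-hand side $M$: a pair $s, t$ satisfies $d_{F_M}(\bar s, \bar t) \leq \eps$ iff $M \vdash s =_\eps t$ belongs to $E$. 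Consequently, for $A \in \Vvar(E)$ and each $\lambda$-small $M \subseteq A$, the inclusion $M \hookrightarrow A$ extends through $F_M$ to a nonexpanding homomorphism $\phi_M : F_M \to A$ restricting to the identity on $M$; since this restriction is an isometric embedding and $\phi_M$ is nonexpanding, the canonical map $M \to F_M$ is itself an isometric embedding.

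The crux is to assemble the $\phi_M$ into a single $\lambda$-reflexive surjection $\phi : F \to A$ with $F \in \mathcal{C}$; closure under $\lambda$-reflexive homomorphic images then yields $A \in \mathcal{C}$. The natural candidate is the $\lambda$-directed colimit $F = \colim_M F_M$ in $\Sigma\text{-}\Met$ along the connecting morphisms $F_M \to F_{M'}$ for $M \subseteq M'$ (each restricting to the identity on $M$); the induced $\phi$ is surjective and $\lambda$-reflexive, because every $A_0 \subseteq A$ of cardinality less than $\lambda$ embeds isometrically into $F_{A_0}$, and the colimit metric on the image of $A_0$ in $F$ agrees with $d_{A_0}$ by the same argument that made $M \to F_M$ isometric. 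The main obstacle is establishing $F \in \mathcal{C}$, since the class is not assumed closed under $\lambda$-directed colimits. My plan to bypass this is to realize $F$ within an appropriate product of $\mathcal{C}$-algebras, using closure under products and subalgebras together with $\lambda$-reflexivity to compensate for the missing colimit closure; the detailed implementation of this step, which is essentially the argument in~\cite{mu}, is the technical heart of the theorem.
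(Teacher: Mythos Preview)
The paper does not actually prove this theorem: it attributes the statement to~\cite{mpp17} and directs the reader to~\cite{mu} (specifically B19--B20) for a correct proof. So there is no ``paper's own proof'' to compare against; both you and the paper ultimately defer to the same external source.

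Evaluating your proposal on its own merits: the necessity direction is correct and complete. The lift via $\lambda$-reflexivity is exactly the right idea, and the verification that $e \circ \tilde f^\sharp = f^\sharp$ follows from $e$ being a homomorphism with $e \circ \tilde f = f$.

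The sufficiency direction, however, has a genuine gap, and you identify it yourself: you construct $F = \colim_M F_M$ as a $\lambda$-directed colimit and need $F \in \mathcal{C}$, but $\mathcal{C}$ is not assumed closed under such colimits. Your proposed workaround --- ``realize $F$ within an appropriate product of $\mathcal{C}$-algebras, using closure under products and subalgebras together with $\lambda$-reflexivity'' --- is a statement of intent rather than an argument. This is not a detail to be filled in mechanically; it is the entire content of the hard direction. In particular, the colimit $F$ need not embed isometrically into any product of the $F_M$'s (the connecting maps $F_M \to F_{M'}$ are not isometric embeddings in general, so distances in the colimit can be strictly smaller than in any $F_M$), so the na\"ive embedding attempt fails. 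The argument in~\cite{mu} does not proceed via this colimit; if you want a self-contained proof you will need either to consult that reference for the actual construction or to find a different route that avoids manufacturing an object of $\mathcal{C}$ by colimit.
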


\begin{proposition}
Every $\lambda$-variety $\Vvar$ of quantitative algebras is concretely isomorphic to $\Met^{\Tmon_\Vvar}$.
\end{proposition}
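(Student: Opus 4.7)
The plan is to show that the comparison functor $K_\Vvar \colon \Vvar \to \Met^{\Tmon_\Vvar}$, sending an algebra $A \in \Vvar$ to the Eilenberg--Moore algebra $(A,\alpha_A)$ whose structure map $\alpha_A \colon T_\Vvar A \to A$ is the unique $\Vvar$-homomorphism satisfying $\alpha_A \comp \eta_A = \id_A$, is a concrete isomorphism. On morphisms one sets $K_\Vvar(f) = f$; concreteness over $\Met$ and faithfulness are then immediate, since both $U_\Vvar \colon \Vvar \to \Met$ and the canonical $U^{\Tmon_\Vvar} \colon \Met^{\Tmon_\Vvar} \to \Met$ simply forget to the underlying metric space.

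For fullness and essential surjectivity I would invoke the enriched Beck monadicity theorem (Dubuc, Theorem~II.2.1, as already used in the proof of Proposition~\ref{P:lambda-acc}), which reduces the claim to three conditions on $U_\Vvar$: (i) existence of a left adjoint $F_\Vvar$, already recorded in the paragraph preceding the proposition; (ii) reflection of isomorphisms, which holds because the inverse of a $\Vvar$-homomorphism $h$ with $U_\Vvar h$ an isometric bijection is again nonexpanding and automatically preserves every operation $\sigma \in \Sigma$; and (iii) creation of coequalizers of $U_\Vvar$-absolute parallel pairs in the enriched sense.

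For (iii), given $f,g \colon A \to B$ in $\Vvar$ whose underlying pair in $\Met$ admits a split coequalizer $q \colon B \to Q$ with section $s \colon Q \to B$, the plan is first to use that $\Sigma\text{-}\Met$ is enriched monadic over $\Met$ via the free monad of the $\lambda$-accessible polynomial functor $H_\Sigma$ (Remark~\ref{R:Barr}(4)) to equip $Q$ with a canonical $\Sigma$-algebra structure making $q$ a $\Sigma$-homomorphism. To place $Q$ inside $\Vvar$, I would then appeal to the Birkhoff Variety Theorem above: the section $s$ makes $q$ a $\lambda$-reflexive epimorphism, because for any subspace $Q_0 \subseteq Q$ with $\card Q_0 < \lambda$ the restriction of $s$ identifies $Q_0$ isometrically with $s[Q_0] \subseteq B$, onto which $q$ restricts as an isomorphism. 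Hence $Q$ is a $\lambda$-reflexive homomorphic image of $B \in \Vvar$, so $Q \in \Vvar$, and $U_\Vvar$ creates the coequalizer.

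The main technical obstacle is step (iii): matching the global, splitting-based notion of $U_\Vvar$-absolute coequalizer used in enriched Beck monadicity to the local, small-subspace-based notion of $\lambda$-reflexive epimorphism featuring in the Birkhoff theorem. Absoluteness supplies a section on the nose, which by restriction gives $\lambda$-reflexivity essentially for free; once this translation is explicit, enriched Beck delivers $K_\Vvar$ as the desired concrete isomorphism $\Vvar \cong \Met^{\Tmon_\Vvar}$.
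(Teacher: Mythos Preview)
The paper does not prove this itself; it simply cites \cite{Ro-new}, Section~4. Your route via enriched Beck monadicity is therefore a genuinely different and more self-contained argument, and it is correct. The crucial step---translating the Beck hypothesis (a split coequalizer in $\Met$) into the Birkhoff closure hypothesis (a $\lambda$-reflexive quotient)---works exactly as you say: from $q\comp s=\id$ with both maps nonexpanding one gets $d(x,y)\leq d(s(x),s(y))\leq d(x,y)$, so the section $s$ is an isometric embedding, and $s[Q_0]$ witnesses $\lambda$-reflexivity for every $Q_0\subseteq Q$ of size $<\lambda$. Lifting the $\Sigma$-structure to $Q$ via the monadicity of $\Sigma\text{-}\Met$ over $\Met$ (Remark~\ref{R:Barr}) is routine, and closure of $\Vvar$ under $\lambda$-reflexive images then places $Q$ back in $\Vvar$. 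Compared with deferring to an external reference, your argument has the advantage of making explicit why the $\lambda$-reflexivity condition in the Birkhoff theorem is exactly calibrated to what Beck requires.

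One minor point you glossed over: Beck delivers an \emph{equivalence}, whereas Definition~\ref{D:concr} asks for a concrete \emph{isomorphism}. The upgrade is standard---both $U_\Vvar$ and $U^{\Tmon_\Vvar}$ are amnestic isofibrations (algebra structure transports uniquely along isomorphisms in $\Met$), so a concrete equivalence is automatically a concrete isomorphism---but it is worth stating explicitly rather than leaving it implicit in the phrase ``enriched Beck delivers $K_\Vvar$ as the desired concrete isomorphism''.
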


This follows from~\cite{Ro-new}, Section~4.

\begin{proposition}
\label{P:sur}
For every $\lambda$-basic variety the monad $\Tmon_\Vvar$ is enriched and preserves surjective morphisms.
\end{proposition}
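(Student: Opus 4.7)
The proof plan breaks into two independent claims, both of which reduce to the observation that for any space $X$ the unit $\eta_X:X\to T_\Vvar X$ factors through a surjective reflection map $q_X: T_\Sigma X \twoheadrightarrow T_\Vvar X$. Indeed, by Lemma~\ref{L:fact} the category $\Sigma\text{-}\Met$ carries the $(\EE,\MM)$-factorization system (surjections, isometric embeddings), and since $\Vvar$ is closed under products and $\MM$-subobjects (being closed under subalgebras), the general reflection theorem (\cite{ahs}, Theorem~16.8) makes $\Vvar$ an $\MM$-reflective subcategory. Its reflection components thus lie in $\EE$, and the composite $X \xrightarrow{\iota_X} T_\Sigma X \xrightarrow{q_X} T_\Vvar X$ is the unit $\eta_X$. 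Naturality of $\eta$ then yields the identity $T_\Vvar f \comp q_X = q_Y \comp T_\Sigma f$ for every $f: X \to Y$.

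For enrichment I would copy the argument of Corollary~\ref{C:left} verbatim, only checking that it still works for $\lambda$-ary operations. Given $f,g: X \to Y$ with $d(f,g)\le\eps$, let $P=\{p\in T_\Vvar X \mid d(T_\Vvar f(p), T_\Vvar g(p))\le\eps\}$. Then $P\supseteq\eta_X[X]$ because $\eta_Y$ is nonexpanding, and $P$ is closed under every $n$-ary symbol $\sigma\in\Sigma$ ($n<\lambda$): if $p=\sigma_{T_\Vvar X}(p_i)_{i<n}$ with all $p_i\in P$, then because $T_\Vvar f$ and $T_\Vvar g$ are $\Sigma$-homomorphisms and $\sigma_{T_\Vvar Y}$ is nonexpanding,
\[
d(T_\Vvar f(p),T_\Vvar g(p)) = d(\sigma_{T_\Vvar Y}(T_\Vvar f(p_i)),\sigma_{T_\Vvar Y}(T_\Vvar g(p_i))) \le \sup_{i<n} d(T_\Vvar f(p_i),T_\Vvar g(p_i)) \le \eps.
\]
Since $q_X$ is surjective, every element of $T_\Vvar X$ is the $q_X$-image of some term in $T_\Sigma X$, and hence is obtained from $\eta_X[X]$ by iterated application of the $\Sigma$-operations. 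Therefore $P=T_\Vvar X$.

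For preservation of surjections, suppose $f:X\to Y$ is surjective in $\Met$. The polynomial functor $T_\Sigma$ is a coproduct of the endofunctors $(\blank)^n$ for $n<\lambda$, and each of these preserves surjections (a coordinatewise-surjective map between products with the maximum metric is surjective); coproducts of surjections are surjective, so $T_\Sigma f$ is surjective. The reflection $q_Y$ is surjective, as above. Consequently $q_Y \comp T_\Sigma f$ is surjective, and by the naturality identity so is $T_\Vvar f \comp q_X$. Since $q_X$ is surjective onto $T_\Vvar X$, the image of $T_\Vvar f$ equals the image of $T_\Vvar f \comp q_X$, which is all of $T_\Vvar Y$; hence $T_\Vvar f$ is surjective.

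I do not expect a serious obstacle here: the whole argument is a diagram chase resting on (i) the $(\EE,\MM)$-factorization on $\Sigma\text{-}\Met$, (ii) the description of $T_\Sigma$ as a coproduct of powers, and (iii) the $\MM$-reflectivity of $\Vvar$. The only point that deserves a line of justification is that $q_X$ is actually surjective, since this is what lets the inductive ``closed under operations'' argument reach every element of $T_\Vvar X$ in both parts of the proof.
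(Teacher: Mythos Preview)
Your proof is correct and follows essentially the same strategy as the paper: establish the surjective reflection $q_X: T_\Sigma X \twoheadrightarrow T_\Vvar X$, note that $T_\Sigma$ (a coproduct of powers) is enriched and preserves surjections, and transfer both properties along the surjective natural transformation $q$. The paper compresses this into the single line ``$\Tmon_\Vvar$ inherits the above two properties from $\Tmon_\Sigma$''; you spell out the diagram chase for surjections and, for enrichment, reproduce the ``closed subset $P$'' argument of Corollary~\ref{C:left} rather than arguing directly from $d(T_\Sigma f,T_\Sigma g)\le\eps$ and nonexpansiveness of $q_Y$. Two small remarks: the identity $T_\Vvar f\comp q_X = q_Y\comp T_\Sigma f$ is naturality of $q$, not of $\eta$; and the closure of a $\lambda$-basic variety under products and subalgebras, which you assert parenthetically, is the easy direction of the Birkhoff theorem for $\lambda$-basic varieties and deserves an explicit citation.
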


\begin{proof}
\phantom{phantom}
\begin{enumerate}

	\item For $\Vvar = \Sigma\text{-}\Alg$ the corresponding monad $\Tmon_\Sigma$ has both properties.
	Indeed, $T_\Sigma M$ is the algebra of terms precisely as in the finitary case (Example~\ref{E:term}).
	It follows that $T_\Sigma$ is a coproduct of functors $(\blank)^n$ for $n < \lambda$, one for every similarity class of terms in $T_\Sigma V$ (compare Corollary~\ref{C:sf}).
	Due to the Birkhoff Variety Theorem for every space $M$ there is a reflection $\eps_M: T_\Sigma M \to T_\Vvar M$ of the free $\Sigma$-algebra in $\Vvar$, and $\eps_M$ is surjective.
	
	\item We obtain a monad morphism $\eps: \Tmon_\Sigma \to \Tmon_\Vvar$ with surjective components.
	This implies that $\Tmon_\Vvar$ inherits the above two properties from $\Tmon_\Sigma$.

\end{enumerate}
\end{proof}

\begin{definition}
An (enriched) monad on $\Met$ is called \emph{$\lambda$-basic} if it is $\lambda$-accessible and preserves surjective morphisms.
\end{definition}

\begin{theorem}
\label{T:basic}
Let $\lambda$ be an uncountable regular cardinal.
Then the free-algebra monad of every $\lambda$-variety is $\lambda$-basic.
\end{theorem}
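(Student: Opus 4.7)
The plan is to reduce the theorem to a single missing property. By Proposition~\ref{P:sur} the monad $\Tmon_\Vvar$ is already enriched and preserves surjective morphisms, so what remains is $\lambda$-accessibility. Writing $T_\Vvar = U_\Vvar \comp F_\Vvar$ and using that $F_\Vvar$ is a left adjoint, this further reduces to showing that the forgetful functor $U_\Vvar : \Vvar \to \Met$ preserves $\lambda$-directed colimits.

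I will carry this out in two moves. First, I will note that $\Tmon_\Sigma$ itself is $\lambda$-accessible: the proof of Proposition~\ref{P:sur} exhibits $T_\Sigma$ as a coproduct of functors $(\blank)^n$ with $n < \lambda$, and each such functor preserves $\lambda$-directed colimits by Example~\ref{E:dirn}(2); consequently $U_\Sigma : \Sigma\text{-}\Met \to \Met$ preserves $\lambda$-directed colimits as well. Second, I will verify that $\Vvar$ is closed in $\Sigma\text{-}\Met$ under $\lambda$-directed colimits. Concretely, given a $\lambda$-directed diagram $(A_i)_{i \in I}$ in $\Vvar$ with colimit cocone $c_i : A_i \to A$ in $\Sigma\text{-}\Met$ and a defining equation $M \vdash l =_\eps r$ (so $\card M < \lambda$ and $l,r \in T_\Sigma M$), I will fix an arbitrary nonexpanding $f : M \to A$ and prove $d(f^\sharp(l), f^\sharp(r)) \leq \eps$. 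Since $\lambda > \aleph_0$, Proposition~\ref{P:presen} makes $M$ a $\lambda$-presentable space in $\Met$, and because $U_\Sigma$ preserves the colimit $A = \colim A_i$, the map $f$ factors as $c_i \comp g$ for some index $i$ and some nonexpanding $g : M \to A_i$. Uniqueness of homomorphic extension then forces $f^\sharp = c_i \comp g^\sharp$, so the nonexpansiveness of $c_i$ together with $A_i \in \Vvar$ yields
\[
d(f^\sharp(l), f^\sharp(r)) = d(c_i \comp g^\sharp(l), c_i \comp g^\sharp(r)) \leq d(g^\sharp(l), g^\sharp(r)) \leq \eps.
\]
Hence $A \in \Vvar$, $U_\Vvar$ preserves $\lambda$-directed colimits, and therefore $T_\Vvar$ is $\lambda$-accessible.

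The main, rather mild obstacle --- and precisely the reason for the restriction to uncountable $\lambda$ --- is this factorization of $f$ through some $A_i$: it depends on the $\lambda$-presentability of $M$ in $\Met$, which fails for $\lambda = \aleph_0$ (consistent with Example~\ref{E:contra}). A small final remark: the preservation of $\lambda$-directed colimits established above is the ordinary, $\Set$-based one, but for $\lambda > \aleph_0$ the ordinary and enriched $\lambda$-presentable objects of $\Met$ coincide by Proposition~\ref{P:presen}, so enriched $\lambda$-accessibility of $\Tmon_\Vvar$ is automatic from the ordinary version.
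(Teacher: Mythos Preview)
Your proof is correct and follows essentially the same route as the paper: reduce to $\lambda$-accessibility via Proposition~\ref{P:sur}, establish that $U_\Sigma$ preserves $\lambda$-directed colimits, and then show $\Vvar$ is closed under such colimits by factoring an interpretation $f:M\to A$ through some $A_i$ using the $\lambda$-presentability of $M$ (Proposition~\ref{P:presen}). The only cosmetic difference is that you deduce preservation by $U_\Sigma$ from the accessibility of $T_\Sigma$ (via the standard fact that a monadic forgetful functor creates the colimits its monad preserves), whereas the paper argues it directly from the term description; both are fine.
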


\begin{proof}
\phantom{phantom}
\begin{enumerate}

	\item We only need to verify that $\Tmon_\Vvar$ is $\lambda$-accessible.
	This is easy to see in case $\Vvar = \Sigma\text{-}\Alg$ since every term in $T_\Sigma M$ contains less than $\lambda$ variables.
	Thus the forgetful functor $U : \Sigma\text{-}\Alg \to \Met$ preserves $\lambda$-filtered colimits, using Corollary~\ref{C:count}.
	Since its left adjoint $F$ preserves colimits, $T_\Sigma = UF$ is $\lambda$-accessible.
	
	\item We verify that every $\lambda$-basic variety $\Vvar$ is closed under $\lambda$-filtered colimits in $\Sigma\text{-}\Met$.
	To prove this, consider a single $\lambda$-basic equation $M \vdash l =_\eps r$.
	We verify that given a $\lambda$-directed diagram of quantitative algebras $A_i$ ($i \in I$) satisfying that equation, it follows that $A = \colim_{i \in I} A_i$ also satisfies it.
	
	For every interpretation $f: M \to A$ we have, since $M$ is $\lambda$-presentable in the enriched sense (Proposition~\ref{P:presen}), a factorization $g$ through the colimit map $c_i : A_i \to A$ for some $i \in I$ in $\Met$:
	\[
	\begin{tikzcd}
	& A_i \arrow[d, "c_i"] \\
	M \arrow[ur, dashed, "g"] \arrow[r, swap, "f"] & A
	\end{tikzcd}
	\]
	Then $f^\sharp = c_i \comp g^\sharp$ because $c_i$ is a morphism in $\Sigma\text{-}\Met$.
	Since $A_i$ satisfies $M \vdash l =_\eps r$, we have $d(g^\sharp(l),g^\sharp(r)) \leq \eps$.
	This implies $d(f^\sharp(l),f^\sharp(r)) \leq \eps$ because $c_i$ is nonexpanding.
	
	\item Thus, the forgetful functor $U_\Vvar : \Vvar \to \Met$ preserves $\lambda$-filtered colimits, too.
	Since $T_\Vvar = U_\Vvar \comp F_\Vvar$ where $F_\Vvar \adj U_\Vvar$ preserves all colimits, this finishes the proof.
	
\end{enumerate}
\end{proof}

\begin{example}[\cite{adamek:varieties-quantitative-algebras}]
\label{E:contra}
Unfortunately, for $\omega$-varieties $\Vvar$ the monads $\Tmon_\Vvar$ are not finitary in general.
A simple example of this are the quasi-discrete spaces (Example~\ref{E:basic} (3)).
We demonstrate that $\Tmon_\Vvar$ does not preserve colimits of $\omega$-chains of subspaces.
Denote by $A_n$ the subspace of the real line consisting of $-1$ and $2^{-k}$ for $k=0,\dots,n$.
Form the $\omega$-chain of inclusions $A_n \hookrightarrow A_{n+1}$.
Its colimit is the subspace $A = \{ -1 \} \cup \{ 2^{-k} \mid k \in \Nat \}$.
The functor $T_\Vvar$ assigns to $A_0 = \{ -1, 1\}$ the 2-element space with distance $2$, to $A_1 = \{ -1, \frac{1}{2}, 1\}$ the $2$-element space with distance $\frac{3}{2}$ etc.
We see that in $T_\Vvar A_n$ the two elements have distance $1 + 2^{-n}$.
Consequently, $\colim_{n < \omega} T_\Vvar A_n$ is the 2-element space with distance $1$.
This space does not lie in $\Vvar$, thus it is not $T_\Vvar(\colim A_n)$.
In fact, the last space has one element.
\end{example}

\begin{notation}
\label{N:lambda2}
In the following construction $\Met_\lambda$ is a set of metric spaces representing all spaces $M$ with $\card |M| < \lambda$ up to isomorphism.
We assume for simplicity that $\Met_\lambda$ contains the discrete spaces $V_k = \{ x_n \mid n < k \}$ for all cardinals $k < \lambda$.
\end{notation}

\begin{construction}[\cite{adamek:varieties-quantitative-algebras}]
Let $\lambda$ be a regular cardinal and $\Tmon$ a $\lambda$-basic monad.
We define a $\lambda$-basic variety of algebras of the following signature $\Sigma$:
\[
\Sigma_n = |TV_n| \text{ for all cardinals } n < \lambda.
\]
It is presented by the following $\lambda$-basic equations where $n$ and $m$ denote arbitrary cardinals smaller than $\lambda$ (analogous to Construction~\ref{C:eq}):
\begin{enumerate}
	\item $M \vdash l =_\eps r$ for all $M \in \Met_\lambda$ and $l, r \in |TM|$ of distance $\eps$ in $TM$,
	\item $k^*(\sigma) = \sigma(k(x_i))$ for all $\sigma \in TV_n$ and all maps $k: V_n \to |TV_m|$,
	\item $\eta_{V_n}(x_i) = x_i$ for all $n < \lambda$ and $i < n$.
\end{enumerate}
\end{construction}

\begin{theorem}[\cite{adamek:varieties-quantitative-algebras}]
\label{T:basic2}
Every $\lambda$-basic monad on $\Met$ is the free-algebra monad of the $\lambda$-basic variety of the above construction.
\end{theorem}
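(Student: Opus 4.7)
The plan is to mirror the proof of Theorem~\ref{T:sfv}: show that for every metric space $M$ the $\Sigma$-algebra structure on $TM$ induced by the free $\Tmon$-algebra $(TM, \mu_M)$ (following the lemma after Construction~\ref{C:eq}) belongs to $\Vvar_\Tmon$ and is free on $M$ with respect to $\eta_M$. By Proposition~\ref{P:acc} applied to $\Vvar_\Tmon$, this yields a concrete isomorphism $\Met^\Tmon \cong \Vvar_\Tmon$ and hence a monad isomorphism $\Tmon \cong \Tmon_{\Vvar_\Tmon}$.

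First, I would verify that every $\Tmon$-algebra $(A,\alpha)$, equipped with $\sigma_A(a_i)_{i<n} = a^*(\sigma) = \alpha \comp Ta(\sigma)$, satisfies all three families of equations from the Construction. Equations~(2) and~(3) are handled by exactly the calculations from the lemma following Construction~\ref{C:eq}. Equation~(1) is the genuinely new case: given $M \in \Met_\lambda$, elements $l, r \in |TM|$ with $d_{TM}(l, r) \leq \eps$, and any nonexpanding $f : M \to A$, I would first establish that $f^\sharp(l) = \alpha \comp Tf(l)$. Here $f^\sharp$ is the unique $\Sigma$-homomorphic extension of $f$ along $\eta_M$, and $\alpha \comp Tf$ is also a $\Sigma$-homomorphism extending $f$ (by naturality and the monad laws), so uniqueness forces equality. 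Then nonexpansiveness of $Tf$ (as $T$ is enriched) and of $\alpha$ yields $d(f^\sharp(l), f^\sharp(r)) = d(\alpha \comp Tf(l), \alpha \comp Tf(r)) \leq \eps$.

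Next, I would show that for each $M \in \Met_\lambda$ the $\Sigma$-algebra $(TM, \mu_M)$ is free on $M$ with respect to $\eta_M$. The canonical $\Sigma$-homomorphism $\phi_M : T_\Sigma M \to TM$ extending $\eta_M$ factors through the free $\Vvar_\Tmon$-algebra $F_{\Vvar_\Tmon}M$ (since every equation holds in $TM$), inducing a nonexpanding $\Sigma$-homomorphism $\bar\phi_M : F_{\Vvar_\Tmon}M \to TM$. Surjectivity of $\bar\phi_M$ uses that $\Tmon$ preserves surjections: the identity $i_M : |M| \to M$ is nonexpanding, so $T(i_M) : T|M| \to TM$ is surjective; since elements of $T|M|$ are in bijection with operation symbols of $\Sigma$ by $\Sigma_n = |TV_n|$ (and $\mu \comp T\eta = \id$), every $l \in TM$ lifts to a $\Sigma$-term in $T_\Sigma M$. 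For the isometry property, $\bar\phi_M$ is already nonexpanding, while the converse inequality $d_{F_{\Vvar_\Tmon}M} \leq d_{TM}$ is precisely the content of equation~(1) applied to $F_{\Vvar_\Tmon}M$ itself, which is an object of $\Vvar_\Tmon$.

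Finally, to pass from $\Met_\lambda$ to arbitrary spaces, I would invoke $\lambda$-accessibility of both $\Tmon$ and $\Tmon_{\Vvar_\Tmon}$: every metric space is a $\lambda$-directed colimit of its subspaces of cardinality less than $\lambda$ (Corollary~\ref{C:present}), and both monads preserve such colimits (the latter by Theorem~\ref{T:basic}, which relies on Corollary~\ref{C:count}). The main obstacle, compared to the strongly finitary setting of Theorem~\ref{T:sfv}, is that one cannot further reduce to discrete spaces: $\lambda$-basic monads are not determined by their restrictions to $\Set_\lambda$ and need not preserve colimits of precongruences. Equation~(1) is designed precisely to remedy this, by transporting the full metric of each $TM$ (for $M \in \Met_\lambda$) into the presentation of $\Vvar_\Tmon$ at the cost of admitting non-discrete spaces as left-hand sides of the turnstile $\vdash$.
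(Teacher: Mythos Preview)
Your sketch is correct and matches the approach of~\cite{adamek:varieties-quantitative-algebras} (the present paper gives no proof beyond that citation, noting only that the general case is analogous to $\lambda=\aleph_1$). The strategy is exactly the $\lambda$-basic analogue of Theorem~\ref{T:sfv}, and you have correctly isolated the two places where $\lambda$-basicity, as opposed to strong $\lambda$-accessibility, is used: preservation of surjections makes $Ti_M\colon T|M|\to TM$ onto, so that every element of $TM$ is a one-step term $\sigma(x_i)$; and the non-discrete equations~(1) replace the precongruence-colimit argument that is unavailable here.

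One refinement worth making explicit: your isometry step for $\bar\phi_M$ tacitly assumes that every element of $F_{\Vvar_\Tmon}M$ is represented by a \emph{simple} term $\sigma(x_i)$ with $\sigma\in T|M|$. Equation~(1) only directly bounds distances between images of such simple terms; for arbitrary $u,v\in F_{\Vvar_\Tmon}M$ you first need to know that their $\rho_M$-preimages can be taken of this form. This reduction of nested terms to simple ones, modulo $\Vvar_\Tmon$, is precisely where equations~(2) and~(3) earn their keep: by induction on term height (as in the existence argument of Theorem~\ref{T:sfv}), $\tau(\sigma_1(x_i),\dots,\sigma_m(x_i))$ collapses to $(k^*(\tau))(x_i)$ for $k\colon V_m\to |T|M||$, $y_j\mapsto\sigma_j$. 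With that reduction in hand, equation~(1) applied to $F_{\Vvar_\Tmon}M$ with interpretation $\eta_M^{\Vvar}$ gives exactly the inequality you want, and bijectivity of $\bar\phi_M$ follows. A minor citation point: Proposition~\ref{P:acc} is stated only for $1$-basic varieties; the version you need is the analogous proposition in Section~\ref{S:basic} following the Birkhoff Variety Theorem.
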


For $\lambda = \aleph_0$ or $\aleph_1$ this has been proved in~\cite{adamek:varieties-quantitative-algebras}, Theorem~3.11.
The general case is completely analogous.

\begin{theorem}
\label{T:main2}
For every uncountable regular cardinal $\lambda$ the following categories are dually equivalent:
\begin{enumerate}
	\item $\lambda$-basic varieties of quantitative algebras (and concrete functors), and
	\item $\lambda$-basic monads (and monad morphisms).
\end{enumerate}
\end{theorem}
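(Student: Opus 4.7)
The plan is to mirror the proof of Theorem~\ref{T:main} verbatim, since the two genuinely substantial inputs are already in place. On one hand, Theorem~\ref{T:basic} together with Proposition~\ref{P:sur} asserts that the free-algebra monad $\Tmon_\Vvar$ of any $\lambda$-basic variety is enriched, $\lambda$-accessible and preserves surjections, that is, is $\lambda$-basic. On the other hand, Theorem~\ref{T:basic2} asserts that every $\lambda$-basic monad arises, up to isomorphism, as $\Tmon_{\Vvar_\Tmon}$. What remains is to assemble these two statements into a dual equivalence of categories.

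First I would recall the Dubuc-style bijection between monad morphisms and concrete functors over $\Met$: for enriched monads $\Tmon$ and $\Smon$, a monad morphism $\phi\colon \Smon \to \Tmon$ corresponds bijectively to a concrete functor $\ol{\phi}\colon \Met^\Tmon \to \Met^\Smon$ sending an algebra $\alpha\colon TA \to A$ to $\alpha \comp \phi_A\colon SA \to A$. Denote its inverse by $(\blank)^*$. Both assignments preserve composition, so $(\blank)^*$ is functorial.

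Next I would define the contravariant functor $\Phi$ from the category of $\lambda$-basic varieties and concrete functors into the category of $\lambda$-basic monads and monad morphisms by setting $\Phi(\Vvar) = \Tmon_\Vvar$ on objects and, on a concrete functor $F\colon \Vvar \to \Wvar$, by
\[
\Phi(F) = (K_\Wvar \comp F \comp K_\Vvar^{-1})^* \colon \Tmon_\Wvar \to \Tmon_\Vvar,
\]
where $K_\Vvar\colon \Vvar \to \Met^{\Tmon_\Vvar}$ is the comparison isomorphism recalled in Section~\ref{S:basic}. Theorem~\ref{T:basic} together with Proposition~\ref{P:sur} guarantees that $\Tmon_\Vvar$ really is $\lambda$-basic; functoriality of $\Phi$ is immediate from the fact that $(\blank)^*$ preserves composition.

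Finally, $\Phi$ is faithful because each $K_\Vvar$ is an isomorphism and $(\blank)^*$ is a bijection; fullness follows from the same two facts. Essential surjectivity on objects is exactly Theorem~\ref{T:basic2}: for any $\lambda$-basic monad $\Tmon$ one has $\Tmon \cong \Tmon_{\Vvar_\Tmon} = \Phi(\Vvar_\Tmon)$. I do not foresee any serious obstacle at this stage, as the heavy lifting has been delegated to Theorems~\ref{T:basic} and~\ref{T:basic2}; the only mildly technical bookkeeping, and the single place where one might slip, is verifying that $\Phi(F)$ lands in the subcategory of $\lambda$-basic monads and not merely in monad morphisms between $\lambda$-accessible monads, which however is automatic because that subcategory is full.
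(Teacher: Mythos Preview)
Your proposal is correct and matches the paper's own proof, which simply states that the result follows from Theorems~\ref{T:basic} and~\ref{T:basic2} precisely as in the proof of Theorem~\ref{T:main}. Your explicit unpacking of the argument (the Dubuc bijection, the definition of $\Phi$, and the verification that it is full, faithful, and essentially surjective) is exactly the intended route.
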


This follows from Theorems~\ref{T:basic} and~\ref{T:basic2} precisely as we have seen it in Theorem~\ref{T:main}.

\begin{remark}
The development above works completely analogously for the full subcategory $\UMet$.
Thus $\lambda$-basic varieties of ultra-quantitative algebras bijectively correspond to $\lambda$-basic monads on $\UMet$.
\end{remark}

\begin{openproblem}
Which monads correspond to $\omega$-basic varieties?
\end{openproblem}

As we have seen in Example~\ref{E:contra} these monads are not finitary, they can even fail to preserve directed colimits of isometric embeddings.
A partial solution has been presented in~\cite{adamek:varieties-quantitative-algebras}:
$\omega$-basic varieties are, up to isomorphism, precisely the categories $\UMet^\Tmon$, where $\Tmon$ is an enriched monad preserving surjective morphisms and directed colimits of split subobjects.

\begin{remark}
\label{R:exp}
We now show that, for signatures of arity $\aleph_1$, as considered in~\cite{mpp17}, our $\omega_1$-basic equations are precisely as expressive as the formulas~\eqref{eq:MPP} of Remark~\ref{R:bas}~(2).
An algebra $A$ \emph{satisfies} the latter formula provided that every interpretation $f : V \to A$ of variables for which $d(f(x_i),f(y_i)) \leq \delta_i$ holds ($i \in \Nat$) fulfils $d(f^\sharp(l),f^\sharp(r)) \leq \eps$.

Below we work with (extended) \emph{pseudometrics}: we drop the condition that $d(x,y) = 0$ implies $x = y$.
\end{remark}

\begin{construction}
Let $\Sigma$ be an $\aleph_1$-ary signature.
For every formula~\eqref{eq:MPP} of Remark~\ref{R:bas}~(2) denote by $\wh{d}$ the smallest pseudometric on the set $V = \{ x_i \mid i \in \Nat \}$ of variables satisfying $\wh{d}(x_i,x_j) \leq \delta_i$ for $i,j \in \Nat$.
Let
\[
e: (V,\wh{d}) \to M
\]
be the metric reflection: the quotient of $(V,\wh{d})$ modulo the equivalence meging $z,z' \in V$ iff $\wh{d}(z,z') = 0$.
Put $\ol{l} = T_\Sigma e(l)$ and $\ol{r} = T_\Sigma e(r)$ to obtain the following $\omega_1$-basic equation:
\[
M \vdash \ol{l} =_\eps \ol{r}.
\]
\end{construction}

\begin{lemma}
An algebra $A$ satisfies $M \vdash \ol{l} =_\eps \ol{r}$ iff it satisfies the formula~\eqref{eq:MPP}.
\end{lemma}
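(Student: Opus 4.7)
The plan is to establish the lemma by showing that the two conditions translate into each other via the universal property of the metric reflection $e\colon (V,\wh{d})\to M$. The key point is that nonexpanding maps $g\colon M\to A$ correspond bijectively to maps $f\colon V\to A$ (of the underlying set) that respect the pseudometric constraints $d(f(x_i),f(y_i))\le\delta_i$, and that under this correspondence the terms $l,r\in T_\Sigma V$ and $\ol{l}=T_\Sigma e(l),\ol{r}=T_\Sigma e(r)\in T_\Sigma M$ evaluate to the same element of $A$.

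First I would record the basic compatibility: because $V$ is a set of variables (a discrete space) and $e\colon V\to M$ is a function, $T_\Sigma e\colon T_\Sigma V\to T_\Sigma M$ is a $\Sigma$-homomorphism, and for any map $f\colon V\to A$ that factors through $e$ as $f=\ol{f}\comp e$ the uniqueness of homomorphic extensions gives $\ol{f}^\sharp\comp T_\Sigma e=f^\sharp$; in particular $\ol{f}^\sharp(\ol{l})=f^\sharp(l)$ and similarly for $r$.

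Now for the forward direction, assume $A$ satisfies $M\vdash\ol{l}=_\eps\ol{r}$ and take any $f\colon V\to A$ with $d(f(x_i),f(y_i))\le\delta_i$ for all $i\in\Nat$. Pulling back the metric of $A$ along $f$ yields a pseudometric on $V$ satisfying the defining constraints of $\wh{d}$, so minimality gives $\wh{d}(z,z')\le d(f(z),f(z'))$ for all $z,z'\in V$; hence $f\colon(V,\wh{d})\to A$ is nonexpanding. Since $A$ is metric (not merely pseudometric), $f$ factors through the metric reflection as $f=\ol{f}\comp e$ with $\ol{f}\colon M\to A$ nonexpanding. Applying the satisfaction hypothesis to $\ol{f}$ and using the compatibility above yields $d(f^\sharp(l),f^\sharp(r))=d(\ol{f}^\sharp(\ol{l}),\ol{f}^\sharp(\ol{r}))\le\eps$, which is exactly (+).

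For the reverse direction, assume $A$ satisfies (+) and take any nonexpanding $g\colon M\to A$. Set $f=g\comp e\colon V\to A$. Since $e$ is nonexpanding from $(V,\wh{d})$ and $\wh{d}(x_i,y_i)\le\delta_i$ by construction, $d(f(x_i),f(y_i))\le\wh{d}(x_i,y_i)\le\delta_i$, so the antecedent of (+) holds for $f$. Therefore $d(f^\sharp(l),f^\sharp(r))\le\eps$, and the compatibility $g^\sharp(\ol{l})=f^\sharp(l)$, $g^\sharp(\ol{r})=f^\sharp(r)$ yields the desired inequality. The only delicate point is verifying that $\wh{d}$ truly is the minimum among the relevant pseudometrics — this is a standard construction (take the infimum over chains $z=z_0,z_1,\dots,z_k=z'$ of sums $\sum_j\delta_{i_j}$ where each step uses a declared pair) — and confirming that the terms $l,r$, being elements of the discrete free algebra $T_\Sigma V$, can be transported along $T_\Sigma e$ without any nonexpansion issue. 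I do not anticipate a substantial obstacle: both directions reduce to the universal property of the metric reflection together with the functoriality of $T_\Sigma$.
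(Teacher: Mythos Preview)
Your proof is correct and follows essentially the same argument as the paper: both directions use the minimality of $\wh{d}$ to identify interpretations $f\colon V\to A$ satisfying the constraints with nonexpanding maps $(V,\wh{d})\to A$, then invoke the universal property of the metric reflection $e$ to pass between such $f$ and nonexpanding $g\colon M\to A$, with the compatibility $f^\sharp=g^\sharp\comp T_\Sigma e$ transferring the distance inequality between $(l,r)$ and $(\ol{l},\ol{r})$.
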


\begin{proof}
Every interpretation $f : V \to A$ with $d(f(x_i),f(y_i)) \leq \delta_i$ ($i \in \Nat$) is a nonexpanding map $f : (V, \wh{d}) \to (A,d)$.
Indeed, define a pseudometric $d_0$ on $V$ by $d_0(z,z') = d(f(z),f(z'))$.
Then $\wh{d} \leq d_0$ by the minimality of $\wh{d}$.
Thus
\[
\wh{d}(z,z') \leq d_0(z,z') = d(f(z),f(z')).
\]
Therefore we have a unique nonexpanding map $g : M \to (A,d)$ with $f = g \comp e$:
\[
\begin{tikzcd}[column sep=small]
T_\Sigma V \arrow[rrrr, "T_\Sigma e"] \arrow[rrdd, bend right, swap, "f^\sharp"] & & & & T_\Sigma M \arrow[ddll, bend left, "g^\sharp"] \\
& V \arrow[ul, swap, "\eta_V"] \arrow[rr, "e"] \arrow[rd, "f"] & &  M \arrow[ur, "\eta_M"] \arrow[ld, swap, "g"] & \\
& & A & &
\end{tikzcd}
\]
As $T_\Sigma e$ is a nonexpanding homomorphism, we conclude that $f^\sharp = g^\sharp \comp T_\Sigma e$.

If $A$ satisfies $M \vdash \ol{l} =_\eps \ol{r}$, then we get $d(g^\sharp(\ol{l}),g^\sharp(\ol{r})) \leq \eps$ which implies $d(f^\sharp(l),f^\sharp(r)) \leq \eps$.
Conversely, if $A$ satisfies the formula~\eqref{eq:MPP}, then for every nonexpanding map $g : M \to A$ we put $f = g \comp e$ and define $d_0$ as above.
Since both $e$ and $g$ are nonexpanding and $\wh{d}(x_i,y_i) \leq \delta_i$, we conclude $d(f(x_i),f(y_i)) \leq \delta_i$ for $i \in \Nat$.
Thus $d(f^\sharp(l),f^\sharp(r)) \leq \eps$.
From $f^\sharp = T_\Sigma e \comp g^\sharp$ we get $d(g^\sharp(\ol{l}),g^\sharp(\ol{r})) \leq \eps$.
\end{proof}

\section{Accessible Monads and Generalized Varieties}
\label{S:am}

We now turn to general accessible enriched monads on $\Met$: they bijectively correspond to generalized varieties of quantitative algebras.
Recall our standing assumption that an uncountable regular cardinal $\lambda$ is given.
Our signature $\Sigma$ here will be more general than in Sections~\ref{S:inf-algs} and~\ref{S:basic}: arities are spaces, not cardinals.
Our treatment in this section follows ideas of Kelly and Power~\cite{kelly+power:adjunctions} who showed how to present $\lambda$-accessible monads on locally $\lambda$-presentable categories.
(They treated the case $\lambda = \aleph_0$ only, but their results immediately generalize to all $\lambda$.)
The basic difference of our approach is that our generalized signatures $\Sigma$ are collections $(\Sigma_M)_{M \in \Met_\lambda}$ of \emph{sets} $\Sigma_M$ of operation symbols of arity $M$, whereas in loc.\ cit.\ $\Sigma_M$ are arbitrary metric spaces.
This explains why plain equations were sufficient in loc.\ cit., whereas we are going to apply $\lambda$-basic equations analogous to Section~\ref{S:basic}.
Recall $\Met_\lambda$ from Notation~\ref{N:lambda2} and recall that $[M,A]$ is the space of all morphisms $f : M \to A$ with the supremum metric.

\begin{definition}
\phantom{phantom}
\begin{enumerate}
	\item A \emph{generalized signature} of arity $\lambda$ is a collection $\Sigma = (\Sigma_M)_{M \in \Met_\lambda}$ of sets $\Sigma_M$ of operation symbols of arity $M$.
	\item A \emph{quantitative algebra} is a metric space $A$ together with nonexpanding maps
	\[
	\sigma_A: [M,A] \to A \text{ for all } \sigma \in \Sigma_M.
	\]
	A \emph{homomorphism} to an algebra $B$ is a nonexpanding map $h : A \to B$ such that the squares below
	\[
	\begin{tikzcd}
	{[M,A]} \arrow[r, "\sigma_A"] \arrow[d, swap, "h \comp (\blank)"] & A \arrow[d, "h"] \\
	{[M,B]} \arrow[r, swap, "\sigma_B"] & B
	\end{tikzcd}
	\]
	commute for all $M \in \Met_\lambda$ and $\sigma \in \Sigma_M$.
	The category of $\Sigma$-algebras is again denoted by $\Sigma\text{-}\Met$.
\end{enumerate}
\end{definition}

\begin{example}
\label{E:bin}
Let $P = \{ x, y \}$ be the space in $\Met_\lambda$ with $d(x,y) = 1$.
Denote by $\Sigma$ the generalized signature with
\[
\Sigma_P = \{ \sigma \} \text{ and } \Sigma_\One = \{ s \}
\]
(where $\One$ is the singleton space), while all the other sets $\Sigma_M$ are empty.
An algebra is a metric space $A$ together with an element $s_A \in A$ and a partial binary operation
\[
\sigma_A : A \times A \rightharpoonup A
\]
with $\sigma(a_1,a_2)$ defined iff $d(a_1,a_2) \leq 1$.
\end{example}

\begin{remark}
\label{R:gn}
The forgetful functor $U : \Sigma\text{-}\Met \to \Met$ has a left adjoint analogous to Example~\ref{E:term}.
For every space $X$ we define the free algebra $T_\Sigma X$ on it as the algebra of \emph{terms}.
This is the smallest metric space such that
\begin{enumerate}
	\item $X$ is a subspace of $T_\Sigma X$, and
	\item given $\sigma$ of arity $M$ and an $|M|$-tuple of terms $t_i$ ($i \in M$) such that the map $M \to T_\Sigma X$ given by $i \mapsto t_i$ is nonexpanding, we obtain a composite term $\sigma(t_i)_{i \in M}$.
\end{enumerate}
Recall the similarity of terms from Example~\ref{E:term}.
The distance of non-similar terms is again $\infty$, and for similar terms $\sigma(t_i)_{i \in M}$ and $\sigma(t_i')_{i \in M}$ it is the supremum of $\{ d(t_i,t_i') \mid i \in |M| \}$.
The proof that $T_\Sigma X$ above is well defined and forms the free algebra on $X$ is analogous to Proposition~\ref{P:poly}: we just need to define the polynomial functor by
\[
H_\Sigma X = \coprod_{\sigma \in \Sigma} [M,X]
\]
for the arity $M$ of $\sigma$.
\end{remark}

\begin{example}
\label{E:bin2}
Let $\Sigma$ be the generalized signature of Example~\ref{E:bin}.
We first describe $T_\Sigma X$ for discrete spaces $X$.
We can identify elements $x \in X$ with singleton trees labelled by $x$, and composite terms $t = \sigma(t_l,t_r)$ with binary ordered trees having the maximum subtrees $t_l$ and $t_r$.
We see that $T_\Sigma X$ consists of all finite uniform binary trees (with the left-hand subtree similar to the right-hand one) where all the leaves are labelled by the same element $x$ of $X + \{ s \}$:
\[
\scalebox{0.75}{
\begin{tikzpicture}
\node[circle,draw] {$x$};
\end{tikzpicture}
\qquad
\begin{tikzpicture}
	\node[circle,draw] {$s$};
\end{tikzpicture}
\qquad
\begin{tikzpicture}
	\node[circle, draw] {}
	child {node[circle, draw] {$x$}}
	child {node[circle, draw] {$x$}};
\end{tikzpicture}
\qquad
\begin{tikzpicture}
	\node[circle, draw] {}
	child {node[circle, draw] {$s$}}
	child {node[circle, draw] {$s$}};
\end{tikzpicture}
\qquad
\begin{tikzpicture}
	[level 1/.style={sibling distance=20mm},
	level 2/.style={sibling distance=10mm}]
	\node[circle, draw] {}
	child {node[circle, draw] {}
		child {node[circle, draw] {$x$}}
		child {node[circle, draw] {$x$}}
	}
	child {node[circle, draw] {}
		child {node[circle, draw] {$x$}}
		child {node[circle, draw] {$x$}}
	};
\end{tikzpicture}
}
\quad
\raisebox{1.3cm}{\dots}
\]
And the space $T_\Sigma X$ is discrete.

For the space $X = \{ x, y \}$ with $d(x,y) = \frac{1}{2}$ the algebra $T_\Sigma X$ consists of all uniform binary trees where the leaves are either labelled by elements of $X$, or they all have the label $s$:
\[
\scalebox{0.75}{
	\begin{tikzpicture}
		\node[circle,draw] {$x$};
	\end{tikzpicture}
	\quad
	\begin{tikzpicture}
		\node[circle,draw] {$s$};
	\end{tikzpicture}
	\quad
	\begin{tikzpicture}
		\node[circle, draw] {}
		child {node[circle, draw] {$x$}}
		child {node[circle, draw] {$x$}};
	\end{tikzpicture}
	\quad
	\begin{tikzpicture}
		\node[circle, draw] {}
		child {node[circle, draw] {$x$}}
		child {node[circle, draw] {$y$}};
	\end{tikzpicture}
	\quad
	\begin{tikzpicture}
	\node[circle, draw] {}
	child {node[circle, draw] {$y$}}
	child {node[circle, draw] {$y$}};
	\end{tikzpicture}
	\quad
	\begin{tikzpicture}
	\node[circle, draw] {}
	child {node[circle, draw] {$y$}}
	child {node[circle, draw] {$x$}};
	\end{tikzpicture}
	\quad
	\begin{tikzpicture}
		\node[circle, draw] {}
		child {node[circle, draw] {$s$}}
		child {node[circle, draw] {$s$}};
	\end{tikzpicture}
}
\quad
\raisebox{0.7cm}{\dots}
\]
\end{example}

\begin{definition}
\label{D:bass}
Let $\Sigma$ be a generalized $\lambda$-ary signature.
A \emph{$\lambda$-basic quantitative equation} is, as in Definition~\ref{D:basic}, an expression $M \vdash l =_\eps r$ where $M \in \Met_\lambda$ and $l,r \in |T_\Sigma M|$.
An algebra $A$ \emph{satisfies it} if given a nonexpanding interpretation $f : M \to A$, we have
\[
d(f^\sharp(l),f^\sharp(r)) \leq \eps.
\]
A \emph{generalized variety} is a full subcategory of $\Sigma\text{-}\Met$ which can be presented by a set of $\lambda$-basic equations.
\end{definition}

\begin{example}
Generalized varieties are a proper super-class of $\lambda$-basic varieties.
For example, the generalized variety $\Sigma\text{-}\Met$ of Example~\ref{E:bin} (with no equations) is not a $\lambda$-basic variety.
To see this, let $\Tmon$ be the monad of free $\Sigma$-algebras.
We have seen in Proposition~\ref{P:sur} that for every $\lambda$-basic variety the corresponding monad preserves surjective morphisms.
This is not true for generalized varieties.
In the above example let $A$ be the discrete space on $\{ x, y \}$ and $B$ be the two-element space with distance $\frac{1}{2}$ (see Example~\ref{E:bin2}).
For the morphism $\id: A \to B$ the map $T \id$ is not surjective.
\end{example}

\begin{remark}
Analogously to Corollary~\ref{C:term} the free-algebra monad $\Tmon_\Sigma$ of a generalized signature is enriched and $\lambda$-accessible.
Indeed, $T_\Sigma$ is a coproduct of hom-functors of spaces of cardinality less than $\lambda$: one summand for every similarity class of terms of arity $M$.
By Proposition~\ref{P:presen} each $[M,\blank]$ is (enriched and) $\lambda$-accesible.
\end{remark}

\begin{proposition}
\label{P:gen1}
Every generalized variety $\Vvar$ has free algebras.
The corresponding monad $\Tmon_\Vvar$ on $\Met$ is enriched and $\lambda$-accessible.
\end{proposition}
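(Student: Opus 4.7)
The plan is to follow the same strategy used for $\lambda$-basic varieties (Theorem~\ref{T:basic}), adapted to signatures whose arities are metric spaces rather than cardinals. First I would set up $\Sigma\text{-}\Met$ as the Eilenberg--Moore category of $\Tmon_\Sigma$: the remark preceding the proposition supplies that $\Tmon_\Sigma$ is enriched and $\lambda$-accessible, so $\Sigma\text{-}\Met$ is cocomplete and $U_\Sigma: \Sigma\text{-}\Met \to \Met$ preserves $\lambda$-filtered colimits. The factorization system (surjection, isometric embedding) on $\Met$ lifts to $\Sigma\text{-}\Met$ exactly as in Lemma~\ref{L:fact}.

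Next I would verify that $\Vvar$ is closed in $\Sigma\text{-}\Met$ under products and subalgebras. For an equation $M \vdash l =_\eps r$ and a product $\prod_j A_j$ of satisfying algebras, every interpretation $f: M \to \prod_j A_j$ has components $f_j: M \to A_j$, and since each projection is a homomorphism we have $f^\sharp = (f_j^\sharp)_j$; hence $d(f^\sharp(l), f^\sharp(r)) = \sup_j d(f_j^\sharp(l), f_j^\sharp(r)) \leq \eps$. Closure under subalgebras is immediate. The reflective-subcategory theorem invoked in Corollary~\ref{C:left} then yields a reflector $R: \Sigma\text{-}\Met \to \Vvar$ with surjective reflection arrows; composing with $F_\Sigma$ gives a left adjoint $F_\Vvar = R \comp F_\Sigma$ to $U_\Vvar$, so free algebras exist and $T_\Vvar = U_\Vvar F_\Vvar$ is the induced monad.

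For enrichment of $T_\Vvar$ I would mimic Corollary~\ref{C:left} almost verbatim. Given $f, g: X \to Y$ with $d(f, g) \leq \eps$, the set $P = \{ p \in F_\Vvar X \mid d(F_\Vvar f(p), F_\Vvar g(p)) \leq \eps \}$ contains $\eta_X[X]$, because $\eta_Y$ is nonexpanding, and is closed under every operation $\sigma_{F_\Vvar X}$ of arity $M$: if $(p_i)_{i \in M}$ is a nonexpanding family in $P$ then, using that $\sigma_{F_\Vvar Y}$ is nonexpanding from $[M, F_\Vvar Y]$ to $F_\Vvar Y$ and that $F_\Vvar f, F_\Vvar g$ are homomorphisms, $d(F_\Vvar f(\sigma_{F_\Vvar X}((p_i))), F_\Vvar g(\sigma_{F_\Vvar X}((p_i)))) \leq \sup_{i \in M} d(F_\Vvar f(p_i), F_\Vvar g(p_i)) \leq \eps$. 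Hence $P = F_\Vvar X$. For $\lambda$-accessibility I would show $\Vvar$ is closed under $\lambda$-filtered colimits in $\Sigma\text{-}\Met$: if $A = \colim_i A_i$ with each $A_i$ satisfying $M \vdash l =_\eps r$, then since $U_\Sigma$ preserves the colimit and $M$ is $\lambda$-presentable in the enriched sense (Proposition~\ref{P:presen}, using $\lambda > \aleph_0$), any interpretation $f: M \to A$ factors as $f = c_i \comp g$ through some colimit coprojection $c_i: A_i \to A$. Since $c_i$ is a nonexpanding homomorphism, $f^\sharp = c_i \comp g^\sharp$ and hence $d(f^\sharp(l), f^\sharp(r)) \leq \eps$. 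Thus $U_\Vvar$ preserves $\lambda$-filtered colimits, and since $F_\Vvar$ is a left adjoint, $T_\Vvar$ is $\lambda$-accessible.

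The step I expect to require the most care is the closure-under-operations check in the enrichment argument. With generalized arities, $\sigma_A$ acts on the internal hom $[M, A]$ rather than on a power $A^n$, so one must genuinely use the fact that $\sigma_A$ is nonexpanding with respect to the supremum metric on $[M, A]$, and that $F_\Vvar f$ and $F_\Vvar g$ induce nonexpanding post-composition maps $[M, F_\Vvar X] \to [M, F_\Vvar Y]$. Once this is in hand, every remaining computation reduces to a straightforward transcription of the $\lambda$-basic case.
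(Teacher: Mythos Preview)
Your proposal is correct and follows essentially the same route as the paper: prove closure of $\Vvar$ under products, subalgebras, and $\lambda$-directed colimits in $\Sigma\text{-}\Met$, deduce the reflector with surjective components, and then read off enrichment and $\lambda$-accessibility of $\Tmon_\Vvar$. The only cosmetic difference is in the enrichment step: the paper observes directly that the surjective monad morphism $\Tmon_\Sigma \to \Tmon_\Vvar$ transfers enrichment from $\Tmon_\Sigma$ (already known to be enriched), whereas you redo the $P$-set argument of Corollary~\ref{C:left}; your version works too, once you note that $F_\Vvar X$ is generated by $\eta_X[X]$ under the operations (which follows from the surjectivity of the reflection you already established).
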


\begin{proof}
We prove below that $\Vvar$ is closed under products, subalgebras, and $\lambda$-directed colimits in $\Sigma\text{-}\Met$.
Then $\Vvar$ has free algebras; this is analogous to Corollary~\ref{C:left}.
Moreover, it follows that the canonical monad morphism $\Tmon_\Sigma \to \Tmon_\Vvar$ has surjective components, thus $\Tmon_\Vvar$ is enriched (since $\Tmon_\Sigma$ is).
Since $\Tmon_\Sigma$ is $\lambda$-accessible, closure under $\lambda$-directed colimits implies that $\Tmon_\Vvar$ is also $\lambda$-accessible.
\begin{enumerate}
	\item Closure under products: If algebras $A_i$ ($i \in I$) satisfy $M \vdash l =_\eps r$, then so does $A = \prod_{i \in I} A_i$.
	Indeed, for each interpretation $f = \langle f_i \rangle_{i \in I}: M \to A$ we know that $d(f_i^\sharp(l),f_i^\sharp(r)) \leq \eps$.
	Moreover $f^\sharp = \langle f_i^\sharp \rangle$ since every projection merges both sides.
	This proves
	\[
	d(f^\sharp(l),f^\sharp(r)) = \sup_{i \in I} d(f_i^\sharp(l),f_i^\sharp(r)) \leq \eps.
	\]
	
	\item Closure under subalgebras: Let $B$ be a subalgebra of $A$ via an isometric embedding $m : B \to A$.
	If $A$ satisfies $M \vdash l =_\eps r$, so does $B$.
	Indeed, for every interpretation $f : M \to B$ we have $m \comp f^\sharp = (m \comp f)^\sharp$ since $m$ is a homomorphism.
	We know that $d((m \comp f)^\sharp(l),(m \comp f)^\sharp(r)) \leq \eps$.
	Since $m$ preserves distances, this implies $d(f^\sharp(l),f^\sharp(r)) \leq \eps$.
	
	\item Closure under $\lambda$-directed colimits: Let $A_i$ ($i \in I$) be algebras of a directed diagram over $(I,\leq)$ with colimit $A$, with each of $A_i$ satisfying $M \vdash l =_\eps r$.
	For every interpretation $f: M \to A$, since the metric space $M$ is $\lambda$-presentable (Proposition~\ref{P:presen}), there exists $i \in I$ and a nonexpanding map $\ol{f} : M \to A_i$ with $f = c_i \comp \ol{f}$.
	We have $f^\sharp = c_i \comp \ol{f}^\sharp$ because $c_i$ is a homomorphism.
	And we know that $d(\ol{f}^\sharp(l),\ol{f}^\sharp(r)) \leq \eps$.
	As $c_i$ is nonexpanding, this proves $d(f^\sharp(l),f^\sharp(r)) \leq \eps$.
\end{enumerate}
\end{proof}

\begin{proposition}
\label{P:gen}
Every generalized variery $\Vvar$ is concretely isomorphic to $\Met^{\Tmon_\Vvar}$, the Eilenberg-Moore category of its free-algebra monad.
\end{proposition}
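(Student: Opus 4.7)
The plan is to construct the comparison functor $K:\Vvar\to\Met^{\Tmon_\Vvar}$ together with a concrete inverse, following the template of Theorem~\ref{T:var-mon}(5). The key tool is a quotient monad morphism $\epsilon:\Tmon_\Sigma\to\Tmon_\Vvar$ obtained as follows: the factorization system (surjection, isometric embedding) on $\Sigma\text{-}\Met$ extends Lemma~\ref{L:fact} to the generalized signature, and by the closure of $\Vvar$ under products and subalgebras established in the proof of Proposition~\ref{P:gen1}, $\Vvar$ is reflective in $\Sigma\text{-}\Met$ with reflections in $\EE$ (\cite{ahs}, Theorem~16.8). The induced $\epsilon$ has surjective (hence epic) components, is a $\Sigma$-homomorphism in each component, and satisfies $\epsilon\comp\eta^\Sigma=\eta^\Vvar$.

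Next, I would define $K$ on objects by $A\mapsto(A,\alpha_A)$, where $\alpha_A:T_\Vvar A\to A$ is the unique $\Vvar$-homomorphism with $\alpha_A\comp\eta_A=\id_A$, and as the identity on morphisms. For the inverse, given $(A,\alpha)\in\Met^{\Tmon_\Vvar}$ I would set $\beta=\alpha\comp\epsilon_A:T_\Sigma A\to A$, which is a $\Tmon_\Sigma$-algebra by Lemma~\ref{L:amm}, equivalently a generalized $\Sigma$-algebra via Remark~\ref{R:Barr} applied to the $\lambda$-accessible $H_\Sigma$ of Remark~\ref{R:gn}. The critical step is showing $(A,\beta)\in\Vvar$: for any defining equation $M\vdash l=_\eps r$ and any nonexpanding $f:M\to A$, naturality of $\epsilon$ yields
\[
f^\sharp=\beta\comp T_\Sigma f=\alpha\comp T_\Vvar f\comp\epsilon_M,
\]
so by nonexpansion of $\alpha$ and $T_\Vvar f$ it suffices to prove $d(\epsilon_M(l),\epsilon_M(r))\leq\eps$ in $T_\Vvar M$. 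But $T_\Vvar M\in\Vvar$ itself satisfies $M\vdash l=_\eps r$, and applied to $\eta_M^\Vvar:M\to T_\Vvar M$ --- whose $\Sigma$-homomorphic extension is precisely $\epsilon_M$ by uniqueness, since $\epsilon_M$ is a $\Sigma$-homomorphism extending $\epsilon_M\comp\eta_M^\Sigma=\eta_M^\Vvar$ --- this gives exactly the desired inequality.

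Finally I would verify that $K$ and $K^{-1}$ are mutually inverse and concrete. On objects, uniqueness of $\alpha_A$ together with epicness of $\epsilon_A$ forces the $\Vvar$-algebra structure and the generalized $\Sigma$-algebra structure to determine each other; on morphisms, the bijection between $\Sigma$-homomorphisms and $\Tmon_\Vvar$-homomorphisms $A\to B$ follows by the diagram chase of Remark~\ref{R:hom}, again exploiting that $\epsilon_A$ is epic so that commutation of the outer square with $T_\Vvar$ implies commutation with $T_\Sigma$. The main obstacle is the identity $(\eta_M^\Vvar)^\sharp=\epsilon_M$ together with the naturality chain transporting a defining equation from $T_\Vvar M$ to an arbitrary algebra; once set up, everything is routine diagram chasing, the only subtlety being that arities of a generalized signature are metric spaces of cardinality less than $\lambda$ rather than cardinals, which only affects the shape of $H_\Sigma$ and not the logic of the proof.
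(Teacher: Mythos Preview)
Your argument is correct. The paper itself gives no proof here beyond the sentence ``The proof is analogous to that of Proposition~\ref{P:acc}'', and Proposition~\ref{P:acc} is in turn only a citation to~\cite{rosicky:metric-monads}. So you are supplying an explicit argument where the paper supplies none.

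Your route differs from the implicit one in that you do not invoke an abstract monadicity criterion but instead construct the inverse of the comparison functor by hand, via the surjective monad morphism $\epsilon:\Tmon_\Sigma\to\Tmon_\Vvar$. The crucial step---showing that an Eilenberg--Moore algebra $(A,\alpha)$ yields a $\Sigma$-algebra lying in $\Vvar$---is handled elegantly by your observation that $(\eta_M^\Vvar)^\sharp=\epsilon_M$, which reduces satisfaction of each defining equation in $A$ to its satisfaction in the free algebra $T_\Vvar M$. This is essentially the same mechanism as in step~(5) of Theorem~\ref{T:var-mon}, adapted to generalized arities; what you gain over a bare appeal to monadicity is a transparent reason \emph{why} every $\Tmon_\Vvar$-algebra satisfies the equations, rather than deducing it from creation of coequalizers. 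The only places to be careful are exactly the two you flag: that $\epsilon_M$ is a $\Sigma$-homomorphism (it is, being a reflection map in $\Sigma\text{-}\Met$), and that the round-trip on objects returns the original structures (which follows from epicness of $\epsilon_A$ together with the unit and associativity laws of $(A,\alpha)$, as you indicate).
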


The proof is analogous to that of Proposition~\ref{P:acc}.

We now consider the converse direction: from accessible monads to generalized varieties.
For this we use the work of Kelly and Power that we first shortly recall.

\begin{remark}
In Section~5 of~\cite{kelly+power:adjunctions} a presentation of enriched finitary monads on $\A$ by equations is exhibited.
Here $\A$ is a locally finitely presentable category (in the enriched sense).
All of that section immediately generalizes to enriched $\lambda$-accesible monads on a locally $\lambda$-presentable category.
For our given (uncountable) cardinal $\lambda$ this specializes to $\Met$ (see Corollary~\ref{C:present}) as follows:
\begin{enumerate}
	\item Recall $\Met_\lambda$ from Notation~\ref{N:lambda} and denote by
	\[
	\ol{K} : \ol{\Met_\lambda} \hookrightarrow \Met_\lambda
	\]
	the full embedding of the discrete category on the same objects ($\N$ is used in place of $\ol{\Met_\lambda}$ in~\cite{kelly+power:adjunctions}).
	We work with the functor category $[\ol{\Met_\lambda},\Met]$.
	An object $B : \ol{\Met_\lambda} \to \Met$ can be considered as a collection of metric spaces $B(M)$ indexed by $M \in \ol{\Met_\lambda}$.
	In this sense every generalized signature is simply an object $B$ of $[\ol{\Met_\lambda},\Met]$ with all spaces $B(M)$ discrete.

	\item The functor category $[\Met_\lambda,\Met]$ is equivalent to the category of all $\lambda$-accessible endofunctors on $\Met$.
	We thus have an obvious forgetful functor
	\[
	W : \Mon_\lambda(\Met) \to [\Met_\lambda,\Met],
	\]
	assigning to a $\lambda$-accessible monad its underlying endofunctor.
	We also have the forgetful functor
	\[
	V: [\Met_\lambda,\Met] \to [\ol{\Met_\lambda},\Met]
	\]
	given by precomposition with $\ol{K}$.
	The composite $V \comp W$ assigns to a monad $\Tmon$ the collection $(TM)_{M \in \Mon_\lambda}$.
	
	\item The functor $V$ has a left adjoint
	\[
	G \adj V
	\]
	given by the left Kan extension along $\ol{K} : \ol{\Met_\lambda} \hookrightarrow \Met_\lambda$:
	\[GB = \Lan{\ol{K}}{B\ol{K}}.
	\]
	It assigns to $B : \ol{\Met_\lambda} \to \Met$ the functor
	\[
	GB = \coprod_{M \in \Met_\lambda} B(M) \tensor [M,\blank].
	\]
\end{enumerate}
\end{remark}

\begin{theorem}[\cite{lack:monadicity}]
The forgetful functor
\[
V \comp W : \Mon_\lambda(\Met) \to [\ol{\Met_\lambda},\Met]
\]
is monadic.
\end{theorem}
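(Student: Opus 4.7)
The plan is to verify Beck's monadicity theorem (in its reflexive coequalizer form) for $U := V \comp W$, checking: (a) $U$ has a left adjoint; (b) $U$ reflects isomorphisms; (c) $\Mon_\lambda(\Met)$ has, and $U$ preserves, coequalizers of reflexive pairs. Following the strategy already used for Proposition~\ref{P:lambda-acc}, I would first establish ordinary monadicity via Lack's theorem and then promote it to the enriched setting via Dubuc's theorem (\cite{dubuc}, Theorem~II.2.1), using the fact that both functors in the composite are enriched and have enriched left adjoints.

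For (a), a left adjoint $G \adj V$ is already exhibited in the excerpt by the pointwise formula $GB = \Coprod_{M \in \Met_\lambda} B(M) \tensor [M,\blank]$. A left adjoint to $W$ is provided by the free-monad construction: every $\lambda$-accessible endofunctor generates a free monad via the transfinite iteration of Remark~\ref{R:Barr}(2), and the assignment is functorial and enriched. Composing these yields $F \adj U$. For (b), $V$ reflects isomorphisms because $\ol{K}$ is bijective on objects, so $V\alpha$ being pointwise invertible forces $\alpha$ to be pointwise invertible; and $W$ reflects isomorphisms because the inverse of an invertible underlying natural transformation of a monad morphism is automatically a monad morphism. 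The composite $U$ therefore reflects isomorphisms.

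The main obstacle is (c). Proposition~\ref{P:lambda-acc} already guarantees that $\Mon_\lambda(\Met)$ is cocomplete, and $V$, being a precomposition functor, preserves all colimits (formed pointwise in both functor categories). What remains is the preservation by $W$ of reflexive coequalizers of $\lambda$-accessible monads, which is the technical heart of Lack's argument in~\cite{lack:monadicity}. Given a reflexive pair of monad morphisms $\Smon \rightrightarrows \Smon'$, one forms the pointwise coequalizer $q : S' \to T$ in $[\Met,\Met]$ and transports the monad structure along $q$. The key lemma is that a reflexive coequalizer of $\lambda$-accessible endofunctors, being preserved by every $\lambda$-accessible endofunctor in each variable, remains a coequalizer after horizontal composition with itself; hence the multiplication $\mu^{S'}$ descends uniquely along $q * q$ to $\mu^T : TT \to T$, and the unit together with the monad laws transport via $q$ by the universal property. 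Once (a)--(c) are verified, Beck's theorem yields ordinary monadicity of $U$, and then Dubuc's enriched monadicity criterion upgrades this to monadicity in the $\Met$-enriched sense.
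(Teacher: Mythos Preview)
The paper gives no proof of this theorem: it is simply cited from Lack~\cite{lack:monadicity}. Your sketch is consistent with that citation and with the strategy the paper itself uses in Proposition~\ref{P:lambda-acc}, namely: invoke Lack for ordinary monadicity of $U_\ordi$, observe that the left adjoint is enriched, and then apply Dubuc's criterion (\cite{dubuc}, Theorem~II.2.1) to upgrade to enriched monadicity. So the overall approach matches.

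One point in your step~(c) deserves care. You justify preservation of reflexive coequalizers under horizontal composition by saying such coequalizers are ``preserved by every $\lambda$-accessible endofunctor in each variable''. That is not true as stated: $\lambda$-accessible endofunctors preserve $\lambda$-filtered colimits, not reflexive coequalizers in general, so post-composition $T \comp (\blank)$ need not preserve a pointwise reflexive coequalizer. Lack's actual argument does not rely on this; it proceeds via $U$-split (contractible) coequalizers, which every functor preserves, so the descent of the multiplication along $q * q$ goes through for that reason. Since you explicitly defer the technical heart to~\cite{lack:monadicity}, this is an imprecise gloss rather than a genuine gap, but you should correct the stated reason if you expand the sketch.
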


\begin{notation}
The left adjoint $F \adj V \comp W$ assigns to $B : \ol{\Met_\lambda} \to \Met$ the free monad on the $\lambda$-accessible endofunctor of $\Met$ corresponding to $GB$.
\end{notation}

\begin{corollary}
Every enriched $\lambda$-accessible monad $\Tmon$ is a coequalizer, in $\Mon_\lambda(\Met)$, of a parallel pair
\begin{equation}
\label{KP1} \tag{KP1}
\begin{tikzcd}
FB' \arrow[r, bend left, "\sigma"] \arrow[r, bend right, swap, "\tau"] & FB \arrow[r, "\rho"] & \Tmon
\end{tikzcd}
\end{equation}
of monad morphisms $\sigma$, $\tau$ (for some $B',B: \ol{\Met_\lambda} \to \Met$).
\end{corollary}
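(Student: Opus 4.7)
The plan is to derive this corollary directly from the monadicity of $V \circ W$ (stated in the preceding theorem) via the standard canonical presentation of algebras for a monad as reflexive coequalizers of free algebras.

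First I would recall the following general fact: if $U \colon \A \to \B$ is monadic with left adjoint $F$ and counit $\epsilon$, then for every $A \in \A$ the diagram
\[
\begin{tikzcd}
FUFUA \arrow[r, shift left, "\epsilon_{FUA}"] \arrow[r, shift right, swap, "FU\epsilon_A"] & FUA \arrow[r, "\epsilon_A"] & A
\end{tikzcd}
\]
is a reflexive coequalizer in $\A$. This is the image, under the comparison equivalence $\A \simeq \B^{UF}$, of the classical canonical presentation of each Eilenberg--Moore algebra $(X,\alpha)$ as a reflexive coequalizer of the parallel pair $F^{UF}\alpha$ and $\mu^{F^{UF}}_X$, with coequalizing morphism $\alpha$ itself.

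Applying this with $U = V \circ W$, $\A = \Mon_\lambda(\Met)$ and $\B = [\ol{\Met_\lambda},\Met]$, and setting
\[
B := VW\Tmon \quad \text{and} \quad B' := VWFB,
\]
we obtain the desired presentation of the form \eqref{KP1}, with $\sigma := \epsilon_{FB}$, $\tau := F(VW\epsilon_\Tmon)$ and $\rho := \epsilon_\Tmon$. There is no serious obstacle here: the corollary is essentially a repackaging of monadicity with the canonical presentation of algebras. The only bookkeeping is to note that $B$ and $B'$ lie in $[\ol{\Met_\lambda},\Met]$ by construction, and that the coequalizer is to be interpreted in the enriched sense, which is automatic since the monadicity of $V \circ W$ and the comparison equivalence are both enriched.
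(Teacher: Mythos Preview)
Your proposal is correct and is exactly the argument the paper has in mind: the corollary is stated without proof immediately after the monadicity theorem for $V\comp W$, so the intended derivation is precisely the canonical presentation of an object of a monadic category as a (reflexive) coequalizer of free objects, which you have spelled out with the correct choices $B = VW\Tmon$ and $B' = VWFB$.
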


\begin{remark}
\label{R:KP}
Under the adjunction $F \adj V\comp W$ the monad morphisms $\sigma$, $\tau$ correspond to $\sigma',\tau': F \to VWFB$.
Kelly and Power deduce that the category $\Met^\Tmon$ can be identified with the full subcategory of $\Met^{FB}$ on algebras $\alpha : (FB)A \to A$ that satisfy the following quantitative equations
\begin{equation}
\label{KP2} \tag{KP2}
M \vdash \sigma_M'(x) = \tau_M'(x) \text{ for all } M \in \Met_\lambda \text{ and } x \in B'(M).
\end{equation}
Satisfaction means that for every morphism $f : M \to A$ we have
\[
f^\sharp(\sigma_M'(x)) = f^\sharp(\tau_M'(x)) \text{ for } f^\sharp = \alpha \comp (FB)f : (FB)M \to A.
\]
We use this to derive the corresponding presentation by generalized signatures and quantitative equations:
\end{remark}

\begin{theorem}
\label{T:gen}
Every $\lambda$-accessible enriched monad on $\Met$ is the free-algebra monad of a generalized variety.
\end{theorem}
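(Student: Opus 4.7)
The plan is to leverage the Kelly--Power presentation of $\Tmon$ recalled in Remark~\ref{R:KP} and repackage it as a generalized signature with $\lambda$-basic equations. By (KP1)--(KP2) there are objects $B, B' \in [\ol{\Met_\lambda}, \Met]$ and natural transformations $\sigma', \tau' : B' \to VWFB$ such that $\Met^\Tmon$ coincides with the full subcategory of $\Met^{FB}$ determined by the equations (KP2). The idea is to manufacture a pair $(\Sigma, E)$ whose generalized variety $\Vvar$ agrees, as a concrete category over $\Met$, with $\Met^\Tmon$. The theorem then follows: Proposition~\ref{P:gen} delivers a concrete isomorphism $\Vvar \cong \Met^{\Tmon_\Vvar}$, and coincidence of Eilenberg--Moore categories over $\Met$ forces $\Tmon \cong \Tmon_\Vvar$.

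First define the signature by $\Sigma_M = |B(M)|$ for every $M \in \Met_\lambda$, forgetting the metric of $B(M)$ at the syntactic level. Form $E$ as the union of two families. The \emph{metric equations} reinstate the distances of $B(M)$: for every pair $\sigma_1, \sigma_2 \in |B(M)|$ at distance $\eps$ in $B(M)$, include
\[
M \vdash \sigma_1(\eta_M(i))_{i \in M} =_\eps \sigma_2(\eta_M(i))_{i \in M}
\]
(when $\eps$ is irrational, use the countable rational approximation of Example~\ref{E:quant}~(1)). The \emph{Kelly--Power equations} translate (KP2): the space $(FB)M$ has underlying set equal to $|T_\Sigma M|$, since its terms are built identically from the symbols $|B(M)|$, so for each $x \in |B'(M)|$ the images $\sigma'_M(x), \tau'_M(x)$ are genuine terms in $T_\Sigma M$, and we include
\[
M \vdash \sigma'_M(x) =_0 \tau'_M(x).
\]

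It then remains to identify $\Vvar$ with $\Met^\Tmon$ as concrete categories over $\Met$. Unfolding the tensor--hom adjunction, a $GB$-algebra structure on a space $A$ is a $\Sigma$-algebra structure for which each assignment $\sigma \mapsto \sigma_A$ is nonexpanding from $B(M)$ to $[[M,A],A]$. Using the computation $f^\sharp(\sigma_k(\eta_M(i))_{i \in M}) = \sigma_{k,A}(f)$ for every $f : M \to A$, satisfaction of the metric equation for $(\sigma_1,\sigma_2)$ at distance $\eps$ unwinds precisely to $d(\sigma_{1,A}, \sigma_{2,A}) \leq \eps$ in $[[M,A],A]$; so the metric equations cut out exactly the $GB$-algebras among all $\Sigma$-algebras. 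By Remark~\ref{R:Barr} this agrees with the category of $FB$-algebras, and among those the Kelly--Power equations yield precisely (KP2); homomorphisms match on both sides, being in every case nonexpanding maps commuting with all basic operations. The main subtlety to expect is the book-keeping around the metric on $B(M)$: a continuous family of enrichment inequalities is being captured by a discrete set of quantitative equations with rational parameters, so one must verify, using the density of the rationals, that these equations faithfully encode the full metric on $B(M)$, including for pairs at irrational distance.
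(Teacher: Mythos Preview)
Your proposal is correct and follows essentially the same route as the paper: define $\Sigma_M = |B(M)|$, impose the metric equations (the paper's (KP4)) to carve out the $GB$-algebras among all $\Sigma$-algebras, then add the Kelly--Power equations (KP2), using that $|T_\Sigma M| = |(FB)M|$ to make sense of these as $\lambda$-basic equations. You are in fact slightly more careful than the paper in flagging the irrational-distance issue and handling it via rational approximation.
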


\begin{proof}
\phantom{phantom}
\begin{enumerate}
	\item We first associate with every functor $B : \ol{\Met_\lambda} \to \Met$ a generalized signature $\Sigma$ whose $M$-ary symbols are the elements of $BM$:
	\[
	\Sigma_M = |BM| \text{ for } M \in \Met_\lambda.
	\]
	The free $\Sigma$-algebra $T_\Sigma X$ on a space $X$ is, by Remark~\ref{R:Barr}, the colimit $T_\Sigma X = \colim_{i < \lambda} W_i$ of the $\lambda$-chain $W_i$ where
	\[
	W_{i+1} = \coprod_{M \in \Met_\lambda} \Sigma_M \times [M,X].
	\]
	Whereas the free algebra $(GB)X$ is, by the same Remark, the colimit of a $\lambda$-chain $W_i$ ($i < \lambda$) where
	\[
	W_{i+1}' = \coprod_{M \in \Met_\lambda} B(M) \tensor [M,X].
	\]
	We conclude that the underlying sets of the chains $(W_i)_{i < \lambda}$ and $(W_i')_{i < \lambda}$ are the same, thus
	\[
	|T_\Sigma X| = |(GB)X|.
	\]
	
	\item For every functor $B : \ol{\Met_\lambda} \to \Met$ we now show that $\Met^{FB}$ is concretely equivalent to a generalized variety of $\Sigma$-algebras.
	Since $FB$ is the free monad on $H = GB$, we can work with $H$-algebras in place of $\Met^{FB}$ (Remark~\ref{R:Barr}).
	To give a $H$-algebra on a space $A$ means to give a morphism
	\[
	\coprod_{M \in \Met_\lambda} B(M) \tensor [M,A] \to A.
	\]
	Or, equivalently, a collection of morphisms
	\[
	\alpha_M : B(M) \to [[M,A],A]
	\]
	for $M \in \Met_\lambda$.
	Thus $\alpha_M$ assigns to every $\sigma \in \Sigma_M = |BM|$ an operation
	\begin{equation}
	\label{KP3} \tag{KP3}
	\sigma_A = \alpha_M(\sigma) : [M,A] \to A
	\end{equation}
	of arity $M$.
	Moreover, $\alpha_M$ is nonexpanding: for $\sigma,\tau \in \Sigma_M$ we have
	\[
	d(\sigma,\tau) = \eps \text{ in } B(M) \text{ implies } d(\sigma_A,\tau_A) \leq \eps.
	\]
	In other words, the $\Sigma$-algebra $A$ satisfies the $\lambda$-basic equations
	\begin{equation}
	\label{KP4} \tag{KP4}
	M \vdash \sigma =_\eps \tau \text{ for } M \in \Met_\lambda \text{ and } d(\sigma,\tau) = \eps \text{ in } B(M).
	\end{equation}
	Conversely, every $\Sigma$-algebra $A$ satisfying~\eqref{KP3} corresponds to a unique $H$-algebra on the space $A$ with $\alpha_M$ defined by~\eqref{KP3}.
	We conclude that $\Met^{FB}$ is concretely equivalent to the generalized variety presented by~\eqref{KP4}.
		
	\item Let $\Tmon$ be an eriched $\lambda$-accessible monad.
	In Remark~\ref{R:KP} we have seen that $\Met^\Tmon$ is the subcategory of $\Met^{FB}$ presented by the equations~\eqref{KP2}.
	Due to~(1) both $\sigma_M'(x)$ and $\tau_M'(x)$ are elements of $| T_\Sigma M|$, thus~\eqref{KP2} are $\lambda$-basic equations for $\Sigma$ in the sense of Definition~\ref{D:bass}.
	We conclude that $\Tmon$ is the free-algebra monad of the generalized variety presented by~\eqref{KP2} and~\eqref{KP4}.
\end{enumerate}
\end{proof}

\begin{theorem}
\label{T:main3}
For every uncountable regular cardinal $\lambda$ the following categories are dually equivalent:
\begin{enumerate}
	\item generalized varieties of $\lambda$-ary quantitative algebras (and concrete functors), and
	\item $\lambda$-accessible monads on $\Met$ (and monad morphisms).
\end{enumerate}
\end{theorem}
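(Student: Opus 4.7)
The plan is to mimic the proof schema already used for Theorem~\ref{T:main} and Theorem~\ref{T:main2}, replacing the input ingredients by their analogues for generalized varieties. Concretely, I would build the duality functor
\[
\Phi : \Var_\gen(\Met)^\op \to \Mon_\lambda(\Met)
\]
on objects by $\Phi(\Vvar) = \Tmon_\Vvar$, which is well-defined by Proposition~\ref{P:gen1}: the free-algebra monad of a generalized variety is enriched and $\lambda$-accessible, and hence sits in $\Mon_\lambda(\Met)$.

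For morphisms I would use the standard bijection between monad morphisms $\phi:\Smon\to\Tmon$ and concrete functors $\ol\phi: \Met^\Tmon \to \Met^\Smon$ over $\Met$ (as recalled in step~(1) of Theorem~\ref{T:main}), writing $(\blank)^*$ for its inverse. Given a concrete functor $F:\Vvar\to\Wvar$, Proposition~\ref{P:gen} supplies comparison isomorphisms $K_\Vvar:\Vvar\xrightarrow{\sim}\Met^{\Tmon_\Vvar}$ and $K_\Wvar:\Wvar\xrightarrow{\sim}\Met^{\Tmon_\Wvar}$, so the composite
\[
\Met^{\Tmon_\Vvar}\xrightarrow{K_\Vvar^{-1}}\Vvar\xrightarrow{F}\Wvar\xrightarrow{K_\Wvar}\Met^{\Tmon_\Wvar}
\]
is a concrete functor whose image under $(\blank)^*$ gives a monad morphism $\Phi(F):\Tmon_\Wvar\to\Tmon_\Vvar$. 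Functoriality of $\Phi$ is immediate from the compositionality of $(\blank)^*$.

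To finish, I would check that $\Phi$ is a full and faithful equivalence. Faithfulness is formal: concrete functors between $\Vvar$ and $\Wvar$ correspond bijectively under the comparison isomorphisms to concrete functors $\Met^{\Tmon_\Vvar}\to\Met^{\Tmon_\Wvar}$, and $(\blank)^*$ is a bijection. Fullness follows from the same bijection in reverse: any monad morphism $\phi:\Tmon_\Wvar\to\Tmon_\Vvar$ gives a concrete functor $\ol\phi:\Met^{\Tmon_\Vvar}\to\Met^{\Tmon_\Wvar}$, and transport along $K_\Vvar$, $K_\Wvar$ yields a concrete functor $\Vvar\to\Wvar$ sent by $\Phi$ to $\phi$. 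Essential surjectivity is the content of Theorem~\ref{T:gen}: every enriched $\lambda$-accessible monad $\Tmon$ is isomorphic to $\Tmon_{\Vvar_\Tmon}$ for an explicit generalized variety $\Vvar_\Tmon$, so $\Tmon\cong\Phi(\Vvar_\Tmon)$.

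I do not expect any serious obstacle here since the hard technical content has already been absorbed into Propositions~\ref{P:gen1} and~\ref{P:gen} and Theorem~\ref{T:gen}; the only point that warrants a little care is checking that the Barr--Wells bijection between monad morphisms and concrete functors is enriched in our metric setting, i.e.\ that a concrete functor between enriched Eilenberg--Moore categories is automatically enriched and that the associated monad morphism preserves the $\Met$-enriched structure. This is routine because both comparison functors $K_\Vvar$ and $K_\Wvar$ are themselves concrete isomorphisms over $\Met$, so distances are preserved throughout.
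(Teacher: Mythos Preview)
Your proposal is correct and matches the paper's own proof, which simply states that the result follows from Theorem~\ref{T:gen} and Proposition~\ref{P:gen} precisely as in the proof of Theorem~\ref{T:main}. You have spelled out exactly that argument: $\Phi$ is defined on objects via Proposition~\ref{P:gen1}, on morphisms via the comparison isomorphisms of Proposition~\ref{P:gen} together with the Barr--Wells bijection, and essential surjectivity is Theorem~\ref{T:gen}.
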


This follows from Theorem~\ref{T:gen} and Propositions~\ref{P:gen} precisely as in the proof of Theorem~\ref{T:main}.
For $\lambda = \aleph_0$ the corresponding result does not hold: see Example~\ref{E:contra}.

\end{document}